\newtheorem{theorem}{Theorem}[section]
\newtheorem{proposition}[theorem]{Proposition}
\newtheorem{corollary}[theorem]{Corollary}
\newtheorem{lemma}[theorem]{Lemma}
\newenvironment{customthm}[1]
  {\innercustomthm}
  {\endinnercustomthm}
\newenvironment{customcor}[1]
  {\innercustomcor}
  {\endinnercustomcor}
\theoremstyle{definition}
\newcommand{\interior}[1]{#1^{\circ}}
\newcommand{\seqj}[3]{(#1_j)_{j=#2}^{#3}}
\newcommand{\seqgen}[4]{(#1_{#2})_{#2 = {#3}}^{#4}}
\newcommand{\slice}[2]{\mathrm{sl}_{#1}(#2)}
\newcommand{\orbs}[2]{\Omega_{#1}(#2)}
\newcommand{\orbsk}[3]{\Omega_{#1}^{#2}(#3)}
\newcommand{\Jfree}{J_{\mathrm{free}}}
\newcommand{\Jfixedone}{J_{\mathrm{fixed},1}}
\newcommand{\Jfixedzero}{J_{\mathrm{fixed},0}}
\newcommand{\qUk}[1]{(2-q)\pi_q(\mathcal{S}^{#1}) +1}
\newcommand{\Uq}{\mathcal{U}_q}
\newcommand{\Uqk}[1]{\pi_q(\mathcal{S}^{#1})}
\newcommand{\mapseqinf}[5]{({#1}_{{#2}_{#3}})_{{#3} = {#4}}^{#5}}
\newcommand{\mapseqn}[5]{{#1}_{{#2}_{#4} \ldots {#2}_{#5}}}
\newcommand{\mapseqnS}[5]{{#1}_{#2^{#3}_{#4} \ldots #2^{#3}_{#5}}}
\newcommand{\mapstring}[5]{({#1}_{{#2}_{#3}})_{{#3}={#4}}^{#5}}
\newcommand{\mapstringS}[6]{({#1}_{#2^#3_#4})_{#4 = #5}^{#6}}
\newcommand{\mapseqinv}[5]{{#1}^{-1}_{{#2}_{#4} \ldots {#2}_{#5}}}
\newcommand{\mapseqinvinf}[5]{({#1}^{-1}_{{#2}_{#3}})_{{#3}={#4}}^{#5}}
\newcommand{\mapseqgen}[6]{
{#1}^{#2}_{{#3}_{#5} \ldots {#3}_{#6}}}
\newcommand{\mapseqgeninf}[6]{
({#1}^{#2}_{{#3}_{#4}})_{{#4} = {#5}}^{#6}
}
\newcommand{\tSk}[1]{\tilde{\mathcal{S}}^{#1}}
\newcommand{\useq}{(u^{-1}_{i_j})_{j=1}^\infty}
\newcommand{\utseq}{(\tilde{u}^{-1}_{i_j})_{j=1}^\infty}
\newcommand{\fseq}{(f_{i_j})_{j=1}^\infty}
\newcommand{\bq}[1]{\hat{f}_{#1}}
\newcommand{\bqseq}[1]{(\hat{f}_{i_j})_{j={#1}}^\infty}
\newcommand{\bqe}[1]{f^*_{#1}}
\newcommand{\bqeseq}[1]{(f^*_{i_j})_{j={#1}}^\infty}
\newcommand{\bqeinv}[1]{f^{*-1}_{#1}}
\newcommand{\hdim}{\dim_{\mathrm{H}}}
\newcommand{\der}{\mathrm{d}}
\newcommand{\okfun}[2]{
    \draw[line width=0.001mm] (0,0) -- (10,10);
    \ifnum #1 > 1 \relax
    \ifnum #1 > 2
    \begin{scope}[color=white, xscale = 0.3333333, yscale = #2]
        \okfun{\numexpr #1-1}{#2}
    \end{scope}
    \begin{scope}[ color = white, shift = {(3.3333,10*#2)}, xscale = 0.3333333, yscale = -(2*(#2)-1)]
        \okfun{\numexpr #1-1}{#2}
    \end{scope}
    \begin{scope}[color = white, shift = {(6.6666667, 10*(1-#2))}, xscale = 0.3333333, yscale = #2]
        \okfun{\numexpr #1-1}{#2}
    \end{scope}
    \fi
    \ifnum #1 =2
    \begin{scope}[color=black, xscale = 0.3333333, yscale = #2]
        \okfun{\numexpr #1-1}{#2}
    \end{scope}
    \begin{scope}[ color= black, shift = {(3.3333,10*#2)}, xscale = 0.3333333, yscale = -(2*(#2)-1)]
        \okfun{\numexpr #1-1}{#2}
    \end{scope}
    \begin{scope}[color = black, shift = {(6.6666667, 10*(1-#2))}, xscale = 0.3333333, yscale = #2]
        \okfun{\numexpr #1-1}{#2}
    \end{scope}
    \fi
    \fi
}
\title{On the cardinality and dimension of the slices of Okamoto's functions}
\author[1]{Simon Baker}
\author[2]{George Bender}
\affil[1]{Department of Mathematical Sciences,
Loughborough University,
Loughborough,
LE11 3TU,
UK, 
simonbaker412@gmail.com}
\affil[2]{School of Mathematics,
University of Birmingham,
Edgbaston,
Birmingham,
B15 2TT,
UK,
gxb119@bham.ac.uk}
\begin{document}
\setlength{\parskip}{2.3ex}

\maketitle

\begin{abstract}
The graphs of Okamoto's functions, denoted by $K_q$, are self-affine fractal curves contained in $[0,1]^2$, parameterised by $q \in (1,2)$.
In this paper we consider the cardinality and dimension of the intersection of these curves with horizontal lines.

Our first theorem proves that if $q$ is sufficiently close to $2$, then $K_q$ admits a horizontal slice with exactly three elements.
Our second theorem proves that if a horizontal slice of $K_q$ contains an uncountable number of elements then it has positive Hausdorff dimension provided $q$ is in a certain subset of $(1,2)$.
Finally, we prove that if $q$ is a $k$-Bonacci number for some $k \in \mathbb{N}_{\geq 3}$, then the set of $y \in [0,1]$ such that the horizontal slice at height $y$ has $(2m+1)$ elements has positive Hausdorff dimension for any $m \in \mathbb{N}$.
We also show that, under the same assumption on $q$, there is some horizontal slice whose cardinality is countably infinite.
\end{abstract}

\section{Introduction}

John Martstrand's 1954 paper \cite{marstrand1954some} provided some of the cornerstone results for the field known as \textit{Fractal Geometry} \cite{falconer2015sixty}.
Marstrand's paper eventually attracted attention primarily for the projection theorem (Theorem \ref{theorem: Marstrand projection}) and slicing theorem (Theorem \ref{theorem: Marstrand slice}) therein.
The original statement of Marstrand's projection theorem imposes the restriction that the planar set $E$ with $\hdim E = s$ is an $s$-set.
That is, $E$ is required to be both measurable and satisfy $0 < \mathcal{H}^s(E) < \infty$.
Davies \cite{DAVIES1952488} proved that analytic sets $E$ with $\hdim E = s$ and satisfying $\mathcal{H}^s(E) = \infty$ contain subsets of arbitrarily large finite $\mathcal{H}^s$-measure, in particular they contain $s$-sets.
This leads to the following version of Marstrand's theorem.

\begin{customthm}{A}[Marstrand's projection theorem]
\label{theorem: Marstrand projection}
    Let $E \subset \mathbb{R}^2$ be analytic with $\hdim E = s$.
    \begin{enumerate}[(a)]
        \item If $s \leq 1$ then the Hausdorff dimension of almost every orthogonal projection of $E$ is equal to $s$.
        \item If $s > 1$ then almost every orthogonal projection of $E$ has positive length (and hence Hausdorff dimension 1).
    \end{enumerate}
\end{customthm}

If $E$ is as above, with $s > 1$, then Marstrand's projection theorem indicates that the projection of $E$ in almost all directions has positive length.
However, the Hausdorff dimension of the projection to a line cannot exceed $1$ so it is natural to seek a finer description via slices.
An interpretation of Theorem \ref{theorem: Marstrand slice} is as a description of how the `surplus dimension' of $E$ is stored in the fibres of the projection.
Formally, Theorem \ref{theorem: Marstrand slice} provides information on the typical dimension of the intersection of $E$ with lines in the plane which pass through $E$.
The original statement of the slicing theorem in Marstrand \cite{marstrand1954some} imposed conditions on the planar set $E$ being an $s$-set, as with the projection theorem above.
As mentioned, by Davies' result on analytic sets \cite{DAVIES1952488}, we are able to reformulate Marstrand's slicing theorem as follows.

\begin{customthm}{B}[Marstrand's slicing theorem]
\label{theorem: Marstrand slice}
    Let $E\subset \mathbb{R}^2$ be analytic with $\hdim E = s > 1$.
    Then almost every line through $\mathcal{H}^s$-almost every point of $E$ intersects $E$ in a set of Hausdorff dimension $s-1$ with finite Hausdorff $(s-1)$-dimensional measure.
\end{customthm}

Davies \cite{DaviesCounterexamples1979} went on to prove that if the planar set $E$ is not assumed to be analytic, then Theorem \ref{theorem: Marstrand projection} fails completely.
Precisely, a planar set $E^*$ is found which projects to a set of Hausdorff dimension zero in all directions and $E^*$ is \textit{essentially $2$-dimensional}\footnote{
Davies \cite{DaviesCounterexamples1979} defines a set to be \textit{essentially at least $s$-dimensional} if it cannot be expressed as the countable union of sets of dimension strictly less than $s$.}.
Although the results we present in this paper are not linked directly to projections, our work is heavily influenced by Marstrand's closely related slicing theorem.
The improvements that have been found for the projection theorem often have analogous consequences for the slicing theorem.
For example, the extension to higher dimensions that Mattila \cite{Mattila1975HausdorffDO} found for the projection theorem carries immediately over to the slicing theorem, as does Davies' result that the set $E$ in question need only be analytic.

The sets of measure zero where the conclusions of the theorems fail are known as exceptional sets.
Given Theorems \ref{theorem: Marstrand projection} and \ref{theorem: Marstrand slice}, natural questions arise regarding the exceptional sets, for instance, we can ask what the Hausdorff and packing dimension of these sets are in general.
The research already published on this topic motivates our own research on the horizontal slices of Okamoto's functions.
In the context of Marstrand's projection theorem, Kaufman proved in \cite{kaufman_1968} that the Hausdorff dimension of the set of exceptional directions of $E$ is no larger than $\hdim E$, and Kaufman and Mattila \cite{KafmanMattila1975} proved that this bound is sharp.
Since the set of directions is an interval, this result is only significant when $\hdim E \leq 1$.
However, in the case $\hdim E > 1$, Falconer \cite{falconer_1982} proved that the exceptional set, i.e. the set of directions whose projections give a set of zero Lebesgue measure, has Hausdorff dimension at most $2-s$.
This bound was also proved to be sharp in the same paper.
In the context of Theorem \ref{theorem: Marstrand slice}, let $\hdim E = s$.
Let $D \subset S^1$ be the set of exceptional directions, where $d \in D$ if the set of points $e \in E$ for which the line through $e$ in direction $d$ intersects $E$ in a set of Hausdorff dimension $s-1$ has $\mathcal{H}^s$-measure zero.
Orponen \cite{OrponenSlicing} proved that $\hdim D \leq 2-s$.
This result is analogous to the results of Kaufman \cite{kaufman_1968} and Falconer \cite{falconer_1982} mentioned above.

A conjecture of Furstenburg \cite{furstenberg1970intersections} concerns the Hausdorff dimension of the intersection of sets $A,B \subset [0,1]$ which are invariant under multiplication maps $T_p: x \mapsto px \mod{1}$ and $T_q: x \mapsto qx \mod{1}$ respectively.
The conjecture states that for all $u,v \in \mathbb{R}$, if $p$ and $q$ are multiplicatively independent (i.e. $\frac{\log p}{\log q}$ is irrational) then
\begin{equation}
\label{equation: Furstenberg's conjecture}
\hdim ((u A + v) \cap B) \leq \max \{0, \hdim A + \hdim B - 1\}.
\end{equation}
We can see for $u,v \in \mathbb{R}$ that $(u A + v) \cap B$ is the intersection of $A \times B$ and the line $y = ux +v$.
This provides a link to our study of slices.
More precisely, Furstenberg's conjecture can be viewed as an improvement on the `almost all' condition of Marstrand's slicing theorem because it gives a bound for the dimension of \textit{all} slices through $A \times B$, under some constraints on $A$ and $B$.
Recently, in 2019, Shmerkin \cite{shmerkin2019furstenberg} and Wu \cite{WuMengFurstenburg} independently and by different methods proved a stronger version of Furstenberg's conjecture, with upper box dimension replacing Hausdorff dimension on the left hand side of \eqref{equation: Furstenberg's conjecture}.

In this paper we study the horizontal intersections of the family of fractal curves known as \textit{Okamoto's functions} \cite{okamoto}.
Although some special cases of this family of curves had been studied before, namely by Bourbaki \cite{spain2013functions} and Perkins \cite{perkins}, research on the whole family of functions was first carried out in 2005 by Okamoto \cite{okamoto}. 

Let $q \in (1,2)$ and consider the following iterated function system on $[0,1]^2$, which we will refer to as the \textit{Okamoto IFS}:
\begin{align*}
S_0(x,y) &= \left(\frac{x}{3} , \frac{y}{q}\right), \\ 
S_1(x,y) &= \left(\frac{x+1}{3} , \left(\frac{2}{q}-1\right)(1-y) + \left(1-\frac{1}{q}\right)\right), \\ 
S_2(x,y) &= \left( \frac{x+2}{3} , \frac{y}{q} + \left(1-\frac{1}{q}\right)\right).
\end{align*}
Each $S_i$ is a contraction on $[0,1]^2$ so by a theorem of Hutchinson \cite{Hutchinson1981}, there is a unique nonempty compact set $K_q$ with the property that
$$K_q = \bigcup_{i \in \{0,1,2\}} S_i(K_q),$$
which is the self-affine set of the Okamoto IFS.
For $i \in \{0,1,2\}$, define $v_i(x)$ and $u_i(y)$ by the equation $S_i(x,y) = (v_i(x), u_i(y))$, that is, they are the coordinate functions of the Okamoto IFS.
For any $y \in [0,1]$ we define the \textit{horizontal slice at} $y$ of $K_q$ to be the set 
$$\slice{q}{y} = \{x \in [0,1] : (x,y) \in K_q\}.$$
For any $q \in (1,2)$ the set $K_q \subset [0,1]^2$ can be realised as the graph of the limit of a sequence of piecewise linear continuous functions on $[0,1]$.
The family of functions $K_q$ for $q \in (1,2)$ (known as Okamoto's functions) was first studied by Okamoto \cite{okamoto}, where the interest originated in their pathological property of being continuous and nowhere differentiable.
In this paper it is shown that if $q \in (1,3/2)$ then $K_q$ is nowhere differentiable, but if $q \in (3/2,2)$ then $K_q$ is differentiable at infinitely many points.
It is worth emphasising that the family of functions studied by Okamoto in \cite{okamoto} is actually larger than the set we consider.
Essentially, they allow the parameter $q$ to range over $(1,\infty)$, however, the limiting functions that appear for $q \in [2,\infty)$ do not have interesting horizontal intersections, so we ignore them here.

Given the $x$-components of the Okamoto IFS are of the form $\frac{x+i}{3}$ for $i \in \{0,1,2\}$, one can expect information on the ternary expansion of $x$ to tell us about the local behaviour of the point $(x,y) \in K_q$.
Allaart \cite{allaart} proved for any $q \in (1,2)$, that the derivative of $K_q$ at $x$ is infinite if and only if the number of $1$s in the ternary expansion of $x$ is finite and some technical limiting condition on $q$ and the ternary expansion of $x$ holds.
Moreover, we have the satisfying polarity property that if the number of $1$s in the ternary expansion of $x$ is even then the derivative is $+\infty$, while if it is negative, the derivative is $-\infty$.
In this paper, Allaart also made use of base $q$ expansion theory to prove results on the Hausdorff dimension of the set of points $x$ for which Okamoto's function has infinite derivative.
We remark here that both ternary and base $q$ expansions also play a key role in our work in this paper on horizontal intersections of $K_q$.
More recently, building on this work, the derivatives of Okamoto's functions with respect to its defining parameter $q$ were studied by Dalaklis et al. \cite{Dalaklis}.
Again, the ternary expansion of $x$ and in particular the limiting behaviour of the number of $1$s in the ternary expansion of $x$ play a key role in the main theorem of this paper.

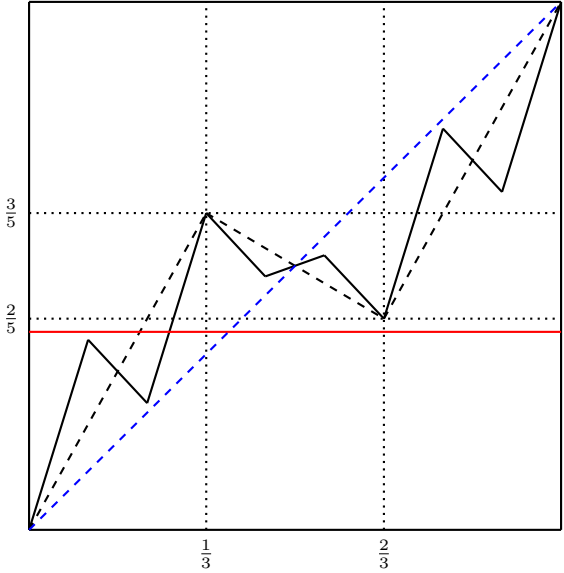
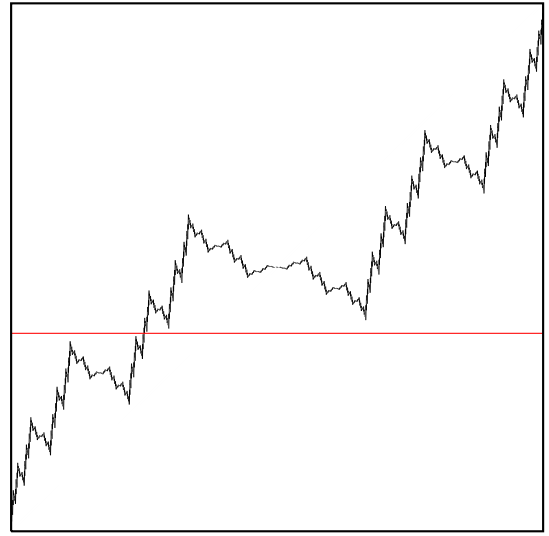
\begin{figure}[t]
    \centering
        \begin{subfigure}[t]{0.45\textwidth}
        \vskip 0pt
            \begin{tikzpicture} [scale = 7]
                \begin{scope}[thick]
                % \begin{scope}[very thick]
                    \draw (0,0) -- (1,0);
                    \draw (0,0) -- (0,1);
                    \draw (1,0) -- (1,1);
                    \draw (0,1) -- (1,1);
                    
                    \draw [dotted] (0.333,0) -- (0.333,1);
                    \draw [dotted] (0.667,0) -- (0.667,1);
                    
                    \draw [dotted] (0,0.6) -- (1,0.6);
                    \draw [dotted] (0,0.4) -- (1,0.4);
                    
                    \draw [dashed](0,0) -- (0.333,0.6);
                    \draw [dashed](0.333,0.6) -- (0.667,0.4);
                    \draw [dashed](0.667,0.4) -- (1,1);
                    
                    \draw (0,0) -- (0.111,0.36);
                    \draw (0.111,0.36) -- (0.222,0.24);
                    \draw (0.222,0.24) -- (0.333,0.6);
                    
                    \draw (0.333,0.6) -- (0.444,0.48);
                    \draw (0.444,0.48) -- (0.555,0.52);
                    \draw (0.555,0.52) -- (0.667,0.4);
                    
                    \draw (0.667,0.4) -- (0.778,0.76);
                    \draw (0.778,0.76) -- (0.889,0.64);
                    \draw (0.889,0.64) -- (1,1);
                    
                    \draw [red] (0,0.375) -- (1,0.375);
                    
                    % \draw [red] (0.21,0.375) -- (0.21,0);
                    
                    \draw [blue, dashed] (0,0) -- (1,1);
                    
                    \node [left] at (0,0.6){\small $\frac{3}{5}$};
                    \node [left] at (0,0.4){\small $\frac{2}{5}$};
                    
                    \node [below] at (0.333,0){\small $\frac{1}{3}$};
                    \node [below] at (0.667,0){\small $\frac{2}{3}$};
                \end{scope}
            \end{tikzpicture}
            \caption{The first two iterations of the IFS construction of the graph of Okamoto's function, $K_q$, where $q = 5/3$.}
        \end{subfigure}
        \hfill
        \begin{subfigure}[t]{0.45\textwidth}
        \vskip 0pt
            \begin{tikzpicture}[scale = 0.7]
                \okfun{8}{0.6};
                \draw[line width = 0.4mm, white] (0,0) -- (4.99,4.99);
                \draw[line width = 0.4mm, white](5.01,5.01) -- (10,10);
                % \draw [line width = 0.001mm, red] (0,4.870856) -- (10,4.870856);
                \draw[red] (0,3.75) -- (10,3.75);
                \draw[thick] (0,0) -- (0,10) -- (10,10) -- (10,0) -- (0,0);
            \end{tikzpicture}
            \caption{The red line $y = 3/8$ has unique intersection with $K_q$, i.e. $\slice{5/3}{3/8}$ has a unique element.}
        \end{subfigure}
        \caption{Constructing $K_q$ for $q = 5/3$.}
        \label{figure: iterations of K_a}
    % \caption{(a) shows the first two steps in the construction of $K_{\frac{3}{5}}$. (b) shows the $F_q$ dynamics for $q = 5/3$. The red line in (a) shows the unique slice at height $y=3/8$ and in (b) it shows the corresponding unique orbit.}
\end{figure}

We will prove in Corollary \ref{corollary: implications of the bijection lemma} that for all $q \in (1,2)$, almost all $y \in [0,1]$ are such that $\slice{q}{y}$ is uncountable.
This result motivates our interest in which parameters $q \in (1,2)$ admit values $y \in [0,1]$ such that $\slice{q}{y}$ is finite or at most countably infinite.
In the context of Theorem \ref{theorem: Marstrand slice}, the set $\slice{q}{y}$ is the intersection of a horizontal line with the planar set $K_q$, so for typical $y$ has Hausdorff dimension $\hdim K_q - 1$.
B\'ar\'any (private communication) has found the following results which link in with the work we present in this paper.
Let $s = 1 + \log_3(\frac{4}{q} -1)$ be the affinity dimension \cite{falconer_1988} of the Okamoto IFS, and let $\dim_{\mathrm{B}}$ and $\dim_{\mathrm{A}}$ denote the box \cite{falconer2004fractal} and Assouad \cite{fraser_2020} dimensions respectively.
Then there is a set $E$ with $\hdim E = 0$ such that for all $ q \in (1,2) \setminus E$, the following are true.
\begin{enumerate}[(a)]
    \item $ \hdim K_q = \dim_{\mathrm{B}} K_q = \dim_{\mathrm{A}} K_q = s$.
    \item For all $y \in [0,1]$, $\hdim (\slice{q}{y}) \leq s-1$.
    \item Lebesgue almost every $y \in [0,1]$ has $\hdim (\slice{q}{y}) = s-1$.
\end{enumerate}
The fact that $\slice{q}{y}$ is typically uncountable tells us that the set of $y$ for which $\slice{q}{y}$ is finite or countable is anomalous.
Despite the questions of whether $K_q$ is an $s$-set for any $s$ being open, it follows easily from the definition of $K_q$ that all projections of $K_q$ have Hausdorff dimension $1$ and indeed are all intervals.

In Subsection \ref{subsection: bijection lemma} we prove the existence of a bijection between $\slice{q}{y}$ and the set of allowable sequences of maps in a well-understood system.
An important property of this bijection is that if $\mapseqinf{f}{i}{j}{1}{\infty}$ is one of the aforementioned sequences of maps, then the sequence $\seqj{i}{1}{\infty}$ is the ternary expansion of some $x \in \slice{q}{y}$.
Moreover, this bijection allows us to restate our theorems on $\slice{q}{y}$ in a form that allows for the introduction of techniques and ideas from base $q$ expansions (also known as $\beta$ expansions).

Let $k \in \mathbb{N}$. A generalisation of the golden ratio, $G \approx 1.618$, which satisfies $G^2 - G - 1 =0$ is the real number $q_k \in (1,2)$ we call the \textit{$k$-Bonacci number} which satisfies
\begin{equation}
\label{equation: k-Bonacci definition}
   q^k - q^{k-1} - \cdots - q - 1 = 0.
\end{equation}
We note that $q_2 = G$ is the golden ratio and $q_1 = 1$ is ignored.

Our main results are stated below.

\begin{customthm}{1}
\label{theorem: interval with slice three}
    If $q \in (q_9,2) = (1.99803..., 2)$ then there is some $y \in [0,1]$ such that $|\slice{q}{y}| = 3$.
\end{customthm}

\begin{customthm}{2}
\label{theorem: positive Haudsorff dimension}
    Let $q \in (1,2)$ be such that for all $y \in [0,1]$, $|\slice{q}{y}| \in \{1,2^{\aleph_0}\}$.
    Then if $\slice{q}{y}$ is uncountable, there exists an $s > 0$ depending only on $q$ such that $\hdim(\slice{q}{y}) \geq s$.
\end{customthm}

Let $\mathcal{T}$ be the set of transcendental numbers and let $q_{\aleph_0} \approx 1.64541$ be the root of $x^6 = x^4 + x^3 + 2x^2 + x + 1$ in $(1,2)$.
It was shown by the first author in \cite{BAKER2015515} that $q_{\aleph_0}$ is the smallest $q \in (G,2)$ with the property that there exists $x \in I_q$ with countably many base $q$ expansions.
Let $q_{\mathrm{KL}}$ be the Komornik-Loreti constant \cite{KomornikLoretiUniqueDevelopments}, defined to be the smallest base $q$ for which $1$ has a unique base $q$ expansion.
We will prove that Corollary \ref{corollary: positive Hausdorff dimension} follows from Theorem \ref{theorem: positive Haudsorff dimension} in Section \ref{section: proof of positive dimension}, before proving an equivalent version of Theorem \ref{theorem: positive Haudsorff dimension}.

\begin{customcor}{2}
\label{corollary: positive Hausdorff dimension}
    Let $q \in (1, q_{\aleph_0}) \setminus \{G\}$ or $q \in \mathcal{T} \cap (q_{\aleph_0}, q_\mathrm{KL})$.
    Then if $\slice{q}{y}$ is uncountable there is some $s > 0$ depending only on $q$ such that $\hdim(\slice{q}{y}) \geq s$.
\end{customcor}

In the context of base $q$ expansions, Sidorov \cite[Theorem 2.1]{sidorov2009expansions} proved the following dichotomy.
If $q \in \mathcal{T} \cap (G, q_{\mathrm{KL}})$ then any $x \in [0,\frac{1}{q-1}]$ has either a unique base $q$ expansion or uncountably many of them.
Corollary \ref{corollary: positive Hausdorff dimension} strengthens this result in the sense that if $q$ is restricted to the same set, then we know that for any $y \in [0,1]$, $\slice{q}{y}$ has either a unique element or it has positive Hausdorff dimension.

\begin{customthm}{3}
\label{theorem: k-Bonacci numbers admit any odd slice}
    Let $\{q_i\}_{i=3}^\infty$ be the set of $k$-Bonacci numbers excluding $G$.
    If $q = q_i$ for some $i \in \{3,4, \ldots\}$ then the following are true.
    \begin{enumerate}
        \item There exists an $\epsilon > 0$ such that if 
        $$Y_m = \{ y \in [0,1] : |\slice{q}{y}| = 2m+1 \},$$
        then for all $m \in \mathbb{N}$, $\hdim(Y_m)>\epsilon$.
        \item There exists $y_{\aleph_0} \in [0,1]$ such that the cardinality of $\slice{q}{y_{\aleph_0}}$ is countably infinite.
    \end{enumerate}
\end{customthm}

In Section \ref{section: interval with slice three} we prove Theorem \ref{theorem: interval with slice three}.
In Section \ref{section: proof of positive dimension} we prove Theorem \ref{theorem: positive Haudsorff dimension} and its Corollary \ref{corollary: positive Hausdorff dimension}.
In Section \ref{section: k-Bonacci results} we prove Theorem \ref{theorem: k-Bonacci numbers admit any odd slice} along with the result that 
there is some $y \in [0,1]$ such that $\slice{q}{y}$ has two elements 
if and only if
$1$ has a unique base $q$ expansion 
(Theorem \ref{theorem: U = C_2}).

\section{Background and preliminaries}
\label{section: background and preliminaries}

In this section we present the necessary background theory required for the proofs of Theorems \ref{theorem: interval with slice three}, \ref{theorem: positive Haudsorff dimension} and \ref{theorem: k-Bonacci numbers admit any odd slice}. In particular we prove Lemma \ref{lemma: slice orbit bijection} which provides an important correspondence allowing us to effectively study both the cardinality and dimension of $\slice{q}{y}$.

\subsection{Background theory}
\label{subsection: background theory}

Let $q \in (1,2)$ and let $K_q$ be the graph of Okamoto's function with parameter $q$ as defined above.
Define $\{0,1,2\}^* = \cup_{k \in \mathbb{N}_{\geq 0}} \{0,1,2\}^k$ and $\{0,1\}^* = \cup_{k \in \mathbb{N}_{\geq 0}} \{0,1\}^k$.
The following proposition is a consequence of \cite[Theorem 9.1]{falconer2004fractal}.

\begin{proposition}
\label{proposition: Okamoto IFS sequences}
A point $(x,y) \in K_q$ if and only if there is a sequence of maps of the Okamoto IFS $(S_{i_j})_{j=1}^\infty \in \{S_0, S_1, S_2\}^\mathbb{N}$ such that for all $k \in \mathbb{N}_{\geq 0},$
\begin{equation}
\label{equation: Okamoto IFS}
(x,y) \in S_{i_1} \circ S_{i_2} \circ \cdots \circ S_{i_k}([0,1]^2),
\end{equation}
which is equivalent to both
\begin{equation}
\label{equation: Okamoto IFS x-coordinate}
x \in v_{i_1} \circ v_{i_2} \circ \cdots \circ v_{i_k}([0,1]),
\end{equation}
and
\begin{equation}
\label{equation: Okamoto IFS y-coordinate}
y \in u_{i_1} \circ u_{i_2} \circ \cdots \circ u_{i_k}([0,1]),
\end{equation}
holding for all $ k \in \mathbb{N}_{\geq 0}$.
\end{proposition}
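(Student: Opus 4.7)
The plan is to apply a standard iterated function system address characterisation (via \cite[Theorem 9.1]{falconer2004fractal}) and then exploit the product structure of the Okamoto IFS to decompose the resulting condition coordinatewise.

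First, I would observe that each $S_i$ maps $[0,1]^2$ into itself, so the compact square is forward-invariant and contains the attractor. Falconer's Theorem 9.1 then gives the identification
$$K_q = \bigcap_{k=0}^{\infty} \bigcup_{(i_1, \ldots, i_k) \in \{0,1,2\}^k} S_{i_1} \circ \cdots \circ S_{i_k}([0,1]^2),$$
together with the fact that a point belongs to the attractor exactly when it admits an address, i.e.\ a sequence $(i_j)_{j=1}^{\infty}$ with $(x,y) \in S_{i_1} \circ \cdots \circ S_{i_k}([0,1]^2)$ for every $k \in \mathbb{N}_{\geq 0}$. The nested sequence of compact sets on the right is decreasing in $k$ (since each $S_i([0,1]^2) \subseteq [0,1]^2$) and its diameters tend to zero (as each $S_i$ is a contraction), so the intersection over $k$ of these sets is either empty or a singleton, yielding the existence direction of the address. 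This settles the equivalence of $(x,y) \in K_q$ with the existence of a sequence satisfying \eqref{equation: Okamoto IFS}.

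For the stated equivalence with the coordinate conditions, I would fix a sequence $(i_j)_{j=1}^{\infty}$ and argue by a short induction on $k$ that
$$S_{i_1} \circ \cdots \circ S_{i_k}([0,1]^2) = \bigl( v_{i_1} \circ \cdots \circ v_{i_k}([0,1]) \bigr) \times \bigl( u_{i_1} \circ \cdots \circ u_{i_k}([0,1]) \bigr).$$
The base case $k = 0$ reads $[0,1]^2 = [0,1] \times [0,1]$, and the inductive step uses only the product form $S_i(x,y) = (v_i(x), u_i(y))$, which immediately gives $S_i(A \times B) = v_i(A) \times u_i(B)$ for $A, B \subseteq [0,1]$. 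Since a point lies in a Cartesian product exactly when each of its coordinates lies in the corresponding factor, \eqref{equation: Okamoto IFS} at level $k$ is equivalent to \eqref{equation: Okamoto IFS x-coordinate} and \eqref{equation: Okamoto IFS y-coordinate} both holding at level $k$. Quantifying over all $k$ completes the argument.

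There is essentially no obstacle here: the proposition is a direct translation of the general IFS address theorem into the separable coordinate form afforded by the fact that $v_i$ depends only on $x$ and $u_i$ depends only on $y$. The only points requiring care are to verify that $[0,1]^2$ is forward-invariant (so that one may take it as the initial compact set in Falconer's construction) and to keep track of the order of composition in the induction.
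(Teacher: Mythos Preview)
Your proposal is correct and follows essentially the same approach as the paper, which simply states the proposition as a consequence of \cite[Theorem 9.1]{falconer2004fractal} without further detail. You have supplied the natural elaboration: the address characterisation from Falconer together with the coordinatewise splitting afforded by the product form $S_i(x,y) = (v_i(x), u_i(y))$.
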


Note that for $k = 0$, the sequence $\seqj{i}{1}{k} \in \{0,1,2\}^*$ is the empty word and the corresponding sequence of maps $S_{i_1} \circ \cdots \circ S_{i_k}$ is just the identity map.
Therefore, in the case $k=0$, the proposition states that $(x,y) \in [0,1]^2$.

For each $q \in (1,2)$, let $I_q = [0,\frac{1}{q-1}]$ and $J_q = [\frac{1}{q}, \frac{1}{q(q-1)}]$. 
In the literature on base $q$ expansions (e.g. \cite{sidorov2009expansions}) $J_q$ is commonly referred to as the \textit{switch region}.
The map $\pi_3 : \{0,1,2\}^\mathbb{N} \rightarrow [0,1]$ is defined by
$$\pi_3(\seqj{i}{1}{\infty}) = \sum_{j=1}^\infty i_j 3^{-j},$$
and for $q \in (1,2)$, the map $\pi_q: \{0,1\}^\mathbb{N} \rightarrow I_q$ is defined by
$$\pi_q(\seqj{i}{1}{\infty}) = \sum_{j=1}^\infty i_j q^{-j}.$$
These are the \textit{projection maps}.
The domain of $\pi_q$ can be extended to $\{-1,0,1\}^\mathbb{N}$ in the obvious way and in this case the codomain is $I_q^* = [-\frac{1}{q-1}, \frac{1}{q-1}]$. We will need this extension of $\pi_q$ in Section \ref{section: interval with slice three}.

Given a finite sequence $\seqj{a}{1}{k} \in \{0,1\}^*$, we define the associated \textit{cylinder} by
$$ [\seqj{a}{1}{k}] = \{\seqj{i}{1}{\infty} \in \{0,1\}^\mathbb{N} : i_j = a_j \ \mathrm{for} \ 1 \leq j \leq k \}. $$
Similarly, if $\seqj{a}{1}{k} \in \{0,1,2\}^*$ then the associated cylinder is defined to be
$$ [\seqj{a}{1}{k}] = \{\seqj{i}{1}{\infty} \in \{0,1,2\}^\mathbb{N} : i_j = a_j \ \mathrm{for} \ 1 \leq j \leq k \}. $$
If $k=0$ we define the empty cylinder to be the whole sequence space: $ [e] = \{0,1\}^\mathbb{N}$ or $[e] = \{0,1,2\}^\mathbb{N}$ where $e$ represents the empty word.
It will be clear from context which of $\{0,1\}^\mathbb{N}$ and $\{0,1,2\}^\mathbb{N}$ $[e]$ represents.
The following lemma is a simple consequence of the definition of the projection map $\pi_q$ and the existence of at least one base $q$ expansion for any point in $I_q$, which Parry showed in his seminal paper \cite{Parry1960OnTO}.

\begin{lemma}
\label{lemma: projections of cylinders are intervals}
    If $\seqj{a}{1}{k} \in \{0,1\}^*$ is arbitrary then $\pi_q[\seqj{a}{1}{k}]$ is the interval $[\pi_q(\seqj{a}{1}{k}0^\infty), \pi_q(\seqj{a}{1}{k}1^\infty)]
    = [\sum_{j=1}^k a_j q^{-j} , \sum_{j=1}^k a_j q^{-j} + q^{-k}(\frac{1}{q-1})]$.
\end{lemma}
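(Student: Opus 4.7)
The plan is to prove both set inclusions. For the easy direction, I would first show that for every $\seqj{i}{1}{\infty} \in [\seqj{a}{1}{k}]$, the value $\pi_q(\seqj{i}{1}{\infty}) = \sum_{j=1}^k a_j q^{-j} + \sum_{j=k+1}^\infty i_j q^{-j}$ lies in the claimed interval. Since $i_j \in \{0,1\}$ for $j > k$, the tail sum is bounded below by $0$ (attained by the all-zero tail) and above by $\sum_{j=k+1}^\infty q^{-j} = q^{-k}/(q-1)$ (attained by the all-one tail). This gives $\pi_q[\seqj{a}{1}{k}] \subseteq [\pi_q(\seqj{a}{1}{k}0^\infty), \pi_q(\seqj{a}{1}{k}1^\infty)]$ and simultaneously identifies the two endpoints with the explicit sums stated in the lemma.

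For the reverse inclusion, I would argue surjectivity onto the interval using Parry's result that $\pi_q:\{0,1\}^{\mathbb{N}}\to I_q$ is onto. Given any target value $x$ in the claimed interval, define
\[
z = q^k\left(x - \sum_{j=1}^k a_j q^{-j}\right).
\]
The hypothesis on $x$ forces $z \in [0, 1/(q-1)] = I_q$, so by Parry's theorem there is some $(b_j)_{j=1}^\infty \in \{0,1\}^\mathbb{N}$ with $\pi_q((b_j)_{j=1}^\infty) = z$. Setting $i_j = a_j$ for $1\le j\le k$ and $i_{k+j}=b_j$ for $j\ge 1$ gives a sequence in the cylinder $[\seqj{a}{1}{k}]$ whose projection is exactly $x$, by the computation
\[
\pi_q(\seqj{i}{1}{\infty}) = \sum_{j=1}^k a_j q^{-j} + q^{-k}\sum_{j=1}^\infty b_j q^{-j} = \sum_{j=1}^k a_j q^{-j} + q^{-k} z = x.
\]

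There is no real obstacle here: the only non-trivial ingredient is the appeal to the base-$q$ representability of every point of $I_q$, which is precisely the cited theorem of Parry. The remainder is the book-keeping of splitting a sequence into its prefix and tail and recognising the geometric series $\sum_{j=k+1}^\infty q^{-j}$. I would present the argument in the order above, endpoints first (to justify the explicit closed-form description of the interval) and surjectivity second.
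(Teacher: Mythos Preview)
Your proposal is correct and matches the paper's approach exactly: the paper does not give a detailed proof but simply remarks that the lemma is a simple consequence of the definition of $\pi_q$ together with Parry's result that every point of $I_q$ admits a base $q$ expansion. Your two inclusions, with the endpoint computation for one direction and the appeal to Parry for surjectivity in the other, are precisely the intended argument.
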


Let $(a_j)_{j=1}^k, \seqj{b}{1}{l} \in \{0,1,2\}^*$ be two finite ternary sequences. 
We say that $(a_j)_{j=1}^k $ is a \textit{prefix} of $\seqj{b}{1}{l}$ if $k \leq l$ and $a_j = b_j$ for all $0 \leq j \leq k$. 
The sequence $(a_j)_{j=1}^k$ is a \textit{strict prefix} of $\seqj{b}{1}{l}$ if it is a prefix and they are not equal as sequences. 
Elements of $\{0,1,2\}^\mathbb{N}$ can be prefixed by elements of $\{0,1,2\}^*$ in the expected way and the definitions hold for elements of $\{0,1\}^*$ too.

The proof of the following lemma is straightforward and omitted.

\begin{lemma}
\label{lemma: subsequences are interval containments}
    Let $\seqj{a}{1}{k} , \seqj{b}{1}{l} \in \{0,1\}^*$ with $\seqj{a}{1}{k}$ a prefix of $ \seqj{b}{1}{l}$.
    Then $\pi_q[\seqj{b}{1}{l}] \subset \pi_q[\seqj{a}{1}{k}]$.
    The same conclusion holds for elements of $\{0,1,2\}^*$.
\end{lemma}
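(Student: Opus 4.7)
The plan is to reduce the containment of intervals in $[0,1]$ to a containment of cylinders in sequence space, where the statement is essentially tautological, and then to exploit the fact that functions preserve subset inclusion. No analysis on $\mathbb{R}$ is required.

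First I would unfold the definition of a cylinder: a sequence $\seqj{i}{1}{\infty}$ lies in $[\seqj{b}{1}{l}]$ exactly when $i_j = b_j$ for all $1 \leq j \leq l$. Since $\seqj{a}{1}{k}$ is a prefix of $\seqj{b}{1}{l}$, we have $k \leq l$ and $a_j = b_j$ for $1 \leq j \leq k$, so any such $\seqj{i}{1}{\infty}$ automatically satisfies $i_j = a_j$ for $1 \leq j \leq k$ and therefore lies in $[\seqj{a}{1}{k}]$. This yields the inclusion $[\seqj{b}{1}{l}] \subseteq [\seqj{a}{1}{k}]$ inside $\{0,1\}^\mathbb{N}$. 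Since $\pi_q$ is a function, applying it to both sides preserves the inclusion, giving $\pi_q[\seqj{b}{1}{l}] \subseteq \pi_q[\seqj{a}{1}{k}]$ as required. The ternary case is line-for-line identical after replacing $\pi_q$ by $\pi_3$ and $\{0,1\}$ by $\{0,1,2\}$; neither the cylinder definition nor the set-theoretic step is sensitive to the alphabet.

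The main ``obstacle'' here is really only to resist the temptation to compute anything. A more pedestrian route would invoke Lemma \ref{lemma: projections of cylinders are intervals} to write each projected cylinder as an explicit interval $[\sum_{j=1}^{k} a_j q^{-j},\, \sum_{j=1}^{k} a_j q^{-j} + q^{-k}/(q-1)]$ (and similarly for $b$) and then verify the two endpoint inequalities using $a_j = b_j$ for $j \leq k$ together with the bounds $b_j \in \{0,1\}$ (resp. $b_j \in \{0,1,2\}$ and the geometric series bound $\sum_{j=k+1}^{l} b_j 3^{-j} + 3^{-l}/2 \leq 3^{-k}/2$). This works but is strictly longer and obscures the underlying point, namely that the containment lives naturally in sequence space and is only transported to $\mathbb{R}$ at the very end by applying the projection map.
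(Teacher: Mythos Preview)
Your proof is correct and is precisely the kind of argument the paper has in mind; in fact the paper simply says ``The proof of the following lemma is straightforward and omitted.'' Your reduction to the cylinder containment $[\seqj{b}{1}{l}] \subseteq [\seqj{a}{1}{k}]$ followed by applying $\pi_q$ is exactly the intended one-line justification.
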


For $q \in (1,2)$, we define the set of maps $ E_q = \{f_0 , f_1 , f_2 \}$ (Figure \ref{figure: Okamoto dynamics}) where $f_0, f_1$ and $f_2$ are given by
\begin{align*}
f_0 &: \left[0,\frac{1}{q(q-1)}\right) \rightarrow \left[0,\frac{1}{q-1}\right);   &f_0(x) &= qx,\\
f_1 &: \left( \frac{1}{q}, \frac{1}{q(q-1)}\right] \rightarrow \left[0, \frac{1}{q-1}\right);   &f_1(x) &= \frac{1}{q-1} - \frac{1}{2-q}(qx-1),\\
f_2 &: \left[\frac{1}{q},\frac{1}{q-1}\right] \rightarrow \left[0, \frac{1}{q-1}\right];   &f_2(x) &= qx - 1.
\end{align*}

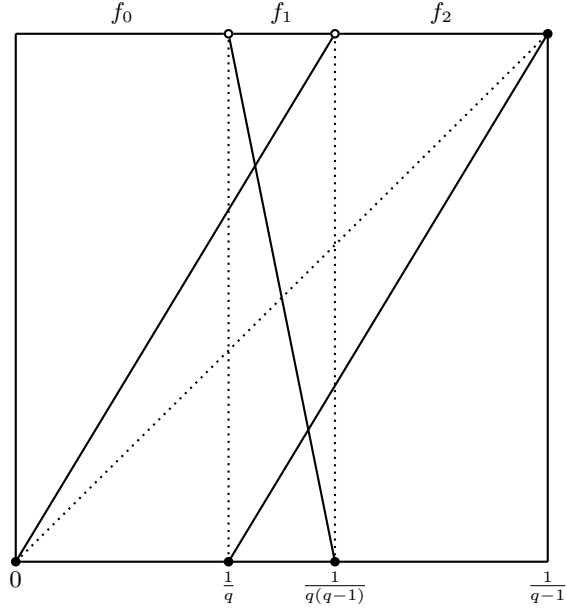
\begin{figure}[t]
    \centering
            \begin{tikzpicture} [scale = 7]
    \begin{scope}[thick]
        \draw (0,0) -- (1,0);
        \draw (0,0) -- (0,1);
        \draw (1,0) -- (1,1);
        \draw (0,1) -- (1,1);
        
        \draw [dotted] (0,0) -- (1,1);
        
        \draw (0,0) -- (0.6,1);
        \draw (0.4,0) -- (1,1);
        \draw (0.6,0) -- (0.4,1);
        
        \draw [dotted] (0.6,1) -- (0.6,0);
        \draw [dotted] (0.4,0) -- (0.4,1);

    \node[above] at (0.2,1){\small $f_0$};
    \node[above] at (0.5,1){\small $f_1$};
    \node[above] at (0.8,1){\small $f_2$};

    \node[below] at (0,0){\small $0$};
    \node[below] at (0.4,0){\small $\frac{1}{q}$};
    \node[below] at (0.6,0){\small $\frac{1}{q(q-1)}$};
    \node[below] at (1,0){\small $\frac{1}{q-1}$};

    \filldraw[white] (0.4,1) circle (0.2pt);
    \filldraw[white] (0.6,1) circle (0.2pt);
    \draw (0.4,1) circle (0.2pt);
    \draw (0.6,1) circle (0.2pt);
    \filldraw[black] (0.4,0) circle (0.2pt);
    \filldraw[black] (0.6,0) circle (0.2pt);
    \filldraw[black] (0,0) circle (0.2pt);
    \filldraw[black] (1,1) circle (0.2pt);
    \end{scope}
    
        \end{tikzpicture}
        \caption{The set of maps $E_q = \{f_0, f_1, f_2\}$ on $I_q$.}
        \label{figure: Okamoto dynamics}
\end{figure}

For any $\seqj{i}{1}{k} \in \{0,1,2\}^*$ let $f_{i_k} \circ \cdots \circ f_{i_1} = \mapseqn{f}{i}{j}{1}{k}$ and write $\fseq$ for an arbitrary element of $\{f_{0}, f_{1}, f_{2} \}^\mathbb{N}$.
In particular if $k = 0$ then $\mapseqn{f}{i}{j}{1}{k}$ is defined to be the identity map.
For any $x \in I_q$, we define the \textit{orbit space} of $x$ in $E_q$ by
$$\Omega_{E_q}(x) = \{\fseq \in \{f_0, f_1, f_2 \}^\mathbb{N} : \mapseqn{f}{i}{j}{1}{k}(x) \in I_q \ \forall k \in \mathbb{N}_{\geq 0} \}.$$
We note here that implicit in the definition of the orbit space is the fact that $\mapseqn{f}{i}{j}{1}{k}(x) \in I_q$ only makes sense when $\mapseqn{f}{i}{j}{1}{l-1}(x)$ is in the domain of $f_{i_l}$ for all $1 \leq l \leq k$.
If this property fails for some $k \in \mathbb{N}$ then $\fseq$ is not an element of $\orbs{E_q}{x}$.
The half open intervals that form the domains of $f_0$ and $f_1$ prevent points other than $\frac{1}{q-1}$ from mapping to $\frac{1}{q-1}$, which restricts the possible sequences of maps in the orbit space as we will see in Subsection \ref{subsection: bijection lemma}.

\begin{lemma}
\label{lemma: unique orbit if avoids switch}
    Let $x \in I_q$, then $\fseq \in \orbs{E_q}{x}$ is unique if and only if $\mapseqn{f}{i}{j}{1}{k}(x) \in I_q \setminus J_q$ for all $k \in \mathbb{N}_{\geq 0}$.
\end{lemma}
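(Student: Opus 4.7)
The plan is to exploit the fact that the switch region $J_q = [\frac{1}{q}, \frac{1}{q(q-1)}]$ is precisely the subset of $I_q$ consisting of points lying in the domain of more than one map from $E_q = \{f_0, f_1, f_2\}$. Direct inspection of the three domains $[0, \frac{1}{q(q-1)})$, $(\frac{1}{q}, \frac{1}{q(q-1)}]$ and $[\frac{1}{q}, \frac{1}{q-1}]$ shows that their pairwise intersections all lie inside $J_q$, and hence each $z \in I_q \setminus J_q$ belongs to the domain of exactly one $f_i$. I would also record the standard fact that $\orbs{E_q}{z} \neq \emptyset$ for every $z \in I_q$: iteratively applying $f_0$ whenever the current iterate lies in $[0, \frac{1}{q})$ and $f_2$ whenever it lies in $[\frac{1}{q}, \frac{1}{q-1}]$ produces an orbit that remains in $I_q$ (using only that $q < 2$, equivalently $1 < \frac{1}{q-1}$).

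For the backward direction, assume $\fseq \in \orbs{E_q}{x}$ satisfies $\mapseqn{f}{i}{j}{1}{k}(x) \in I_q \setminus J_q$ for every $k \in \mathbb{N}_{\geq 0}$. Given any other element $(\tilde f_{i_j})_{j=1}^\infty \in \orbs{E_q}{x}$, let $n$ be the least index at which the two sequences disagree and set $y := \mapseqn{f}{i}{j}{1}{n-1}(x) = \mapseqn{\tilde f}{i}{j}{1}{n-1}(x)$. Both $f_{i_n}$ and $\tilde f_{i_n}$ have $y$ in their domain, so by the domain observation above $y \in J_q$, contradicting the hypothesis. Hence $\fseq$ is the unique element of $\orbs{E_q}{x}$.

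For the forward direction I would argue the contrapositive: if some iterate $y := \mapseqn{f}{i}{j}{1}{k}(x)$ lies in $J_q$, then $y$ is in the domain of at least two elements of $E_q$, so I can pick $g \in E_q \setminus \{f_{i_{k+1}}\}$ with $y \in \mathrm{dom}(g)$. Since every $f_i$ has codomain inside $I_q$, we have $g(y) \in I_q$, and by the greedy existence fact above we can extend this to some orbit $(g_j)_{j=1}^\infty \in \orbs{E_q}{g(y)}$. Concatenating gives a second orbit $(f_{i_1}, \ldots, f_{i_k}, g, g_1, g_2, \ldots) \in \orbs{E_q}{x}$ distinct from $\fseq$, so $\fseq$ is not unique.

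The only non-routine step is the non-emptiness of $\orbs{E_q}{g(y)}$ needed to complete the alternative orbit; this is handled by the greedy algorithm observation and is essentially the existence of a base-$q$ expansion. Everything else is bookkeeping with the domains of $f_0, f_1, f_2$ and the definition of $J_q$.
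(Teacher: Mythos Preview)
Your proof is correct and follows essentially the same route as the paper: both arguments rest on the observation that $J_q$ is precisely the set of points lying in the domain of more than one map of $E_q$, so that uniqueness of the orbit is equivalent to every iterate avoiding $J_q$. Your version is in fact slightly more careful, as you explicitly justify (via the greedy construction) that any alternative first step at a point of $J_q$ extends to a full orbit, a point the paper leaves implicit in its formulation of condition~(A).
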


\begin{proof}
    Let $x \in I_q$. 
    The sequence $\fseq \in \orbs{E_q}{x}$ is unique if and only if the following condition holds.
    \begin{equation}
    \label{condition: unique orbit}
        \tag{A}
        \parbox{\dimexpr\linewidth-4em}{%
    \strut
   For all $k \in \mathbb{N}_{\geq 0}$, there is a unique choice of $i_{k+1} \in \{0,1,2\}$ such that $\mapseqn{f}{i}{j}{1}{k}(x)$ is in the domain of $f_{i_{k+1}}$.
    \strut
  }
    \end{equation}
    By inspection of the domains of the maps in $E_q$, condition \eqref{condition: unique orbit} is equivalent to $\mapseqn{f}{i}{j}{1}{k}(x) \in I_q \setminus J_q$ for all $k \in \mathbb{N}_{\geq 0}$ and hence the lemma.
\end{proof}

\subsection{Bijection lemma}
\label{subsection: bijection lemma}

Let $x \in [0,1]$. We define the \textit{restricted ternary expansion} (RTE) of $x$ to be the unique sequence $\seqj{i}{1}{\infty} \in \{0,1,2\}^\mathbb{N}$ with $\pi_3(\seqj{i}{1}{\infty}) = x$ such that $\seqj{i}{k}{\infty} \neq 2^\infty$ for any $k \in \mathbb{N}$ unless $x = 1$ in which case $\seqj{i}{1}{\infty} = 2^\infty$.
By observing that $v_i(x) = \frac{x + i}{3}$ for $i \in \{0,1,2\}$, it is clear that given $x \in [0,1]$, $\seqj{i}{1}{\infty} \in \{0,1,2\}^\mathbb{N}$ satisfies \eqref{equation: Okamoto IFS x-coordinate}
for all $k \in \mathbb{N}_{\geq 0}$ if and only if $x = \pi_3(\seqj{i}{1}{\infty})$.
The following lemma allows us to transpose questions about $\slice{q}{y}$ into questions about $\orbs{E_q}{\frac{y}{q-1}}$.
In Lemma \ref{lemma: orbit space inequality}, we prove an inequality between the cardinality of $\orbs{E_q}{x}$ and the number of base $q$ expansions of $x$ for any $x \in I_q$.
Hence, this transposition to working with $\orbs{E_q}{\frac{y}{q-1}}$ allows us to use existing work on base $q$ expansion theory.
The details of this are contained in Subsection \ref{subsection: implications of the bijection lemma}.

\begin{lemma}
\label{lemma: slice orbit bijection}
    Let $q \in (1,2)$ and $y \in [0,1]$, then there is a bijection between $\slice{q}{y}$ and $\orbs{E_q}{\frac{y}{q-1}}$ given by $x \leftrightarrow \fseq$ where $\seqj{i}{1}{\infty}$ is the RTE of $x$.
\end{lemma}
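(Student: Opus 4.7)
The plan is to translate the statement into the language of Proposition \ref{proposition: Okamoto IFS sequences}, exploiting a conjugacy between the maps $f_i$ on $I_q$ and the inverse branches $u_i^{-1}$ on $[0,1]$. Specifically, the scaling $\phi(w) = w/(q-1)$ satisfies $\phi \circ u_i^{-1} = f_i \circ \phi$ for $i \in \{0,1,2\}$ by direct computation from the definitions of $u_i$ and $f_i$. The subtle point is that the domains of $f_0, f_1, f_2$ are half-open in a very specific way (excluding $1/(q(q-1))$ from $f_0$'s domain and $1/q$ from $f_1$'s domain), while the corresponding $u_i^{-1}$ have closed domains. This asymmetry encodes the RTE convention inside the definition of $E_q$.

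With this setup, I define $\Phi(x) = \fseq$ where $\seqj{i}{1}{\infty}$ is the RTE of $x$. Injectivity is immediate since the RTE uniquely determines $x$ via $\pi_3$. For well-definedness, namely $\Phi(\slice{q}{y}) \subseteq \orbs{E_q}{y/(q-1)}$, I would proceed as follows. By Proposition \ref{proposition: Okamoto IFS sequences}, $x \in \slice{q}{y}$ is equivalent to the existence of \emph{some} ternary expansion $\seqj{j}{1}{\infty}$ of $x$ with $y \in u_{j_1} \circ \cdots \circ u_{j_k}([0,1])$ for all $k$. Using the identities $u_1(0) = u_0(1) = 1/q$ and $u_2(0) = u_1(1) = 1 - 1/q$, one shows that in the case of a triadic rational $x$ (the only case where the ternary expansion is not unique), if the non-RTE expansion provides such a trajectory, then the RTE does as well. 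The conjugacy then translates this into a valid $f$-orbit; the key point is that the RTE can never produce the ``bad'' boundary combinations (reaching $1/q$ with next digit $0$, or reaching $1/(q(q-1))$ with next digit $1$, in the $f$-dynamics), since either of these would force a $2^\infty$ tail on $\seqj{i}{1}{\infty}$, which the definition of RTE forbids except when $x = 1$.

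For surjectivity, given $\fseq \in \orbs{E_q}{y/(q-1)}$, set $x = \pi_3(\seqj{i}{1}{\infty})$; I must verify that $\seqj{i}{1}{\infty}$ is the RTE of $x$, and that $x \in \slice{q}{y}$. For the first, the key observation is that $1/(q-1)$ is the unique fixed point of $f_2$, and by a short analysis of domains it lies in the image of neither $f_0$ nor $f_1$. Consequently any $2^\infty$-tail starting at position $k$ in $\seqj{i}{1}{\infty}$ forces $f_{i_k} \circ \cdots \circ f_{i_1}(y/(q-1)) = 1/(q-1)$, and then inductively the entire sequence to be $2^\infty$, i.e.\ $x = 1$. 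For the second, the conjugacy turns the $f$-orbit condition into condition \eqref{equation: Okamoto IFS y-coordinate}, and combined with $x = \pi_3(\seqj{i}{1}{\infty})$ automatically satisfying \eqref{equation: Okamoto IFS x-coordinate}, Proposition \ref{proposition: Okamoto IFS sequences} yields $(x,y) \in K_q$.

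The main obstacle is the reconciliation of the half-open domains of the $f_i$ with the closed domains of the $u_i^{-1}$, which requires careful treatment of the switch region boundary and a clear explanation of why the RTE's prohibition of $2^\infty$ tails mirrors exactly the convention baked into the definition of $E_q$. Once that correspondence is in place, both directions of the bijection follow cleanly from Proposition \ref{proposition: Okamoto IFS sequences}.
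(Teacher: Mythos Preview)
Your approach is essentially the paper's: exploit the conjugacy $\phi(w)=w/(q-1)$ between the inverse branches $u_i^{-1}$ on $[0,1]$ and the maps $f_i$ on $I_q$, and argue that the half-open domain convention in $E_q$ encodes exactly the RTE restriction. The paper makes this explicit by introducing an intermediate system $\tilde F_q$ (the $u_i^{-1}$ restricted to half-open domains matching $E_q$ under $\phi$) and factoring the bijection as $\slice{q}{y}\leftrightarrow\orbs{\tilde F_q}{y}\leftrightarrow\orbs{E_q}{y/(q-1)}$; you collapse these two steps, which is fine. You are in fact more careful than the paper on one point: when $x$ is a triadic rational, Proposition~\ref{proposition: Okamoto IFS sequences} only guarantees that \emph{some} ternary expansion of $x$ yields a valid $u^{-1}$-orbit of $y$, and one must check that the RTE does too; the paper tacitly uses this.

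There is, however, a concrete slip. You write that the bad boundary combinations are ``reaching $1/q$ with next digit $0$, or reaching $1/(q(q-1))$ with next digit $1$''. These are swapped: the domain of $f_0$ is $[0,1/(q(q-1)))$, so it is $f_0$ that is excluded at $1/(q(q-1))$; the domain of $f_1$ is $(1/q,1/(q(q-1))]$, so it is $f_1$ that is excluded at $1/q$. With the correct pairing your argument works: applying (formally) $f_0$ at $1/(q(q-1))$ or $f_1$ at $1/q$ would land at $1/(q-1)$, the fixed point of $f_2$ and a point not in the image of $f_0$ or $f_1$, thereby forcing a $2^\infty$ tail --- which the RTE forbids for $x\neq 1$. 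With your swapped values this reasoning fails (for instance $f_1(1/(q(q-1)))=0$, not $1/(q-1)$, and $f_0(1/q)=1$, which for $q\neq 2$ does not trap the orbit). Once the swap is corrected, your proof is complete and matches the paper's.
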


The proof of the lemma concerns the following two closely related sets of maps on $[0,1]$.
Define $F_q = \{u_0^{-1}, u_1^{-1}, u_2^{-1}\}$ by
\begin{align*}
u_0^{-1}&:[0,1/q] \rightarrow [0,1];   &u_0^{-1}(y) &= qy,\\
u_1^{-1}&:[1-1/q,1/q] \rightarrow [0,1];   &u_1^{-1}(y) &= \frac{1-qy}{2-q},\\
u_2^{-1}&:[1-1/q,1] \rightarrow [0,1];   &u_2^{-1}(y) &= qy+(1-q).
\end{align*}
The corresponding orbit space is
$$\orbs{F_q}{y} = \{(u_{i_j}^{-1})_{j=1}^\infty \in \{u_0^{-1}, u_1^{-1}, u_2^{-1}\}^\mathbb{N} : \mapseqinv{u}{i}{j}{1}{k}(y) \in [0,1] \mathrm{\ for \ all \ }k \in \mathbb{N}_{\geq 0}\},$$
and has the property that for a given $y \in [0,1]$, $\seqj{i}{1}{\infty}$ solves \eqref{equation: Okamoto IFS y-coordinate} for all $k \in \mathbb{N}_{\geq 0}$ if and only if
$(u_{i_j}^{-1})_{j=1}^\infty \in \orbs{F_q}{y}$.

Let $\tilde{F}_q = \{\tilde{u}_0^{-1}, \tilde{u}_1^{-1}, \tilde{u}_2^{-1} \}$ where we make the requirement that the maps $\tilde{u}_0^{-1}$ and $\tilde{u}_1^{-1}$ are defined on half-open intervals but are otherwise identical to the corresponding maps in $F_q$:
\begin{align*}
\tilde{u}_0^{-1}&:[0,1/q) \rightarrow [0,1);   &\tilde{u}_0^{-1}(y) &= qy,\\
\tilde{u}_1^{-1}&: (1-1/q,1/q] \rightarrow [0,1); &\tilde{u}_1^{-1}(y) &= \frac{1-qy}{2-q},\\
\tilde{u}_2^{-1}&: [1-1/q,1] \rightarrow [0,1]; &\tilde{u}_2^{-1}(y) &= qy + (1-q).
\end{align*}
The orbit space of $\tilde{F}_q$ is
$$\orbs{\tilde{F}_q}{y} = \{\utseq \in \{\tilde{u}_0^{-1}, \tilde{u}_1^{-1}, \tilde{u}_2^{-1}\}^\mathbb{N} : \mapseqinv{\tilde{u}}{i}{j}{1}{k}(y) \in [0,1] \mathrm{\ for \ all \ }k \in \mathbb{N}_{\geq 0}\}.$$
The purpose of making the restriction from $F_q$ to $\tilde{F}_q$ is to guarantee that if $\seqj{i}{1}{\infty} \in \{0,1,2\}^\mathbb{N}$ is not an RTE then $\mapseqinf{\tilde{u}^{-1}}{i}{j}{1}{\infty}$ is not an element of $\orbs{\tilde{F}_q}{y}$ for any $y \in [0,1]$.

\begin{proof}[Proof of Lemma \ref{lemma: slice orbit bijection}]
We first show there is a bijection $\slice{q}{y} \leftrightarrow \orbs{\tilde{F}_q}{y}$ given by $x \leftrightarrow \utseq$ where $\seqj{i}{1}{\infty}$ is the RTE of $x$, before proving that $\utseq \leftrightarrow \fseq$ is a bijection between $\orbs{\tilde{F}_q}{y}$ and $\orbs{E_q}{\frac{y}{q-1}}$ for all $y \in [0,1]$.

Let $q \in (1,2)$, let $x \in \slice{q}{y}$ for some $y \in [0,1]$ and let $\seqj{i}{1}{\infty}$ be the RTE of $x$. 
By Proposition \ref{proposition: Okamoto IFS sequences}, $x \in \slice{q}{y}$ if and only if $\mapseqinv{u}{i}{j}{1}{k}(y) \in [0,1]$ for all $k \in \mathbb{N}_{\geq 0}$, i.e. $\mapseqinvinf{u}{i}{j}{1}{\infty} \in \orbs{F_q}{y}$.

Suppose $y = 1$, if $\mapseqinvinf{u}{i}{j}{1}{\infty} \in \orbs{F_q}{1}$ then $\seqj{i}{1}{\infty} = 2^\infty$, so $\mapseqinvinf{\tilde{u}}{i}{j}{1}{\infty} \in \orbs{\tilde{F}_q}{1}$.
If $y \in [0,1)$ then since $\seqj{i}{1}{\infty}$ is an RTE, it avoids the tail $2^\infty$.
Therefore $\mapseqinv{u}{i}{j}{1}{k}(y) \neq 1$ for any $k \in \mathbb{N}_{\geq 0}$.
To see what the consequences of this are, we consider the inverse images of $1$.
There are three cases corresponding to the three distinct maps of $F_q$.
\begin{enumerate}
    \item $u^{-1}_0(1/q) = 1$.
    \item $u^{-1}_1(1-1/q) = 1$.
    \item $u^{-1}_2(1) = 1$.
\end{enumerate}
We can ignore the final case since we know $\mapseqinv{u}{i}{j}{1}{k}(y) \neq 1$ for any $k \in \mathbb{N}_{\geq 0}$.
The first case means that if $\mapseqinv{u}{i}{j}{1}{k}(y) = 1/q$ for some $k \in \mathbb{N}_{\geq 0}$ then $i_{k+1} \neq 0$.
If this were the case then $\seqj{i}{1}{\infty}$ would have tail $02^{\infty}$, contradicting the fact that it is an RTE.
Similarly, in the second case, if $\mapseqinv{u}{i}{j}{1}{k}(y) = 1-1/q$ for some $k \in \mathbb{N}_{\geq 0}$ then $i_{k+1} \neq 1$ to avoid $\seqj{i}{1}{\infty}$ having the tail $12^\infty$.
Therefore, this restriction is equivalent to removing $1/q$ from the domain of $u^{-1}_0$ and removing $1-1/q$ from the domain of $u^{-1}_1$.
One can observe that this is exactly the restriction imposed on $F_q$ to transform it into $\tilde{F}_q$.
Hence if $y \in [0,1)$ then $\mapseqinvinf{\tilde{u}}{i}{j}{1}{\infty} \in \orbs{\tilde{F}_q}{y}$.
We conclude that $\mapseqinvinf{\tilde{u}}{i}{j}{1}{\infty} \in \orbs{\tilde{F}_q}{y}$ for all $y \in [0,1]$ and that we have a map $\slice{q}{y} \rightarrow \orbs{\tilde{F}_q}{y}$ given by $x \mapsto \utseq$ where $\seqj{i}{1}{\infty}$ is the RTE of $x$.

Let $\utseq \in \orbs{\tilde{F}_q}{y}$ for some point $y \in [0,1]$.
Because $1$ is not in the image of $\tilde{u}^{-1}_0$ or $\tilde{u}^{-1}_1$, $\utseq$ has the property that $\mapseqinv{\tilde{u}}{i}{j}{1}{k}(y) \neq 1$ for any $k \in \mathbb{N}_{\geq 0}$ unless $y=1$.
If $y = 1$ then $\seqj{i}{1}{\infty} = 2^\infty$ and if $y \in [0,1)$, there is no $k \in \mathbb{N}$ such that $\seqj{i}{k}{\infty} = 2^\infty$.
In either case $\seqj{i}{1}{\infty}$ is an RTE.
If $\utseq \in \orbs{\tilde{F}_q}{y}$ then $\useq \in \orbs{F_q}{y}$ since the domain of $\tilde{u}^{-1}_i$ is contained in the domain of $u^{-1}_i$ for $i \in \{0,1,2\}$.
Then, by Proposition \ref{proposition: Okamoto IFS sequences}, $\seqj{i}{1}{\infty}$ is the RTE of some $x \in \slice{q}{y}$.
This proves that there is a bijection $\slice{q}{y} \leftrightarrow \orbs{\tilde{F}_q}{y}$ given by $x \leftrightarrow \utseq$ where $\seqj{i}{1}{\infty}$ is the RTE of $x$.

It remains to show that the map $\utseq \leftrightarrow \fseq$ is a bijection between $\orbs{\tilde{F}_q}{y}$ and $\orbs{E_q}{\frac{y}{q-1}}$ for all $y \in [0,1]$.
For each $i \in \{0,1,2\}$, and for any $y \in [0,1]$, it can be checked that
$$\frac{1}{q-1}\tilde{u}^{-1}_i(y) = f_i\left(\frac{y}{q-1}\right).$$
Moreover, for all $i \in \{0,1,2\}$, if the domain of $f_i$ is $D_i$ and the domain of $\tilde{u}^{-1}_i$ is $C_i$ then $D_i = \frac{1}{q-1}C_i$.
Hence, for all $k \in \mathbb{N}_{\geq 0}$ and any $y \in [0,1]$,
$$ \mapseqinv{\tilde{u}}{i}{j}{1}{k}(y) \in [0,1] \iff \frac{1}{q-1}\left[\mapseqinv{\tilde{u}}{i}{j}{1}{k}(y)\right] \in I_q \iff \mapseqn{f}{i}{j}{1}{k}\left(\frac{y}{q-1}\right) \in I_q.$$
So $\utseq \in \orbs{\tilde{F}_q}{y} \iff \fseq \in \orbs{E_q}{y}$.
Therefore, given $q \in (1,2)$ and any $y \in [0,1]$, the map $\slice{q}{y} \leftrightarrow \orbs{E_q}{\frac{y}{q-1}}$ given by $x \leftrightarrow \fseq$ where $\seqj{i}{1}{\infty}$ is the RTE of $x$, is a bijection.
\end{proof}

\subsection{Base $q$ expansions}
\label{subsection: base q expansions}
In this subsection we introduce base $q$ expansions \cite{Parry1960OnTO} and state Theorem \ref{theorem: basic results on base-q expansions} which, alongside Lemma \ref{lemma: slice orbit bijection}, puts $\slice{q}{y}$ in the context of base $q$ expansions.

Let $q \in (1,2)$ and $x \in I_q$ then a base $q$ expansion of $x$ is a sequence $\seqj{i}{1}{\infty} \in \{0,1\}^\mathbb{N}$ such that $x = \sum_{j=1}^\infty i_j q^{-j}$, that is, such that $x = \pi_q(\seqj{i}{1}{\infty})$.
We define the \textit{base $q$ dynamics} (Figure \ref{figure: Base-q dynamics}) to be the set of maps $ \hat{E}_q = \{\hat{f}_0 , \hat{f}_1 \}$ where \begin{align*}
\bq{0}&:\left[0,\frac{1}{q(q-1)}\right] \rightarrow I_q; &\bq{0}(x) &= qx, \\
\bq{1}&:\left[\frac{1}{q}, \frac{1}{q-1}\right] \rightarrow I_q; &\bq{1}(x) &= qx - 1.
\end{align*}

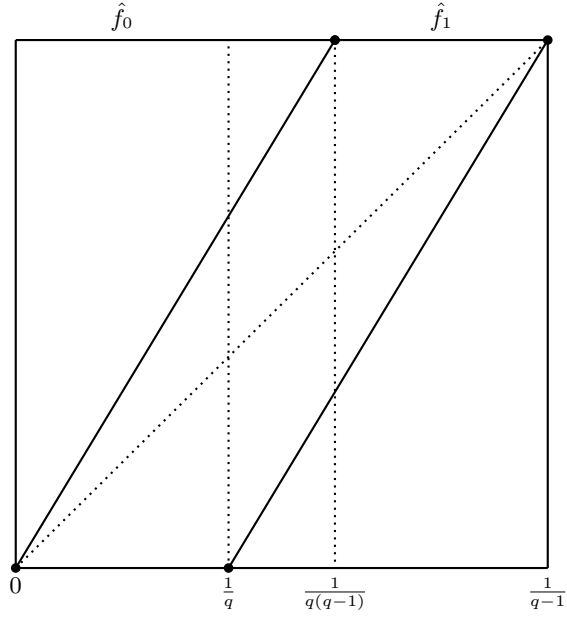
\begin{figure}[t]
    \centering
            \begin{tikzpicture} [scale = 7]
    \begin{scope}[thick]
        \draw (0,0) -- (1,0);
        \draw (0,0) -- (0,1);
        \draw (1,0) -- (1,1);
        \draw (0,1) -- (1,1);
        
        \draw [dotted] (0,0) -- (1,1);
        
        \draw (0,0) -- (0.6,1);
        \draw (0.4,0) -- (1,1);
        % \draw (0.6,0) -- (0.4,1);
        
        \draw [dotted] (0.6,1) -- (0.6,0);
        \draw [dotted] (0.4,0) -- (0.4,1);

    \node[above] at (0.2,1){\small $\hat{f}_0$};
    % \node[above] at (0.5,1){\small $f_1$};
    \node[above] at (0.8,1){\small $\hat{f}_1$};

    \node[below] at (0,0){\small $0$};
    \node[below] at (0.4,0){\small $\frac{1}{q}$};
    \node[below] at (0.6,0){\small $\frac{1}{q(q-1)}$};
    \node[below] at (1,0){\small $\frac{1}{q-1}$};

    % \filldraw[white] (0.4,1) circle (0.2pt);
    \filldraw[black] (0.6,1) circle (0.2pt);
    % \draw (0.4,1) circle (0.2pt);
    \draw (0.6,1) circle (0.2pt);
    \filldraw[black] (0.4,0) circle (0.2pt);
    % \filldraw[black] (0.6,0) circle (0.2pt);
    \filldraw[black] (0,0) circle (0.2pt);
    \filldraw[black] (1,1) circle (0.2pt);
    \end{scope}
    
        \end{tikzpicture}
        \caption{The base $q$ dynamics $\hat{E}_q = \{\hat{f}_0, \hat{f}_1\}$ on $I_q$.}
        \label{figure: Base-q dynamics}
\end{figure}
For any $x \in I_q$, the orbit space of $x$ in $\hat{E}_q$ is given by
$$\orbs{\hat{E}_q}{x} = \{\bqseq{1} \in \{\bq{0}, \bq{1}\}^\mathbb{N} : \mapseqn{\hat{f}}{i}{j}{1}{k}(x) \in I_q \mathrm{\ for \ all \ }k \in \mathbb{N}_{\geq 0}\}.$$
For any $x \in I_q$, it is a simple exercise to show that $\bqseq{1} \in \orbs{\hat{E}_q}{x}$ if and only if $\pi_q(\seqj{i}{1}{\infty}) = x$ (c.f. \cite{baker2012generalised}).
We say that $x \in I_q$ has a unique base $q$ expansion if $\orbs{\hat{E}_q}{x}$ has only one element.

The following table indicates some notation for $\hat{E}_q$ in line with Sidorov \cite{sidorov2009expansions} on the left hand side. On the right hand side is the notation for the corresponding sets in $E_q$, in the context of the self-affine sets $K_q$.
We emphasise here that the notation $\mathcal{V}_q^{(k)}$ used here is \textit{not} in line with \cite{sidorov2009expansions}.

\begin{table}[h]
\centering
\noindent\makebox[\textwidth]{
\begin{tabular}{ | l | l |} 
\hline
$\hat{E}_q$ & 
$E_q$ \\
 \hline
 $\mathcal{B}_k = \{ q \in (1,2) : \exists \ x \in I_q \mathrm{\ such \ that \ } |\orbs{\hat{E}_q}{x}| = k\}$ & 
 $\mathcal{C}_k = \{ q \in (1,2) : \exists \ x \in I_q \mathrm{\ such \ that \ } |\orbs{E_q}{x}| = k\}$ \\ 
 $\mathcal{B}_{\aleph_0} = \{ q \in (1,2) : \exists \ x \in I_q \mathrm{\ such \ that \ } |\orbs{\hat{E}_q}{x}| = \aleph_0 \}$ & 
 $\mathcal{C}_{\aleph_0} = \{ q \in (1,2) : \exists \ x \in I_q \mathrm{\ such \ that \ } |\orbs{E_q}{x}| = \aleph_0 \}$ \\ 
 $\mathcal{U}_q = \{x \in I_q : |\orbs{\hat{E}_q}{x}| = 1\}$ & 
 $\mathcal{V}_q = \{ x  \in I_q : |\orbs{E_q}{x}| = 1\}$ \\ 
 $\mathcal{U}_q^{(k)} = \{x \in I_q : |\orbs{\hat{E}_q}{x}| = k\}$ &
 $\mathcal{V}_q^{(k)} = \{ x  \in I_q : |\orbs{E_q}{x}| = k\}$ \\
 $\mathcal{U}_q^{(\aleph_0)} = \{x \in I_q : |\orbs{\hat{E}_q}{x}| = \aleph_0 \}$ &
$\mathcal{V}_q^{(\aleph_0)} = \{ x  \in I_q : |\orbs{E_q}{x}| = \aleph_0 \}$ \\
$\mathcal{U} = \{ q \in (1,2) : |\orbs{\hat{E}_q}{1}| = 1 \}$ &
$\mathcal{V} = \{ q \in (1,2) : |\orbs{E_q}{1}| = 1\}$ \\
\hline
\end{tabular}
}
% \end{center}
\end{table}

The following result which combines
\cite[Theorem 3]{ErdosJooKomornikCharacter} and
\cite[Theorem 1]{SidorovAlmostEvery2003} 
alongside Lemma \ref{lemma: slice orbit bijection} has immediate implications for the cardinality of $\slice{q}{y}$ as shown in Subsection \ref{subsection: implications of the bijection lemma}.

\begin{theorem}
\label{theorem: basic results on base-q expansions}
    \begin{enumerate}
        \item If $q \in (1,G)$ then $\orbs{\hat{E}_q}{x}$ is uncountably infinite for all $x \in \interior{I_q}$.
        \item If $q = G$ then there is a countably infinite set of points $x \in \interior{I_q}$ such that $\orbs{\hat{E}_q}{x}$ is countably infinite and for all other $x \in \interior{I_q}$, $\orbs{\hat{E}_q}{x}$ is uncountably infinite.
        \item If $q \in (G,2)$ then the set of points $x \in \interior{I_q}$ such that $\orbs{\hat{E}_q}{x}$ has a unique element is at least countably infinite. The set of points $x \in \interior{I_q}$ such that $\orbs{\hat{E}_q}{x}$ is uncountably infinite has full Lebesgue measure.
    \end{enumerate}
\end{theorem}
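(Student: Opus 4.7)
The plan is to prove each of the three parts separately by leveraging the correspondence that $\bqseq{1} \in \orbs{\hat{E}_q}{x}$ if and only if $\pi_q(\seqj{i}{1}{\infty}) = x$. The cardinality of $\orbs{\hat{E}_q}{x}$ is governed by how often the (multi-valued) forward orbit of $x$ under $\hat{E}_q$ enters the switch region $J_q = [1/q, 1/(q(q-1))]$, since $J_q$ is precisely the set where both $\hat{f}_0$ and $\hat{f}_1$ are defined and therefore the only place where the orbit can branch; outside $J_q$ there is a unique admissible map.

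For Part 1, I would exploit that for $q \in (1, G)$ the images $\hat{f}_0(J_q) = [1, 1/(q-1)]$ and $\hat{f}_1(J_q) = [0, (2-q)/(q-1)]$ cover $I_q$ with strict overlap, and that $q = G$ is exactly the threshold at which this overlap collapses (by the defining identity $G^2 = G + 1$). From this I would argue that given any $x \in \interior{I_q}$ and any finite prefix of an element of $\orbs{\hat{E}_q}{x}$, one can always extend the prefix so that the orbit re-enters $J_q$. Iterating produces a complete binary tree of distinct extensions inside $\orbs{\hat{E}_q}{x}$, so this set is uncountable.

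For Part 2, I would use that at $q = G$ the points $\{1/G, 1\}$ form a period-two cycle on the boundary of $J_G$ (namely $\hat{f}_0(1/G) = 1$ and $\hat{f}_1(1) = 1/G$). The covering argument of Part 1 just fails for those $x$ whose orbits are eventually trapped on this cycle, and the set of such $x$ is the countable union of preimages of $\{1/G, 1\}$ under finite compositions of maps in $\hat{E}_G$. Each such $x$ admits countably infinitely many expansions, corresponding to the step at which the orbit chooses to leave the cycle; all other interior points still admit the full binary-tree construction.

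For Part 3, the two statements are proved by different methods. To exhibit at least countably many uniquely expanding points I would explicitly construct sequences $\seqj{i}{1}{\infty}$ whose orbits $\mapseqn{\hat{f}}{i}{j}{1}{k}(x)$ avoid $\interior{J_q}$ for all $k$, using that $J_q$ has shrunk enough for $q > G$ to allow concatenations of finitely many admissible blocks and thus yield a countably infinite family of such $x$. The full Lebesgue measure statement I would prove via ergodic theory: the greedy base $q$ transformation admits an absolutely continuous invariant probability measure which is ergodic and assigns positive mass to $J_q$, whence Birkhoff's ergodic theorem implies that Lebesgue-almost every $x$ has orbit visiting $J_q$ infinitely often and so lies in $\mathcal{U}_q^{(2^{\aleph_0})}$. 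I expect the main obstacle to be this last step, which requires identifying the correct invariant measure and verifying that the ergodic-theoretic notion of a single greedy orbit visiting $J_q$ genuinely translates into uncountable branching of the multi-valued orbit in $\orbs{\hat{E}_q}{x}$.
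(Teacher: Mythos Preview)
The paper does not prove this theorem: it is quoted as a known result, attributed in the text preceding the statement to \cite[Theorem~3]{ErdosJooKomornikCharacter} and \cite[Theorem~1]{SidorovAlmostEvery2003}. There is thus no proof in the paper to compare against; your outline is a reconstruction of the arguments in those cited references, and the overall strategy you describe (branching in $J_q$, the boundary cycle at $q=G$, explicit unique expansions plus a measure-theoretic argument for $q>G$) matches the standard approach.

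Two technical remarks on your sketch. First, your covering claim in Part~1 is misstated: $\hat f_0(J_q)=[1,\tfrac{1}{q-1}]$ and $\hat f_1(J_q)=[0,\tfrac{2-q}{q-1}]$ cover $I_q$ only for $q\le 3/2$, not for all $q<G$. The correct mechanism is that $q<G$ is equivalent to $\tfrac{1}{q(q-1)}>1$, and this is what forces every orbit from $\interior{I_q}$ to enter $J_q$ (if $x<1/q$ then the first $n$ with $q^nx\ge 1/q$ satisfies $q^nx<1<\tfrac{1}{q(q-1)}$, so $q^nx\in J_q$; symmetrically on the other side). With this correction your binary-tree argument goes through, and at $q=G$ the right endpoint of $J_q$ becomes exactly $1$, which is why the period-two cycle $\{1/G,1\}$ appears on the boundary as you describe.

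Second, the obstacle you flag in Part~3 is real: infinitely many visits of the \emph{greedy} orbit to $J_q$ do not by themselves yield $2^{\aleph_0}$ expansions, because after a non-greedy choice the new point need not inherit that property. Sidorov's proof addresses this; if you carry out the ergodic route you will need an additional argument (for instance, that the a.e.\ statement is preserved under the countably many affine maps $\hat f_i^{-1}$, so that a.e.\ $x$ has every finite expansion-prefix extendable to a further branch).
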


\subsection{Implications of the bijection lemma}
\label{subsection: implications of the bijection lemma}

In this subsection we use existing results from the theory of base $q$ expansions to prove some elementary results on $\slice{q}{y}$.

If $\bqseq{1} \in \orbs{\hat{E}_q}{x}$ for some $x \in I_q$ then define $D : \{0,1\}^\mathbb{N} \rightarrow \{0,1,2\}^\mathbb{N}$ by $D(\seqj{i}{1}{\infty}) = \seqj{2i}{1}{\infty}$ unless $\seqj{i}{1}{\infty} = (01^\infty)$ in which case $D(01^\infty) = (10^\infty)$.
Then it can be checked that $\bqseq{1} \in \orbs{\hat{E}_q}{x}$ implies that $(f_{d(i_j)})_{j=1}^\infty \in \orbs{E_q}{x}$ where $D(\seqj{i}{1}{\infty}) = (d(i_j))_{j=1}^\infty$.

Lemma \ref{lemma: unique orbit if avoids switch} and the existence of the map $D$ with the aforementioned property prove the following lemma.

\begin{lemma}
\label{lemma: orbit space inequality}
Let $x \in I_q$, then $|\orbs{E_q}{x}| \geq |\orbs{\hat{E}_q}{x}|$ and $|\orbs{\hat{E}_q}{x}| = 1$ if and only if $|\orbs{E_q}{x}| = 1$.
\end{lemma}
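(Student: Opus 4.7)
My plan is to prove both halves of Lemma \ref{lemma: orbit space inequality} by combining the injection $D$ introduced immediately before the statement with Lemma \ref{lemma: unique orbit if avoids switch}, together with a short base~$q$ analogue of the latter which I would establish inline.

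For the inequality $|\orbs{E_q}{x}| \geq |\orbs{\hat{E}_q}{x}|$, I would verify that $D$ gives an injection from $\orbs{\hat{E}_q}{x}$ into $\orbs{E_q}{x}$. Injectivity is immediate from the defining formula. To see that $D$ lands in $\orbs{E_q}{x}$, I would compare defining formulas of the two dynamics: on their common domains, $f_0$ and $\hat{f}_0$ both equal $y \mapsto qy$, while $f_2$ and $\hat{f}_1$ both equal $y \mapsto qy-1$. Hence in the generic case $(d(i_j))=(2i_j)$ the image sequence yields an Okamoto orbit with the same intermediate points as the base $q$ orbit, and these points remain in $I_q$ by hypothesis. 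The only point of $I_q$ belonging to the domain of $\hat{f}_0$ but not of $f_0$ is $1/(q(q-1))$; if the base $q$ orbit reaches this point followed by $\hat{f}_0$, the corresponding tail of the expansion is forced to be $01^\infty$. The exceptional rule $D(01^\infty) = 10^\infty$ is tailored to exactly this situation, and I would check directly that $f_1$ sends $1/(q(q-1))$ to $0$ and that $f_0^\infty$ then fixes $0$, producing a valid element of $\orbs{E_q}{1/(q(q-1))}$.

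For the equivalence $|\orbs{\hat{E}_q}{x}| = 1 \iff |\orbs{E_q}{x}| = 1$, the backward implication follows immediately from the inequality just proved together with the fact (Parry's theorem, cited in the paper) that $\orbs{\hat{E}_q}{x}$ is non-empty for every $x \in I_q$. For the forward direction, I would first prove the base $q$ analogue of Lemma \ref{lemma: unique orbit if avoids switch}: if $|\orbs{\hat{E}_q}{x}| = 1$, then the unique base $q$ orbit of $x$ lies entirely in $I_q \setminus J_q$, since any visit to the switch region $J_q$ would allow branching into either $\hat{f}_0$ or $\hat{f}_1$, both of which extend to full base $q$ orbits by Parry's theorem, contradicting uniqueness. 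On $I_q \setminus J_q$ the only Okamoto maps defined are $f_0$ and $f_2$, and these agree pointwise with $\hat{f}_0$ and $\hat{f}_1$; hence $D((i_j))$ produces an Okamoto orbit of $x$ whose intermediate points also lie in $I_q \setminus J_q$. Lemma \ref{lemma: unique orbit if avoids switch} then yields $|\orbs{E_q}{x}| = 1$.

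The main technical care will be boundary bookkeeping at the endpoints of $J_q$ and at the singular point $1/(q(q-1))$, where the half-open domain conventions of the Okamoto dynamics depart from the closed-domain conventions of the base $q$ dynamics; it is precisely this discrepancy which forces the single-sequence modification in the definition of $D$ and which drives the entire geometric content of the lemma.
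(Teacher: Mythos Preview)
Your proposal is correct and follows essentially the same route as the paper, which proves the lemma in a single sentence by citing Lemma~\ref{lemma: unique orbit if avoids switch} together with the map $D$; you have simply unpacked that sentence. Your observation that the exceptional rule $D(01^\infty)=10^\infty$ must really be applied to \emph{tails} (whenever the $\hat{E}_q$-orbit reaches $1/(q(q-1))$ and then uses $\hat f_0$) is a genuine detail the paper glosses over, and your verification that $f_1(1/(q(q-1)))=0$ is exactly what is needed there; likewise, your inline $\hat{E}_q$-analogue of Lemma~\ref{lemma: unique orbit if avoids switch} is the natural missing ingredient for the forward direction of the equivalence.
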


An immediate consequence of Lemma \ref{lemma: orbit space inequality} is that $\mathcal{U} = \mathcal{V}$.
Although the set $\mathcal{U}$ has generated significant interest within base $q$ expansion theory (see e.g. \cite{ErdosJooKomornikCharacter, KomornikLoretiUniqueDevelopments}), we do not consider it here until the final result of the paper (Theorem \ref{theorem: U = C_2}).

In conjunction with Lemmas \ref{lemma: slice orbit bijection} and \ref{lemma: orbit space inequality}, Theorem \ref{theorem: basic results on base-q expansions} has the following corollary on slices.

\begin{corollary}
\label{corollary: implications of the bijection lemma}
\begin{enumerate}
    \item If $q \in (1,G)$ then for any $y \in (0,1)$, $\slice{q}{y}$ is uncountably infinite.
    In fact, a consequence of \cite[Theorem 1.4]{SimonBaker2012} is that $\slice{q}{y}$ has positive Hausdorff dimension under these hypotheses.
    \item If $q=G$ then there is a countably infinite set of points $y \in (0,1)$ such that $\slice{q}{y}$ is countably infinite and for all other $y \in (0,1)$ $\slice{q}{y}$ is uncountably infinite.
    \item Let $q \in (G,2)$. 
    The set of points $y \in (0,1)$ such that $\slice{q}{y}$ has a unique element is at least countably infinite. 
    The set of points $y \in (0,1)$ such that $\slice{q}{y}$ is uncountably infinite has full Lebesgue measure. 
\end{enumerate}
\end{corollary}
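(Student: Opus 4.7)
The plan is to translate each part of Theorem \ref{theorem: basic results on base-q expansions} through the chain of identifications
\[
\slice{q}{y} \ \longleftrightarrow \ \orbs{E_q}{y/(q-1)} \ \longleftrightarrow \ \orbs{\hat{E}_q}{y/(q-1)}
\]
supplied by Lemmas \ref{lemma: slice orbit bijection} and \ref{lemma: orbit space inequality}. First, I would observe that the map $y \mapsto y/(q-1)$ is an affine bijection from $(0,1)$ onto $\interior{I_q}$ which preserves both Lebesgue negligibility and full Lebesgue measure; since $y/(q-1) \in \interior{I_q}$ for all $y \in (0,1)$, Theorem \ref{theorem: basic results on base-q expansions} is always applicable.

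\textbf{Part 1.} For $q \in (1,G)$, I would apply Theorem \ref{theorem: basic results on base-q expansions}(1) to obtain $|\orbs{\hat{E}_q}{y/(q-1)}| = 2^{\aleph_0}$ for every $y \in (0,1)$; Lemma \ref{lemma: orbit space inequality} then forces $|\orbs{E_q}{y/(q-1)}| \geq 2^{\aleph_0}$, and Lemma \ref{lemma: slice orbit bijection} delivers $|\slice{q}{y}| = 2^{\aleph_0}$. The positive Hausdorff dimension refinement will be quoted directly from \cite{SimonBaker2012}.

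\textbf{Part 3.} For $q \in (G,2)$, the equivalence $|\orbs{E_q}{x}| = 1 \iff |\orbs{\hat{E}_q}{x}| = 1$ in Lemma \ref{lemma: orbit space inequality} identifies $\{y \in (0,1) : |\slice{q}{y}| = 1\}$ with the preimage of $\mathcal{U}_q \cap \interior{I_q}$ under the affine change of variables; Theorem \ref{theorem: basic results on base-q expansions}(3) guarantees this preimage is at least countably infinite. The full-measure claim will follow in parallel: Theorem \ref{theorem: basic results on base-q expansions}(3) gives full Lebesgue measure for $\{x \in \interior{I_q} : |\orbs{\hat{E}_q}{x}| = 2^{\aleph_0}\}$, Lemma \ref{lemma: orbit space inequality} preserves uncountability when passing to $\orbs{E_q}{x}$, and Lemma \ref{lemma: slice orbit bijection} together with the change of variables transfers the full-measure conclusion to $\{y \in (0,1) : |\slice{q}{y}| = 2^{\aleph_0}\}$.

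\textbf{Part 2.} This is the most delicate case and where I expect the main obstacle. Theorem \ref{theorem: basic results on base-q expansions}(2) decomposes $\interior{I_G}$ into a countably infinite set $A$ with $|\orbs{\hat{E}_G}{x}| = \aleph_0$ and its cocountable complement $B$ with $|\orbs{\hat{E}_G}{x}| = 2^{\aleph_0}$. Lemma \ref{lemma: orbit space inequality} immediately yields $|\orbs{E_G}{x}| = 2^{\aleph_0}$ for $x \in B$, so the corresponding slices are uncountable, and gives only $|\orbs{E_G}{x}| \geq \aleph_0$ for $x \in A$. The hard part will be to rule out $|\orbs{E_G}{x}| = 2^{\aleph_0}$ for $x \in A$, i.e.\ to argue that the additional branchings into the map $f_1$ available inside the switch region $J_G$ cannot promote a countable lower bound to an uncountable one. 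My plan is to exploit the concrete description of $A$ at $q = G$ as those points whose greedy $G$-expansions are eventually periodic of a specific form: a careful count of how often the $\hat{E}_G$-orbit of $x \in A$ visits $J_G$ along each branch should then bound the number of $E_G$-orbits by a countable union of countable sets, yielding $|\orbs{E_G}{x}| = \aleph_0$ exactly.
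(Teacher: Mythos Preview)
Your treatment of Parts 1 and 3 is correct and coincides with the paper's derivation, which simply invokes Lemmas~\ref{lemma: slice orbit bijection} and~\ref{lemma: orbit space inequality} together with Theorem~\ref{theorem: basic results on base-q expansions}.

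For Part 2 you are right that Lemma~\ref{lemma: orbit space inequality} alone does not close the argument, but you have set yourself a harder task than the statement requires. You propose to show $|\orbs{E_G}{x}|=\aleph_0$ for \emph{every} $x$ in the countable set $A=\{x\in\interior{I_G}:|\orbs{\hat E_G}{x}|=\aleph_0\}$. The corollary, however, only asserts that $C:=\{y\in(0,1):|\slice{G}{y}|=\aleph_0\}$ is countably infinite and that its complement consists of points with uncountable slice. Since Theorem~\ref{theorem: basic results on base-q expansions}(2) and Lemma~\ref{lemma: orbit space inequality} give $|\slice{G}{y}|\ge\aleph_0$ for every $y\in(0,1)$, the ``all other $y$'' clause is automatic once $C$ is specified, and the containment $C\subset (G-1)\cdot A$ already forces $|C|\le\aleph_0$. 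Thus only $|C|\ge\aleph_0$ remains, and this is elementary: at $q=G$ one has $J_G=[G-1,1]$, and direct computation at the endpoints (namely $f_0(G-1)=1$, $f_2(G-1)=0$, $f_1(1)=0$, $f_2(1)=G-1$, with $f_1$ undefined at $G-1$ and $f_0$ undefined at $1$) shows that $G-1$ is a null-infinite point, so $|\orbs{E_G}{G-1}|=\aleph_0$. The points $G^{-n}$ for $n\ge 2$ lie in $[0,G-1)$, where only $f_0$ applies, and $f_0^{\,n-1}(G^{-n})=G-1$; hence $|\orbs{E_G}{G^{-n}}|=\aleph_0$ for all $n\ge 1$, giving infinitely many elements of $C$. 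Your proposed orbit-by-orbit analysis of how $f_1$ interacts with visits to $J_G$ is therefore unnecessary for the corollary as stated, though it would be needed if you wanted the sharper identification $C=(G-1)\cdot A$.
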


\section{Proof of Theorem \ref{theorem: interval with slice three}}
\label{section: interval with slice three}

Using Lemma \ref{lemma: slice orbit bijection}, Theorem \ref{theorem: interval with slice three} is equivalent to the claim that if $q \in (q_9,2)$ then there is some $x \in I_q$ such that $|\orbs{E_q}{x}| = 3$. That is, with the above notation, Theorem \ref{theorem: interval with slice three} is equivalent to Theorem \ref{theorem: interval in C_3} below.

\begin{theorem}
\label{theorem: interval in C_3}
    $(q_9,2) \subset \mathcal{C}_3$.
\end{theorem}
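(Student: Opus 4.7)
The plan is to use Lemma \ref{lemma: slice orbit bijection} and Lemma \ref{lemma: orbit space inequality} to rewrite the statement in terms of unique base-$q$ expansions, then to exhibit, for every $q\in(q_9,2)$, an explicit $x\in J_q^\circ$ whose three images under $f_0,f_1,f_2$ all lie in $\mathcal{U}_q$.

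First I would observe that for $x\in J_q^\circ$ all three maps of $E_q$ are applicable at $x$, so by concatenating sequences one obtains $|\Omega_{E_q}(x)|=\sum_{i=0}^{2}|\Omega_{E_q}(f_i(x))|$. Each summand is at least one, so $|\Omega_{E_q}(x)|=3$ precisely when each $f_i(x)\in\mathcal{V}_q$; by Lemma \ref{lemma: orbit space inequality} this is the same as $f_i(x)\in\mathcal{U}_q$. Parameterising $J_q^\circ$ by $t=qx-1\in(0,(2-q)/(q-1))$ gives $f_2(x)=t$, $f_0(x)=1+t$, and $f_1(x)=\tfrac{1}{q-1}-\tfrac{t}{2-q}$. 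The reflection $\phi(z)=\tfrac{1}{q-1}-z$ realises the digit-flip $0\leftrightarrow 1$ on base-$q$ expansions and hence preserves $\mathcal{U}_q$; so $f_1(x)\in\mathcal{U}_q$ iff $t/(2-q)\in\mathcal{U}_q$. The reduced problem is therefore: for each $q\in(q_9,2)$, find $t\in(0,(2-q)/(q-1))$ such that $t$, $1+t$, and $t/(2-q)$ all have unique base-$q$ expansions.

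I would attack this by taking $t=(2-q)s$ for a candidate $s\in\mathcal{U}_q$, starting with the period-two point $s_0=q/(q^2-1)$, whose $\hat{E}_q$-orbit cycles between $s_0$ and $1/(q^2-1)$, both of which sit strictly outside $J_q$ for every $q>G$, so that $s_0\in\mathcal{U}_q$. With this choice the condition on $f_1(x)$ is automatic, and the other two conditions become statements about the orbits of $t=(2-q)s_0$ and $1+t$ under $\hat{E}_q$ avoiding $J_q$. The orbit of $t$ applies $\hat{f}_0$ repeatedly until it first exceeds $1/q$; for it to avoid $J_q$ at this crossing one needs the crossing value to exceed $1/(q(q-1))$, which simplifies to $q^{n+2}(2-q)>q+1$ for the first-crossing index $n$. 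The orbit of $1+t$ is forced to apply $\hat{f}_1$ first, producing the decreasing sequence $(a_n+q^n t)_{n\geq 0}$ where $a_n=q^n-q^{n-1}-\cdots-1$, and one needs this sequence to skip over $J_q$ on its way down to the cycle $\{s_0,1/(q^2-1)\}$. Both inequalities hold automatically at $q=q_k$ via the identity $q_k^k(2-q_k)=1$ and persist in a neighbourhood of each $q_k$ by continuity.

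The main obstacle is covering the parameters $q$ for which the first-crossing value of either orbit lands inside the narrow ``bad window'' $J_q=[1/q,1/(q(q-1))]$, in which case $(2-q)s_0\notin\mathcal{U}_q$ or $1+(2-q)s_0\notin\mathcal{U}_q$. For these exceptional $q$ I would substitute $s_0$ with another admissible candidate: either a $q$-shifted point $q^{-m}s_0$ (which remains in $\mathcal{U}_q$ since prepending zeros to a unique expansion preserves uniqueness) or a higher-period cycle point such as $(q^2+1)/(q^3-1)$, chosen so that the first-crossing indices move and the crossing values relocate outside $J_q$. The content of the threshold $q_9$ is then a finite case analysis showing that for every $q\in(q_9,2)$ at least one member of a bounded family of such candidates succeeds; $q_9$ is the smallest $k$-Bonacci number at which this finite family is rich enough to cover the entire parameter interval.
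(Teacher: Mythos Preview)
Your reduction is correct and matches the paper's: writing $qy\in J_q^\circ$ with $t=qy-1$, one needs $t$, $1+t$ and $t/(2-q)$ to lie in $\mathcal{U}_q$ simultaneously, and the reflection symmetry handles the $f_1$-branch exactly as you say. The gap is in the final paragraph. You have not executed the ``finite case analysis'', and there is no argument that the bad windows of your bounded family of candidate points $s$ do not cover a whole subinterval of $(q_9,2)$. As $q$ varies, the first-crossing index of the orbit of $(2-q)s$ jumps, and near each jump the crossing value sweeps through $J_q$; the same happens for the orbit of $1+(2-q)s$. Showing that for \emph{every} $q$ at least one candidate from a fixed finite list avoids all of these moving windows is exactly the hard part, and nothing you have written constrains how these windows interact. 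In particular, your explanation of where the threshold $q_9$ comes from (``the smallest $k$-Bonacci number at which this finite family is rich enough'') is an assertion, not a proof.

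The paper sidesteps this difficulty entirely by replacing the search for a single explicit $t$ with an intersection argument between two Cantor-like sets. It builds, for each $q$, a set $A_q\subset\mathcal{S}^9$ of binary sequences (using a $\{-1,0,1\}$-expansion of $1$) with the property that $\pi_q(A_q)\subset\mathcal{U}_q$ \emph{and} $\pi_q(A_q)-1\subset\mathcal{U}_q$ simultaneously; this handles the $t$ and $1+t$ constraints in one stroke for every point of $\pi_q(A_q)$. The $t/(2-q)$ constraint is encoded as membership of $qy$ in the affine image $(2-q)\pi_q(\mathcal{S}^9)+1$. The existence of a common point is then obtained from Newhouse's gap lemma: one shows $\tau(\pi_q(A_q))>q^{-5}$ and uses the known bound $\tau(\pi_q(\mathcal{S}^9))>q^{9-3}=q^6$ from \cite{sidorov2009expansions}, so the product of thicknesses exceeds $q>1$; together with an interleaving check this forces $\pi_q(A_q)\cap\bigl((2-q)\pi_q(\mathcal{S}^9)+1\bigr)\neq\emptyset$. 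The index $9$ is dictated by these thickness estimates, not by any orbit-by-orbit case split. Your approach could in principle be made to work on subintervals of $(q_9,2)$, but to cover the whole interval you would need either a uniform mechanism of this kind or a substantially more detailed analysis than you have sketched.
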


We say that a sequence (finite or infinite) $\seqj{i}{1}{k} \in \{0,1\}^*$ \textit{avoids} $\seqj{a}{1}{l} \in \{0,1\}^*$ if there does not exist  $m \in \mathbb{N}$ such that $\seqj{i}{m}{m +l -1} = \seqj{a}{1}{l}$.

Define
$$\mathcal{S}^k = \{\seqj{i}{1}{\infty} \in \{0,1\}^\mathbb{N} : \seqj{i}{1}{\infty} \mathrm{\ avoids \ } (01^k) \mathrm{\ and \ }(10^k)\},$$
and notice that if a finite sequence is a prefix of an element of $\mathcal{S}^k$, then it avoids $(01^k)$ and $(10^k)$.
The following lemma is a consequence of 
\cite[Lemma 4]{glendinning2001unique}.

\begin{lemma}
\label{lemma: projection of S^k is contained in U_q}
    If $q \in (q_k,2)$ then $\pi_q(\mathcal{S}^k) \subset \mathcal{U}_q$.
\end{lemma}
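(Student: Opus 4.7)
The plan is to prove the contrapositive: every $x = \pi_q((i_j))$ with $(i_j) \in \mathcal{S}^k$ has a unique base $q$ expansion. Suppose instead that $(i'_j) \neq (i_j)$ is another expansion of $x$ and let $n+1$ be the first index where they disagree. The involution $j \mapsto 1-j$ preserves $\mathcal{S}^k$ (it swaps the two forbidden blocks $01^k$ and $10^k$) and sends $\pi_q$ to $\frac{1}{q-1} - \pi_q$, so after applying this symmetry if necessary we may assume $i_{n+1}=1$ and $i'_{n+1}=0$. Equating the two expansions and dividing by $q^{-(n+1)}$ then yields
\[
\sum_{l=1}^{\infty} \bigl(i'_{n+1+l} - i_{n+1+l}\bigr)\, q^{-l} = 1.
\]

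The combinatorial heart of the argument is to use avoidance of $10^k$ by $(i_j)$ to extract indices $l_1 < l_2 < \cdots$ with $l_1 \leq k$, $l_{j+1} - l_j \leq k$, and $i_{n+1+l_j} = 1$ for every $j \geq 1$. Since $i_{n+1}=1$, the word $i_{n+1}i_{n+2}\cdots i_{n+1+k}$ is not $10^k$, so some entry among the next $k$ must be $1$; the smallest such index gives $l_1$. Repeating the same reasoning at $i_{n+1+l_j}$ produces $l_{j+1}$. In particular $l_j \leq jk$, so
\[
\sum_{j\geq 1} q^{-l_j} \geq \sum_{j\geq 1} q^{-jk} = \frac{1}{q^k - 1}.
\]

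Next I bound the original displayed sum by $q^{-l}$ at each index $l \neq l_j$ (where $i_{n+1+l}=0$) and by $0$ at each $l=l_j$ (where $i_{n+1+l_j}=1$ forces the summand to be at most $0$). This gives
\[
1 \;\leq\; \frac{1}{q-1} - \sum_{j \geq 1} q^{-l_j} \;\leq\; \frac{1}{q-1} - \frac{1}{q^k-1},
\]
which rearranges to $q^k(2-q) \geq 1$. The argument closes by showing $q^k(2-q) < 1$ whenever $q \in (q_k, 2)$: the $k$-Bonacci relation is exactly the equality $q_k^k(2-q_k) = 1$, the derivative of $g(q):=q^k(2-q)$ is $q^{k-1}\bigl(2k - (k+1)q\bigr)$, so $g$ is strictly decreasing on $\bigl(\tfrac{2k}{k+1},\, 2\bigr)$, and the inequality $q_k > \tfrac{2k}{k+1}$ is immediate from the identity $q_k^k = 1/(2-q_k)$ together with $q_k \geq G$ for $k \geq 2$.

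The main obstacle is the construction of the indices $l_j$: this is the only place where the avoidance hypothesis is converted into a quantitative bound. Everything that follows is routine algebra, and the sharpness of $q^k(2-q) = 1$ at $q = q_k$ confirms that the threshold $q_k$ appearing in the statement is the optimal one.
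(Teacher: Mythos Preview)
Your argument is correct. The paper itself does not give a proof of this lemma; it simply records it as ``a consequence of \cite[Lemma~4]{glendinning2001unique}''. That cited result is the Glendinning--Sidorov lexicographic characterisation of unique $q$-expansions: a sequence $(i_j)$ is the unique expansion of its value if and only if every shift $\sigma^n(i_j)$ (and its reflection) lies strictly below the quasi-greedy expansion of $1$. One then checks that for $q>q_k$ the quasi-greedy expansion of $1$ dominates $(1^{k-1}0)^\infty$, from which the avoidance of $01^k$ and $10^k$ immediately yields the required lexicographic inequalities.

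Your route is different and more self-contained: you bypass the Parry/Glendinning--Sidorov machinery entirely and instead translate ``avoids $10^k$'' directly into a density statement about the positions of $1$'s (the indices $l_j$ with gaps $\le k$), which converts into the analytic inequality $q^k(2-q)\ge 1$ and is then ruled out by elementary calculus. This buys you a proof that does not rely on any external reference and makes the role of the threshold $q_k$ transparent via the identity $q_k^k(2-q_k)=1$. The lexicographic approach, on the other hand, is more structural and immediately generalises (it gives the full description $\pi_q(\hat{\mathcal S}^k)=\mathcal U_q$ at $q=q_k$, which the paper uses later in Section~\ref{section: k-Bonacci results}). One very minor remark: your monotonicity step handles $k\ge 2$; the case $k=1$ is vacuous since $\mathcal S^1=\{0^\infty,1^\infty\}$.
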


In particular, we will use the fact that if $q \in (q_9,2)$ then $\pi_q(\seqj{i}{1}{\infty}) \in \mathcal{U}_q$ for any sequence $\seqj{i}{1}{\infty} \in \mathcal{S}^9$.
We sketch the proof of Theorem \ref{theorem: interval in C_3}.

\begin{enumerate}
    \item 
    \label{item: images in projection of S^9} Suppose that for each $q \in (q_9,2)$ there is some set $A_q \subset \mathcal{S}^9$ such that $\pi_q(A_q), \pi_q(A_q)-1 \subset \Uqk{9}$ and $A_q$ also satisfies 
    \begin{equation}
    \label{equation: Uq8 intersect pi(A)}
    (\qUk{9}) \cap \pi_q(A_q) \neq \emptyset.
    \end{equation}
    Taking $qy \in (\qUk{9}) \cap \pi_q(A_q)$, we have 
    \begin{align*}
    f_0(y) &= qy \in \pi_q(A_q) \subset \Uqk{9}, \\
    f_1(y) &= \frac{1}{q-1} - \frac{qy-1}{2-q} \in \Uqk{9},\\
    f_2(y) &= qy-1 \in \pi_q(A_q)-1 \subset \Uqk{9}.
    \end{align*}
    In the second line we used
    $$qy \in \qUk{9} \iff \frac{qy-1}{2-q} \in \Uqk{9},$$
    and the fact that $x \in \Uqk{k}$ if and only if $\frac{1}{q-1} - x \in \Uqk{k}$ for all $k \in \mathbb{N}$.
    So under the assumptions on $\pi_q(A_q)$ we have $f_0(y), f_1(y), f_2(y) \in \Uqk{9}$.
    \item By Lemma \ref{lemma: projection of S^k is contained in U_q}, $q \in (q_9,2)$ implies that $\Uqk{9} \subset \Uq$. 
    Let $q \in (q_9,2)$, Item \ref{item: images in projection of S^9} implies that if $A_q$ exists then there exists $y \in \mathcal{V}_q^{(3)}$ and hence $(q_9,2) \subset \mathcal{C}_3$.
    Therefore to prove the theorem, for each $q \in (q_9,2)$ we need to construct $A_q$ such that $\pi_q(A_q), \pi_q(A_q) - 1 \subset \Uqk{9}$
    and \eqref{equation: Uq8 intersect pi(A)} holds.
    \item The claim that each $A_q$ satisfies \eqref{equation: Uq8 intersect pi(A)} is proved using Newhouse's theorem \cite{NewhouseNondensity}.
    \item The property that $\pi_q(A_q), \pi_q(A_q) - 1 \subset \Uqk{9}$ will follow from our construction of $A_q$.
\end{enumerate}
The majority of the work is in proving that the structure of $\pi_q(A_q)$ is amenable to an application of Newhouse's theorem. 
This involves calculating the thickness of $\pi_q(A_q)$ and showing it is interleaved with $(2-q)\pi_q(\mathcal{S}^9)+1$.

\subsection{Results on projections of sequences}

This subsection introduces some useful results on the projections of cylinders and inequalities involving $k$-Bonacci numbers.

\begin{lemma}
\label{lemma: 2-q inequality}
    Let $k \in \mathbb{N}_{\geq 2}$ then 
    \begin{enumerate}
        \item \begin{enumerate}
            \item $0 < (2-q) < q^{-k}$ if $q \in (q_k,2)$,
            \item $q^{-k-1} \leq (2-q) < q^{-k}$ if $q \in (q_k,q_{k+1}]$.
        \end{enumerate}
        \item \begin{enumerate}
            \item $0 < q^k - q^{k-1} - \cdots - q - 1 < 1$ if $q \in (q_k,2)$,
            \item $0 < q^k - q^{k-1} - \cdots - q - 1 \leq \frac{1}{q}$ if $q \in (q_k,q_{k+1}]$.
        \end{enumerate}
    \end{enumerate}
\end{lemma}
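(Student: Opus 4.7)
The entire lemma reduces to a single telescoping identity,
\[
(q-1)\left(q^k - q^{k-1} - \cdots - q - 1\right) = q^{k+1} - 2q^k + 1,
\]
which is routinely verified by expanding the left-hand side. Abbreviating $P_k(q) = q^k - q^{k-1} - \cdots - q - 1$, the defining equation \eqref{equation: k-Bonacci definition} gives $P_k(q_k) = 0$, so the identity yields $2 - q_k = q_k^{-k}$. Combined with the standard fact that $q_k$ is the unique root of $P_k$ in $(1,2)$, together with $P_k(2) = 1 > 0$, this produces the key dichotomy $P_k(q) > 0 \iff q > q_k$ on the interval $(1,2)$.

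My plan for statement $(2)$ is then as follows. For part $(2)(a)$, the lower bound $P_k(q) > 0$ for $q \in (q_k, 2)$ is immediate from the dichotomy. For the upper bound $P_k(q) < 1$, I would apply the identity to rewrite this as $q^{k+1} - 2q^k + 1 < q - 1$, which factors as $(q-2)(q^k - 1) < 0$; this is manifest for $q \in (1, 2)$. For part $(2)(b)$, the crucial observation is that $q P_k(q) - 1 = P_{k+1}(q)$, so $P_k(q) \leq 1/q$ is equivalent to $P_{k+1}(q) \leq 0$, which by the dichotomy applied with $k+1$ in place of $k$ holds exactly when $q \leq q_{k+1}$.

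For statement $(1)$, the positivity $2 - q > 0$ is immediate from $q < 2$. The inequality $2 - q < q^{-k}$ becomes, after multiplying through by $q^k$ and applying the identity, equivalent to $(q-1) P_k(q) > 0$, which follows from part $(2)(a)$. Similarly, $q^{-k-1} \leq 2 - q$ rearranges, via the same identity with $k+1$ in place of $k$, to $P_{k+1}(q) \leq 0$, which by the dichotomy holds precisely for $q \leq q_{k+1}$, covering the hypothesis $q \in (q_k, q_{k+1}]$.

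There is no real obstacle here beyond careful bookkeeping: the entire argument is polynomial manipulation once the telescoping identity is in hand. The only point one has to be slightly attentive to is shifting indices between $k$ and $k+1$ when chaining the identity with the dichotomy to prove the $(b)$-halves, and justifying the uniqueness of $q_k$ as a root of $P_k$ in $(1,2)$, which one can obtain by analysing $h(q) = 2 - q - q^{-k}$ on $(1,2)$ and using that $h(1) = 0$, $h(2) < 0$, and $h$ has a unique interior critical point.
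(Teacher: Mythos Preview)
Your proposal is correct and takes a genuinely different, more algebraic route than the paper. The paper proves part~(1) by a derivative comparison, showing $\left|\frac{\der}{\der q}(2-q)\right| > \left|\frac{\der}{\der q}(q^{-k})\right|$ on $(G,2)$ and then verifying $2-q_k = q_k^{-k}$ (via what is, implicitly, your identity evaluated at $q_k$); for part~(2)(a) it runs an induction on $k$, using the derivative of $P_k$ together with the inductive hypothesis for $P_{k-1}$ to establish that $P_k$ is increasing on $(q_k,2)$; and for~(2)(b) it combines this monotonicity with a direct evaluation at $q_{k+1}$. You instead state $(q-1)P_k(q) = q^{k+1} - 2q^k + 1$ as a general identity and pair it with the recursion $qP_k(q) - 1 = P_{k+1}(q)$, so that every inequality in the lemma becomes an algebraic reformulation of the sign dichotomy for either $P_k$ or $P_{k+1}$. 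This bypasses both the induction and the derivative comparison, and makes transparent why the thresholds $q_k$ and $q_{k+1}$ appear: they are precisely the zeros of $P_k$ and $P_{k+1}$. The paper's argument is more hands-on and perhaps more suggestive of how one might discover the bounds; yours is cleaner and shorter once the identity is in hand.
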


\begin{proof}
    It can be checked that $q^{k+1} > k$ whenever $k \in \mathbb{N}_{\geq 2}$ and $q \in (G,2)$, and it follows that $1 > kq^{-k-1}$ under the same conditions.
    Observe that
    $$\left|\frac{\der }{\der q}(2-q)\right| = |-1| > |-k q^{-k-1}| = \left|\frac{\der}{\der q}(q^{-k})\right|,$$
    so $(2-q)$ is monotone decreasing faster than $q^{-k}$.
    Therefore, if $q = q_k$ solves $2-q = q^{-k}$ then both 1(a) and 1(b) would follow.
    Let $q = q_k$, then $q$ satisfies $q^k - q^{k-1} - \cdots - q - 1 = 0$, so 
    \begin{align*}
    0 &=q(q^k - q^{k-1} - \cdots - q - 1) \\
    &=q^{k+1} - 2q^k + (q^k - q^{k-1} - \cdots - q^2 - q) \\
    &=q^{k+1} - 2q^k + 1 \\
    &=q^k(q-2) + 1,
    \end{align*}
    so $(2-q_k) = q_k^{-k}$.

    We prove 2(a) by induction.
    Since $G^2 - G - 1 = 0$, $2^2 - 2 - 1 = 1$ and $q^2 - q -1$ is increasing for $q \in (G,2)$, we know that the claim holds for $q \in (G,2)$.
    For the induction, assume that $q \in (q_{k-1},2)$ implies that $0 < q^{k-1} - q^{k-2} - \cdots -q -1 < 1$ for some $k \in \mathbb{N}_{\geq 3}$.
    Let $q \in (q_k,2)$, and set 
    $$f(q) = q^k - q^{k-1} - \cdots - q - 1,$$
    then
    $$\frac{\der f}{\der q} = kq^{k-1} - (k-1)q^{k-2} - \cdots - 2q - 1 > k(q^{k-1} - q^{k-2} - \cdots - q -1).$$
    Since $q_{k-1} < q_k < 2$, we know by assumption that if $q \in (q_k,2)$ then $0 < q^{k-1} - q^{k-2} - \cdots - q -1 < 1$ so $\frac{\der f}{\der q} > 0$.
    Hence, $f(q)$ is increasing with $q$ on the interval $(q_k,2)$.
    By observing that if $q = q_k$ then $f(q) = 0$ and if $q = 2$ then $f(q) = 1$, we have shown that $q \in (q_k,2)$ implies that $0 < f(q) < 1$ and by induction we are done.

    For 2(b), the above proof of 2(a) shows that $f(q)$ is increasing for all $q \in (q_k,2)$, and $\frac{1}{q}$ is obviously decreasing with $q$.
    It therefore remains to show that $q = q_{k+1}$ solves $q^k - q^{k-1} - \cdots - q - 1 = \frac{1}{q}$.
    Let $q = q_{k+1}$, then
    \begin{align*}
    0 &= q^{k+1} - q^k - \cdots - q - 1\\
    &= q^{k+1} - 2q^k + (q^k - q^{k-1} - \cdots - q - 1),
    \end{align*}
    so
    $$q^k - q^{k-1} - \cdots - q - 1 = q^k(2-q) = \frac{1}{q},$$
    since $(2-q) = q^{-k-1}$ at $q = q_{k+1}$.
\end{proof}

\begin{lemma}
    \label{lemma: sequence inequalities}
    If $q \in (q_k, 2)$ for some $k \in \mathbb{N}_{\geq 2}$ then 
    \begin{enumerate}
        \item $\pi_q(01^k0^\infty) < \pi_q(10^\infty) < \pi_q(01^\infty) < \pi_q(10^k1^\infty)$,
        \item $\pi_q((01^{k-1})^\infty) < \pi_q(10^\infty)$ and symmetrically $\pi_q(01^\infty) < \pi_q((10^{k-1})^\infty)$.
    \end{enumerate} 
\end{lemma}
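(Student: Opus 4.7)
The plan is to reduce every inequality in both parts of the lemma to the single condition $q^k(2-q) < 1$ (equivalently $2 - q < q^{-k}$), which is exactly Lemma~\ref{lemma: 2-q inequality}(1)(a) for $q \in (q_k, 2)$. Each of the sequences appearing in the statement is either eventually or purely periodic, so every projection collapses to a closed-form rational function of $q$ via the geometric series formula, and the reductions are then direct algebraic manipulations.

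First I would compute the four eventually periodic projections from part (1):
$$\pi_q(10^\infty) = \frac{1}{q}, \quad \pi_q(01^\infty) = \frac{1}{q(q-1)}, \quad \pi_q(01^k 0^\infty) = \frac{1 - q^{-k}}{q(q-1)}, \quad \pi_q(10^k 1^\infty) = \frac{1}{q} + \frac{q^{-(k+1)}}{q-1}.$$
The middle inequality $\pi_q(10^\infty) < \pi_q(01^\infty)$ is equivalent to $q - 1 < 1$, which is automatic. Clearing denominators in $\pi_q(01^k 0^\infty) < \pi_q(10^\infty)$ yields $1 - q^{-k} < q - 1$, i.e.\ $2 - q < q^{-k}$. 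For $\pi_q(01^\infty) < \pi_q(10^k 1^\infty)$, noting that $\pi_q(01^\infty) - \pi_q(10^\infty) = (2-q)/(q(q-1))$ and $\pi_q(10^k 1^\infty) - \pi_q(10^\infty) = q^{-(k+1)}/(q-1)$, the desired inequality again reduces to $q^k(2-q) < 1$.

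For part (2), I would treat $(01^{k-1})^\infty$ and $(10^{k-1})^\infty$ as purely periodic of period $k$ and sum the geometric series in $q^{-k}$:
$$\pi_q((10^{k-1})^\infty) = \frac{q^{-1}}{1 - q^{-k}} = \frac{q^{k-1}}{q^k - 1}, \quad \pi_q((01^{k-1})^\infty) = \frac{q^{k-1} - 1}{(q-1)(q^k - 1)},$$
where the second identity follows by first computing the finite block $q^{-2} + \cdots + q^{-k} = (1 - q^{-(k-1)})/(q(q-1))$ and dividing by $1 - q^{-k}$. Substituting into $\pi_q((01^{k-1})^\infty) < \pi_q(10^\infty)$ and clearing the common factor $q(q-1)(q^k-1)$ gives $q^k - q < q^{k+1} - q^k - q + 1$, which simplifies to $q^k(2-q) < 1$; the symmetric statement $\pi_q(01^\infty) < \pi_q((10^{k-1})^\infty)$ reduces identically. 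An application of Lemma~\ref{lemma: 2-q inequality}(1)(a) then completes the proof.

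There is no genuine obstacle here beyond careful bookkeeping: the content of the lemma is that the relevant sharp threshold in every case is $2 - q = q^{-k}$, which is precisely the edge $q = q_k$. The role of assuming $q \in (q_k, 2)$ rather than any larger interval is to ensure each reduction lands on the strict inequality, and for this the specific shape of the sequences (in particular the length of the blocks $1^k$ and $0^k$) is essential.
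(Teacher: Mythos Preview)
Your proof is correct. For part (1) it is essentially the same as the paper's argument: both reduce to the key inequality furnished by Lemma~\ref{lemma: 2-q inequality}, the only cosmetic difference being that you invoke form (1)(a), $2-q<q^{-k}$, whereas the paper invokes the equivalent form (2)(a), $q^k-q^{k-1}-\cdots-q-1>0$.

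For part (2) your route is genuinely different. The paper argues by iterated substitution: starting from $q^{-1}$, it repeatedly replaces the trailing term $q^{-mk-1}$ by the smaller finite sum $q^{-mk-2}+\cdots+q^{-(m+1)k-1}$ (this is just \eqref{equation: sum of inverses inequality} shifted by $mk$), thereby building up the periodic expansion $(01^{k-1})^\infty$ from above and never needing to sum the series in closed form. You instead compute the periodic projections exactly via the geometric series and clear denominators. Your approach is more uniform, collapsing every inequality in the lemma to the single condition $q^k(2-q)<1$; the paper's iterative argument is slightly more hands-on but has the minor advantage of making visible \emph{why} the periodic sequence sits below $q^{-1}$, namely that each period absorbs exactly one application of the basic inequality. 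Both are equally valid and of comparable length.
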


\begin{proof}
    For the first part, if $q \in (q_k,2)$ Lemma \ref{lemma: 2-q inequality} implies that $q^k - q^{k-1} - \cdots - q - 1 > 0$, so 
    \begin{equation}
    \label{equation: sum of inverses inequality}
        q^{-2} + q^{-3} + \cdots + q^{-k-1} < q^{-1},
    \end{equation}
    and hence $\pi_q(01^k0^\infty) < \pi_q(10^\infty)$. 
    Since $a < b \implies \frac{1}{q-1} - a > \frac{1}{q-1} - b $ we have the symmetric result, $\pi_q(01^\infty) < \pi_q(10^k1^\infty)$. 
    $ q \in (1, 2) $ implies that $\pi_q(10^\infty) < \pi_q(01^\infty)$ which gives the complete inequality.
    
    For the second part, we observe that a consequence of \eqref{equation: sum of inverses inequality} is that for any $m \in \mathbb{N}$, 
    \begin{equation}
    \label{equation: sum of inverses inequality variation}
     q^{-mk-2} + q^{-mk-3} + \cdots + q^{-(m+1)k-1} < q^{-mk-1}.
    \end{equation}
    Using \eqref{equation: sum of inverses inequality} and \eqref{equation: sum of inverses inequality variation},
    \begin{align*}
        \pi_q(10^\infty) = q^{-1} &> (q^{-2} + q^{-3} + \cdots + q^{-k}) + q^{-k-1},\\
        &> (q^{-2} + q^{-3} + \cdots + q^{-k}) + (q^{-k-2} + q^{-k-3} + \cdots + q^{-2k}) + q^{-2k-1},\\
        & \vdots \\
        &> \sum_{m=0}^\infty (q^{-mk-2} + q^{-mk-3} + \cdots + q^{-(m+1)k}),\\
        & = \pi_q((01^{k-1})^\infty),
    \end{align*}
    where at each step we have replaced the final term $q^{-mk-1}$ with the finite sum $(q^{-mk-2} + \cdots + q^{-(m+1)k-1})$ which is smaller by \eqref{equation: sum of inverses inequality variation}.
    By symmetry we also have $\pi_q(01^\infty) < \pi_q((10^{k-1})^\infty)$.
\end{proof}

Let $\seqj{i}{1}{k}, \seqj{i'}{1}{k} \in \{0,1\}^k$ be arbitrary distinct finite sequences. 
We write $\seqj{i}{1}{k} \prec \seqj{i'}{1}{k}$ if $i_p = 0$ and $i'_p = 1$ where $p \in \mathbb{N}$ is the smallest number with $i_p \neq i'_p$. 
This is the natural ordering of the sequence space with the obvious extension to infinite sequences in $\{0,1\}^\mathbb{N}$. 
Let $k \in \mathbb{N}_{\geq 0}$ and let $\seqj{i}{1}{k},\seqj{i'}{1}{k} \in \{0,1\}^k$ be two finite binary strings of length $k$. 
We say that $\seqj{i}{1}{k}$ and $\seqj{i'}{1}{k}$ are \textit{lexicographically consecutive} if they are consecutive as binary numbers. 
That is, if $\sum_{j=1}^k i_j 2^{k-j} + 1 = \sum_{j=1}^k i'_j 2^{k-j}$ or $\sum_{j=1}^k i'_j 2^{k-j} + 1 = \sum_{j=1}^k i_j 2^{k-j}$.

\begin{lemma}
\label{lemma: projections of sequences are well ordered}
    Let $q \in (q_k,2)$ and let $\seqj{i}{1}{l}, \seqj{i'}{1}{l} $ be prefixes of elements of $\mathcal{S}^k$ such that $\seqj{i}{1}{l} \prec \seqj{i'}{1}{l}$.
    Then $\pi_q(\seqj{i}{1}{l}0^\infty) < \pi_q(\seqj{i'}{1}{l}0^\infty)$ and $\pi_q(\seqj{i}{1}{l}1^\infty) < \pi_q(\seqj{i'}{1}{l}1^\infty)$.
\end{lemma}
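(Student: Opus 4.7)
Let $p \in \{1,\dots,l\}$ be the smallest index at which $\seqj{i}{1}{l}$ and $\seqj{i'}{1}{l}$ disagree, so that $i_p = 0$ and $i'_p = 1$. The sequences agree before position $p$ and have identical $0^\infty$ tails, hence
\[
\pi_q\!\left(\seqj{i'}{1}{l}\, 0^\infty\right) - \pi_q\!\left(\seqj{i}{1}{l}\, 0^\infty\right) = q^{-p} + \sum_{j=p+1}^l (i'_j - i_j)\,q^{-j} \;\geq\; q^{-p} - \sum_{j=p+1}^l i_j\,q^{-j},
\]
using only $i'_j \geq 0$. The whole statement therefore reduces to showing
\[
\sum_{j=p+1}^l i_j\, q^{-j} \;<\; q^{-p}. \qquad (\star)
\]

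\textbf{Bounding the tail via the $\mathcal{S}^k$ hypothesis.} The crude bound $i_j\leq 1$ yields only $q^{-p}/(q-1)$, which exceeds $q^{-p}$ whenever $q \leq 2$, so $(\star)$ genuinely needs the avoidance hypothesis. Since $\seqj{i}{1}{l}$ is a prefix of an element of $\mathcal{S}^k$ it avoids $01^k$, and appending $0^\infty$ cannot create such a subword, so the infinite extension $(y_j)_{j\geq 1}$ defined by $y_j = i_j$ for $j \leq l$ and $y_j = 0$ otherwise is a sequence avoiding $01^k$ with $y_p = 0$. The constraint forces the $0$-positions after $p$ to lie at most $k$ apart. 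A short exchange argument --- since $q > 1$, pushing each $0$-position as late as allowed only decreases $\sum_{y_j = 0} q^{-j}$ --- identifies the maximiser of $\sum_{j > p} y_j\, q^{-j}$ as the sequence whose $0$-positions are exactly $p+k, p+2k, p+3k,\dots$, whence
\[
\sum_{j=p+1}^l i_j\,q^{-j} \;\leq\; \sum_{j>p} q^{-j} - \sum_{m\geq 1} q^{-(p+mk)} \;=\; q^{-p}\!\left(\frac{1}{q-1} - \frac{1}{q^k-1}\right).
\]

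\textbf{Closing via the $k$-Bonacci threshold and the $1^\infty$ case.} The displayed upper bound is strictly less than $q^{-p}$ iff $\tfrac{1}{q-1} - \tfrac{1}{q^k-1} < 1$; clearing denominators, this rearranges to $q^{k+1} - 2q^k + 1 > 0$, equivalently $(2-q) < q^{-k}$, which is guaranteed for $q \in (q_k, 2)$ by Lemma \ref{lemma: 2-q inequality} part 1(a). This closes the first inequality. For the second, a direct expansion gives
\[
\pi_q\!\left(\seqj{i'}{1}{l}\, 1^\infty\right) - \pi_q\!\left(\seqj{i}{1}{l}\, 1^\infty\right) = \sum_{j=1}^l (i'_j - i_j)\,q^{-j},
\]
the identical geometric tails cancelling exactly, so the quantity to bound is literally the same as in the $0^\infty$ case and no extra work is needed.

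\textbf{Main obstacle.} The substantive step is the exchange argument extracting the geometric-tail bound from $01^k$-avoidance; this is where the $\mathcal{S}^k$ hypothesis is indispensable, since without it the inequality fails outright for $q < 2$ (cf.\ the fact that $\pi_q(01^\infty) > \pi_q(10^\infty)$ unrestricted). Once this bound is in place, the algebra collapses to $(2-q) < q^{-k}$, which is exactly what distinguishes the regime $q > q_k$.
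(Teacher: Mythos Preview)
Your proof is correct and takes a genuinely different route from the paper's. The paper dispatches the lemma in two lines by observing that $\seqj{i}{1}{l}0^\infty$ and $\seqj{i'}{1}{l}0^\infty$ are greedy $q$-expansions and invoking \cite[Proposition~2.5]{DEVRIES2009390}, which says greedy expansions respect the lexicographic order; the $1^\infty$ case then follows exactly as you note. Your argument instead works directly with the difference $\sum_{j}(i'_j-i_j)q^{-j}$, isolates the first disagreement, and uses the $01^k$-avoidance of $\seqj{i}{1}{l}$ to bound the tail $\sum_{j>p} i_j q^{-j}$ by the geometric quantity $q^{-p}\bigl(\tfrac{1}{q-1}-\tfrac{1}{q^k-1}\bigr)$, which is $<q^{-p}$ precisely because $(2-q)<q^{-k}$ on $(q_k,2)$. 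This is more hands-on but fully self-contained: it needs no external reference and in fact only uses the hypothesis on $\seqj{i}{1}{l}$, not on $\seqj{i'}{1}{l}$. It is worth noting that checking the paper's assertion ``these are greedy expansions'' amounts, via the standard tail characterisation of greediness, to exactly the inequality $(\star)$ you prove --- so your argument can be read as an explicit unpacking of what the paper's citation encapsulates. The exchange argument you sketch (that $z_m\le p+mk$ for the $m$th zero after $p$, hence $q^{-z_m}\ge q^{-(p+mk)}$ termwise) is straightforward to make rigorous and is the only place the $\mathcal{S}^k$ hypothesis enters.
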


\begin{proof}
    Let $q \in (q_k, 2)$ and let $\seqj{i}{1}{l} \prec \seqj{i'}{1}{l}$ where both sequences are prefixes of elements of $\mathcal{S}^k$. 
    Since both $\seqj{i}{1}{l}0^\infty$ and $\seqj{i'}{1}{l}0^\infty$ are greedy expansions, as defined in Section 2 of \cite{DEVRIES2009390}, by \cite[Proposition 2.5]{DEVRIES2009390} we know that $\pi_q(\seqj{i}{1}{l}0^\infty) < \pi_q(\seqj{i'}{1}{l}0^\infty)$. 
    This inequality immediately tells us that $\pi_q(\seqj{i}{1}{l}1^\infty) < \pi_q(\seqj{i'}{1}{l}1^\infty)$.
\end{proof}

Recall by Lemma \ref{lemma: projections of cylinders are intervals} that $\pi_q(\seqj{i}{1}{l}0^\infty)$ and $\pi_q(\seqj{i}{1}{l}1^\infty)$ are the left and right endpoints of the interval $\pi_q[\seqj{i}{1}{l}]$.

\begin{lemma}
\label{lemma: cylinders intersect iff lex cons}
    Let $q \in (q_k,2)$, $l \in \mathbb{N}$ and let $\seqj{i}{1}{l}, \seqj{i'}{1}{l}$ be prefixes of elements of $\mathcal{S}^k$. 
    Then $\pi_q[\seqj{i}{1}{l}] \cap \pi_q[\seqj{i'}{1}{l}] \neq \emptyset$ if and only if $\seqj{i}{1}{l}$ and $\seqj{i'}{1}{l}$ are lexicographically consecutive.
\end{lemma}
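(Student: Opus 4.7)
The plan is to rephrase the intersection condition as a single endpoint inequality, and then exploit the avoidance of $01^k$ and $10^k$ to bound how far a non-consecutive pair can deviate from the unique ``consecutive'' configuration. Each $\pi_q[\seqj{i}{1}{l}]$ is a closed interval by Lemma \ref{lemma: projections of cylinders are intervals}. Without loss of generality take $\seqj{i}{1}{l} \prec \seqj{i'}{1}{l}$; then Lemma \ref{lemma: projections of sequences are well ordered} shows both endpoints of $\pi_q[\seqj{i'}{1}{l}]$ strictly exceed the corresponding endpoints of $\pi_q[\seqj{i}{1}{l}]$, so the two intervals meet if and only if $\pi_q(\seqj{i}{1}{l}1^\infty) \geq \pi_q(\seqj{i'}{1}{l}0^\infty)$. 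Let $p$ be the first index where the two sequences differ, so $i_p = 0$ and $i'_p = 1$; expanding both endpoints and simplifying gives
\[
\pi_q(\seqj{i}{1}{l}1^\infty) - \pi_q(\seqj{i'}{1}{l}0^\infty) \;=\; \frac{q^{-p}(2-q)}{q-1} - L,
\qquad L := \sum_{j = p+1}^{l} \bigl[(1 - i_j) + i'_j\bigr] q^{-j}.
\]

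The converse direction is immediate from this formula: if the pair is lexicographically consecutive then $\seqj{i}{1}{l} = a_1 \ldots a_{p-1}\,0\,1^{l-p}$ and $\seqj{i'}{1}{l} = a_1 \ldots a_{p-1}\,1\,0^{l-p}$, so every summand of $L$ vanishes and the endpoint difference equals $q^{-p}(2-q)/(q-1) \geq 0$. The substantive direction is to show that whenever the pair is not consecutive, $L > q^{-p}(2-q)/(q-1)$, contradicting the intersection inequality $L \leq q^{-p}(2-q)/(q-1)$.

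For this I would first establish the uniform combinatorial lower bound $L \geq q^{-(p+k-1)}$ by splitting on $l - p$. If $l - p \leq k - 1$, the consecutive configuration is itself a valid $\mathcal{S}^k$-prefix, and non-consecutiveness forces at least one positive summand of $L$ at some $j \leq l \leq p+k-1$, so $L \geq q^{-l} \geq q^{-(p+k-1)}$. If $l - p \geq k$, the would-be consecutive sequences $a_1 \ldots a_{p-1}\,0\,1^{l-p}$ and $a_1 \ldots a_{p-1}\,1\,0^{l-p}$ contain the forbidden blocks $01^k$ and $10^k$ respectively, so there must exist $r, s \in \{p+1, \ldots, p+k\}$ with $i_r = 0$ and $i'_s = 1$, whose combined contributions force $L \geq 2q^{-(p+k)} > q^{-(p+k-1)}$ (the last inequality from $q < 2$). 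The main obstacle is converting this combinatorial bound into the required analytic strict inequality, and here Lemma \ref{lemma: 2-q inequality} is the key: it supplies $2 - q < q^{-k}$ for $q \in (q_k, 2)$, while the hypothesis $q > q_k \geq G$ forces $q(q-1) > 1$, hence $q^{-(k-1)}(q-1) > q^{-k}$. Chaining,
\[
L \;\geq\; q^{-(p+k-1)} \;=\; q^{-p}\,q^{-(k-1)} \;>\; q^{-p} \cdot \frac{2-q}{q-1},
\]
contradicting the intersection bound. Hence the pair must be consecutive, completing the proof.
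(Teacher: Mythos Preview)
Your proof is correct, and it takes a genuinely different route from the paper's. Both arguments reduce to the single endpoint inequality $\pi_q(\seqj{i}{1}{l}1^\infty) \geq \pi_q(\seqj{i'}{1}{l}0^\infty)$ via Lemma~\ref{lemma: projections of sequences are well ordered}, and both handle the consecutive (``if'') direction essentially the same way. The divergence is in the ``only if'' direction. The paper argues structurally: it reduces to the case where $\seqj{i^-}{1}{l}$ and $\seqj{i^+}{1}{l}$ are exactly two binary steps apart, writes out their explicit tail structure, and then invokes the inequalities of Lemma~\ref{lemma: sequence inequalities} (in particular $\pi_q((01^{k-1})^\infty) < \pi_q(10^\infty)$) to separate the endpoints. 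You instead compute the endpoint difference explicitly as $q^{-p}(2-q)/(q-1) - L$, and then bound $L$ from below combinatorially using only the avoidance of $01^k$ and $10^k$: either the tail is short ($l-p\le k-1$) and non-consecutiveness directly forces a summand $\ge q^{-(p+k-1)}$, or the tail is long ($l-p\ge k$) and the forbidden blocks force a $0$ in $\seqj{i}{p+1}{p+k}$ and a $1$ in $\seqj{i'}{p+1}{p+k}$, giving $L\ge 2q^{-(p+k)}>q^{-(p+k-1)}$. The analytic finish via $2-q<q^{-k}$ (Lemma~\ref{lemma: 2-q inequality}) and $q(q-1)>1$ is clean.

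What each buys: your argument is shorter and more quantitative, entirely bypassing Lemma~\ref{lemma: sequence inequalities}; it also yields an explicit positive gap $L - q^{-p}(2-q)/(q-1) \geq q^{-(p+k-1)} - q^{-p}(2-q)/(q-1)$ between non-consecutive cylinders, which could be useful elsewhere. The paper's approach makes the geometry of the ``two-apart'' case more visible and ties the result to the sequence inequalities that recur throughout Section~\ref{section: interval with slice three}.
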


\begin{figure}[t]
    \centering
    \begin{tikzpicture}
        \begin{scope}[thick]
            \draw (0,0) -- (5,0);
            \draw (4,-0.5) -- (9,-0.5);

            \draw (-4,-0.5) -- (1,-0.5);

            \node[above] at (0,0){$\pi_q(\seqj{i}{1}{l}0^\infty)$};
            \node[above] at (5,0){$\pi_q(\seqj{i}{1}{l}1^\infty)$};

            \node[below] at (4,-0.5){$\pi_q(\seqj{i^+}{1}{l}0^\infty)$};
            \node[below] at (9,-0.5){$\pi_q(\seqj{i^+}{1}{l}1^\infty)$};

            \node[below] at (-4,-0.5){$\pi_q(\seqj{i^-}{1}{l}0^\infty)$};
            \node[below] at (1,-0.5){$\pi_q(\seqj{i^-}{1}{l}1^\infty)$};

            \filldraw[black] (0,0) circle (1pt);
            \filldraw[black] (5,0) circle (1pt);
            \filldraw[black] (4,-0.5) circle (1pt);
            \filldraw[black] (9,-0.5) circle (1pt);

            \filldraw[black] (-4,-0.5) circle (1pt);
            \filldraw[black] (1,-0.5) circle (1pt);
        \end{scope}
    \end{tikzpicture}
    \caption{In the context of Lemma \ref{lemma: cylinders intersect iff lex cons}, $\pi_q[\seqj{i^-}{1}{l}]$ and $\pi_q[\seqj{i^+}{1}{l}]$ intersect if and only if $\seqj{i^-}{1}{l} \prec \seqj{i^+}{1}{l}$ are lexicographically consecutive.}
    \label{figure: intersecting intervals}
\end{figure}
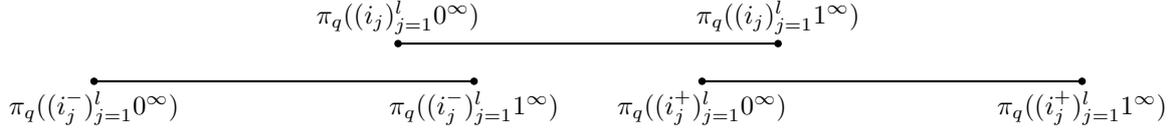

\begin{proof}
    We first show the reverse direction that lexicographically consecutive sequences subject to the hypotheses of the lemma project to intervals which intersect.

    Let $q \in (q_k, 2)$ and let $\seqj{i}{1}{l} , \seqj{i'}{1}{l}$ be prefixes of elements of $\mathcal{S}^k$.
    Suppose $\seqj{i}{1}{l}, \seqj{i'}{1}{l}$ are lexicographically consecutive with $\seqj{i}{1}{l} \prec \seqj{i'}{1}{l}$, then there is some $p \in \mathbb{N}$ such that 
    $\seqj{i}{1}{l-p} = \seqj{i'}{1}{l-p}$,
    $i_{l-p+1} = 0, i_{l-p+2} = \cdots = i_l = 1$, and
    $i'_{l-p+1} = 1, i'_{l-p+2} = \cdots = i'_l = 0$.
    So $\seqj{i}{1}{l}$ and $\seqj{i'}{1}{l}$ are of the form
    $$\seqj{i}{1}{l} =  \seqj{i}{1}{l-p}01^{p-1},$$
    and
    $$\seqj{i'}{1}{l} = \seqj{i}{1}{l-p}10^{p-1}.$$
    Since $\seqj{i}{1}{l}$ and $\seqj{i'}{1}{l}$ avoid $(01^k)$ and $(10^k)$, we know that $1 \leq p \leq k$.
    To prove $\pi_q[\seqj{i}{1}{l}] \cap \pi_q[\seqj{i'}{1}{l}] \neq \emptyset$ it suffices to show that the left endpoint of $\pi_q[\seqj{i}{1}{l}]$ is less than the left endpoint of $\pi_q[\seqj{i'}{1}{l}]$ which in turn is less than the right endpoint of $\pi_q[\seqj{i}{1}{l}]$ (see Figure \ref{figure: intersecting intervals}). 
    This is equivalent to the sequence of inequalities
    $$\pi_q(\seqj{i}{1}{l}0^\infty) < \pi_q(\seqj{i'}{1}{l}0^\infty) < \pi_q(\seqj{i}{1}{l}1^\infty).$$
    The first inequality is immediate from Lemma \ref{lemma: projections of sequences are well ordered} and the second is equivalent to $\pi_q(10^\infty) < \pi_q(01^\infty)$ which is true for all $q \in (1,2)$.
    This proves the reverse direction.

    For the forwards direction of the proof, we show that sequences $\seqj{i^-}{1}{l} \prec \seqj{i^+}{1}{l}$ prefixing elements of $\mathcal{S}^k$ which are not lexicographically consecutive project to disjoint cylinders.
    In this case, by Lemma \ref{lemma: projections of sequences are well ordered}, the left and right endpoints are well ordered as follows:
    $$\pi_q(\seqj{i^-}{1}{l}0^\infty) < \pi_q(\seqj{i^+}{1}{l}0^\infty),$$
    and
    $$\pi_q(\seqj{i^-}{1}{l}1^\infty) < \pi_q(\seqj{i^+}{1}{l}1^\infty).$$
    
    It therefore suffices to show that if
    $\seqj{i^-}{1}{l}, \seqj{i^+}{1}{l}$ prefix elements of $\mathcal{S}^k$ and are such that 
    $\seqj{i^-}{1}{l} \prec \seqj{i}{1}{l}$ are lexicographically consecutive and $\seqj{i}{1}{l} \prec \seqj{i^+}{1}{l}$ are lexicographically consecutive for some $\seqj{i}{1}{l} \in \{0,1\}^l$,
    then the right endpoint of $\seqj{i^-}{1}{l}$ is less than the left endpoint of $\seqj{i^+}{1}{l}$ (see Figure \ref{figure: intersecting intervals}). 
    That is, we aim to prove the inequality
    \begin{equation}
    \label{equation: right endpoint less than left endpoint}
    \pi_q(\seqj{i^-}{1}{l}1^\infty) < \pi_q(\seqj{i^+}{1}{l}0^\infty).
    \end{equation}
    It can be checked that $i_l^- = i_l^+$ and that $\seqj{i^-}{1}{l-1}$ and $ \seqj{i^+}{1}{l-1}$ are lexicographically consecutive with $\seqj{i^-}{1}{l-1} \prec \seqj{i^+}{1}{l-1}$.
    Hence $\seqj{i^-}{1}{l}$ and $\seqj{i^+}{1}{l}$ are of the form
    $$\seqj{i^-}{1}{l} = \seqj{i^-}{1}{l-p-1}01^{p-1}i_l^-,$$
    and
    $$\seqj{i^+}{1}{l} = \seqj{i^-}{1}{l-p-1}10^{p-1}i_l^-.$$
    As in the proof of the reverse direction, we know that $p \leq k-1$ since by assumption $\seqj{i^-}{1}{l}$ and $\seqj{i^+}{1}{l}$ avoid $(01^k)$ and $(10^k)$.
    Substituting the above expressions for $\seqj{i^-}{1}{l}$ and $\seqj{i^+}{1}{l}$ back into \eqref{equation: right endpoint less than left endpoint}, the inequality we now aim to prove is
    $$\pi_q(\seqj{i^-}{1}{l-p-1}01^{p-1}i_l^- 1^\infty) < \pi_q(\seqj{i^-}{1}{l-p-1}10^{p-1}i_l^- 0^\infty),$$
    which we simplify by restricting to the tails to give
    \begin{equation}
    \label{equation: right endpoint less than left endpoint variation}
        \pi_q(01^{p-1}i_l^- 1^\infty) < \pi_q(10^{p-1}i_l^- 0^\infty).
    \end{equation}
    By inspection, the left hand side of \eqref{equation: right endpoint less than left endpoint variation} is maximal at $p = k-1$ and the right hand side is minimal at $p=k-1$.
    It therefore suffices to consider only $p = k-1$.
    Setting $i_l^- = 0$, \eqref{equation: right endpoint less than left endpoint variation} becomes 
    $$\pi_q(01^{k-2}01^\infty) < \pi_q(10^\infty).$$
    We have that 
    $$\pi_q(01^{k-2}01^\infty) < \pi_q(01^{k-2}(10^{k-1})^\infty) < \pi_q((01^{k-1})^\infty) < \pi_q(10^\infty),$$
    where in the first and third inequalities we have used Lemma \ref{lemma: sequence inequalities} and in the second inequality we have used $\pi_q((0^{k-1}1)^\infty) < \pi_q((01^{k-1})^\infty)$.
    Therefore \eqref{equation: right endpoint less than left endpoint} holds when $i_l^- = 0$ and the case for $i_l^- = 1$ is similar.
    This proves the forwards direction and the lemma holds.
    \end{proof}

\subsection{Extension of the base $q$ expansion}
\label{subsection: extension of the base q expansion}

In this subsection we define a set of maps $E_q^* = \{\bqe{-1}, \bqe{0}, \bqe{1}\}$ on $I_q^* = [-\frac{1}{q-1}, \frac{1}{q-1}]$ which extends the standard base $q$ expansion on $I_q$ from the alphabet $\{0,1\}$ to the alphabet $\{-1,0,1\}$.

Define the set of maps $E_q^* = \{\bqe{-1}, \bqe{0}, \bqe{1}\}$ on the interval $I_q^*$ (see Figure \ref{figure: Extended base q expansion}) by
\begin{align*}
    \bqe{-1} &: \left[-\frac{1}{q-1} , \frac{2-q}{q(q-1)}\right] \rightarrow I_q^* ;
&\bqe{-1}(x) &= qx + 1, \\
    \bqe{0} &: \left[ -\frac{1}{q(q-1)}, \frac{1}{q(q-1)}\right] \rightarrow I_q^*;
&\bqe{0}(x) &= qx, \\
    \bqe{1} &: \left[ \frac{-(2-q)}{q(q-1)}, \frac{1}{q-1}\right] \rightarrow I_q^*;
&\bqe{1}(x) &= qx - 1.     
\end{align*}

\begin{figure}[t]
    \centering
        \begin{subfigure}[t]{0.45\textwidth}
        \vskip 0pt
    \begin{tikzpicture}[scale = 3.5]
        \begin{scope}[thick]
        \draw (-1,0) -- (1,0);
        \draw (0,-1) -- (0,1);

        \draw (-1,-1) -- (0.2,1);
        \draw (-0.6, -1) -- (0.6,1);
        \draw (-0.2,-1) -- (1,1);

        \draw[dotted] (0.4,-1) -- (0.4,1);
        \draw[dotted] (0.6,-1) -- (0.6,1);

        \draw [dotted] (-0.4,-1) -- (-0.4,1);
        \draw [dotted] (-0.6,-1) -- (-0.6,1);

        % \draw[dotted] (-0.2,-1) -- (-0.2,1);
        % \draw[dotted] (0.2,-1) -- (0.2,1);

        \draw [dotted] (-1,-1) -- (1,1);

        \draw (-1,-1) -- (-1,1) -- (1,1) -- (1,-1) -- (-1,-1);

        \node[below] at (1,-1) {$\frac{1}{q-1}$};
        \node[below] at (0.6,-1) {$\frac{1}{q(q-1)}$};
        \node[below] at (0.4,-1) {$\frac{1}{q}$};
        \node[below] at (0,-1) {$0$};
        \node[below] at (-0.4,-1) {$-\frac{1}{q}$};
        \node[below] at (-0.64,-1) {$-\frac{1}{q(q-1)}$};
        \node[below] at (-1,-1) {$-\frac{1}{q-1}$};

        \node[left] at (-1,0) {$0$};
        \node[left] at (-1,1) {$\frac{1}{q-1}$};

        \node[above] at (-0.5,1) {$f_{-1}^*$};
        \node[above] at (0,1) {$f_0^*$};
        \node[above] at (0.5,1) {$f_1^*$};
        
        \end{scope}
    \end{tikzpicture}
            \caption{The extension of the base $q$ expansion given by the maps $E_q^*$.
    One can see the top right quadrant resembles the standard base $q$ expansion on $I_q$ if $f_{-1}^*$ is ignored.}
    \label{figure: Extended base q expansion}
        \end{subfigure}
        \hfill
        \begin{subfigure}[t]{0.45\textwidth}
        \vskip 0pt
        \begin{tikzpicture}[scale = 3.5]
        \begin{scope}[thick, opacity = 0.2]
        \draw (-1,0) -- (1,0);
        \draw (0,-1) -- (0,1);

        \draw (-1,-1) -- (0.2,1);
        \draw (-0.6, -1) -- (0.6,1);
        \draw (-0.2,-1) -- (1,1);

        \draw[dotted] (0.4,-1) -- (0.4,1);
        \draw[dotted] (0.6,-1) -- (0.6,1);

        \draw [dotted] (-0.4,-1) -- (-0.4,1);
        \draw [dotted] (-0.6,-1) -- (-0.6,1);

        % \draw[dotted] (-0.2,-1) -- (-0.2,1);
        % \draw[dotted] (0.2,-1) -- (0.2,1);

        \draw [dotted] (-1,-1) -- (1,1);

        \draw (-1,-1) -- (-1,1) -- (1,1) -- (1,-1) -- (-1,-1);

        \node[below] at (1,-1) {$\frac{1}{q-1}$};
        \node[below] at (0.6,-1) {$\frac{1}{q(q-1)}$};
        \node[below] at (0.4,-1) {$\frac{1}{q}$};
        \node[below] at (0,-1) {$0$};
        \node[below] at (-0.4,-1) {$-\frac{1}{q}$};
        \node[below] at (-0.6,-1) {$-\frac{1}{q(q-1)}$};
        \node[below] at (-1,-1) {$-\frac{1}{q-1}$};

        \node[left] at (-1,0) {$0$};
        \node[left] at (-1,1) {$\frac{1}{q-1}$};

        \node[above] at (-0.5,1) {$f_{-1}^*$};
        \node[above] at (0,1) {$f_0^*$};
        \node[above] at (0.5,1) {$f_1^*$};
        
        \end{scope}

        \begin{scope}[thick]
        \draw (-0.8,0) -- (0.8,0);

        \draw (-0.8,-0.1) -- (-0.4,-0.1);
        \draw (-0.5,-0.2) -- (-0.1,-0.2);
        \draw (-0.2,-0.3) -- (0.2,-0.3);
        \draw (0.8,-0.1) -- (0.4,-0.1);
        \draw (0.5,-0.2) -- (0.1,-0.2);

        \draw[dotted] (-0.8,-0.1) -- (-0.8,0);
        \draw[dotted] (-0.4,-0.1) -- (-0.4,0);

        \draw[dotted] (-0.5,-0.2) -- (-0.5,0);
        \draw[dotted] (-0.1, -0.2) -- (-0.1,0);

        \draw [dotted] (-0.2, -0.3) -- (-0.2,0);
        \draw[dotted] (0.2, -0.3) -- (0.2,0);

        \draw[dotted] (0.8,-0.1) -- (0.8,0);
        \draw[dotted] (0.4,-0.1) -- (0.4,0);

        \draw[dotted] (0.5,-0.2) -- (0.5,0);
        \draw[dotted] (0.1, -0.2) -- (0.1,0);

        \node [above] at (0,0) {$H_q$};

        \node [below left] at (-0.6,-0.1) {$S_{-1 0}$};
        \node [below left] at (-0.3,-0.2) {$S_{0 -1}$};
        \node [below] at (0, -0.3) {$S_{00}$};
        \node [below right] at (0.3, -0.2) {$S_{01}$};
        \node [below right] at (0.6, -0.1) {$S_{10}$};
        
        \end{scope}
    \end{tikzpicture}
            \caption{Interval $H_q$ with overlapping subintervals $S_w(H_q)$ for $w \in W_2$.
            Note that the interval $H_q$ and the subintervals are not drawn to scale.}
            \label{figure: H_q}
        \end{subfigure}
        \caption{}
        \label{figure: Extended base q expansion and H_q}
    % \caption{(a) shows the first two steps in the construction of $K_{\frac{3}{5}}$. (b) shows the $F_q$ dynamics for $q = 5/3$. The red line in (a) shows the unique slice at height $y=3/8$ and in (b) it shows the corresponding unique orbit.}
\end{figure}

For any $x \in I_q$, the orbit space of $x$ in $E^*_q$ is given by
$$\orbs{E^*_q}{x} = \{\bqeseq{1} \in \{\bqe{-1}, \bqe{0}, \bqe{1}\}^\mathbb{N} : \mapseqgen{f}{*}{i}{j}{1}{k}(x) \in I_q^* \mathrm{\ for \ all \ }k \in \mathbb{N}_{\geq 0}\}.$$
Since ternary expansions do not index the orbit space of this extension of the base $q$ dynamics, we impose no restrictions on the domains of the maps to be half-open intervals.

As for the standard base $q$ expansion, it is straightforward to show that 
$\bqeseq{1} \in \orbs{E^*_q}{x}$ if and only if $\pi_q(\seqj{i}{1}{\infty}) = x$ for any $x \in I_q^*$. We note that in the case of the extended base $q$ dynamics $E_q^*$ the associated sequence space is $\{-1,0,1\}^\mathbb{N}$ rather than
$\{0,1\}^\mathbb{N}$.

Define $H_q = \left[ -\frac{q}{q^2-1} , \frac{q}{q^2-1} \right] 
\subset I_q^* $ and $W_2 = \{(- \! 10),(0 - \!\!1),(00),(01),(10)\}$. For $x \in I_q^*$, define $S_i(x) = \bqeinv{i}(x)$ for $i \in \{-1,0,1\}$, and for $w = i_1 \ldots i_n \in \{-1,0,1\}^n$, define $S_w(x) = \bqeinv{i_1} \circ \cdots \circ \bqeinv{i_n}(x)$.

\begin{lemma}
\label{lemma: H_q is the union of images of H_q}
    For any $q \in (1,2)$,
    $$H_q = \bigcup_{w \in W_2} S_w(H_q).$$
\end{lemma}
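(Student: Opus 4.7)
The statement is a direct arithmetic verification. Each extended base-$q$ map has the form $f^*_i(x) = qx - i$ for $i \in \{-1,0,1\}$ on its respective domain, so the inverses are $S_i(x) = (x+i)/q$. For a two-letter word $w = i_1 i_2 \in W_2$ this gives
\[
S_w(x) = S_{i_1}(S_{i_2}(x)) = \frac{x + i_2 + q i_1}{q^2},
\]
so each $S_w(H_q)$ is a closed interval of common half-length $r_q := \frac{1}{q(q^2-1)}$, centred at $c_w := (i_2 + q i_1)/q^2$. Substituting the five elements of $W_2$ produces the ordered list of centres
\[
c_{(-10)} = -\tfrac{1}{q}, \quad c_{(0-1)} = -\tfrac{1}{q^2}, \quad c_{(00)} = 0, \quad c_{(01)} = \tfrac{1}{q^2}, \quad c_{(10)} = \tfrac{1}{q}.
\]

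With this setup, the desired equality splits into two claims. First, the extreme endpoints match: writing $h_q := \frac{q}{q^2-1}$, the one-line identity $\frac{1}{q} + r_q = h_q$ shows that $S_{(-10)}(H_q)$ has left endpoint $-h_q$ and $S_{(10)}(H_q)$ has right endpoint $h_q$. Because all five intervals share the same half-length and their centres lie in $[-1/q, 1/q]$, this same identity also delivers the inclusion $\bigcup_{w \in W_2} S_w(H_q) \subseteq H_q$. Second, consecutive intervals (in the ordering above) must overlap. The symmetry $x \mapsto -x$ preserves $H_q$, preserves $W_2$, and exchanges $S_i$ with $S_{-i}$, so only two of the four adjacent pairs need to be checked. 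The pair $(c_{(00)}, c_{(01)})$ overlaps iff $2 r_q \geq 1/q^2$, equivalent to $q^2 - 2q - 1 \leq 0$, which holds on $(1, 1+\sqrt{2}) \supset (1,2)$. The pair $(c_{(01)}, c_{(10)})$ overlaps iff $2 r_q \geq 1/q - 1/q^2$, which, after clearing denominators, rearranges to $q^3 - q^2 - 3q + 1 \leq 0$; this polynomial evaluates to $-2$ at $q=1$ and $-1$ at $q=2$, and a short inspection of its unique critical point in $(1,2)$ (near $q \approx 1.39$) confirms it remains negative throughout.

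The argument is entirely routine, with no conceptual obstacle. The only mild care required is identifying which of the four adjacent pairs of centres imposes the binding overlap constraint; once the contiguity of the five intervals is established, combining it with the matching extreme endpoints above yields $H_q \subseteq \bigcup_{w \in W_2} S_w(H_q)$ and the reverse inclusion simultaneously, completing the proof.
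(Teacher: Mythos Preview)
Your proof is correct and follows essentially the same approach as the paper's own proof: both explicitly identify the five subintervals $S_w(H_q)$, verify that the extreme endpoints match those of $H_q$, and reduce the overlap of consecutive intervals to the two polynomial inequalities $q^2 - 2q - 1 \leq 0$ and $q^3 - q^2 - 3q + 1 \leq 0$, invoking the symmetry $x \mapsto -x$ for the remaining pairs. Your centre/half-length formulation is a tidy way to organise the same arithmetic the paper carries out with explicit endpoints.
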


\begin{proof}
The proof is a straightforward calculation and comparison of endpoints of intervals (see Figure \ref{figure: H_q}).
Notice that 
\begin{align*}
S_{00}(H_q) = f_0^{*-1}(f_0^{*-1}(H_q)) &= \left[-\frac{1}{q}\left(\frac{1}{q^2-1}\right), \frac{1}{q}\left(\frac{1}{q^2-1}\right)\right], \\
S_{01}(H_q) =f_0^{*-1}(f_1^{*-1}(H_q)) &= \left[\frac{1}{q^2}\left(\frac{q^2-q-1}{q^2-1}\right) , \frac{1}{q^2}\left(\frac{q^2+q-1}{q^2-1}\right) \right], \\
S_{10}(H_q) =f_1^{*-1}(f_0^{*-1}(H_q)) &= \left[\frac{1}{q}\left(\frac{q^2-2}{q^2-1}\right) , \left(\frac{q}{q^2-1}\right) \right] , \\
S_{0-1}(H_q) =f_0^{*-1}(f_{-1}^{*-1}(H_q)) &= \left[\frac{1}{q^2}\left(\frac{-q^2-q+1}{q^2-1}\right) , \frac{1}{q^2}\left(\frac{-q^2+q+1}{q^2-1}\right) \right] , \\
S_{-10}(H_q) =f_{-1}^{*-1}(f_0^{*-1}(H_q)) &= \left[\left(-\frac{q}{q^2-1}\right) , \frac{1}{q}\left(\frac{2-q^2}{q^2-1}\right) \right] .
\end{align*}
$S_{00}(H_q) \cap S_{01}(H_q) \neq \emptyset$ because
$$ \frac{1}{q}\left(\frac{1}{q^2-1}\right) > \frac{1}{q^2}\left(\frac{q^2-q-1}{q^2-1}\right) ,$$
$$ \iff q^2 - 2q - 1 < 0, $$
which is true for all $q \in (1,2)$, and $S_{01}(H_q) \cap S_{10}(H_q) \neq \emptyset$ because
$$ \frac{1}{q^2}\left(\frac{q^2+q-1}{q^2-1}\right) > \frac{1}{q}\left(\frac{q^2-2}{q^2-1}\right) , $$
$$ \iff q^3 -q^2 - 3q +1 < 0, $$
which is also true for all $q \in (1,2)$. By symmetry $S_{00}(H_q) \cap S_{0-1}(H_q) \neq \emptyset $ and $S_{0-1}(H_q) \cap S_{-10}(H_q) \neq \emptyset $.
The left and right endpoints of $H_q$ are preserved by the maps $S_{-10}(H_q)$ and $S_{10}(H_q)$ respectively, hence the lemma.
\end{proof}

The following lemma follows easily from Lemma \ref{lemma: H_q is the union of images of H_q}.

\begin{lemma}
\label{lemma: elements of H_q have orbits in W_2}
    If $x \in H_q$ then there is some $ \bqeseq{1} \in \orbs{E^*_q}{x}$ such that $\seqj{i}{1}{\infty} \in W_2^\mathbb{N}$ and $\mapseqgen{f}{*}{i}{j}{1}{k}(x) \in H_q$ for all $k \in 2\mathbb{N}$.
\end{lemma}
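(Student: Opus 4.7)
The plan is to construct the sequence $\seqj{i}{1}{\infty}$ inductively, two symbols at a time, by repeatedly applying Lemma \ref{lemma: H_q is the union of images of H_q}. The key observation is that for $w = (w_1, w_2) \in W_2$ we have $S_w = \bqeinv{w_1} \circ \bqeinv{w_2}$, so if $y \in S_w(H_q)$ then the composition $\bqe{w_2} \circ \bqe{w_1}$ sends $y$ back into $H_q$. This is exactly the mechanism that lets the orbit return to $H_q$ after every two applications.

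More precisely, I would carry out induction on $k \in \mathbb{N}_{\geq 0}$ with the inductive hypothesis that $i_1, \ldots, i_{2k}$ have been chosen so that the pairs $(i_{2j-1} i_{2j})$ lie in $W_2$ for each $1 \leq j \leq k$ and $y_k := \mapseqgen{f}{*}{i}{j}{1}{2k}(x) \in H_q$. The base case $k=0$ is immediate since $\mapseqgen{f}{*}{i}{j}{1}{0}$ is the identity and $x \in H_q$ by hypothesis. For the inductive step, Lemma \ref{lemma: H_q is the union of images of H_q} applied to $y_k$ yields some $w = (w_1, w_2) \in W_2$ with $y_k \in S_w(H_q) = \bqeinv{w_1}(\bqeinv{w_2}(H_q))$; choose $y_{k+1} \in H_q$ with $y_k = \bqeinv{w_1}(\bqeinv{w_2}(y_{k+1}))$ and set $i_{2k+1} := w_1$, $i_{2k+2} := w_2$. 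Then $\bqe{w_1}(y_k) = \bqeinv{w_2}(y_{k+1})$, which automatically lies in the domain of $\bqe{w_2}$, and $\bqe{w_2}(\bqe{w_1}(y_k)) = y_{k+1} \in H_q$, closing the induction.

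It remains to verify that the resulting sequence $\bqeseq{1}$ lies in $\orbs{E^*_q}{x}$, i.e.\ that every partial composition $\mapseqgen{f}{*}{i}{j}{1}{m}(x)$ is in $I_q^*$. For even $m$ this follows from $H_q \subset I_q^*$, and for odd $m$ it follows because each $\bqe{i}$ has codomain $I_q^*$ and the intermediate image at an odd step is of the form $\bqeinv{w_2}(y_{k+1})$, which lies in the domain of the next map by construction. The only thing to be careful about is the convention that $\mapseqgen{f}{*}{i}{j}{1}{m}(x) = \bqe{i_m} \circ \cdots \circ \bqe{i_1}(x)$ while $S_{w_1 w_2} = \bqeinv{w_1} \circ \bqeinv{w_2}$; once this composition-order bookkeeping is aligned the lemma is a direct unwinding of Lemma \ref{lemma: H_q is the union of images of H_q}, and I do not anticipate any real obstacle beyond that bookkeeping.
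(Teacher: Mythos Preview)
Your proposal is correct and follows essentially the same approach as the paper: inductively apply Lemma~\ref{lemma: H_q is the union of images of H_q} to the current point in $H_q$ to obtain the next two symbols from $W_2$, and repeat. You are slightly more explicit than the paper about the domain bookkeeping at odd steps, but the underlying argument is identical.
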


\begin{proof}
    If $x \in H_q$ then $x \in S_w(H_q)$ for some $w \in W_2$ by Lemma \ref{lemma: H_q is the union of images of H_q}. 
    Let $w_1 = i_1 i_2$ then $\bqe{i_2}(\bqe{i_1}(x)) \in H_q$. Repeating the above argument we generate a sequence of words $\seqj{w}{1}{\infty} = \seqj{i}{1}{\infty}$ where $w_j \in W_2$ for all $j \in \mathbb{N}$ and $\mapseqgen{f}{*}{i}{j}{1}{k} \in H_q$ for all $k \in 2\mathbb{N}$.
    By construction it follows that $(f^*_{i_j})_{j=1}^\infty \in \orbs{E^*_q}{x}$.
\end{proof}

The following lemma guarantees the existence of an orbit of $1$ whose associated sequence has a tail in $W_2^\mathbb{N}$.
This is important for the definition of the \textit{fixed expansion of $1$} in the next subsection to be valid.

\begin{lemma}
\label{lemma: orbit of 1 has tail in W_2}
    If $q \in (1,2)$ then there is some $M \in \mathbb{N}_{\geq 0}$ depending only on $q$ and some sequence $\seqj{w}{1}{\infty} \in W_2^\mathbb{N}$ such that $\seqj{i}{1}{\infty} = 1^M\seqj{w}{1}{\infty}$ satisfies $\mapseqgeninf{f}{*}{i}{j}{1}{\infty} \in \orbs{E_q^*}{1}$.
    Moreover, if $q \in (1,G]$ then $M=0$ and if $q \in (q_k, q_{k+1}]$ for $k \in \mathbb{N}_{\geq 2}$ then $M = k$.
\end{lemma}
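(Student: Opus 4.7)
The plan is to construct the desired orbit in two stages: first apply the map $\bqe{1}: x \mapsto qx - 1$ to $1$ exactly $M$ times to land in $H_q$, then invoke Lemma \ref{lemma: elements of H_q have orbits in W_2} to continue the orbit using digits drawn from $W_2^{\mathbb{N}}$. A direct induction gives the formula $(\bqe{1})^j(1) = q^j - q^{j-1} - \cdots - q - 1$ for all $j \geq 0$, so everything reduces to understanding the size of this quantity, for which Lemma \ref{lemma: 2-q inequality} is tailor-made.

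For the case $q \in (1, G]$, I would check directly that $1 \in H_q$: the condition $1 \leq q/(q^2-1)$ rearranges to $q^2 - q - 1 \leq 0$, which is exactly $q \leq G$. Hence $M = 0$ works with no iteration at all. For $q \in (q_k, q_{k+1}]$ with $k \geq 2$, I would verify that each intermediate iterate $(\bqe{1})^j(1)$ with $0 \leq j \leq k-1$ lies in the domain of $\bqe{1}$; since $q > q_k > q_j$, Lemma \ref{lemma: 2-q inequality}(2)(a) places $(\bqe{1})^j(1)$ in $(0,1) \subset [0, 1/(q-1)]$, so the iteration proceeds without issue. Applying Lemma \ref{lemma: 2-q inequality}(2)(b) at the $k$th step then gives $(\bqe{1})^k(1) \in (0, 1/q]$, and since $1/q \leq q/(q^2 - 1)$ holds trivially on $(1,2)$, this final iterate lies in $H_q$. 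Thus $M = k$ suffices.

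Having reached a point $x := (\bqe{1})^M(1) \in H_q$, Lemma \ref{lemma: elements of H_q have orbits in W_2} supplies a sequence $\seqj{w}{1}{\infty} \in W_2^{\mathbb{N}}$ such that $(\bqe{w_j})_{j=1}^\infty \in \orbs{E^*_q}{x}$. Concatenating the $M$ initial applications of $\bqe{1}$ with this tail yields the required sequence $\seqj{i}{1}{\infty} = 1^M \seqj{w}{1}{\infty}$, whose orbit of $1$ remains in $I_q^*$ at every stage. I do not foresee any serious obstacle: the only delicate step is confirming that the intermediate iterates are inside the (closed) domain of $\bqe{1}$, and this is immediate from Lemma \ref{lemma: 2-q inequality}(2)(a).
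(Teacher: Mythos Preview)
Your proof is correct and follows essentially the same route as the paper: show $(\bqe{1})^M(1)\in H_q$ via Lemma~\ref{lemma: 2-q inequality} and then appeal to Lemma~\ref{lemma: elements of H_q have orbits in W_2}. You are in fact slightly more careful than the paper in that you explicitly check the intermediate iterates lie in the domain of $\bqe{1}$ (a point the paper glosses over); the only minor imprecision is that for $j=0$ the iterate equals $1\notin(0,1)$, but of course $1\in\mathrm{dom}(\bqe{1})$ directly, so this does not affect the argument.
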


\begin{proof}
    Since $q_1 = 1$ and $\lim_{n \rightarrow \infty} q_n = 2$, we know that $q \in (1,2)$ implies that $q \in (q_k, q_{k+1}]$ for some $k 
    \in \mathbb{N}$.
    By Lemma \ref{lemma: elements of H_q have orbits in W_2} it suffices to show, for each $q \in (1,2)$, that there is some $M \in \mathbb{N}_{\geq 0}$ such that $(f^*_1)^M(1) \in H_q$.
    Recall that $q_2 = G$.
    If $q \in (1,G]$ then $0 < 1 \leq \frac{q}{q^2-1}$ so $1 \in H_q$ and by setting $M=0$ we are done.
    Let $q \in (q_k, q_{k+1}]$ for some $k \geq 2$, and observe $(f^*_1)^k(1) = q^k - q^{k-1} - \cdots - q - 1$.
    By Lemma \ref{lemma: 2-q inequality}, we know that if $q \in (q_k, q_{k+1}]$ then $0 < (f^*_1)^k(1) \leq \frac{1}{q}$.
    Finally, $\frac{1}{q} < \frac{q}{q^2-1}$ for all $q \in (G,2)$ so setting $M = k$, $(f^*_1)^M(1) \in H_q$ and we are done.
\end{proof}

\subsection{Construction of $A_q$}
\label{subsection: construction of A_q}

In this subsection we construct a family of sequences $A_q$ for each $q \in (q_9,2)$ with the property that whenever $\seqj{a}{1}{\infty} \in A_q$ we can guarantee that $\pi_q(\seqj{a}{1}{\infty}), \pi_q(\seqj{a}{1}{\infty})-1 \in \mathcal{U}_q$. 
This is the content of Proposition \ref{proposition: A_q and A_q - 1 in U_q} below.

For each $q \in (q_9,2)$, fix $ \seqj{c}{1}{\infty} = 1^M\seqj{c}{M+1}{\infty} \in \{-1,0,1\}^\mathbb{N}$ where $1 = \pi_q(\seqj{c}{1}{\infty})$ and $\seqj{c}{M+1}{\infty} \in W_2^\mathbb{N}$ is as described in Lemma \ref{lemma: orbit of 1 has tail in W_2}.
The sequence $\seqj{c}{1}{\infty}$ is referred to as the \textit{fixed expansion of $1$}.
We emphasise here that the sequence $\seqj{c}{1}{\infty}$ with these properties is not necessarily unique, but by fixing one such sequence for each $q \in (q_9,2)$, the definition of the fixed expansion of $1$ is valid.

Let $J$ be the set of zeros of the sequence $\seqj{c}{1}{\infty}$:
$$ J = \{j \in \mathbb{N} : c_j = 0\}. $$
We can enumerate $J$ in the obvious way:
$J = \{j_0, j_1 , \ldots\}$ where $j_0 < j_1 < \cdots$.
Define 
$$\Jfixedone = \{j_m \in J : m =4k + 1 \ \mathrm{for \ some \ } k \in \mathbb{N}\},$$
$$ \Jfixedzero = \{j_m \in J : m = 4k+3 \ \mathrm{for \ some \ } k \in \mathbb{N}\}, $$
and
$$\Jfree = \{j_m \in J : m \ \mathrm{is \ even} \},$$ 
and call an index in $\Jfree$ a \textit{free zero} of $\seqj{c}{1}{\infty}$. 
If $j \in \Jfixedone \cup \Jfixedzero$ or if $j$ is such that $c_j \in \{-1,1\}$ then $j$ is a \textit{fixed index}, otherwise $j$ is a free zero.

Given $q \in (q_9,2)$ and $1^M\seqj{c}{M+1}{\infty}$ the fixed expansion of $1$, the set $A_q$ consists of the sequences $\seqj{a}{1}{\infty} \in \{0,1\}^\mathbb{N}$ which satisfy the following properties:
\begin{enumerate}
    \item $a_j = 1$ if $c_j = 1$,
    \item $a_j = 0$ if $c_j = -1$,
    \item $a_j = 1$ if $j \in \Jfixedone$ and
    \item $a_j = 0$ if $j \in \Jfixedzero$.
\end{enumerate}

Notice that there are no restrictions on the value of $a_j$ if $j \in \Jfree$ and later we will use this fact along with the bounded distance between zeros of $\seqj{c}{1}{\infty}$ to show that the thickness of $\pi_q(A_q)$ can be bounded below.
We can now state the key proposition of this subsection.

\begin{proposition}
\label{proposition: A_q and A_q - 1 in U_q}
    If $q \in (q_9,2)$ then $\pi_q(A_q),\pi_q(A_q)-1 \subset \mathcal{U}_q$.
\end{proposition}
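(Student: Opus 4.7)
The plan is to reduce the proposition to a purely combinatorial statement: that for every $\seqj{a}{1}{\infty} \in A_q$, both $\seqj{a}{1}{\infty}$ and the auxiliary binary sequence $\seqj{b}{1}{\infty}$ defined by $b_j := a_j - c_j$ lie in $\mathcal{S}^9$. Lemma \ref{lemma: projection of S^k is contained in U_q} then immediately gives $\pi_q(\seqj{a}{1}{\infty}) \in \mathcal{U}_q$. A quick check from the defining conditions on $A_q$ shows that $b_j = 0$ wherever $c_j = 1$, $b_j = 1$ wherever $c_j = -1$, and $b_j = a_j$ wherever $c_j = 0$, so $b_j \in \{0,1\}$ for every $j$; together with the linearity of $\pi_q$ on $\{-1,0,1\}^{\mathbb{N}}$ this yields $\pi_q(\seqj{b}{1}{\infty}) = \pi_q(\seqj{a}{1}{\infty}) - 1$, so $\seqj{b}{1}{\infty} \in \mathcal{S}^9$ also gives $\pi_q(\seqj{a}{1}{\infty}) - 1 \in \mathcal{U}_q$.

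The heart of the argument is a single combinatorial estimate. By Lemma \ref{lemma: orbit of 1 has tail in W_2} the tail $\seqj{c}{M+1}{\infty}$ is a concatenation of pairs from $W_2$, and each pair of $W_2$ contains at least one zero. Any window of nine consecutive tail positions therefore spans at least four complete pairs, and hence contains at least four zeros of $\seqj{c}{1}{\infty}$; being consecutive in position, these are necessarily four consecutive elements $j_m, j_{m+1}, j_{m+2}, j_{m+3}$ of $J$. Since the classes $\Jfree, \Jfixedone, \Jfixedzero$ partition $J$ by the residue of $m$ modulo $4$ (even, $\equiv 1$, and $\equiv 3$ respectively), any four consecutive elements of $J$ meet both $\Jfixedone$ and $\Jfixedzero$. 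Thus within any nine consecutive tail positions the sequence $\seqj{a}{1}{\infty}$ must contain both a $1$ (at the fixed-$1$ zero) and a $0$ (at the fixed-$0$ zero); the identical forcing applies to $\seqj{b}{1}{\infty}$ because $b_j = a_j$ whenever $c_j = 0$. Consequently neither $\seqj{a}{1}{\infty}$ nor $\seqj{b}{1}{\infty}$ can contain a run of length nine within the tail.

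It remains to handle the boundary. The prefix $c_1 \cdots c_M = 1^M$, with $M \geq 9$ guaranteed by Lemma \ref{lemma: orbit of 1 has tail in W_2} and the hypothesis $q > q_9$, forces $a_1 \cdots a_M = 1^M$ and $b_1 \cdots b_M = 0^M$. For $\seqj{a}{1}{\infty}$ to contain $(01^9)$ one would need a $0$ followed by nine $1$s; the $0$ cannot lie within the prefix, and nine consecutive $1$s cannot lie entirely within the tail, so the $1^9$ would have to straddle position $M+1$ — but this is ruled out because the window $\{M+1,\ldots,M+9\}$ contains a fixed-$0$ index. A symmetric argument handles $(10^9)$ in $\seqj{a}{1}{\infty}$, and an analogous pair of arguments (with the roles of $0$ and $1$ interchanged) handles both forbidden patterns in $\seqj{b}{1}{\infty}$. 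The main obstacle to a polished write-up is this boundary bookkeeping, as one must combine the constant-prefix structure with the $W_2$-pair structure of the tail in each of four cases; the combinatorial core is otherwise essentially the single observation above.
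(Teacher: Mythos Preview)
Your proposal is correct and follows essentially the same approach as the paper: show $A_q \subset \mathcal{S}^9$ and that the shifted sequence $b_j = a_j - c_j$ also lies in $\mathcal{S}^9$, then invoke Lemma~\ref{lemma: projection of S^k is contained in U_q}. The combinatorial core---nine consecutive tail indices contain at least four consecutive zeros of $c$, hence hit both $\Jfixedone$ and $\Jfixedzero$---is exactly the paper's argument.

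One small comment on the boundary case: your ``straddling'' analysis is more elaborate than necessary and the final clause (ruling out a straddling $1^9$ via a fixed-$0$ index in $\{M+1,\ldots,M+9\}$) is not quite watertight, since that index could sit at $M+9$ while a straddling $1^9$ reaches only to $M+8$. Fortunately this step is redundant: once you note the preceding $0$ cannot lie in the $1^M$ prefix, the $0$ is in the tail, so the entire block $01^9$ lies in the tail, contradicting the tail estimate directly. The paper handles it this way in one line, and the symmetric argument for $b$ (prefix $0^M$) is equally short.
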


Proposition \ref{proposition: A_q and A_q - 1 in U_q} is a consequence of the following two lemmas.

\begin{lemma}
    \label{lemma: projection of A_q is in U_q}
    If $q \in (q_9,2)$ then $\pi_q(A_q) \subset \mathcal{U}_q$.
\end{lemma}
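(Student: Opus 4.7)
The plan is to show that $A_q \subset \mathcal{S}^9$, whence Lemma \ref{lemma: projection of S^k is contained in U_q} applied with $k=9$ gives $\pi_q(A_q) \subset \pi_q(\mathcal{S}^9) \subset \mathcal{U}_q$. Fix $\seqj{a}{1}{\infty} \in A_q$ and call $p \in \mathbb{N}$ a \emph{forcing-zero} if $c_p = -1$ or $p \in \Jfixedzero$, and a \emph{forcing-one} if $c_p = 1$ or $p \in \Jfixedone$; the defining conditions of $A_q$ force $a_p = 0$ at forcing-zeros and $a_p = 1$ at forcing-ones. Every maximal $1$-run in $\seqj{a}{1}{\infty}$ is therefore trapped between two consecutive forcing-zeros, and symmetrically for $0$-runs, so it suffices to show that consecutive forcing-zero positions differ by at most $9$, and likewise for forcing-one positions.

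For the forcing-zero bound, adding $-1$-positions to $\Jfixedzero$ only shrinks gaps between forcing-zeros, so the worst case is a tail $\seqj{c}{M+1}{\infty}$ with no $-1$, in which the forcing-zero positions are exactly $\Jfixedzero = \{j_{4k+3}\}_{k \geq 0}$ and $j_{4k+7} - j_{4k+3}$ is precisely the sum of $4$ consecutive zero-gaps of the tail. I would therefore prove that in any tail in $W_2^{\mathbb{N}}$ the sum of any $4$ consecutive zero-gaps is at most $9$. Each block of $W_2$ places its zero(s) at known local positions: $(0, \pm 1)$ at position $1$, $(\pm 1, 0)$ at position $2$, and $(0,0)$ at both. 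Enumerating the transition gaps between the last zero of one block and the first zero of the next gives gap $\leq 3$, with equality only for transitions $(0, \pm 1) \to (\pm 1, 0)$; after any such transition the current block ends at its right edge, forcing the next zero-gap to be at most $2$. A short dynamic programme over four states (last zero was the unique zero of a $(0, \pm 1)$ block, of a $(\pm 1, 0)$ block, or was the first or second zero of a $(0,0)$ block) then pins the maximum $4$-step sum at exactly $9$, attained for instance by the block pattern $(0,1)(0,1)(1,0)(0,1)(1,0)$ whose zero-gaps are $2, 3, 1, 3$. The bound for forcing-one positions follows by the $+1 \leftrightarrow -1$ symmetry of $W_2$ together with the symmetric modular condition defining $\Jfixedone$.

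It remains to address the boundary between the initial segment and the tail. Since $a_1 = \cdots = a_M = 1$ is not preceded by any $0$, this long initial $1$-run cannot create an $01^9$ substring. A $3$-step version of the same gap-sum analysis gives $j_3 \leq M + 8$, so the first forcing-zero appears within $7$ positions of the end of the initial block and the main argument handles everything thereafter; similarly $j_1 \leq M + 4$ bounds the initial $0$-run in the tail (which begins no later than $a_{M+1}$ and is preceded by $a_M = 1$) to length at most $3$, safely avoiding $10^9$. The main obstacle throughout is the technical $4$-step gap bound of exactly $9$; the value is tight, and both the choice $k = 9$ in $\mathcal{S}^9$ and the period-$4$ partition of the zeros of $\seqj{c}{1}{\infty}$ into $\Jfree, \Jfixedone, \Jfixedzero$ are calibrated precisely to match it.
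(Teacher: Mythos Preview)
Your approach is correct and follows the paper's strategy exactly: reduce to $A_q \subset \mathcal{S}^9$ and then invoke Lemma~\ref{lemma: projection of S^k is contained in U_q}. The paper's execution is considerably shorter, however: instead of bounding gaps between consecutive forcing-zeros via a four-state dynamic programme on $W_2$-blocks, it observes directly that any nine consecutive indices of the tail $\seqj{c}{M+1}{\infty}$ contain at least four zeros of $c$ (since every length-two word in $W_2$ contains a zero), and any four consecutive zeros $j_m,j_{m+1},j_{m+2},j_{m+3}$ hit every residue class modulo $4$, hence include one element of $\Jfixedzero$ and one of $\Jfixedone$; thus every length-nine window in the tail already contains both a forced $0$ and a forced $1$, and the boundary at $M$ is immediate from $a_1=\cdots=a_M=1$.
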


\begin{proof}
    By Lemma \ref{lemma: projection of S^k is contained in U_q}, it suffices to show that if $q \in (q_9,2)$ then $A_q \subset \mathcal{S}^9$.
    
    Let $q \in (q_9,2)$ and let $1^M\seqj{c}{M+1}{\infty}$ be the fixed expansion of $1$. Since each $w \in W_2$ has length two and at least one $0$, we know that for any $m \in \mathbb{N}$, $(2m+1)$ consecutive indices of $\seqj{c}{M+1}{\infty}$ must contain at least $m$ $0$s. 
    In particular, each set of nine consecutive indices must contain at least four $0$s.

    By inspection of the sets $\Jfixedzero$ and $\Jfixedone$, we see that in every set of four consecutive $0$s of $\seqj{c}{M+1}{\infty}$, $\{j_m, j_{m+1} , j_{m+2}, j_{m+3}\}$, we have at least one element of $\Jfixedzero$ and at least one element of $\Jfixedone$. 
    Hence, for $j>M$, $\{a_j , \ldots , a_{j+8}\}$ contains at least one $0$ and at least one $1$.
    Since $\seqj{a}{1}{M} = 1^M$ and $\seqj{a}{M+1}{\infty}$ avoids the strings $(1^9)$ and $(0^9)$, we know that $\seqj{a}{1}{\infty}$ avoids $(01^9)$ and $(10^9)$, that is, $A_q \subset \mathcal{S}^9$. 
    % Lemma \ref{lemma: projection of S^k is contained in U_q} then provides the conclusion that $\pi_q(A_q) \subset \mathcal{U}_q$.
\end{proof}

\begin{lemma}
\label{lemma: projection of A_q - 1 is in U_q}
    If $q \in (q_9,2)$ then $\pi_q(A_q)-1 \subset \mathcal{U}_q$.
\end{lemma}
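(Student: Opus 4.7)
The plan is to imitate the proof of Lemma \ref{lemma: projection of A_q is in U_q} by producing, for each $\seqj{a}{1}{\infty} \in A_q$, a binary sequence in $\mathcal{S}^9$ whose $\pi_q$-image equals $\pi_q(\seqj{a}{1}{\infty}) - 1$; Lemma \ref{lemma: projection of S^k is contained in U_q} then finishes the job. Fixing $q \in (q_9,2)$, the fixed expansion $\seqj{c}{1}{\infty} = 1^M \seqj{c}{M+1}{\infty}$ of $1$, and $\seqj{a}{1}{\infty} \in A_q$, I would set $b_j = a_j - c_j$, so that
\[
\pi_q(\seqj{a}{1}{\infty}) - 1 \;=\; \pi_q(\seqj{a}{1}{\infty}) - \pi_q(\seqj{c}{1}{\infty}) \;=\; \sum_{j=1}^\infty b_j q^{-j}.
\]
The four defining rules of $A_q$ give $b_j \in \{0,1\}$: if $c_j = 1$ then $a_j = 1$ and $b_j = 0$; if $c_j = -1$ then $a_j = 0$ and $b_j = 1$; and if $c_j = 0$ then $b_j = a_j \in \{0,1\}$. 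Hence $\seqj{b}{1}{\infty} \in \{0,1\}^\mathbb{N}$, and it suffices to prove $\seqj{b}{1}{\infty} \in \mathcal{S}^9$.

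The central step is the local claim that for every $k \geq M+1$ the block $b_k, b_{k+1}, \ldots, b_{k+8}$ contains both a $0$ and a $1$. Since $\seqj{c}{M+1}{\infty}$ is a concatenation of words from $W_2$, and each element of $W_2$ contains at least one $0$, a short alignment check shows that any window of length $9$ inside $[M+1,\infty)$ fully contains at least four consecutive $W_2$-pairs, and hence at least four zeros of $\seqj{c}{1}{\infty}$. These zeros correspond to four consecutive indices $j_m, j_{m+1}, j_{m+2}, j_{m+3}$ of $J$, whose residues modulo $4$ are $0,1,2,3$ in some order. Exactly one of these indices lies in $\Jfixedone$, forcing $b=1$ at that position, and exactly one lies in $\Jfixedzero$, forcing $b=0$, proving the claim.

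To deduce $\seqj{b}{1}{\infty} \in \mathcal{S}^9$, suppose that $(01^9)$ appears at some position $k$. Since $\seqj{b}{1}{M} = 0^M$ and $M \geq 9$ (by Lemma \ref{lemma: orbit of 1 has tail in W_2}), the requirement $b_{k+1} = 1$ forces $k+1 > M$, so the local claim applied to the window $[k+1, k+9]$ produces a $0$, contradicting $b_{k+1} = \cdots = b_{k+9} = 1$. Similarly, $(10^9)$ at position $k$ forces $b_k = 1$, hence $k > M$, and the local claim applied to $[k+1, k+9]$ produces a $1$, contradicting $b_{k+1} = \cdots = b_{k+9} = 0$. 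Thus $\seqj{b}{1}{\infty} \in \mathcal{S}^9$ and Lemma \ref{lemma: projection of S^k is contained in U_q} yields $\pi_q(\seqj{a}{1}{\infty}) - 1 \in \mathcal{U}_q$, giving $\pi_q(A_q) - 1 \subset \mathcal{U}_q$. The only delicate piece of bookkeeping is the alignment-based pair-counting that produces four fully contained $W_2$-pairs in every $9$-window; the rest mirrors the fixed-index analysis already used in Lemma \ref{lemma: projection of A_q is in U_q}.
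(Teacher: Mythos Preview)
Your proof is correct and follows essentially the same route as the paper: set $b_j = a_j - c_j$, check $\seqj{b}{1}{\infty} \in \{0,1\}^\mathbb{N}$, show $\seqj{b}{1}{\infty} \in \mathcal{S}^9$ via the same window argument (nine consecutive tail indices contain a zero in $\Jfixedone$ and one in $\Jfixedzero$), then invoke Lemma \ref{lemma: projection of S^k is contained in U_q}. The paper compresses the window step by pointing back to the proof of Lemma \ref{lemma: projection of A_q is in U_q}; your appeal to $M \geq 9$ is harmless but unnecessary, since $b_{k+1}=1$ already forces $k+1>M$ irrespective of the size of $M$.
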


\begin{proof}
    Let $q \in (q_9,2)$ and let $1^M \seqj{c}{1}{\infty}$ be the fixed expansion of $1$. 
    For each $\seqj{a}{1}{\infty} \in A_q$ define a sequence $\seqj{b}{1}{\infty}$ by declaring that $b_j = a_j - c_j$ for all $j \in \mathbb{N}$. Therefore $\seqj{b}{1}{\infty}$ satisfies
    \begin{enumerate}
        \item $b_j = 0$ if $c_j = 1$,
        \item $b_j = 1$ if $c_j = -1$,
        \item $b_j = a_j$ if $c_j = 0$.
    \end{enumerate}
    Since $\seqj{a}{1}{\infty} \in \{0,1\}^\mathbb{N}$, we know that $\seqj{b}{1}{\infty} \in \{0,1\}^\mathbb{N}$. 
    Moreover, by inspection of the proof of Lemma \ref{lemma: projection of A_q is in U_q}, since $b_j = a_j$ whenever $c_j = 0$, we know that $\seqj{b}{M+1}{\infty}$ also avoids $0^9$ and $1^9$. 
    Noting that $\seqj{b}{1}{M} = 0^M$, we can say that $\seqj{b}{1}{\infty}$ avoids $(01^9)$ and $(10^9)$, so $\seqj{b}{1}{\infty} \in \mathcal{S}^9$. 
    Since $q \in (q_9,2)$, by Lemma \ref{lemma: projection of S^k is contained in U_q} we know that $\pi_q(\seqj{b}{1}{\infty}) \in \mathcal{U}_q$.

    By definition of $\seqj{b}{1}{\infty}$ and by the additive property of $\pi_q$, we have that $\seqj{a}{1}{\infty}$, $\seqj{b}{1}{\infty}$ and $\seqj{c}{1}{\infty}$ satisfy 
    $$1 = \pi_q(\seqj{c}{1}{\infty}) = \pi_q(\seqj{a}{1}{\infty}) - \pi_q(\seqj{b}{1}{\infty}).$$
    So for each $\seqj{a}{1}{\infty} \in A_q$ there is some uniquely defined $\seqj{b}{1}{\infty}$ such that $\pi_q(\seqj{b}{1}{\infty}) = \pi_q(\seqj{a}{1}{\infty}) - 1$. 
    Since $q \in (q_9,2)$, $\pi_q(\seqj{b}{1}{\infty}) \in \mathcal{U}_q$ and hence $\pi_q(A_q) - 1  \subset \mathcal{U}_q$.
\end{proof}

Lemmas \ref{lemma: projection of A_q is in U_q} and \ref{lemma: projection of A_q - 1 is in U_q} together prove Proposition \ref{proposition: A_q and A_q - 1 in U_q}.

\subsection{The structure of $\pi_q(A_q)$}
\label{subsection: structure of pi_q(A_q)}
In this subsection we prove Proposition \ref{proposition: thickness bound}.
This thickness estimate forms a key part of the application of Newhouse's theorem \cite{NewhouseNondensity} to prove Theorem \ref{theorem: interval in C_3}.
The following definition of \textit{thickness} is due to Newhouse \cite{Newhouse1979}.
We note that an interleaving condition is also required for Newhouse's theorem and this is addressed in Subsection \ref{subsection: interleaving}.

Let $C \subset \mathbb{R}$ be a compact set such that the complement of $C$ in $\mathbb{R}$ is the union of two disjoint unbounded open intervals with a countable collection of open intervals we refer to as \textit{gaps}. 
Denote this countable collection of gaps by $\{G_n : n \in \mathbb{N} \}$ and let $|G_m| \geq |G_n|$ whenever $m < n $. 
To each gap $G_n$ in the complement of $C$ we associate a left and right \textit{bridge}. 
Given a gap $G_n$ the left bridge, $L_n$, is defined to be the interval to the left of $G_n$ whose right endpoint is the left endpoint of $G_n$ and whose left endpoint is the right endpoint of the gap immediately to the left of $G_n$ and at least as big. The right bridge, $R_n$, is defined similarly.
So for a given gap $G_n$, we know the gaps contained in the bridges $L_n$ and $R_n$ are strictly smaller than $G_n$.
The thickness of the compact set $C$ is given by
$$\tau(C) = \inf \left\{ \min \left\{ \frac{|L_n|}{|G_n|} , \frac{|R_n|}{|G_n|} \right\} : n \in \mathbb{N} \right\}.$$
Note that the unbounded components of the complement of $C$ in $\mathbb{R}$ are not considered when dealing with thickness, so we ignore these in what follows.
We are now able to state the key proposition of this subsection.

\begin{proposition}    
\label{proposition: thickness bound}
    If $q \in (q_9,2)$ then $\tau(\pi_q(A_q)) > q^{-5}$.
\end{proposition}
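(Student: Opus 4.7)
The plan is to exploit the product structure of $A_q$ to realise $\pi_q(A_q)$ as a translate of a self-similar Cantor-type set parameterised by the free zeros, and then extract the thickness from the resulting gap/bridge geometry. Enumerate $\Jfree = \{k_0 < k_1 < k_2 < \cdots\}$ (so $k_l = j_{2l}$), and let $\kappa = \sum_{j:\, c_j = 1} q^{-j} + \sum_{j \in \Jfixedone} q^{-j}$ collect the contribution of all coordinates that are forced to equal $1$ across every element of $A_q$. By the definition of $A_q$, every $\seqj{a}{1}{\infty} \in A_q$ satisfies $\pi_q(\seqj{a}{1}{\infty}) = \kappa + \sum_{l=0}^\infty \epsilon_l q^{-k_l}$ for a uniquely determined $(\epsilon_l)_{l=0}^\infty \in \{0,1\}^{\mathbb{N}_{\geq 0}}$, and every such $\epsilon$-sequence arises. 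Hence $\pi_q(A_q) = \kappa + T$ where $T = \{\sum_{l \geq 0} \epsilon_l q^{-k_l} : \epsilon_l \in \{0,1\}\}$, and since thickness is translation invariant it suffices to bound $\tau(T)$.

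The set $T$ is a standard nested Cantor construction. For each $L \in \mathbb{N}_{\geq 0}$, fixing the prefix $(\epsilon_0,\ldots,\epsilon_{L-1})$ restricts the remaining sum to $[0, S_L]$ with $S_L := \sum_{l > L} q^{-k_l}$; the two choices of $\epsilon_L$ then give sub-branches of convex hull length $S_L$ separated by a gap of length $g_L = q^{-k_L} - S_L$. Since consecutive free zeros satisfy $k_{l+1}-k_l \geq 2$, one has $S_L \leq q^{-k_L}/(q^2-1) < q^{-k_L}$ for $q > \sqrt 2$, so $g_L > 0$ and $T$ is totally disconnected. A short calculation gives $g_L > g_{L+1}$ (equivalent to $q^{k_{L+1}-k_L} > 2$, which holds because $k_{L+1}-k_L \geq 2$ and $q > q_9 > \sqrt 2$), so every level-$L$ gap is strictly larger than every level-$L'$ gap with $L' > L$. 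Consequently each of the $2^L$ level-$L$ gaps of $T$ has left and right bridges coinciding with the convex hull of the adjacent level-$L$ sub-branch: every smaller gap inside the sub-branch is at a strictly higher level, and the first larger gap encountered on exiting the sub-branch is at a strictly lower level. Both bridges therefore have length $S_L$, so the thickness ratio associated with any level-$L$ gap equals $r_L := S_L/g_L = S_L/(q^{-k_L}-S_L)$.

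The remaining task is the combinatorial estimate $k_{l+1}-k_l \leq 5$. Since each block $w \in W_2$ has length $2$ and contains a zero, one immediately gets $j_{m+1}-j_m \leq 3$, and I would upgrade this by ruling out two successive gaps of length $3$: a gap of $3$ starting at $j_m$ forces $w_i \in \{(01),(0-\!\!1)\}$ and $w_{i+1}\in \{(-\!10),(10)\}$, which places $j_{m+1}$ at the second coordinate of $w_{i+1}$; the compulsory zero in $w_{i+2}$ then sits at distance at most $2$ from $j_{m+1}$, so $j_{m+2}-j_{m+1} \leq 2$. Summing two consecutive gaps gives $k_{l+1}-k_l = (j_{2l+1}-j_{2l})+(j_{2l+2}-j_{2l+1}) \leq 3+2 = 5$, so $S_L \geq q^{-k_{L+1}} \geq q^{-k_L-5}$. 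Because $s \mapsto s/(q^{-k_L}-s)$ is increasing on $[0,q^{-k_L})$, one obtains the uniform lower bound
$$r_L \;\geq\; \frac{q^{-k_L-5}}{q^{-k_L}-q^{-k_L-5}} \;=\; \frac{q^{-5}}{1-q^{-5}} \;>\; q^{-5},$$
and taking the infimum over $L$ yields $\tau(\pi_q(A_q)) \geq q^{-5}/(1-q^{-5}) > q^{-5}$. The principal obstacle is precisely this combinatorial improvement: a naive $W_2$-count only gives $k_{l+1}-k_l \leq 6$, and eliminating the case of two consecutive gaps of length $3$ is exactly what is needed to match the exponent $5$ in the statement.
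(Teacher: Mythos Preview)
Your proof is correct and takes a genuinely different route from the paper's. The key observation you make---that $\pi_q(A_q)$ is a translate of the product set $T=\{\sum_{l\geq 0}\epsilon_l q^{-k_l}:\epsilon_l\in\{0,1\}\}$---is not used in the paper, and it makes the gap/bridge geometry completely explicit: every level-$L$ gap has length $g_L=q^{-k_L}-S_L$ and both bridges have length exactly $S_L$, so the thickness is the clean quantity $\inf_L S_L/g_L$. The paper instead works in the full sequence space $\{0,1\}^{\mathbb N}$, analysing projected cylinders $\pi_q[\seqj{a}{1}{k}]$ through a chain of six lemmas (3.11--3.16): it classifies gaps by the first free-zero index they straddle, shows $q^{-k}<|G|<q^{-k+1}$ for a level-$k$ gap, and separately bounds bridges by $|L_G|,|R_G|>q^{-k-4}$, yielding $\tau>q^{-5}$. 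Your argument bypasses all of the cylinder-intersection bookkeeping (Lemmas 3.4, 3.7, 3.13, 3.14 in the paper) and delivers the slightly sharper bound $\tau\geq q^{-5}/(1-q^{-5})$. It is also worth noting that the paper \emph{asserts} the spacing bound $k_{l+1}-k_l\leq 5$ (``at most four fixed indices between free zeros'') without a detailed justification, whereas your parity analysis of the $W_2$ blocks---showing that a zero-gap of length $3$ forces $j_m$ to sit in the first slot of its block and hence $j_{m+2}-j_{m+1}\leq 2$---actually proves it. What the paper's approach buys is a more hands-on description of the gaps in terms of the original coding, which feeds naturally into the interleaving argument of the following subsection; your approach is shorter and conceptually cleaner for the thickness estimate itself.
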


Let $q \in (q_9,2)$ and let $1^M\seqj{c}{M+1}{\infty} \in \{-1,0,1\}^\mathbb{N}$ be the fixed expansion of $1$, defined in Subsection \ref{subsection: construction of A_q}.
Recall that $M$ and the expansion $1^M\seqj{c}{M+1}{\infty}$ are defined such that $\seqj{c}{M+1}{\infty} \in W_2^\mathbb{N}$ and $\pi_q(1^M\seqj{c}{M+1}{\infty}) = 1$.
For every $k \in \mathbb{N}$ we define the set of length $k$ prefixes of sequences in $A_q$ by $A_q^k$, that is
$$A_q^k = \{\seqj{a}{1}{k} \in \{0,1\}^k : \exists \seqj{a'}{1}{\infty} \in A_q \mathrm{\ such \ that \ } a'_j = a_j \ \forall 1 \leq j \leq k \},$$
and write
$$\pi_q[A_q^k] = \{ \pi_q[\seqj{a}{1}{k}] : \seqj{a}{1}{k} \in A_q^k \},$$ 
so elements of $\pi_q[A_q^k]$ are intervals.
Since $q \in (q_9,2)$ implies that $A_q \subset \mathcal{S}^9$, it is immediate that any $\seqj{a}{1}{k} \in A_q^k$ avoids $(10^9)$ and $(01^9)$.

The lemma below will be used frequently throughout the proofs of the remaining lemmas of this section.

\begin{lemma}
\label{lemma: projected intervals in pi_q(A_q)}
    \begin{enumerate}
        \item $\seqj{a}{1}{k} \in A_q^k$ if and only if $\pi_q[\seqj{a}{1}{k}] \cap \pi_q(A_q) \neq \emptyset$.
        \item If $\seqj{a}{1}{k} \notin A_q^k$ is such that there are sequences $\seqj{a^-}{1}{k}, \seqj{a^+}{1}{k} \in A_q^k$ with the property that $\seqj{a^-}{1}{k} \prec \seqj{a}{1}{k}$ and $\seqj{a}{1}{k} \prec \seqj{a^+}{1}{k}$, then $\pi_q[\seqj{a}{1}{k}] \subset \mathrm{conv}(\pi_q(A_q)) \setminus \pi_q(A_q)$.
        \item If $\seqj{a}{1}{k-2} 0 a_k , \seqj{a}{1}{k-2} 1 a_k \in A_q^k$ then $\pi_q[\seqj{a}{1}{k-2} a_k \overline{a_k}] \cap \pi_q[\seqj{a}{1}{k-2} 0 a_k] \neq \emptyset$ and $\pi_q[\seqj{a}{1}{k-2} a_k \overline{a_k}] \cap \pi_q[\seqj{a}{1}{k-2} 1 a_k] \neq \emptyset$.
    \end{enumerate}
\end{lemma}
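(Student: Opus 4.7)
The plan is to prove the three parts in turn, drawing chiefly on Proposition \ref{proposition: A_q and A_q - 1 in U_q} (which gives $\pi_q(A_q) \subset \mathcal{U}_q$) together with Lemmas \ref{lemma: projections of sequences are well ordered} and \ref{lemma: cylinders intersect iff lex cons}. For part (1), the forward direction is immediate: any prefix $\seqj{a}{1}{k} \in A_q^k$ extends to some $\seqj{a}{1}{\infty} \in A_q$, and then $\pi_q(\seqj{a}{1}{\infty})$ witnesses the intersection $\pi_q[\seqj{a}{1}{k}] \cap \pi_q(A_q)$. For the reverse, suppose $x$ lies in this intersection. By Proposition \ref{proposition: A_q and A_q - 1 in U_q} the point $x \in \mathcal{U}_q$ has a unique base $q$ expansion; on the other hand, $x \in \pi_q[\seqj{a}{1}{k}]$ gives one expansion beginning with $\seqj{a}{1}{k}$, while $x \in \pi_q(A_q)$ gives another expansion arising from an element of $A_q$, so uniqueness forces these to coincide and hence $\seqj{a}{1}{k} \in A_q^k$.

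For part (2), the disjointness $\pi_q[\seqj{a}{1}{k}] \cap \pi_q(A_q) = \emptyset$ follows from part (1). To establish $\pi_q[\seqj{a}{1}{k}] \subset \mathrm{conv}(\pi_q(A_q))$, I would pick any extensions $(\tilde{a}^{\pm}_j)_{j=1}^\infty \in A_q$ of $\seqj{a^-}{1}{k}$ and $\seqj{a^+}{1}{k}$. Since $\seqj{a^-}{1}{k}$ and $\seqj{a}{1}{k}$ are prefixes of elements of $\mathcal{S}^9$ with $\seqj{a^-}{1}{k} \prec \seqj{a}{1}{k}$, Lemma \ref{lemma: projections of sequences are well ordered} gives $\pi_q((\tilde{a}^-_j)_{j=1}^\infty) \leq \pi_q(\seqj{a^-}{1}{k} 1^\infty) < \pi_q(\seqj{a}{1}{k} 1^\infty)$, and combined with the exclusion from part (1) this pins $\pi_q((\tilde{a}^-_j)_{j=1}^\infty)$ strictly below the left endpoint $\pi_q(\seqj{a}{1}{k} 0^\infty)$ of $\pi_q[\seqj{a}{1}{k}]$. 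A symmetric argument places $\pi_q((\tilde{a}^+_j)_{j=1}^\infty)$ strictly above the right endpoint, yielding the claimed inclusion. The main obstacle sits precisely here: a priori $(\tilde{a}^-_j)_{j=1}^\infty$ could project into $\pi_q[\seqj{a}{1}{k}]$ or past it, because the underlying cylinders can overlap when their prefixes are lexicographically consecutive (Lemma \ref{lemma: cylinders intersect iff lex cons}); the resolution is that part (1) rules out projections inside $\pi_q[\seqj{a}{1}{k}]$ while the well-ordering of right endpoints from Lemma \ref{lemma: projections of sequences are well ordered} rules out projections above it.

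For part (3), I would first verify that $\seqj{a}{1}{k-2} a_k \overline{a_k}$ is a prefix of some element of $\mathcal{S}^9$. Any occurrence of $(01^9)$ or $(10^9)$ in this sequence would either lie entirely inside $\seqj{a}{1}{k-2}$ (excluded because both $\seqj{a}{1}{k-2} 0 a_k$ and $\seqj{a}{1}{k-2} 1 a_k$ belong to $A_q^k$ and hence avoid these patterns) or straddle the last two positions, which a short case check against the two known $A_q^k$-cylinders rules out. Then, in each of the cases $a_k = 0$ and $a_k = 1$, the pairs $\{\seqj{a}{1}{k-2} a_k \overline{a_k}, \seqj{a}{1}{k-2} 0 a_k\}$ and $\{\seqj{a}{1}{k-2} a_k \overline{a_k}, \seqj{a}{1}{k-2} 1 a_k\}$ each differ by $1$ when read as binary numbers and are therefore lexicographically consecutive, so Lemma \ref{lemma: cylinders intersect iff lex cons} supplies the required overlaps.
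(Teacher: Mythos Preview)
Your proposal is correct and, for parts (1) and (2), follows the paper's line closely—if anything you are more careful than the paper in part (1), where you explicitly invoke $\pi_q(A_q)\subset\mathcal{U}_q$ from Proposition~\ref{proposition: A_q and A_q - 1 in U_q} to force the two candidate expansions of $x$ to coincide; the paper states the biconditional as a one-line consequence of the definition without isolating this step. One small wrinkle: in part (2) you assert that $\seqj{a}{1}{k}$ is a prefix of an element of $\mathcal{S}^9$ in order to apply Lemma~\ref{lemma: projections of sequences are well ordered}, but the hypotheses of part (2) do not guarantee this. The paper simply asserts the needed inequality chain without naming a lemma, so the same informality is present there; in every later use of part (2) the middle sequence \emph{is} an $\mathcal{S}^9$-prefix, so nothing breaks downstream.

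For part (3) you take a genuinely different route. You check that $\seqj{a}{1}{k-2} a_k \overline{a_k}$ is a prefix of an $\mathcal{S}^9$ element and then observe that the two relevant pairs are lexicographically consecutive, so Lemma~\ref{lemma: cylinders intersect iff lex cons} delivers the overlaps. The paper instead compares endpoints directly: for either value of $a_k$ the inequality $\pi_q(\seqj{a}{1}{k-2}0a_k 1^\infty) > \pi_q(\seqj{a}{1}{k-2}a_k \overline{a_k}0^\infty)$ reduces to $\pi_q(01^\infty) > \pi_q(10^\infty)$, which holds for all $q\in(1,2)$, and symmetrically for the other pair. The paper's argument is shorter and sidesteps the $\mathcal{S}^9$ verification entirely; your route reuses machinery already in place and is equally valid.
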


\begin{proof}
    The first part follows from the fact that $\seqj{a}{1}{k} \in A_q^k$ if and only if $\seqj{a}{1}{k}$ is a prefix of some element $\seqj{a}{1}{\infty} \in A_q$ which satisfies $\pi_q(\seqj{a}{1}{\infty}) \in \pi_q[\seqj{a}{1}{k}]$.

    For the second part, we assume the hypotheses and use the first part to guarantee that $\pi_q[\seqj{a^-}{1}{k}] \cap \pi_q(A_q) \neq \emptyset$ and $\pi_q[\seqj{a^+}{1}{k}] \cap \pi_q(A_q) \neq \emptyset$.
    Since $\seqj{a}{1}{k} \notin A_q^k$, $\seqj{a}{1}{k}$ is not a prefix of any element of $A_q$ we know that $\pi_q[\seqj{a}{1}{k}] \cap \pi_q(A_q) = \emptyset$.
    Therefore $\seqj{a^-}{1}{k}$ and $\seqj{a^+}{1}{k}$ are prefixes of sequences $\seqj{a^-}{1}{\infty} , \seqj{a^+}{1}{\infty} \in A_q$ which satisfy
    $$\pi_q(\seqj{a^-}{1}{\infty}) < \pi_q(\seqj{a}{1}{k}0^\infty) < \pi_q(\seqj{a}{1}{k}1^\infty) < \pi_q(\seqj{a^+}{1}{\infty}).$$
    This inequality proves that the interval $\pi_q[\seqj{a}{1}{k}]$ is bounded on both sides by elements of $\pi_q(A_q)$, and hence $\pi_q[\seqj{a}{1}{k}] \subset \mathrm{conv}(\pi_q(A_q)) \setminus \pi_q(A_q)$.

    For the final part, notice that if $a_k$ is $0$ or $1$, in either case the inequality $\pi_q(\seqj{a}{1}{k-2}0a_k 1^\infty) > \pi_q(\seqj{a}{1}{k-2}a_k \overline{a_k}0^\infty)$ reduces to $\pi_q(01^\infty) > \pi_q(10^\infty)$, which we know is true for all $ q \in (1,2)$.
    Therefore $\pi_q[\seqj{a}{1}{k-2}a_k \overline{a_k}] \cap \pi_q[\seqj{a}{1}{k-2}0a_k] \neq \emptyset$ and a similar argument shows that $\pi_q[\seqj{a}{1}{k-2}a_k \overline{a_k}] \cap \pi_q[\seqj{a}{1}{k-2}1a_k] \neq \emptyset$ which completes the proof.
\end{proof}

A \textit{level $k$ gap} is a gap $G$ which contains the interval (which we show to exist) between $\pi_q[\seqj{a}{1}{k-2}0a_k]$ and $\pi_q[\seqj{a}{1}{k-2}1 a_k]$ for some $\seqj{a}{1}{k-2}0a_k, \seqj{a}{1}{k-2}1 a_k \in A_q^k$.
Given a gap $G$, we denote the left and right bridges of $G$ by $L_G$ and $R_G$ respectively.

We prove the following claims which allow us to bound $\tau(\pi_q(A_q))$ from below.
\begin{enumerate}
    \item All gaps of $\pi_q(A_q)$ are open.
    \item Level $k$ gaps exist if and only if $k-1$ is a free zero (which requires $k \geq M+1$).
    \item A level $k$ gap cannot be a level $l$ gap for any $l \neq k$.
    \item All gaps of $\pi_q(A_q)$ are level $k$ gaps for some $k$.
    \item If $G$ is a level $k$ gap and $H$ is a level $l$ gap for some $l > k$ then $|H| < |G|$.
    Moreover, $q^{-k} < |G| < q^{-k+1}$.
    \item If $G$ is a level $k$ gap then $|L_G|, |R_G| > q^{-k-4}$.
\end{enumerate}

These steps lead us to conclude that $\tau( \pi_q(A_q)) > q^{-5}$ (Proposition \ref{proposition: thickness bound}).
This bound will then be used in an application of Newhouse's theorem to prove \eqref{equation: Uq8 intersect pi(A)} from which Theorem \ref{theorem: interval in C_3} follows.
The items in the list are proved sequentially in the next six lemmas.

\begin{lemma}
    All gaps of $\pi_q(A_q)$ are open.
\end{lemma}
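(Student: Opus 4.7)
The plan is to deduce the statement from a compactness argument. Recall that the notion of a gap used here (from Newhouse's definition recalled above) is tailored to a compact subset $C \subset \mathbb{R}$, for which the bounded components of $\mathbb{R} \setminus C$ are automatically open intervals. So it suffices for me to verify that $\pi_q(A_q)$ is compact.

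First I would observe that $A_q$ is closed in $\{0,1\}^{\mathbb{N}}$ equipped with the product topology. Indeed, from its construction in Subsection \ref{subsection: construction of A_q}, a sequence $\seqj{a}{1}{\infty} \in \{0,1\}^{\mathbb{N}}$ belongs to $A_q$ precisely when for each fixed index $j$ (i.e.\ each $j$ for which $c_j \in \{-1,1\}$, or $j \in \Jfixedone \cup \Jfixedzero$) the coordinate $a_j$ equals a prescribed value. Each single such constraint defines a clopen cylinder, and $A_q$ is the intersection of all these cylinders over the fixed indices, hence closed.

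Next, Tychonoff's theorem makes $\{0,1\}^{\mathbb{N}}$ compact, so the closed subset $A_q$ is compact. The projection $\pi_q : \{0,1\}^{\mathbb{N}} \to I_q$ is continuous (the partial sums converge uniformly in $\seqj{i}{1}{\infty}$ since $q > 1$), so $\pi_q(A_q)$ is a compact subset of $\mathbb{R}$, and in particular closed and bounded. Therefore the bounded connected components of $\mathbb{R} \setminus \pi_q(A_q)$, which are the gaps of $\pi_q(A_q)$, are open intervals, as required.

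There is no real obstacle here; the statement is essentially a bookkeeping lemma ensuring that $\pi_q(A_q)$ falls within the class of sets to which Newhouse's thickness machinery applies, so that the later lemmas in this subsection can meaningfully speak of its gaps and bridges.
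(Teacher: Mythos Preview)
Your proof is correct and follows essentially the same approach as the paper: both arguments establish that $A_q$ is compact in $\{0,1\}^{\mathbb{N}}$, that $\pi_q$ is continuous, and hence that $\pi_q(A_q)$ is compact so its complement (and therefore each gap) is open. Your version is slightly more explicit in justifying compactness of $A_q$ via closedness as an intersection of cylinders together with Tychonoff, whereas the paper simply asserts compactness with respect to the standard metric on $\{0,1\}^{\mathbb{N}}$.
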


\begin{proof}
Let $\seqj{i}{1}{\infty}, \seqj{i'}{1}{\infty} \in \{0,1\}^\mathbb{N}$ and consider the metric on $\{0,1\}^\mathbb{N}$ given by
\begin{equation*}
 d(\seqj{i}{1}{\infty}, \seqj{i'}{1}{\infty}) = 
\begin{cases} 
    2^{-n} & \mathrm{where \ } n=\min\{j \in \mathbb{N} : i_j \neq i'_j\}, \\
    0 & \mathrm{if \ } \seqj{i}{1}{\infty} = \seqj{i'}{1}{\infty}. 
\end{cases}
\end{equation*}
$A_q$ is compact with respect to the topology generated by this metric, and $\pi_q$ is continuous.
Therefore $\pi_q(A_q)$ is compact with respect to the normal topology on $\mathbb{R}$, generated by the open intervals, and hence its complement is open.
The gaps of $\pi_q(A_q)$ form a subset of the complement, so all the gaps are necessarily open.
\end{proof}

Let $\overline{x}$ be the reflection of $x$, i.e. if $x = \seqj{i}{1}{k} \in \{0,1\}^*$ then $ \overline{x} = \seqj{\overline{i}}{1}{k} = \seqj{1-i}{1}{k}$.
In particular if $x \in \{0,1\}$ then $\overline{x} = 1-x$ and the reflection of the empty word is just itself.

\begin{lemma}
    Let $q \in (q_9,2)$, then level $k$ gaps exist if and only if $k-1$ is a free zero. 
\end{lemma}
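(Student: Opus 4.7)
The plan is to translate the existence of a level $k$ gap into a condition on the entry $c_{k-1}$ of the fixed expansion of $1$, and then use Lemma \ref{lemma: projected intervals in pi_q(A_q)} parts (2) and (3) to realise the geometric gap between two cylinders associated with elements of $A_q^k$.

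For the "only if" direction, suppose a level $k$ gap exists, so that $\seqj{a}{1}{k-2}0a_k$ and $\seqj{a}{1}{k-2}1a_k$ both belong to $A_q^k$ for some choice of $\seqj{a}{1}{k-2}$ and $a_k$. These two finite sequences differ only at position $k-1$, so both values $0$ and $1$ at that position must be compatible with the four constraints defining $A_q$. Inspection of those constraints shows that this is possible precisely when $c_{k-1}=0$ and $k-1 \notin \Jfixedone \cup \Jfixedzero$, i.e., when $k-1 \in \Jfree$; in other words, $k-1$ is a free zero.

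For the "if" direction, suppose $k-1 \in \Jfree$. Since the constraints defining $A_q$ are independent across positions, $A_q$ is nonempty, and fixing any $\seqj{a}{1}{\infty} \in A_q$ yields two elements $\seqj{a}{1}{k-2}0a_k$ and $\seqj{a}{1}{k-2}1a_k$ of $A_q^k$ by toggling $a_{k-1}$. I claim that the in-between cylinder $\seqj{a}{1}{k-2}a_k\overline{a_k}$ does not lie in $A_q^k$, because position $k$ is forced. Writing $k-1 = j_m$ with $m$ even, either $c_k \neq 0$, in which case $a_k$ is fixed by conditions 1 and 2 defining $A_q$; or $c_k = 0$, in which case $k = j_{m+1}$ with $m+1$ odd, so that $k \in \Jfixedone \cup \Jfixedzero$ and $a_k$ is fixed by conditions 3 and 4. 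In either case $\overline{a_k}$ violates a defining constraint at position $k$. By Lemma \ref{lemma: projected intervals in pi_q(A_q)}(2), the interval $\pi_q[\seqj{a}{1}{k-2}a_k\overline{a_k}]$ lies in $\mathrm{conv}(\pi_q(A_q)) \setminus \pi_q(A_q)$, while by Lemma \ref{lemma: projected intervals in pi_q(A_q)}(3) it meets both $\pi_q[\seqj{a}{1}{k-2}0a_k]$ and $\pi_q[\seqj{a}{1}{k-2}1a_k]$. Hence the non-degenerate region strictly between the two outer cylinders is contained in a single gap $G$ of $\pi_q(A_q)$, and by definition $G$ is a level $k$ gap.

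The main obstacle is the "if" direction, specifically the parity argument that forces $a_k$ whenever $k-1$ is a free zero; this relies on the $4$-periodic assignment of consecutive elements of $J$ to the pattern $\Jfree, \Jfixedone, \Jfree, \Jfixedzero$, so that the element of $J$ immediately following a free zero (should it exist) is always in $\Jfixedone \cup \Jfixedzero$. The remaining steps are direct applications of Lemma \ref{lemma: projected intervals in pi_q(A_q)}.
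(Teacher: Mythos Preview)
Your argument follows the same route as the paper's: the ``only if'' direction is the observation that two sequences in $A_q^k$ differing only at position $k-1$ force $k-1\in\Jfree$, and the ``if'' direction uses the parity of the $\Jfree,\Jfixedone,\Jfree,\Jfixedzero$ pattern to conclude that index $k$ is fixed, then invokes parts (2) and (3) of Lemma~\ref{lemma: projected intervals in pi_q(A_q)} on the in-between cylinder $\pi_q[\seqj{a}{1}{k-2}a_k\overline{a_k}]$.

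There is one missing step. You assert that ``the non-degenerate region strictly between the two outer cylinders'' exists, but nothing you have established rules out $\pi_q[\seqj{a}{1}{k-2}0a_k]\cap\pi_q[\seqj{a}{1}{k-2}1a_k]\neq\emptyset$; parts (2) and (3) of Lemma~\ref{lemma: projected intervals in pi_q(A_q)} tell you only that the middle cylinder lies in a gap and overlaps each outer cylinder, which does not by itself force the outer cylinders apart. The paper fills this in with Lemma~\ref{lemma: cylinders intersect iff lex cons}: the two outer sequences $\seqj{a}{1}{k-2}0a_k$ and $\seqj{a}{1}{k-2}1a_k$ are not lexicographically consecutive (indeed $\seqj{a}{1}{k-2}a_k\overline{a_k}$ sits strictly between them), so their projected cylinders are disjoint and the interval between them genuinely exists. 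With this one citation added, your proof is complete and matches the paper's.
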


\begin{proof}
    Observe that the existence of sequences $\seqj{a}{1}{k-2}0a_k$ and $\seqj{a}{1}{k-2}1 a_k$ as elements of $A_q^k$ is equivalent to $k-1$ being a free zero.
    Hence it suffices to show that if $\seqj{a}{1}{k-2}0a_k$ and $\seqj{a}{1}{k-2}1 a_k$ are elements of $A_q^k$ then $\pi_q[\seqj{a}{1}{k-2}0a_k]$ and $\pi_q[\seqj{a}{1}{k-2}1 a_k]$ do not intersect and the interval between them is contained in a gap.

    Suppose $\seqj{a}{1}{k-2}0a_k, \seqj{a}{1}{k-2}1 a_k \in A_q^k$ then since $\seqj{a}{1}{k-2}0a_k \prec \seqj{a}{1}{k-2}1 a_k$ are not lexicographically consecutive, Lemma \ref{lemma: cylinders intersect iff lex cons} implies that $\pi_q[\seqj{a}{1}{k-2}0a_k]$ and $\pi_q[\seqj{a}{1}{k-2}1a_k]$ do not intersect.
    Recall from Subsection \ref{subsection: construction of A_q} that if $k-1$ is a free zero, then $k$ must be a fixed index, so $\seqj{a}{1}{k-2}a_k\overline{a_k} \notin A_q^k$ because the $k$th entry is not $a_k$.
    It can be easily checked that $\seqj{a}{1}{k-2}0a_k \prec \seqj{a}{1}{k-2}a_k\overline{a_k}$ are lexicographically consecutive and that $\seqj{a}{1}{k-2}a_k\overline{a_k} \prec \seqj{a}{1}{k-2}1a_k$ are lexicographically consecutive.
    So, by Lemma \ref{lemma: projected intervals in pi_q(A_q)}, $\pi_q[\seqj{a}{1}{k-2}a_k\overline{a_k}]$ is a gap contained in $ \mathrm{conv}(\pi_q(A_q)) \setminus \pi_q(A_q)$.    

    Again by Lemma \ref{lemma: projected intervals in pi_q(A_q)}, $\pi_q[\seqj{a}{1}{k-2}a_k \overline{a_k}]$ intersects both $\pi_q[\seqj{a}{1}{k-2}0a_k]$ and $\pi_q[\seqj{a}{1}{k-2}1 a_k]$ and must therefore contain the interval between them, which completes the proof.
\end{proof}

Note that by the definition of the fixed expansion of $1$, the first free zero cannot occur before index $M+1$, hence we guarantee there are no level $k$ gaps for $k \leq M+1$.
To prove a level $k$ gap cannot be a level $l$ gap for any $l \neq k$, we prove that the gap 
which contains the interval between $\pi_q[\seqj{a}{1}{k-2}0a_k]$ and $\pi_q[\seqj{a}{1}{k-2}1 a_k]$ is determined uniquely by the sequence $\seqj{a}{1}{k-2} \in A_q^{k-2}$. This is the content of the following lemma.

\begin{lemma}
\label{lemma: level j gaps are well defined}
    Let $q \in (q_9,2)$.
    Let $G$ be the gap which contains the interval between $\pi_q[\seqj{a}{1}{k-2}0a_k]$ and $\pi_q[\seqj{a}{1}{k-2}1a_k]$ for some $k \in \mathbb{N}$ and some $\seqj{a}{1}{k-2}0 a_k, \seqj{a}{1}{k-2}1a_k \in A_q^k$.
    Let $H$ be the gap which contains the interval between $\pi_q[\seqj{b}{1}{l-2}0b_l]$ and $\pi_q[\seqj{b}{1}{l-2}1b_l]$ for some $l \in \mathbb{N}$ and $\seqj{b}{1}{l-2}0 b_l, \seqj{b}{1}{l-2}1b_l \in A_q^l$.
    Then if $\seqj{a}{1}{k-2} \neq \seqj{b}{1}{l-2}$ then $G \cap H = \emptyset$.
\end{lemma}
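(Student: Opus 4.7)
The plan is to characterise the endpoints of $G = (g_-, g_+)$ intrinsically in terms of the prefix $\seqj{a}{1}{k-2}$: I will show that $g_-$ and $g_+$ have unique base-$q$ expansions beginning with $\seqj{a}{1}{k-2}0a_k$ and $\seqj{a}{1}{k-2}1a_k$ respectively. The analogous statement for $H = (h_-, h_+)$, combined with the hypothesis $\seqj{a}{1}{k-2} \neq \seqj{b}{1}{l-2}$, will force $G \neq H$; since distinct gaps of $\pi_q(A_q)$ are disjoint connected components of $\mathrm{conv}(\pi_q(A_q)) \setminus \pi_q(A_q)$, this yields $G \cap H = \emptyset$.

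First, $g_\pm \in \pi_q(A_q)$ by compactness, and since $q \in (q_9,2)$ gives $\pi_q(A_q) \subset \mathcal{U}_q$ by Lemma \ref{lemma: projection of A_q is in U_q}, each $g_\pm$ has a \emph{unique} base-$q$ expansion which must belong to $A_q$. By Lemma \ref{lemma: cylinders intersect iff lex cons} the cylinders $\pi_q[\seqj{a}{1}{k-2}0a_k]$ and $\pi_q[\seqj{a}{1}{k-2}1a_k]$ are disjoint (they are not lexicographically consecutive), by Lemma \ref{lemma: projections of sequences are well ordered} the former lies to the left of the latter, and both sit inside $\pi_q[\seqj{a}{1}{k-2}]$ and meet $\pi_q(A_q)$ nontrivially (Lemma \ref{lemma: projected intervals in pi_q(A_q)}(1)). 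Because position $k-1$ is a free zero while position $k$ is fixed to $a_k$, every element of $\pi_q(A_q) \cap \pi_q[\seqj{a}{1}{k-2}]$ has its unique expansion beginning with either $\seqj{a}{1}{k-2}0a_k$ or $\seqj{a}{1}{k-2}1a_k$, so $\pi_q(A_q) \cap \pi_q[\seqj{a}{1}{k-2}]$ decomposes as the disjoint union of $A := \pi_q(A_q) \cap \pi_q[\seqj{a}{1}{k-2}0a_k]$ and $B := \pi_q(A_q) \cap \pi_q[\seqj{a}{1}{k-2}1a_k]$, with $\max A < \min B$. The open interval $(\max A, \min B)$ is contained in $\pi_q[\seqj{a}{1}{k-2}]$, is disjoint from $\pi_q(A_q)$, and contains the defining interval of $G$, hence lies inside $G$; combined with $g_- \leq \max A$ and $g_+ \geq \min B$ (since $g_\pm \notin G$), this forces $g_- = \max A$ and $g_+ = \min B$. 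The desired characterisation of the expansions of $g_-$ and $g_+$ follows, and the same analysis applied to $H$ yields expansions of $h_\pm$ beginning with $\seqj{b}{1}{l-2}0b_l$ and $\seqj{b}{1}{l-2}1b_l$.

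Finally, assume for contradiction that $G = H$, so $g_\pm = h_\pm$; without loss of generality $k \leq l$. The unique expansion of $g_- = h_-$ simultaneously has length-$k$ prefix $\seqj{a}{1}{k-2}0a_k$ and length-$l$ prefix $\seqj{b}{1}{l-2}0b_l$; the analogous identity with $1$s holds for $g_+ = h_+$. When $k < l$, position $k-1 \leq l-2$ lies inside $\seqj{b}{1}{l-2}$, so $b_{k-1}$ must equal both $0$ (read off from the $g_-$-expansion) and $1$ (read off from the $g_+$-expansion), a contradiction. When $k = l$, the length-$k$ prefix equality $\seqj{a}{1}{k-2}0a_k = \seqj{b}{1}{l-2}0b_l$ directly forces $\seqj{a}{1}{k-2} = \seqj{b}{1}{l-2}$, again contradicting the hypothesis. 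The main subtlety is the identification $g_- = \max A$ and $g_+ = \min B$, which hinges on the clean decomposition of $\pi_q(A_q) \cap \pi_q[\seqj{a}{1}{k-2}]$ afforded by uniqueness of base-$q$ expansions in $\mathcal{U}_q$ together with the cylinder separation available for $q > G$; everything else is routine bookkeeping.
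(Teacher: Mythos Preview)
Your argument is correct and takes a genuinely different route from the paper. The paper proceeds by a case split on whether the length-$(k-2)$ prefixes $\seqj{a}{1}{k-2}$ and $\seqj{b}{1}{k-2}$ agree (after arranging $k\le l$): if they differ, it observes that $k-2$ is a fixed index so the two parent cylinders $\pi_q[\seqj{a}{1}{k-2}]$ and $\pi_q[\seqj{b}{1}{k-2}]$ are not lexicographically consecutive and hence disjoint by Lemma~\ref{lemma: cylinders intersect iff lex cons}, separating $G$ from $H$; if they agree (forcing $k<l$), it exhibits a point $x\in\pi_q(A_q)\cap\pi_q[\seqj{b}{1}{l-2}0b_l]\subset\pi_q[\seqj{a}{1}{k-2}1a_k]$ lying strictly between $G$ and $H$. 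Your approach instead pins down the endpoints of each gap intrinsically: using $\pi_q(A_q)\subset\mathcal{U}_q$ (Lemma~\ref{lemma: projection of A_q is in U_q}) together with the fixed/free structure of indices $k-1$ and $k$, you identify $g_-$ and $g_+$ as points whose \emph{unique} base-$q$ expansions begin with $\seqj{a}{1}{k-2}0a_k$ and $\seqj{a}{1}{k-2}1a_k$ respectively, and then derive a contradiction at digit $k-1$ from $G=H$. This avoids the case analysis entirely and yields, as a byproduct, an explicit description of the gap endpoints; the price is that you lean more heavily on uniqueness of expansions, whereas the paper's proof is purely a cylinder-geometry argument that does not explicitly invoke $\mathcal{U}_q$.
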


\begin{proof}
    Assume the hypotheses of the lemma and that $k,l \in \mathbb{N}$ with $k \leq l$.
    Suppose that $\seqj{a}{1}{k-2} \prec \seqj{b}{1}{k-2}$. It is clear that $G \subset \pi_q[\seqj{a}{1}{k-2}]$ and $H \subset \pi_q[\seqj{b}{1}{l-2}] \subset \pi_q[\seqj{b}{1}{k-2}]$. Since $k-1$ is a free zero, $k-2$ is a fixed index so $a_{k-2} = b_{k-2}$ and therefore $\seqj{a}{1}{k-2}$ and $\seqj{b}{1}{k-2}$ are not lexicographically consecutive.
    By Lemma \ref{lemma: cylinders intersect iff lex cons} we know that $\pi_q[\seqj{a}{1}{k-2}] \cap \pi_q[\seqj{b}{1}{k-2}] = \emptyset$ so $G \cap H = \emptyset$.

    Suppose instead that $\seqj{a}{1}{k-2} = \seqj{b}{1}{k-2}$. In order for the premise $\seqj{a}{1}{k-2} \neq \seqj{b}{1}{l-2}$ to hold, we require that $k<l$. Notice that since $l-1$ is a free zero and $k$ is a fixed index, we require $l \geq k+2$.
    Without loss of generality\footnote{If instead we assume that $\seqj{b}{1}{k} = \seqj{a}{1}{k-2}0a_k$ then \eqref{equation: cylinder containment} becomes $\pi_q[\seqj{b}{1}{l-2}1b_l] \subset \pi_q[\seqj{b}{1}{l-2}] \subset \pi_q[\seqj{a}{1}{k-2}0a_k]$ and the rest of the argument is similar with $H$ disjoint and to the left of $G$ instead of disjoint and to the right of $G$.}, let $\seqj{b}{1}{k} = \seqj{a}{1}{k-2}1a_k$. Therefore 
    \begin{equation}
        \label{equation: cylinder containment}
        \pi_q[\seqj{b}{1}{l-2}0b_l] \subset \pi_q[\seqj{b}{1}{l-2}] \subset \pi_q[\seqj{a}{1}{k-2}1a_k].
    \end{equation}
    With Lemma \ref{lemma: projected intervals in pi_q(A_q)}, $\seqj{b}{1}{l-2}0b_l \in A_q^l$ so there exists some $x \in \pi_q[\seqj{b}{1}{l-2}0b_l] \cap \pi_q(A_q)$.
    Using \eqref{equation: cylinder containment}, we know that $x \in \pi_q[\seqj{a}{1}{k-2}1 a_k] \cap \pi_q(A_q)$. 
    Since $G$ contains the interval between $\pi_q[\seqj{a}{1}{k-2}0 a_k]$ and $\pi_q[\seqj{a}{1}{k-2}1 a_k]$ and $G$ is a connected component of $\mathrm{conv}(\pi_q(A_q)) \setminus \pi_q(A_q)$, we know that $y < x$ for all $y \in G$.
    
    Similarly, since $H$ contains the interval between $\pi_q[\seqj{b}{1}{l-2}0b_l]$ and $\pi_q[\seqj{b}{1}{l-2}1b_l]$ we know that $x < z$ for all $z \in H$ (see Figure \ref{figure: level k gaps are well defined}).
    Since $x \in \pi_q(A_q)$ we conclude that $G \cap H = \emptyset$.
\end{proof}

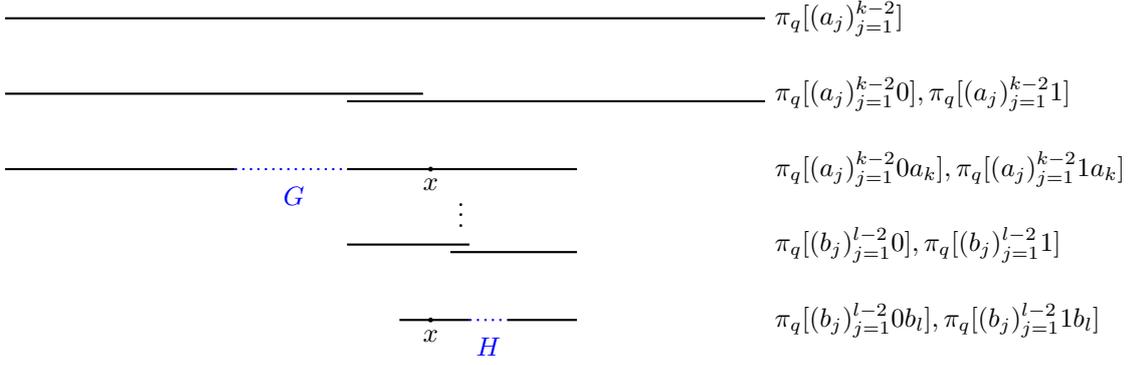
\begin{figure}[t]
    \centering
    \begin{tikzpicture}
        \begin{scope}[thick]
            \draw (0,0) -- (10,0);

            \draw (0,-1) -- (5.5,-1);
            \draw (4.5,-1.1) -- (10,-1.1);

            \draw (0,-2) -- (3.025,-2);
            \draw (4.5,-2) -- (7.525,-2);

            \draw (4.5,-3) -- (6.11,-3);
            \draw (5.86,-3.1) -- (7.525,-3.1);

            \draw (5.19, -4) -- (6.11,-4);
            \draw (6.61,-4) -- (7.525,-4);

            \draw[blue,dotted] (3.025,-2) -- (4.5,-2);
            \draw[blue,dotted] (6.11,-4) -- (6.61,-4);

            \filldraw[black] (5.6,-4) circle (0.5pt);
            \node[below] at (5.6,-4){$x$};

            \filldraw[black] (5.6,-2) circle (0.5pt);
            \node[below] at (5.6,-2){$x$};

            \node[blue,below] at (3.8,-2.1){$G$};
            \node[blue,below] at (6.36,-4.1){$H$};

            \node at (6,-2.5){$\vdots$};

            \node[right] at (10,0){$\pi_q[\seqj{a}{1}{k-2}]$};
            \node[right] at (10,-1){$\pi_q[\seqj{a}{1}{k-2}0], \pi_q[\seqj{a}{1}{k-2}1]$};
            \node[right] at (10,-2){$\pi_q[\seqj{a}{1}{k-2}0a_k], \pi_q[\seqj{a}{1}{k-2}1a_k]$};
            \node[right] at (10,-3){$\pi_q[\seqj{b}{1}{l-2}0], \pi_q[\seqj{b}{1}{l-2}1]$};
            \node[right] at (10,-4){$\pi_q[\seqj{b}{1}{l-2}0b_l], \pi_q[\seqj{b}{1}{l-2}1b_l]$};
        \end{scope}
    \end{tikzpicture}
    \caption{The projections of cylinders used to show $G \cap H = \emptyset$ in the proof of Lemma \ref{lemma: level j gaps are well defined}. The sets $G$ and $H$ contain the intervals represented by the dotted lines above them. The point $x \in \pi_q(A_q)$ is an element of neither $G$ nor $H$ but necessarily lies between them.}
    \label{figure: level k gaps are well defined}
\end{figure}

Next we show that every gap is a level $k$ gap for some $k \in \mathbb{N}$.

\begin{lemma}
\label{lemma: all gaps are level k gaps}
    Let $q \in (q_9,2)$. If $G$ is a gap then $G$ is a level $k$ gap for some $k \in \mathbb{N}$.
\end{lemma}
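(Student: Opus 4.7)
The plan is to take an arbitrary gap $G = (x^-, x^+)$ of $\pi_q(A_q)$ and exhibit it as a level $k$ gap for a specific $k$ read off from $G$. Since $A_q$ is closed in $\{0,1\}^\mathbb{N}$ (each of the four defining conditions fixes the value of a single coordinate, hence is closed) and therefore compact in the product topology, $\pi_q(A_q)$ is compact, so $G$ is open and the endpoints $x^\pm$ lie in $\pi_q(A_q)$. I would then fix preimages $\seqj{a^-}{1}{\infty}, \seqj{a^+}{1}{\infty} \in A_q$ with $\pi_q(\seqj{a^\pm}{1}{\infty}) = x^\pm$. By the construction in Subsection \ref{subsection: construction of A_q}, every fixed index has its value forced in all members of $A_q$, so these two sequences can disagree only at free zeros. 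Let $k-1$ be the smallest index of disagreement; in particular $k-1 \in \Jfree$.

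The central claim is that $a^-_{k-1} = 0$, $a^+_{k-1} = 1$, and that $k$ is a fixed index. For the first part I would argue by contradiction. If instead $a^-_{k-1} = 1$ and $a^+_{k-1} = 0$, note that consecutive elements of $\Jfree$ differ by at least $2$ (since free zeros occupy the even positions in the enumeration $J = \{j_0 < j_1 < \cdots\}$), so the next free zero after $k-1$ sits at a position $\geq k+1$ and subsequent free zeros are spaced $\geq 2$ apart. Because the two sequences can only differ on $\Jfree$, this yields
\[
x^+ - x^- \;\leq\; -q^{-(k-1)} + \sum_{n=0}^\infty q^{-(k+1+2n)} \;=\; -q^{-(k-1)} + \frac{q^{-(k-1)}}{q^2-1},
\]
which is strictly negative for $q > \sqrt{2}$, and hence for $q \in (q_9, 2)$, contradicting $x^- < x^+$. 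For the second part, because $\Jfree$ has gaps of size at least $2$, the index $k$ cannot itself be a free zero; every non-free-zero index is a fixed index, so $a^-_k = a^+_k =: a_k$.

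With these two facts in hand, the finite strings $\seqj{a^-}{1}{k-2}0a_k$ and $\seqj{a^-}{1}{k-2}1a_k$ both belong to $A_q^k$ and are not lexicographically consecutive, so Lemma \ref{lemma: cylinders intersect iff lex cons} yields that their projected cylinders are disjoint with a nonempty open interval separating them. This separating interval sits inside $(x^-, x^+) = G$ since $x^- \in \pi_q[\seqj{a^-}{1}{k-2}0a_k]$ and $x^+ \in \pi_q[\seqj{a^-}{1}{k-2}1a_k]$, so $G$ is a level $k$ gap (uniquely determined by $\seqj{a^-}{1}{k-2}$ via Lemma \ref{lemma: level j gaps are well defined}). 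The main obstacle is the geometric-series estimate that pins down the sign $a^-_{k-1} < a^+_{k-1}$: the spacing bound on $\Jfree$ must be converted into an inequality tight enough to overcome the term $-q^{-(k-1)}$, and it is precisely here that the hypothesis $q \in (q_9,2)$ (in fact merely $q > \sqrt{2}$) enters. Once this sign is fixed, the remaining bookkeeping follows directly from Lemmas \ref{lemma: cylinders intersect iff lex cons} and \ref{lemma: level j gaps are well defined}.
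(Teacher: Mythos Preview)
Your argument is correct and takes a genuinely different route from the paper's. The paper works top-down: it defines $k_G$ as the largest $k$ for which $G$ lies inside the interior of some projected cylinder $\pi_q[\seqj{a}{1}{k-2}]$ with $\seqj{a}{1}{k-2} \in A_q^{k-2}$, and then argues by contradiction (using boundary considerations for the cylinder) that $k_G-1$ must be a free zero and that $G$ contains the required separating interval. Your approach is bottom-up: you read off $k$ directly from the first disagreement of preimages of the two endpoints $x^\pm \in \pi_q(A_q)$, and the only nontrivial step is the geometric-series estimate forcing $a^-_{k-1}=0$, $a^+_{k-1}=1$. Your route is more elementary and transparent---it bypasses the somewhat delicate interior/boundary bookkeeping of the paper and reduces everything to the spacing bound on $\Jfree$ plus a single inequality valid already for $q>\sqrt{2}$. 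The paper's route, on the other hand, stays closer to the cylinder picture used in the surrounding lemmas and makes the identification of the level via the maximal containing cylinder, which ties in naturally with the bounds on $|G|$ proved in Lemma~\ref{lemma: level k gaps decrease}. Both ultimately rely on Lemma~\ref{lemma: cylinders intersect iff lex cons} (and hence on $A_q \subset \mathcal{S}^9$) to produce the actual gap between the two level-$k$ cylinders.
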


\begin{proof}
    Let $q \in (q_9,2)$ and let $1^M \seqj{c}{M+1}{\infty}$ be the fixed expansion of $1$. 
    Let $G$ be a gap and let ${k_G} \in \mathbb{N}$ be the largest number such that there exists an element $\seqj{a}{1}{{k_G}-2} \in A_q^{{k_G}-2}$ with the property that $G \subset \interior{\pi_q[\seqj{a}{1}{{k_G}-2}]}$. 
    We show that ${k_G}$ exists and claim that $G$ is a level ${k_G}$ gap.
    
    Recall that $j_0$ is the smallest free zero of $1^M \seqj{c}{M+1}{\infty}$, so let $\seqj{a}{1}{j_0-1}$ be the unique element of $A_q^{j_0-1}$, then $\seqj{a}{1}{j_0-1}$ is a prefix of every element of $A_q$.
    So $\pi_q(A_q) \subset \pi_q[\seqj{a}{1}{j_0-1}]$.
    The endpoints of $\pi_q[\seqj{a}{1}{j_0-1}]$ are not in $\pi_q(A_q)$ so $\mathrm{conv}(\pi_q(A_q)) \subset \interior{\pi_q[\seqj{a}{1}{j_0-1}]}$.
    Hence all gaps are contained in $\interior{\pi_q[\seqj{a}{1}{j_0-1}]}$ and so for any gap $G$, ${k_G}$ is at least $j_0+1$.
    A calculation shows that $|\pi_q[\seqj{a}{1}{{k_G}-2}]| = q^{-{k_G}+2}(\frac{1}{q-1})$ which approaches $0$ as ${k_G} \rightarrow \infty$.
    Since every gap $G$ has positive length, ${k_G}$ is bounded above for every gap $G$. 
    This shows that for any gap $G$, ${k_G}$ exists, is finite and is unique since it is maximal within a finite set.
    
    Let $G$ be a gap and ${k_G} \in \mathbb{N}$ as above. We show that ${k_G}-1$ is a free zero by contradiction.
    Suppose ${k_G}-1$ is a fixed index and let $a_{{k_G}-1} \in \{0,1\}$ be such that $\seqj{a}{1}{{k_G}-2}a_{{k_G}-1} \in A_q^{{k_G}-1}$ so $\seqj{a}{1}{{k_G}-2}\overline{a_{{k_G}-1}} \notin A_q^{{k_G}-1}$.
    We know that $G \subset \interior{\pi_q[\seqj{a}{1}{{k_G}-2}]}$ and $\interior{\pi_q[\seqj{a}{1}{k_G-2}]} = \interior{\pi_q[\seqj{a}{1}{{k_G}-2}a_{k_G -1}]} \cup \interior{\pi_q[\seqj{a}{1}{{k_G}-2}\overline{a_{k_G -1}}]}$.
    Therefore, since $G \not\subset \interior{\pi_q[\seqj{a}{1}{{k_G}-2}a_{k_G -1}]}$ we know that $G \cap \pi_q[\seqj{a}{1}{{k_G}-2}\overline{a_{{k_G}-1}}] \neq \emptyset$.
    Because $\seqj{a}{1}{{k_G}-2}\overline{a_{{k_G}-1}} \notin A_q^{{k_G}-1}$, Lemma \ref{lemma: projected intervals in pi_q(A_q)} tells us that $\pi_q[\seqj{a}{1}{{k_G}-2}\overline{a_{{k_G}-1}}] \cap \pi_q(A_q) = \emptyset$ so we know that $\pi_q[\seqj{a}{1}{{k_G}-2}\overline{a_{{k_G}-1}}]\subset G$. 
    Observe that $ \pi_q(\seqj{a}{1}{{k_G}-2}(\overline{a_{{k_G}-1}})^\infty) \in \pi_q[\seqj{a}{1}{{k_G}-2}\overline{a_{{k_G}-1}}]$ is an extremal point of $\pi_q[\seqj{a}{1}{{k_G}-2}]$ so $\pi_q(\seqj{a}{1}{{k_G}-2}(\overline{a_{{k_G}-1}})^\infty) \in \pi_q[\seqj{a}{1}{{k_G}-2}]\setminus \interior{\pi_q[\seqj{a}{1}{{k_G}-2}]}$ which contradicts $G \subset \interior{\pi_q[\seqj{a}{1}{{k_G}-2}]}$. 
    Hence ${k_G}-1$ is a free zero.

    We next show that $\pi_q[\seqj{a}{1}{{k_G}-2}\overline{a_{k_G}}\overline{a_{k_G}}] \cap G = \emptyset$.
    We show that this implies that $G$ contains the interval between $\pi_q[\seqj{a}{1}{{k_G}-2}0a_{k_G}]$ and $\pi_q[\seqj{a}{1}{{k_G}-2}1a_{k_G}]$, proving $G$ is a level $k_G$ gap.
    Since ${k_G}-1$ is a free zero ${k_G}-2$ and ${k_G}$ are fixed indices so let $a_{k_G} \in \{0,1\}$ be such that $\seqj{a}{1}{{k_G}-2}0a_{k_G}, \seqj{a}{1}{{k_G}-2}1a_{k_G} \in A_q^{k_G}$.
    Notice that by Lemma \ref{lemma: projected intervals in pi_q(A_q)}, $\pi_q[\seqj{a}{1}{{k_G}-2}\overline{a_{k_G}}\overline{a_{k_G}}] \cap \pi_q(A_q) = \emptyset$ because $\seqj{a}{1}{{k_G}-2}\overline{a_{k_G}}\overline{a_{k_G}} \notin A_q^{k_G}$.
    Suppose $\pi_q[\seqj{a}{1}{{k_G}-2}\overline{a_{k_G}}\overline{a_{k_G}}] \cap G \neq \emptyset$, then since $G$ is a gap and $\pi_q[\seqj{a}{1}{{k_G}-2}\overline{a_{k_G}}\overline{a_{k_G}}] \cap \pi_q(A_q) = \emptyset$, this is equivalent to $\pi_q[\seqj{a}{1}{{k_G}-2}\overline{a_{k_G}}\overline{a_{k_G}}] \subset G$.
    In this case, $\pi_q(\seqj{a}{1}{{k_G}-2}(\overline{a_{k_G}})^\infty) \in G$, but $\pi_q(\seqj{a}{1}{{k_G}-2}(\overline{a_{k_G}})^\infty) \in \pi_q[\seqj{a}{1}{{k_G}-2}]\setminus \interior{\pi_q[\seqj{a}{1}{{k_G}-2}]}$ which contradicts $G \subset \interior{\pi_q[\seqj{a}{1}{{k_G}-2}]}$. 
    Hence, $\pi_q[\seqj{a}{1}{{k_G}-2}\overline{a_{k_G}}\overline{a_{k_G}}] \cap G = \emptyset$.

    Consider the union
    $$\pi_q[\seqj{a}{1}{{k_G}-2}] = \bigcup_{\delta \in \{a_{k_G} a_{k_G}, a_{k_G} \overline{a_{k_G}}, \overline{a_{k_G}}a_{k_G}, \overline{a_{k_G}}\overline{a_{k_G}}\}} \pi_q[\seqj{a}{1}{{k_G}-2}\delta].$$
    Since $\pi_q[\seqj{a}{1}{{k_G}-2}\overline{a_{k_G}}\overline{a_{k_G}}] \cap G = \emptyset$, we know that 
    $$G \subset \bigcup_{\delta \in \{a_{k_G} a_{k_G}, a_{k_G} \overline{a_{k_G}}, \overline{a_{k_G}}a_{k_G}\}} \pi_q[\seqj{a}{1}{{k_G}-2}\delta].$$
    We know that $\{a_{k_G} a_{k_G} , \overline{a_{k_G}} a_{k_G}\} = \{0 a_{k_G} , 1 a_{k_G}\}$ and $\seqj{a}{1}{{k_G}-2}a_{k_G}\overline{a_{k_G}} \notin A_q^{k_G}$.
    Since $G$ is an interval which is not contained in either $\interior{\pi_q[\seqj{a}{1}{{k_G}-2}0a_{k_G}]}$ or $\interior{\pi_q[\seqj{a}{1}{{k_G}-2}1a_{k_G}]}$, we know $ \pi_q[\seqj{a}{1}{{k_G}-2}a_{k_G}\overline{a_{k_G}}] \cap G \neq \emptyset$.
    Again, by Lemma \ref{lemma: projected intervals in pi_q(A_q)} we know that $\pi_q[\seqj{a}{1}{{k_G}-2}a_{k_G}\overline{a_{k_G}}] \cap \pi_q(A_q) = \emptyset$.
    Since $G$ is a gap, the fact that $\pi_q[\seqj{a}{1}{{k_G}-2}a_{k_G}\overline{a_{k_G}}] \cap G \neq \emptyset$ tells us that $\pi_q[\seqj{a}{1}{{k_G}-2}a_{k_G}\overline{a_{k_G}}] \subset G$.

    By inspection, either $a_{k_G} a_{k_G} \prec a_{k_G} \overline{a_{k_G}} \prec \overline{a_{k_G}}a_{k_G}$ or $\overline{a_{k_G}} a_{k_G} \prec a_{k_G} \overline{a_{k_G}} \prec a_{k_G} a_{k_G}$ and in 
    each case both pairs $\{a_{k_G} a_{k_G} , a_{k_G} \overline{a_{k_G}}\}$ and $\{a_{k_G}\overline{a_{k_G}}, \overline{a_{k_G}}a_{k_G}\}$ are lexicographically consecutive.
    Therefore by Lemma \ref{lemma: projected intervals in pi_q(A_q)},
    $$\pi_q[\seqj{a}{1}{k_G-2} a_{k_G} \overline{a_{k_G}}] \cap \pi_q[\seqj{a}{1}{k_G-2} a_{k_G} a_{k_G}] \neq \emptyset,$$
    and
    $$\pi_q[\seqj{a}{1}{k_G-2} a_{k_G} \overline{a_{k_G}}] \cap \pi_q[\seqj{a}{1}{k_G-2} \overline{a_{k_G}} a_{k_G}] \neq \emptyset.$$
    Since $\pi_q[\seqj{a}{1}{{k_G}-2}a_{k_G}\overline{a_{k_G}}] \subset G$ and $G$ is an interval, we know $G$ contains the interval between $\pi_q[\seqj{a}{1}{{k_G}-2}0a_{k_G}]$ and $\pi_q[\seqj{a}{1}{{k_G}-2}1a_{k_G}]$. Hence $G$ is a level $k_G$ gap.
\end{proof}

\begin{lemma}
\label{lemma: level k gaps decrease}
   Let $q  \in (q_9,2)$. If $G$ is a level $k$ gap then $q^{-k} < |G| < q^{-k+1}$.
   Moreover, if $H$ is a level $l$ gap then $|G|> |H|$ whenever $k<l$.
\end{lemma}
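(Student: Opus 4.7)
The plan is to identify the endpoints of $G$ explicitly and express $|G|$ in closed form, up to a sum over free zeros of the fixed expansion $\seqj{c}{1}{\infty}$ that can be controlled.

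Fix a level $k$ gap $G$ arising from some prefix $\seqj{a}{1}{k-2} \in A_q^{k-2}$ as in the proof of Lemma \ref{lemma: all gaps are level k gaps}. Then $k-1$ is a free zero and $k-2,k$ are fixed indices with prescribed values $a_{k-2},a_k$. Since any element of $A_q$ with prefix $\seqj{a}{1}{k-2}$ must take the value $a_k$ at position $k$, the cylinders $\pi_q[\seqj{a}{1}{k-2}\,0\,\overline{a_k}]$ and $\pi_q[\seqj{a}{1}{k-2}\,1\,\overline{a_k}]$ are disjoint from $\pi_q(A_q)$. Combined with the fact that $G \subset \pi_q[\seqj{a}{1}{k-2}]$, this forces the endpoints of $G$ to be
\[
r = \max\bigl(\pi_q(A_q) \cap \pi_q[\seqj{a}{1}{k-2}\,0\,a_k]\bigr), \qquad l = \min\bigl(\pi_q(A_q) \cap \pi_q[\seqj{a}{1}{k-2}\,1\,a_k]\bigr).
\]
Now $r$ is attained by the sequence that assigns the value $1$ at every free zero $j > k$ and the forced value at every other index, while $l$ is attained by the analogous sequence assigning $0$ at every free zero $j > k$. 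The contributions from fixed indices cancel when subtracting, yielding
\[
|G| = l - r = q^{-(k-1)} - \sum_{j > k,\ j \in \Jfree} q^{-j}.
\]

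The upper bound $|G| < q^{-k+1}$ is immediate since $\Jfree$ is infinite and so the subtracted sum is strictly positive. For the lower bound $|G| > q^{-k}$, I would use the fact that consecutive elements of $\Jfree$ are separated by at least $2$: writing $\Jfree = \{j_{2m}\}_{m \geq 0}$, the index $j_{2m+1}$ lies strictly between $j_{2m}$ and $j_{2m+2}$, so $j_{2m+2} \geq j_{2m} + 2$. This yields the crude estimate
\[
\sum_{j > k,\ j \in \Jfree} q^{-j} \leq \sum_{i=1}^\infty q^{-(k+2i-1)} = \frac{q^{-k+1}}{q^2-1},
\]
and hence $|G| \geq q^{-k+1}(q^2-2)/(q^2-1)$. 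The desired lower bound $|G| > q^{-k}$ then reduces to the polynomial inequality $q^3 - q^2 - 2q + 1 > 0$, which is readily verified on $(q_9,2)$ (the cubic is increasing there and already positive at $q_9 \approx 1.998$).

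The \emph{moreover} statement is then an immediate consequence of the two bounds: any level $l$ gap $H$ with $l > k$ satisfies $l \geq k+1$, so $|H| < q^{-l+1} \leq q^{-k} < |G|$. The main technical step is the precise identification of $r$ and $l$ from the constraint structure of $A_q$; once this is carried out, the two bounds reduce to elementary arithmetic in $q$.
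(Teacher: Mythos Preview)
Your proof is correct and takes a cleaner, more direct route than the paper's. The paper does not identify the endpoints of $G$ exactly; instead it sandwiches $G$ between two intervals built from the next two free zeros $J_2,J_3$ after $k-1$ and the fixed blocks $\alpha_1,\alpha_2$ between them (each of length between $1$ and $4$), obtaining bounds of the shape
\[
q^{-(k-2)}\Bigl[q^{-1} - q^{-2-|\alpha_1|} \pm q^{-2-|\alpha_1|-|\alpha_2|}\tfrac{1}{q-1}\Bigr],
\]
and then optimises over $|\alpha_1|,|\alpha_2|$ with some ad hoc numerical estimates using $q>q_9$. Your approach instead pins down the actual endpoints $r,l\in\pi_q(A_q)$ and produces the closed form $|G|=q^{-(k-1)}-\sum_{j\in\Jfree,\,j>k}q^{-j}$, after which both bounds are straightforward. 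Two ingredients in your identification of $r,l$ deserve to be made explicit: first, that every point of $\pi_q(A_q)\cap\pi_q[\seqj{a}{1}{k-2}]$ arises from a sequence in $A_q$ with prefix $\seqj{a}{1}{k-2}$ (this follows from Lemma~\ref{lemma: cylinders intersect iff lex cons} and the fact that $k-2$ is a fixed index, so distinct prefixes in $A_q^{k-2}$ give disjoint cylinders); second, that the lexicographic order on $A_q$ agrees with the $\pi_q$-order (via unique expansions, Lemma~\ref{lemma: projection of S^k is contained in U_q}). For the \emph{moreover} clause the paper uses the slightly sharper $l\geq k+2$ (both $k-1$ and $l-1$ are free zeros), but your $l\geq k+1$ already suffices.
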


\begin{proof}
    Let $q \in (q_9,2)$ and let $G$ be a level $k$ gap. 
    Suppose $G$ is the level $k$ gap which contains the interval between $\pi_q[\seqj{a}{1}{k-2}0a_k]$ and $\pi_q[\seqj{a}{1}{k-2}1 a_k]$ for some $\seqj{a}{1}{k-2}0 a_k, \seqj{a}{1}{k-2}1a_k \in A_q^k$.
    We find lower and upper bounds on $|G|$ by inspecting further restrictions on the endpoints of $G$.

    Let $J_1, J_2, J_3$ be successive free zeros such that $J_1 = k-1$.
    Let $\alpha_1$ and $\alpha_2$ be the binary strings between the pairs of free zeros $J_1, J_2$ and $J_2, J_3$ respectively. 
    That is $\alpha_1 = \seqj{a}{J_1+1}{J_2-1}$ and $\alpha_2 = \seqj{a}{J_2+1}{J_3-1}$.
    Since $J_1,J_2,J_3$ are successive free zeros, there are no free zeros in the ranges $J_1+1, \ldots , J_2-1$ and $J_2+1 , \ldots , J_3-1$, so $\alpha_1$ and $\alpha_2$ are fixed since they are indexed by fixed indices.
    There is at least one and at most four fixed indices between free zeros of any sequence in $A_q$, so $1 \leq |\alpha_1|,|\alpha_2| \leq 4$. 
    Therefore, for any $a_{J_1}, a_{J_2} \in \{0,1\}$ we know that
    $$\seqj{a}{1}{k-2} a_{J_1} \alpha_1 a_{J_2} \alpha_2 \in A_q^{k + |\alpha_1| + |\alpha_2|}.$$ 
    By the above construction, we have the equality:
    \begin{multline}
    \label{equation: bounding gaps setwise equivalence}
    \{\seqj{a}{1}{k-2} a_{J_1} \alpha_1 a_{J_2} \alpha_2 \in A_q^{k + |\alpha_1| + |\alpha_2|} : a_{J_1}, a_{J_2} \in \{0,1\} \} \\
    = \{ \seqj{a'}{1}{k + |\alpha_1| + |\alpha_2|} \in A_q^{k + |\alpha_1| + |\alpha_2|} : \seqj{a'}{1}{k-2} = \seqj{a}{1}{k-2}\}.
    \end{multline}

    In words, \eqref{equation: bounding gaps setwise equivalence} is saying that the expression $\seqj{a}{1}{k-2} a_{J_1} \alpha_1 a_{J_2} \alpha_2$ describes all sequences in $A_q^{k + |\alpha_1| + |\alpha_2|}$ prefixed by $\seqj{a}{1}{k-2}$.

    We know $G$ contains the interval between $\pi_q[\seqj{a}{1}{k-2}0 a_k]$ and $\pi_q[\seqj{a}{1}{k-2}1 a_k]$, therefore, $G$ contains the interval between the rightmost element of $\pi_q[A_q^{k+|\alpha_1| + |\alpha_2|}]$ contained in $\pi_q[\seqj{a}{1}{k-2}0 a_k]$ and the leftmost element of $\pi_q[A_q^{k + |\alpha_1| + |\alpha_2|}]$ contained in $\pi_q[\seqj{a}{1}{k-2}1 a_k]$.
    By \eqref{equation: bounding gaps setwise equivalence}, the rightmost element of $\pi_q[A_q^{k+|\alpha_1| + |\alpha_2|}]$ contained in $\pi_q[\seqj{a}{1}{k-2}0 a_k]$ is given by $\pi_q[\seqj{a}{1}{k-2}0 \alpha_1 1 \alpha_2]$ and
    the leftmost element of $\pi_q[A_q^{k+|\alpha_1| + |\alpha_2|}]$ contained in $\pi_q[\seqj{a}{1}{k-2}1 a_k]$ is given by $\pi_q[\seqj{a}{1}{k-2}1 \alpha_1 0 \alpha_2]$.
    Moreover, we know $G$ is contained in the convex hull of the union of these intervals, that is,
    $$G \subset \mathrm{conv}( \pi_q[\seqj{a}{1}{k-2}0 \alpha_1 1 \alpha_2] \cup \pi_q[\seqj{a}{1}{k-2}1 \alpha_1 0 \alpha_2] ),$$
    because both $\pi_q[\seqj{a}{1}{k-2}0 \alpha_1 1 \alpha_2]$ and $\pi_q[\seqj{a}{1}{k-2}1 \alpha_1 0 \alpha_2 ]$ are elements of $\pi_q[A_q^{k+|\alpha_1|+|\alpha_2|}]$.
    
    The above argument finds an interval which is contained in $G$ and an interval which contains $G$.
    This provides both upper and lower bounds on $|G|$ (see Figure \ref{figure: upper and lower bounds for level k gap}).
    The containments described above give the following inequality,
    $$
    \pi_q(\seqj{a}{1}{k-2}1 \alpha_1 0 \alpha_2 0^\infty) - \pi_q(\seqj{a}{1}{k-2} 0 \alpha_1 1 \alpha_2 1^\infty) \leq |G| \leq \pi_q(\seqj{a}{1}{k-2}1 \alpha_1 0 \alpha_2 1^\infty) - \pi_q(\seqj{a}{1}{k-2} 0 \alpha_1 1 \alpha_2 0^\infty),
    $$
    which algebraically gives,
    \begin{multline}
    \label{equation: upper and lower bounds on |G|}
    q^{-(k-2)}\left[ q^{-1} - q^{-2-|\alpha_1|} - q^{-2-|\alpha_1| - |\alpha_2|}\left(\frac{1}{q-1}\right)\right] \leq |G| \\
    \leq q^{-(k-2)}\left[q^{-1} - q^{-2-|\alpha_1|} + q^{-2 - |\alpha_1| - |\alpha_2|}\left(\frac{1}{q-1}\right)\right].
    \end{multline}

    For the lower bound, we observe that the left hand side of \eqref{equation: upper and lower bounds on |G|} is smallest when the negative terms are largest, i.e. when $|\alpha_1| = |\alpha_2| = 1$. 
    Then
    \begin{align*}
    |G| &\geq q^{-k+2}\left[q^{-1} - q^{-3} - q^{-4}\left(\frac{1}{q-1}\right)\right] \\
     &> q^{-k+1}\left[1 - \frac{25}{8}q^{-3}\right]
    \end{align*}
    where the last inequality comes from $q<2 \implies q^{-3} < 2q^{-4}$ and $q > q_9 \implies \frac{1}{q-1} < \frac{9}{8}$. Now using $q > q_9 \implies q^{-3} < \frac{1}{7}$ and $q^{-1} < \frac{31}{56}$ we have
    $$|G| > q^{-k}.$$

    For the upper bound, the right hand inequality of \eqref{equation: upper and lower bounds on |G|} is equivalent to
    $$|G| \leq  q^{-k+2}\left[q^{-1} + q^{-2-|\alpha_1|}\left(\frac{q^{-|\alpha_2|}}{q-1} - 1 \right)\right].$$
    The product $q^{-2-|\alpha_1|}\left(\frac{q^{-|\alpha_2|}}{q-1} - 1\right)$ is negative so $|G| < q^{-k+1}$.
    Therefore, $q^{-k} < |G| <  q^{-k+1}$.
    If $H$ is a level $l$ gap for some $l > k$ then since $l-1$ is a free zero, $l \geq k+2$ so $|H| < q^{-l+1} \leq q^{-k-1} < q^{-k} < |G|$ and the lemma is proved.
    \end{proof}
    
\begin{figure}
    \centering
    \begin{tikzpicture}
        \begin{scope}[thick]
            \draw (0,0) -- (10,0);
            \node[right] at (10,0){$\pi_q[\seqj{a}{1}{k-2}]$};

            \draw (0,-1) -- (6,-1);
            \draw (4,-1.1) -- (10,-1.1);
            \node[right] at (10,-1){$\pi_q[\seqj{a}{1}{k-2}0], \pi_q[\seqj{a}{1}{k-2}1]$};

            \node at (2.5,-1.5){$\vdots$};
            \node at (7.5,-1.5){$\vdots$};

            \draw (1,-2) -- (4,-2);
            \draw (6,-2) -- (9,-2);
            \node[right] at (10,-2){$\pi_q[\seqj{a}{1}{k-2}0\alpha_1], \pi_q[\seqj{a}{1}{k-2}1\alpha_1]$};

            \draw[dashed] (1,-3) -- (3,-3);
            \draw (2,-3.1) -- (4,-3.1);
            \draw (6,-3.1) -- (8,-3.1);
            \draw[dashed] (7,-3) -- (9,-3);
            \node[right] at (10,-3){$\pi_q[\seqj{a}{1}{k-2}0\alpha_1 1], \pi_q[\seqj{a}{1}{k-2}1\alpha_1 0 ]$};

            \node at (3,-3.5){$\vdots$};
            \node at (7,-3.5){$\vdots$};

            \draw (2.5,-4) -- (3.5,-4);
            \draw (6.5,-4) -- (7.5,-4);
            \draw[blue] (3.2,-4.1) -- (6.8,-4.1);
            % \draw[blue,dotted] (3.5,-3.9) -- (6.5,-3.9);
            \node[blue,below] at (5,-4.1){$G$};
            \node[right] at (10,-4){$\pi_q[\seqj{a}{1}{k-2}0\alpha_1 1 \alpha_2], \pi_q[\seqj{a}{1}{k-2}1\alpha_1 0 \alpha_2]$};

        \end{scope}
    \end{tikzpicture}
    \caption{Upper and lower bounds for $|G|$ in the context of Lemma \ref{lemma: level k gaps decrease}.}
    \label{figure: upper and lower bounds for level k gap}
\end{figure}
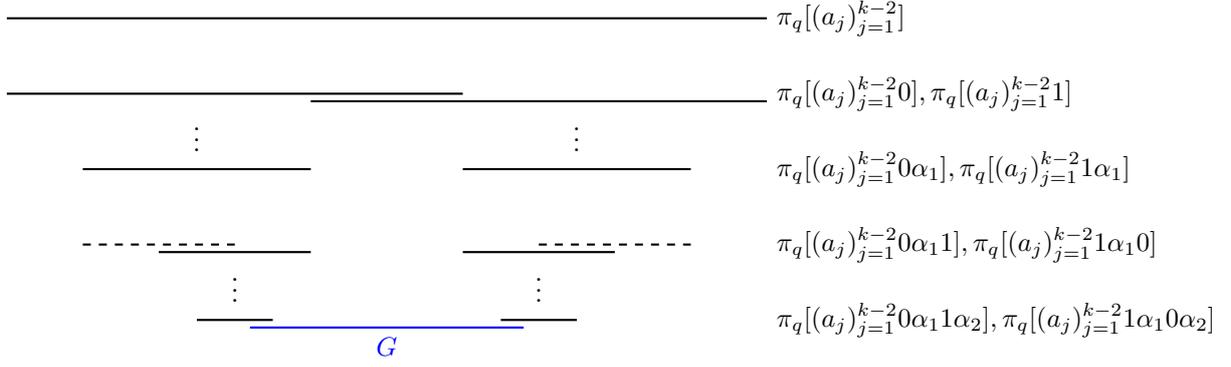

Lemma \ref{lemma: level k gaps decrease} allows us to estimate the length of the bridge either side of some gap $G$.
If $G$ is a level $k$ gap of $\pi_q(A_q)$, we know by Lemma \ref{lemma: level k gaps decrease} that if $l > k$ then all level $l$ gaps are smaller than $G$. 
Let $H$ be the gap at least as large and immediately to the left of $G$.
Then $H$ is a level $m$ gap for some $m \leq k$.
Therefore to bound $L_G$ below it suffices to find a lower bound on the distance between $G$ and $H$.

\begin{lemma}
\label{lemma: lower bound on bridge}
    Let $q \in (q_9,2)$.
    If $G$ is a level $k$ gap of $\pi_q(A_q)$ then the bridges associated with $G$ on the left and the right, $L_G$ and $R_G$, have diameter $|L_G|,|R_G| > q^{-k-4}$.
\end{lemma}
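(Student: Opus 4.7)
My plan is to find a long piece of $\pi_q(A_q)$ sitting inside the cylinder $\pi_q[\seqj{a}{1}{k-2} 0 a_k]$, which lies immediately to the left of $G$, and bound its length below by $q^{-k-4}$. The estimate for $R_G$ follows by the symmetric argument on $\pi_q[\seqj{a}{1}{k-2} 1 a_k]$, so I focus on $L_G$.

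The key step is to verify that no gap of $\pi_q(A_q)$ of size at least $|G|$ is contained in $\pi_q[\seqj{a}{1}{k-2} 0 a_k]$. For a level $l$ gap with $l > k$ one has $l \geq k + 2$ (since the next free zero after $k - 1$ is at least $k + 1$), so Lemma \ref{lemma: level k gaps decrease} gives size $< q^{-l+1} \leq q^{-k-1} < q^{-k} < |G|$. By Lemma \ref{lemma: level j gaps are well defined}, the unique level $k$ gap associated to the prefix $\seqj{a}{1}{k-2}$ is $G$ itself, and $G$ lies outside $\pi_q[\seqj{a}{1}{k-2} 0 a_k]$. For a level $l$ gap with $l < k$ associated to some $\seqj{b}{1}{l-2}$, the cylinder $\pi_q[\seqj{a}{1}{k-2} 0 a_k]$ is either disjoint from $\pi_q[\seqj{b}{1}{l-2}]$ or contained in one of the two subcylinders $\pi_q[\seqj{b}{1}{l-2} i b_l]$, $i \in \{0,1\}$, bounding the gap; in either case the gap is disjoint from $\pi_q[\seqj{a}{1}{k-2} 0 a_k]$.

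Consequently the previous gap of size at least $|G|$ lies entirely to the left of $\pi_q[\seqj{a}{1}{k-2} 0 a_k]$, so $L_G$ extends at least from $\inf \pi_q[\seqj{a}{1}{k-2} 0 a_k]$ to $\sup(\pi_q(A_q) \cap \pi_q[\seqj{a}{1}{k-2} 0 a_k])$ (the left endpoint of $G$). I would construct two sequences $\seqj{a^+}{1}{\infty}, \seqj{a^-}{1}{\infty} \in A_q$ with common prefix $\seqj{a}{1}{k-2} 0 a_k$, where $\seqj{a^+}{1}{\infty}$ takes value $1$ at every free zero after position $k$ and $\seqj{a^-}{1}{\infty}$ takes value $0$ at every free zero after position $k$; all other entries are forced by the constraints defining $A_q$. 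Then
\[
\pi_q(\seqj{a^+}{1}{\infty}) - \pi_q(\seqj{a^-}{1}{\infty}) = \sum_{j \in \Jfree,\, j > k} q^{-j}.
\]
The observation in the proof of Lemma \ref{lemma: level k gaps decrease} that $|\alpha_1| \leq 4$ shows that consecutive free zeros of $\seqj{c}{1}{\infty}$ are at most $5$ apart, so the smallest free zero strictly larger than $k - 1$ is at most $k + 4$. Since $\Jfree$ is infinite, the displayed sum strictly exceeds $q^{-k-4}$, giving $|L_G| > q^{-k-4}$. The main obstacle is the three-way case analysis in the second paragraph: each case is short, but gaps at every level must be ruled out to prevent the bridge from being truncated prematurely.
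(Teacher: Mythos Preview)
Your approach is essentially the paper's: locate $G$ between the cylinders $\pi_q[\seqj{a}{1}{k-2}0a_k]$ and $\pi_q[\seqj{a}{1}{k-2}1a_k]$, argue that no gap of size $\ge |G|$ sits inside the left cylinder, and bound $|L_G|$ below by the spread of $\pi_q(A_q)$ inside that cylinder. Your final estimate $\sum_{j\in\Jfree,\,j>k}q^{-j}>q^{-k-4}$ is a clean variant of the paper's two--free--zero computation and gives the same bound.

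There is, however, one genuine logical slip. From ``no gap of size $\ge|G|$ is \emph{contained} in $\pi_q[\seqj{a}{1}{k-2}0a_k]$'' you conclude that ``the previous gap of size at least $|G|$ lies \emph{entirely to the left} of $\pi_q[\seqj{a}{1}{k-2}0a_k]$''. That does not follow: such a gap may well overlap the left portion of the cylinder (indeed, the left endpoint $\pi_q(\seqj{a}{1}{k-2}0a_k0^\infty)$ is typically not in $\pi_q(A_q)$). The paper flags exactly this point and fixes it by passing to $\mathrm{conv}\bigl(\pi_q[\seqj{a}{1}{k-2}0a_k]\cap\pi_q(A_q)\bigr)$; since no large gap is \emph{contained} in the cylinder, none can separate two points of $\pi_q(A_q)$ lying in it, so this convex hull is a subset of $L_G$. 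Your sequences $a^+,a^-$ lie in $\pi_q(A_q)\cap\pi_q[\seqj{a}{1}{k-2}0a_k]$, so once you make this correction your bound $|L_G|\ge\pi_q(a^+)-\pi_q(a^-)$ is valid for the right reason. A smaller point: your level-$k$ case only treats the gap with prefix $\seqj{a}{1}{k-2}$; other level-$k$ gaps live in $\pi_q[\seqj{a'}{1}{k-2}]$ with $\seqj{a'}{1}{k-2}\neq\seqj{a}{1}{k-2}$, and since $k-2$ is a fixed index these cylinders are disjoint from $\pi_q[\seqj{a}{1}{k-2}]$ by Lemma~\ref{lemma: cylinders intersect iff lex cons}. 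The paper instead handles all $l\le k$ at once by the size comparison $|H|\ge|\pi_q[\seqj{b}{1}{l-2}b_l\overline{b_l}]|\ge|\pi_q[\seqj{a}{1}{k-2}0a_k]|$.
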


\begin{proof}
    Let $k-1$ be a free zero and let $\seqj{a}{1}{k-2}0 a_k, \seqj{a}{1}{k-2} 1 a_k \in A_q^k$.
    By Lemma \ref{lemma: level j gaps are well defined}, we know there is a unique level $k$ gap $G$ which contains the interval between $\pi_q[\seqj{a}{1}{k-2}0 a_k]$ and $\pi_q[\seqj{a}{1}{k-2} 1 a_k]$.
    Given $G$, it is sufficient to bound the diameter of the left bridge $|L_G|$ from below because the same bound on the diameter of the right bridge $|R_G|$ follows by symmetry.
    To bound $|L_G|$ from below we seek an interval immediately to the left of $G$ which does not intersect any gap larger than $G$, whose length we bound below.

    We start by proving that $\pi_q[\seqj{a}{1}{k-2}0 a_k]$ does not contain a level $l$ gap for any $l \leq k$. 
    Let $H$ be a level $l$ gap for some $l \leq k$, then there are elements $\seqj{b}{1}{l-2} 0 b_l , \seqj{b}{1}{l-2} 1 b_l \in A_q^l$ such that $H$ contains the interval between $\pi_q[\seqj{b}{1}{l-2} 0 b_l]$ and $\pi_q[\seqj{b}{1}{l-2} 1 b_l]$, and this uniquely determines $H$.
    By the proof of Lemma \ref{lemma: all gaps are level k gaps}, $H$ contains the interval $\pi_q[\seqj{b}{1}{l-2} b_l \overline{b_l}]$ and since $l \leq k$, we have the inequality
    $$|H| \geq |\pi_q[\seqj{b}{1}{l-2} b_l \overline{b_l}]| \geq |\pi_q[\seqj{a}{1}{k-2} 0 a_k]|.$$
    Hence $H$ cannot be contained in $\pi_q[\seqj{a}{1}{k-2} 0 a_k]$.
    
    By Lemmas \ref{lemma: all gaps are level k gaps} and \ref{lemma: level k gaps decrease}, this implies that any gap contained in $\pi_q[\seqj{a}{1}{k-2}0 a_k]$ is smaller than $G$.
    We emphasise that it is possible for a gap with diameter greater than or equal to $|G|$ to \textit{intersect} $\pi_q[\seqj{a}{1}{k-2}0 a_k]$ but no such gap is \textit{contained} in $\pi_q[\seqj{a}{1}{k-2}0 a_k]$.
    Removing the elements of $\pi_q[\seqj{a}{1}{k-2}0 a_k]$ which are not elements of $\pi_q(A_q)$ and taking the convex hull removes the possibility of intersecting with a gap of diameter greater than or equal to $|G|$ (see Figure \ref{figure: subset which avoids larger gaps}).
    That is, the set $\mathrm{conv}(\pi_q[\seqj{a}{1}{k-2}0 a_k] \cap \pi_q(A_q))$ does not intersect any gap with diameter greater than or equal to $|G|$.
    Therefore a lower bound on $|\pi_q[\seqj{a}{1}{k-2}0 a_k] \cap \pi_q(A_q)|$ is a lower bound on $|L_G|$.
    It now suffices to show that $|\pi_q[\seqj{a}{1}{k-2}0 a_k] \cap \pi_q(A_q)| > q^{-k-4}$.
    
    \begin{figure}
        \centering
        \begin{tikzpicture}
        \begin{scope}[thick]
            \draw (1,0) -- (4,0);
            \draw (6,0) -- (9,0);

            \node[above] at (2.5,0){$\pi_q[\seqj{a}{1}{k-2}0 a_k]$};
            \node[above] at (7.5,0){$ \pi_q[\seqj{a}{1}{k-2}1 a_k ]$};

            \draw [dotted](3.5,-0.5) -- (6.5,-0.5);

            \node[below] at (5,-0.5){$G$};

            \draw [dotted] (-1.5,-0.5) -- (1.5,-0.5);

            \node[below] at (0,-0.5){$J$};

            \draw (1.5,-1) -- (3.5,-1);

            \node[below] at (2.5,-1){$\mathrm{conv}(\pi_q[\seqj{a}{1}{k-2}0 a_k] \cap \pi_q(A_q))$};
        \end{scope}
        \end{tikzpicture}
        \caption{Constructing the subset $ \mathrm{conv}(\pi_q[\seqj{a}{1}{k-2} 0 a_k]\cap \pi_q(A_q)) \subset \pi_q[\seqj{a}{1}{k-2} 0 a_k]$ which does not intersect any gap with diameter greater than or equal to $|G|$. 
        $J$ is defined to be the gap which contains the point $\pi_q(\seqj{a}{1}{k-2} 0 a_k 0^\infty)$ which may satisfy $|J| \geq |G|$.}            \label{figure: subset which avoids larger gaps}
    \end{figure}
    
    As in the proof of Lemma \ref{lemma: level k gaps decrease}, let $J_1, J_2, J_3 , J_4$ be successive free zeros such that $J_1 = k-1$ and let $\alpha_1, \alpha_2, \alpha_3$ be the fixed binary strings between the pairs of free zeros $J_1, J_2$ and $J_2, J_3$ and $J_3, J_4$ respectively.
    That is $\alpha_1 = \seqj{a}{J_1+1}{J_2-1}$, $\alpha_2 = \seqj{a}{J_2+1}{J_3-1}$ and $\alpha_3 = \seqj{a}{J_3+1}{J_4-1}$ are fixed because there are no free zeros in the ranges $J_1+1, \ldots , J_2-1$ and $J_2+1, \ldots , J_3-1$ and $J_3+1, \ldots , J_4-1$.
    There is at least one and at most four fixed indices between free zeros of any sequence in $A_q$, so $1 \leq |\alpha_1|,|\alpha_2|, |\alpha_3| \leq 4$.
    As before, the implication of this is that $\alpha_1, \alpha_2,\alpha_3$ are such that for any $a_{J_1}, a_{J_2}, a_{J_3} \in \{0,1\}$, 
    $\seqj{a}{1}{k-2} a_{J_1} \alpha_1 a_{J_2} \alpha_2 a_{J_3} \alpha_3 \in A_q^{k+1 + |\alpha_1| + |\alpha_2| + |\alpha_3|}$.
    Using this, we know that $\pi_q[\seqj{a}{1}{k-2}0 \alpha_1 0 \alpha_2 0 \alpha_3]$ and $\pi_q[\seqj{a}{1}{k-2} 0 \alpha_1 1 \alpha_2 1 \alpha_3]$ are subintervals of $\pi_q[\seqj{a}{1}{k-2} 0 a_k]$ and intersect $\pi_q(A_q)$, so the distance between them is a lower bound for $|L_G|$. 
    This distance is given by
    \begin{align*}
    |L_G| &\geq \pi_q(\seqj{a}{1}{k-2}0 \alpha_1 1 \alpha_2 1 \alpha_3 0^\infty) - \pi_q(\seqj{a}{1}{k-2} 0 \alpha_1 0 \alpha_2 0 \alpha_3 1^\infty) \\
    &=q^{-(k-1+|\alpha_1|)}\left[ q^{-1} + q^{-2-|\alpha_2|} - q^{-2 - |\alpha_2| - |\alpha_3|}\left(\frac{1}{q-1}\right)\right] \\
    &= q^{-k-|\alpha_1|}\left[ 1 + q^{-1-|\alpha_2|}\left(1 - \frac{q^{-|\alpha_3|}}{q-1}\right)\right].
    \end{align*}
    We aim to minimize this expression under the constraints $1 \leq |\alpha_1|, |\alpha_2|, |\alpha_3| \leq 4$. Since $|\alpha_3| \geq 1$, the term in round brackets is positive which implies the term in square brackets is positive. This gives $|\alpha_1| = 4$ and $|\alpha_2| = 4$. $|\alpha_3| = 1$ minimizes the term in round brackets and hence the whole expression. This gives,
    $$|L_G| \geq q^{-k-4}\left[1 + q^{-5}\left(1-\frac{q^{-1}}{q-1}\right)\right] > q^{-k-4},$$
    which completes the proof.
\end{proof}

Recall from Lemma \ref{lemma: level k gaps decrease} that if $G$ is a level $k$ gap then $|G| < q^{-k+1}$.
We combine this with Lemma \ref{lemma: lower bound on bridge} to find a lower bound on the thickness of $\pi_q(A_q)$.

% \begin{proposition}    
% \label{proposition: thickness bound}
%     If $q \in (q_9,2)$ then $\tau(\pi_q(A_q)) > q^{-5}$.
% \end{proposition}

\begin{proof}[Proof of Proposition \ref{proposition: thickness bound}]
    By Lemma \ref{lemma: all gaps are level k gaps}, we can rewrite the definition of the thickness of $\pi_q(A_q)$ in terms of level $k$ gaps.
    That is, we seek a lower bound on
    $$\tau(\pi_q(A_q)) =  \inf \left\{ \min \left\{ \frac{|L_G|}{|G|} , \frac{|R_G|}{|G|} \right\} : k \in \mathbb{N}, G\  \mathrm{a\ level\ } k \mathrm{\ gap} \right\}.$$

    If $G$ is a level $k$ gap then $|L_G|,|R_G| > q^{-k-4}$ by Lemma \ref{lemma: lower bound on bridge} and $|G| < q^{-k+1}$ by Lemma \ref{lemma: level k gaps decrease}.
    Therefore
    $$\min \left\{ \frac{|L_G|}{|G|}, \frac{|R_G|}{|G|} \right\} > \frac{q^{-k-4}}{q^{-k+1}} = q^{-5}.$$
    Since this is independent of $k \in \mathbb{N}$, we conclude that $\tau(\pi_q(A_q)) > q^{-5}$.
\end{proof}

\subsection{Interleaving}
\label{subsection: interleaving}

Let $C_1$ and $C_2$ be two compact subsets of $\mathbb{R}$ then $C_1$ and $C_2$ are \textit{interleaved} if conv$(C_1) \not\subset \mathbb{R}\setminus C_2 $ and conv$(C_2) \not\subset \mathbb{R} \setminus C_1$.
This is often written more directly as neither set is contained in a gap of the other.
For $C_1$ and $C_2$ to be interleaved, it is sufficient to show that $C_2 \subset \mathrm{conv}(C_1)$ and $|C_2| \geq |G|  $ where $G$ is the largest interval in $\mathrm{conv}(C_1) \setminus C_1$.

In this subsection we prove the following proposition, which will be the consequence of the two lemmas which follow it.

\begin{proposition}
    \label{proposition: interleaving}
    The sets $(2-q)\pi_q(\mathcal{S}^9)+1$ and $\pi_q(A_q)$ are interleaved.
\end{proposition}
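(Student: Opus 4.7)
I plan to verify the sufficient condition for interleaving recalled immediately before the proposition. Set $C_1 = (2-q)\pi_q(\mathcal{S}^9) + 1$ and $C_2 = \pi_q(A_q)$; the goal is to show (i) $C_2 \subset \mathrm{conv}(C_1)$ and (ii) the diameter of $C_2$ strictly exceeds the largest gap of $C_1$.

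For (i), since $\pi_q(\mathcal{S}^9)$ contains $\pi_q(0^\infty) = 0$ and $\pi_q(1^\infty) = 1/(q-1)$, we have $\mathrm{conv}(C_1) = [1, 1/(q-1)]$. The upper bound $\pi_q(A_q) \subset [0, 1/(q-1)]$ is immediate. For the lower bound, given $a \in A_q$, define $b = a - c \in \{0,1\}^{\mathbb{N}}$ as in the proof of Lemma \ref{lemma: projection of A_q - 1 is in U_q}; then $\pi_q(a) = \pi_q(b) + \pi_q(c) = \pi_q(b) + 1 \geq 1$.

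For (ii), suppose $q \in (q_k, q_{k+1}]$ for some $k \geq 9$, so $M = k$ by Lemma \ref{lemma: orbit of 1 has tail in W_2}. The smallest free zero $j_0$ satisfies $j_0 \leq k+2$, since $c_{k+1} c_{k+2}$ is one of the five length-two words in $W_2$ and each contains a $0$. Fixing every entry except the $j_0$-th and letting $a_{j_0}$ take the two values $0$ and $1$ yields two sequences in $A_q$ whose $\pi_q$-images differ by exactly $q^{-j_0}$, so $\mathrm{diam}(\pi_q(A_q)) \geq q^{-(k+2)}$. For the largest gap of $C_1$, equal to $(2-q)$ times the largest gap of $\pi_q(\mathcal{S}^9)$: Lemma \ref{lemma: cylinders intersect iff lex cons} applied to $\mathcal{S}^9$ implies that lex-consecutive length-$\ell$ prefixes give overlapping cylinders, so gaps only arise at levels where some prefix is excluded. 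The first such level is $\ell = 10$, where the only excluded prefixes are $01^9$ and $10^9$, so any level-$10$ gap lies inside the interval bounded by $\pi_q[01^8 0]$ and $\pi_q[10^8 1]$. A direct calculation gives $\pi_q(10^8 1\,0^\infty) - \pi_q(01^8 0\,1^\infty) = 2q^{-10} - (2-q)/(q(q-1)) < 2q^{-10}$. Gaps at levels $\ell > 10$ sit inside individual length-$10$ cylinders and, by an argument parallel to Lemma \ref{lemma: level k gaps decrease}, are strictly smaller. Hence by Lemma \ref{lemma: 2-q inequality} the largest gap of $C_1$ is at most $(2-q) \cdot 2q^{-10} \leq 2q^{-(k+10)}$.

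Combining, the required inequality $q^{-(k+2)} > 2q^{-(k+10)}$ reduces to $q^8 > 2$, which holds throughout $(q_9, 2)$ since $q_9 > 1.99$. This establishes (ii), and together with (i) gives the interleaving. The main technical obstacle is the rigorous verification that the level-$10$ gap dominates all deeper gaps of $\pi_q(\mathcal{S}^9)$; this requires repeating, for the set $\mathcal{S}^9$, the classification and length comparison of gaps carried out for $\pi_q(A_q)$ in Lemmas \ref{lemma: all gaps are level k gaps} and \ref{lemma: level k gaps decrease}. Conceptually this is a simpler task (since the only forbidden prefixes are $01^9$ and $10^9$), but it still needs careful bookkeeping.
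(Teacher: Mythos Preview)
Your overall strategy coincides with the paper's: the paper also reduces the interleaving to (i) $\pi_q(A_q)\subset[1,\tfrac{1}{q-1}]=\mathrm{conv}(C_1)$ and (ii) $\mathrm{diam}\,\pi_q(A_q)$ exceeds the largest gap of $C_1$, and proves these as Lemmas~\ref{lem:convex hulls contained} and~\ref{lem:gap is small enough}. Your treatment of~(i) and your lower bound $\mathrm{diam}\,\pi_q(A_q)\geq q^{-(k+2)}$ are essentially the paper's arguments. The only genuine difference is how the largest gap of $\pi_q(\mathcal{S}^9)$ is handled: the paper simply quotes \cite[Proof of Lemma~4.1]{sidorov2009expansions} for the bound $|G|<q^{-8}$, whereas you attempt a direct calculation.

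That direct calculation has a flaw. The quantity $\pi_q(10^8 1\,0^\infty)-\pi_q(01^8 0\,1^\infty)$ is the distance \emph{between} the cylinders $\pi_q[01^80]$ and $\pi_q[10^81]$, and this is a \emph{lower} bound for the corresponding gap of $\pi_q(\mathcal{S}^9)$, not an upper bound: the gap extends into both cylinders, since the nearest points of $\pi_q(\mathcal{S}^9)$ on either side are $\pi_q((01^8)^\infty)$ and $\pi_q((10^8)^\infty)$, strictly inside those cylinders. The fix is easy---bound the gap instead by the diameter of $\mathrm{conv}\bigl(\pi_q[01^80]\cup\pi_q[10^81]\bigr)$, i.e.\ by $\pi_q(10^81\,1^\infty)-\pi_q(01^80\,0^\infty)$, which is still of order $q^{-9}$ and makes the final comparison go through. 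With that correction (and the deeper-level comparison you already flagged as needing care), your argument is a self-contained variant of the paper's, trading the citation of Sidorov for an explicit estimate.
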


\begin{figure}
    \centering
    \begin{tikzpicture}
        \draw[thick] (1,0) -- (4,0);
        \draw[thick] (0,-1) -- (2,-1);
        \draw[thick] (3,-1) -- (5,-1);
        \node[right] at (6,0) {$\mathrm{conv}(\pi_q(A_q))$};
        \node[right] at (6,-1) {$\mathrm{conv}((2-q)\pi_q(\mathcal{S}^9) + 1) \setminus G$};
        \node[below] at (0,-1) {$1$};
        \node[below] at (5,-1) {$\frac{1}{q-1}$};
    \end{tikzpicture}
    \caption{$\pi_q(A_q)$ and $(2-q)\pi_q(\mathcal{S}^9) + 1$ are interleaved. Here, $G$ denotes the largest gap of $((2-q)\pi_q(\mathcal{S}^9) + 1)$.}
    \label{figure: interleaved}
\end{figure}
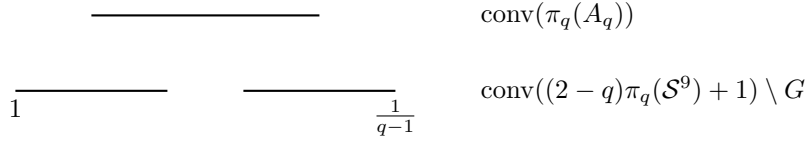

As shown in \cite{sidorov2009expansions} and Subsection \ref{subsection: structure of pi_q(A_q)} respectively, the sets $\pi_q(\mathcal{S}^k)$ and $\pi_q(A_q)$ are each the complement of an interval with a collection of infinitely many gaps of decreasing length. 
We show that $\mathrm{conv}((2-q)\pi_q(\mathcal{S}^9) + 1)$ contains $\pi_q(A_q)$ and that the largest gap of $(2-q)(\pi_q(\mathcal{S}^9) + 1)$ is smaller than $|\pi_q(A_q)|$, which is sufficient for the sets to be interleaved. 
Figure \ref{figure: interleaved} shows $\mathrm{conv}(\pi_q(A_q))$ and $\mathrm{conv}((2-q)\pi_q(\mathcal{S}^9) + 1)$ with the largest gap removed.
Notice that $\pi_q(\mathcal{S}^9)$ contains $0$ and $\frac{1}{q-1}$ because $0 = \pi_q(0^\infty) \in \pi_q(\mathcal{S}^9)$ and $\frac{1}{q-1} = \pi_q(1^\infty) \in \pi_q(\mathcal{S}^9)$. 
Since $0$ and $\frac{1}{q-1}$ are extremal points of $\pi_q(\mathcal{S}^9)$, it is easy to check that $\mathrm{conv}((2-q)\pi_q(\mathcal{S}^9) + 1) = [1,\frac{1}{q-1}]$.

\begin{lemma}
\label{lem:convex hulls contained}
    If $ q \in (q_9,2)$ then $\pi_q(A_q) \subset [1,\frac{1}{q-1}]$.
\end{lemma}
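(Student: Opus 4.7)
The plan is to bound $\pi_q(\seqj{a}{1}{\infty})$ above and below for each $\seqj{a}{1}{\infty} \in A_q$. The upper bound is essentially free: since every $a_j \in \{0,1\}$, we have
\[
\pi_q(\seqj{a}{1}{\infty}) = \sum_{j=1}^\infty a_j q^{-j} \leq \sum_{j=1}^\infty q^{-j} = \frac{1}{q-1}.
\]
Equality is only achieved when $\seqj{a}{1}{\infty} = 1^\infty$, which typically will not lie in $A_q$, but the weak inequality suffices for the stated containment.

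For the lower bound, I would reuse the additive trick that already appears in the proof of Lemma \ref{lemma: projection of A_q - 1 is in U_q}. Fix $\seqj{a}{1}{\infty} \in A_q$ and let $\seqj{c}{1}{\infty} = 1^M\seqj{c}{M+1}{\infty}$ be the fixed expansion of $1$. Define $\seqj{b}{1}{\infty}$ coordinate-wise by $b_j = a_j - c_j$. A direct check using the four defining conditions on $A_q$ shows that $b_j \in \{0,1\}$ for every $j$: if $c_j = 1$ then $a_j = 1$ so $b_j = 0$; if $c_j = -1$ then $a_j = 0$ so $b_j = 1$; and if $c_j = 0$ then $b_j = a_j \in \{0,1\}$. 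Since $\pi_q$ is additive on $\{-1,0,1\}^\mathbb{N}$, it follows that
\[
\pi_q(\seqj{a}{1}{\infty}) = \pi_q(\seqj{b}{1}{\infty}) + \pi_q(\seqj{c}{1}{\infty}) = \pi_q(\seqj{b}{1}{\infty}) + 1 \geq 1,
\]
because $b_j \geq 0$ and $q > 0$ imply $\pi_q(\seqj{b}{1}{\infty}) \geq 0$.

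Combining the two bounds gives $\pi_q(\seqj{a}{1}{\infty}) \in [1, \tfrac{1}{q-1}]$ for every $\seqj{a}{1}{\infty} \in A_q$, which is exactly the claim. There is no real obstacle here: the key structural input is simply the design of the set $A_q$ to force $a_j = 1$ wherever $c_j = 1$ and $a_j = 0$ wherever $c_j = -1$, so that the difference sequence lands in $\{0,1\}^\mathbb{N}$ and the identity $\pi_q(\seqj{c}{1}{\infty}) = 1$ can be leveraged. The hypothesis $q \in (q_9,2)$ is used only implicitly, in that it guarantees the existence of the fixed expansion of $1$ with the correct shape and ensures $\tfrac{1}{q-1} > 1$ so that the target interval is non-degenerate.
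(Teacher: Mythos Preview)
Your proof is correct and follows essentially the same approach as the paper: for the lower bound you use the same decomposition $b_j = a_j - c_j$ to write $\pi_q(\seqj{a}{1}{\infty}) = 1 + \pi_q(\seqj{b}{1}{\infty}) \geq 1$, and your upper bound via $\sum a_j q^{-j} \leq \sum q^{-j}$ is equivalent to the paper's lexicographic argument $\seqj{a}{1}{\infty} \prec 1^\infty$. The paper additionally notes that both inequalities are strict (invoking $\seqj{b}{1}{\infty} \in \mathcal{S}^9$), but since the target is a closed interval your weak inequalities suffice.
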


\begin{proof}
Let $q \in (q_9,2)$ and let $\seqj{c}{1}{\infty} \in \{-1,0,1\}^\mathbb{N}$ be the fixed expansion of $1$ (see Subsection \ref{subsection: construction of A_q}). 
Let $\seqj{a}{1}{\infty} \in A_q$ and let $\seqj{b}{1}{\infty}$ be the sequence with $c_j = a_j - b_j$ for all $j \in \mathbb{N}$. 
Using that $\pi_q(\seqj{c}{1}{\infty}) =1$, $\pi_q(\seqj{b}{1}{\infty}) > 0$ and $\pi_q(\seqj{a}{1}{\infty}), \pi_q(\seqj{b}{1}{\infty}) \in \pi_q(\mathcal{S}^9)$, we know that $\pi_q(\seqj{a}{1}{\infty}) > 1$.
Lexicographically, $\seqj{a}{1}{\infty} \in A$ satisfies $\seqj{a}{1}{\infty} \prec 1^\infty$, which implies $\pi_q(\seqj{a}{1}{\infty}) < \frac{1}{q-1}$. 
Therefore $\pi_q(A_q) \subset [1,\frac{1}{q-1}]$.
\end{proof}

\begin{lemma}
\label{lem:gap is small enough}
    If $q \in (q_9,2)$ the diameter of the largest gap of $((2-q)\pi_q(\mathcal{S}^9)+1)$ is smaller than $|\pi_q(A_q)|$.
\end{lemma}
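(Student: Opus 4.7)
The plan is to upper bound the largest gap of $\pi_q(\mathcal{S}^9)$, lower bound the diameter $|\pi_q(A_q)|$, and then show that after scaling by $(2-q)$ the former remains smaller than the latter.

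First I would identify the largest gap of $\pi_q(\mathcal{S}^9)$. A greedy/lazy argument using that $\mathcal{S}^9$ avoids $(01^9)$ and $(10^9)$ shows that the largest element of $\mathcal{S}^9$ starting with $0$ is $(01^8)^\infty$, while the smallest element starting with $1$ is $(10^8)^\infty$, so the open interval between their $\pi_q$-images is a gap. An inductive comparison then confirms that any gap of $\pi_q(\mathcal{S}^9)$ contained in a cylinder $\pi_q[w]$ with $|w| \geq 1$ is a $q^{-|w|}$-contraction of a gap in a restricted sub-shift, and hence strictly smaller than the displayed one. A direct computation gives
\[
\pi_q((10^8)^\infty) - \pi_q((01^8)^\infty) = \frac{q^8 - q^7 - \cdots - q - 1}{q^9 - 1},
\]
and Lemma \ref{lemma: 2-q inequality}(2)(a) applied with $k = 8$ (valid since $q > q_9 > q_8$) shows $q^8 - q^7 - \cdots - 1 < 1$, so this gap is strictly less than $1/(q^9 - 1)$.

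Next I would estimate $|\pi_q(A_q)|$ from below. The diameter equals $\sum_{j \in \Jfree} q^{-j}$, realised by comparing the two elements of $A_q$ that take value $1$ (respectively $0$) at every free zero. Since $\seqj{c}{M+1}{\infty} \in W_2^\mathbb{N}$ and each $w \in W_2$ contains at least one zero in its two entries, the smallest free zero $j_0$ satisfies $j_0 \leq M + 2$, which yields $|\pi_q(A_q)| \geq q^{-(M+2)}$.

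Finally I would combine the estimates. By Lemma \ref{lemma: orbit of 1 has tail in W_2}, every $q \in (q_9, 2)$ lies in $(q_k, q_{k+1}]$ for some $k \geq 9$ with $M = k$, and Lemma \ref{lemma: 2-q inequality}(1)(a) gives $(2-q) < q^{-k}$. Hence the largest gap of $(2-q)\pi_q(\mathcal{S}^9) + 1$ is bounded above by $q^{-k}/(q^9 - 1)$, and the inequality $q^{-k}/(q^9 - 1) < q^{-(k+2)} \leq |\pi_q(A_q)|$ reduces to $q^9 > q^2 + 1$, which is immediate since $q > q_9 > 1.99$ forces $q^9 > 500 > 5 > q^2 + 1$. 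The main obstacle will be the first step: proving rigorously that the top-level gap dominates all others requires a careful inductive analysis of the sofic structure of $\mathcal{S}^9$, since one must verify that the restricted sub-shifts within cylinders, which carry slightly different forbidden-word constraints, never produce a disproportionately large gap.
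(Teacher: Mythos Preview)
Your proposal follows the same three-step skeleton as the paper's proof: upper bound the largest gap of $\pi_q(\mathcal{S}^9)$, lower bound $|\pi_q(A_q)|$, then compare after scaling by $(2-q)<q^{-k}$. Your lower bound $|\pi_q(A_q)|\ge q^{-(M+2)}=q^{-(k+2)}$ via $j_0\le M+2$ is exactly what the paper obtains (the paper phrases it through an auxiliary index $s\le k$, but the conclusion $j_0\le k+2$ is identical), and your final numerical check $q^9>q^2+1$ is a valid substitute for the paper's $q^{-k-8}<q^{-k-2}$.

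The one substantive difference is in step one. The paper does not attempt to identify or analyse the gap structure of $\pi_q(\mathcal{S}^9)$ at all; it simply quotes from Sidorov \cite[Proof of Lemma 4.1]{sidorov2009expansions} that the largest gap satisfies $|G|<q^{-8}$. You instead compute the central gap $(\pi_q((01^8)^\infty),\pi_q((10^8)^\infty))$ explicitly and bound it by $1/(q^9-1)$, which is in fact sharper than $q^{-8}$ on $(q_9,2)$. You rightly flag that showing this central gap dominates all others is the real work, and that the self-similarity is not exact because the follower sets inside cylinders carry slightly different constraints. This is precisely the content absorbed into the citation in the paper. So your route is more self-contained but leaves a nontrivial lemma to be proved, while the paper's route is shorter but relies on an external reference; structurally the two arguments are the same.
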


\begin{proof}
    The largest gap of $((2-q)\pi_q(\mathcal{S}^9)+1)$ has the same diameter as the largest gap of $(2-q)\pi_q(\mathcal{S}^9)$.
    We prove the lemma by bounding this gap above and showing that this upper bound is less than a lower bound for $|\pi_q(A_q)|$.
    We start by finding a lower bound for $|\pi_q(A_q)|$.

    Let $q \in (q_9,2)$. 
    Recall from Subsection \ref{subsection: extension of the base q expansion} that $H_q = \left[ -\frac{q}{q^2-1} , \frac{q}{q^2-1} \right]$.
    If $q_k < q \leq q_{k+1}$ then by Lemma \ref{lemma: 2-q inequality}, $0 < f_1^k(1) = q^k - q^{k-1} - \cdots - q -1 \leq \frac{1}{q} < \frac{q}{q^2-1}$, so $f_1^k(1) \in H_q$.
    Let $s$ be the smallest index such that $f_1^s(1) \in H_q$ (so $s \leq k$). 
    Let $\seqj{c}{1}{\infty}$ be the fixed expansion of $1$.
    By inspection of the words $w \in W_2$, if $c_{s+1} \neq 0$ then $c_{s+2} = 0$, so if $j_0$ is the first zero (which is a free zero) of $\seqj{c}{1}{\infty}$ then $j_0 \leq s+2$.
    Since $s\leq k$, $j_0 \leq k+2$.

    Let $(j_{2k})_{k=0}^\infty = \Jfree$ be the sequence of free zeros of $\seqj{c}{1}{\infty}$ and for $l \in \mathbb{N}$ let $\alpha_l = \seqj{a}{j_{2(l-1)}+1}{j_{2l}-1}$ be the values of every sequence in $A_q$ at the fixed indices between $j_{2(l-1)}$ and $j_{2l}$.
    Since there is at least one free zero in every set of five consecutive indices after $j_0$, and free zeros never occupy consecutive indices, we know that $1 \leq |\alpha_l| \leq 4$ for every $ l \in \mathbb{N}$.
    By definition, 
    $$A_q = \{\seqj{a}{1}{j_0-1}a_{j_0}\alpha_1 a_{j_2} \alpha_2 a_{j_4} \ldots : a_{j_{2k}} \in \{0,1\} \ \forall k \in \mathbb{N}_{\geq0}\},$$
    so $\pi_q(A_q)$ is bounded below and above by $L = \pi_q(\seqj{a}{1}{j_0-1}0\alpha_1 0 \alpha_2 0 \ldots )$
    and
    $U = \pi_q(\seqj{a}{1}{j_0-1}1 \alpha_1 1 \alpha_2 1 \ldots )$
    respectively, and $L,U \in \pi_q(A_q)$.
    Hence 
    \begin{align*}
        |\pi_q(A_q)| &= U - L = q^{-j_0+1}\left[q^{-1} + q^{-2 - |\alpha_1|} + q^{-3- |\alpha_1| -|\alpha_2|} + \cdots \right] \\
        &\geq q^{-k-1}\left[q^{-1} + q^{-6} + q^{-11} + \cdots \right] = q^{-k-2}\left[\frac{q^5}{q^5-1}\right] \\
        &> q^{-k-2}.
    \end{align*}
    We compare this lower bound for $|\pi_q(A_q)|$ with an upper bound for the largest gap of $(2-q)\pi_q(\mathcal{S}^9)$.
    Let $q \in (q_9,2)$ and let $G$ be the largest gap of $\pi_q(\mathcal{S}^9)$. 
    A consequence of the arguments in \cite[Proof of Lemma 4.1]{sidorov2009expansions} is that $|G| < q^{-8}$.
    If $q_k < q \leq q_{k+1}$ then $q^{-k-1} \leq ( 2-q) < q^{-k}$ by Lemma \ref{lemma: 2-q inequality}. 
    Therefore the diameter of the largest gap of $ (2-q)\pi_q(\mathcal{S}^9)+1$ is bounded above by $(2-q)|G| < q^{-k-8}$.

    Hence if $q \in (q_9,2)$ and $ q \in (q_k, q_{k+1}]$ then $|\pi_q(A_q)| > q^{-k-2} > q^{-k-8} > (2-q)|G|$.
\end{proof}

We now observe that Proposition \ref{proposition: interleaving} follows from Lemmas \ref{lem:convex hulls contained} and \ref{lem:gap is small enough} combined with the fact that $\mathrm{conv}(\pi_q(\mathcal{S}^9)) = [1,\frac{1}{q-1}]$.

\subsection{Proof of Theorem \ref{theorem: interval in C_3}}

\begin{proof}[Proof of Theorem \ref{theorem: interval in C_3}]
Let $q \in (q_9,2)$. 
By Newhouse's theorem (\cite[Lemma 4]{Newhouse1979}), in order for two compact subsets of $\mathbb{R}$ to have nonempty intersection, it is sufficient to show that the product of the thicknesses of the sets is greater than 1 and that the sets are interleaved, that is, neither set lies entirely within a gap of the other.
Proposition \ref{proposition: interleaving} shows that $(2-q)\pi_q(\mathcal{S}^9) + 1$ and $\pi_q(A_q)$ are interleaved and Proposition \ref{proposition: thickness bound} shows that $\tau(\pi_q(A_q)) > q^{-5}$. 
In \cite{sidorov2009expansions} it is shown that $\tau(\pi_q(\mathcal{S}^k)) > q^{k-3}$ whenever $q_k < q < 2$ for all $k \geq 3$. 
Hence, for $k=9$ we can say that $\tau(\pi_q(\mathcal{S}^9)) > q^6$ whenever $q \in (q_9,2)$. 
Because thickness is preserved under affine transformations, we also know that $\tau((2-q)\pi_q(\mathcal{S}^9) + 1) > q^6$.
Hence we can conclude that $\tau((2-q)\pi_q(\mathcal{S}^9) + 1) \times \tau(\pi_q(A_q)) > q^6 \times q^{-5} > 1$ and by Newhouse's theorem, $((2-q)\pi_q(\mathcal{S}^9)+1) \cap \pi_q(A_q) \neq \emptyset$.

By Lemma \ref{lemma: projection of S^k is contained in U_q}, since $q \in (q_9,2)$, we know
$$((2-q)\pi_q(\mathcal{S}^9)+1) \cap \pi_q(A_q) \neq \emptyset,$$
implies that 
$$((2-q)\mathcal{U}_q + 1) \cap \pi_q(A_q) \neq \emptyset.$$
Hence it is sufficient to show that $((2-q)\mathcal{U}_q + 1) \cap \pi_q(A_q) \neq \emptyset$ implies that $q \in \mathcal{C}_3$.

If $((2-q)\mathcal{U}_q + 1) \cap \pi_q(A_q) \neq \emptyset$ then let $qy \in ((2-q)\mathcal{U}_q + 1) \cap \pi_q(A_q)$ for some $y \in \left[ \frac{1}{q} , \frac{1}{q(q-1)}\right]$. 
Then $\frac{1}{q-1} - \frac{qy-1}{2-q} \in \mathcal{U}_q$\footnote{Recall that $x \in \mathcal{U}_q$ if and only if $\frac{1}{q-1} - x \in \mathcal{U}_q$ and we know that $\frac{qy-1}{2-q} \in \mathcal{U}_q$.}, $qy \in \pi_q(A_q) \subset \mathcal{U}_q $ and $qy-1 \in \pi_q(A_q)-1 \subset \mathcal{U}_q$ by construction. 
Hence $f_0(y),f_1(y), f_2(y) \in \mathcal{U}_q$, so $y$ has three orbits. i.e. $y \in \mathcal{V}_q^{(3)}$ and so $q \in \mathcal{C}_3$.
\end{proof}

\section{Proof of Theorem \ref{theorem: positive Haudsorff dimension}}
\label{section: proof of positive dimension}

In this section we prove Theorem \ref{theorem: positive Haudsorff dimension} and Corollary \ref{corollary: positive Hausdorff dimension}. 
We start by proving that Theorem \ref{theorem: positive Haudsorff dimension} is equivalent to Theorem \ref{theorem: positive dimension variation 1} and that Corollary \ref{corollary: positive Hausdorff dimension} follows from Theorem \ref{theorem: positive dimension variation 1}.
We then show that Theorem \ref{theorem: positive dimension variation 1} is equivalent to Theorem \ref{theorem: positive dimension variation 2} before proving Theorem \ref{theorem: positive dimension variation 2}.

Let $a = \seqj{a}{1}{k} \in \{0,1,2\}^*$ be a finite ternary sequence, then $\mapstring{f}{a}{j}{1}{k}$ is the associated finite sequence of maps.
We write $f_a$ to denote the finite composition of maps $\mapseqn{f}{a}{j}{1}{k}$.
When we do not state it explicitly, $|a|$ denotes the length of the sequence $a$, so in this case $|a| = k$.
Given $q \in (1,2)$, $x \in I_q$, we define
$$\orbsk{E_q}{k}{x} = \{\mapstring{f}{i}{j}{1}{k} \in \{f_0, f_1, f_2 \}^k: \mapseqn{f}{i}{j}{1}{l}(x) \in I_q \mathrm{\ for \ all \ } 1 \leq l \leq k\},$$
$$\orbsk{E_q}{*}{x} = \bigcup_{k \in \mathbb{N}_{\geq 0}} \orbsk{E_q}{k}{x},$$
and the set
$$\mathcal{D} = \{q \in (1,2) : |\orbs{E_q}{x}| \in \{1, 2^{\aleph_0}\} \ \forall x \in I_q\}.$$
If $q \in \mathcal{D}$ then by Lemma \ref{lemma: slice orbit bijection}, $|\slice{q}{y}| \in \{1,2^{\aleph_0}\}$ for all $ y \in [0,1]$.
Therefore Theorem \ref{theorem: positive Haudsorff dimension} is equivalent to Theorem \ref{theorem: positive dimension variation 1}.

\begin{theorem}
\label{theorem: positive dimension variation 1}
    Let $q \in \mathcal{D}$ and let $y \in [0,1]$. 
    If $\slice{q}{y}$ is uncountable then there is some $s > 0$ depending only on $q$ such that $\hdim(\slice{q}{y}) \geq s$.
\end{theorem}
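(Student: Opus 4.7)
The first step is a reduction to a symbolic statement via the bijection lemma. By Lemma~\ref{lemma: slice orbit bijection}, $\slice{q}{y}$ is in bijection with $\orbs{E_q}{x}$, where $x = y/(q-1)$, via the map sending an orbit $\fseq$ to the point of $[0,1]$ with restricted ternary expansion $\seqj{i}{1}{\infty}$. Under this bijection and the symbolic metric $d(\mathbf{i}, \mathbf{i}') = 3^{-\min\{j : i_j \neq i_j'\}}$ on $\{0,1,2\}^\mathbb{N}$, $\pi_3$ is bi-Lipschitz on restricted ternary expansions (off the countable set of ambiguous triadic points), so $\hdim \slice{q}{y}$ equals the symbolic Hausdorff dimension of $\orbs{E_q}{x}$. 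It therefore suffices to produce $s = s(q) > 0$ with $\hdim \orbs{E_q}{x} \geq s$ whenever $x \in W_q := I_q \setminus \mathcal{V}_q$; note that $q \in \mathcal{D}$ forces $|\orbs{E_q}{z}| = 2^{\aleph_0}$ for every $z \in W_q$.

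The key structural input is a \emph{branching lemma}: for every $x \in W_q$ there exist $k \in \mathbb{N}$ and a prefix $\seqj{i}{1}{k}$ of some element of $\orbs{E_q}{x}$ such that $z := \mapseqn{f}{i}{j}{1}{k}(x)$ lies in $J_q \cap W_q$ and at least two of its children $f_0(z), f_1(z), f_2(z)$ again lie in $W_q$. I would argue by contradiction: if every such $z$ had at most one child in $W_q$, then a unique ``rich path'' would emanate from $x$, determined at each rich $J_q$-node by going to its unique rich child, and any orbit of $x$ not coinciding with this path must deviate from it into $\mathcal{V}_q$, after which only a single continuation exists. This would cap $|\orbs{E_q}{x}|$ at $\aleph_0$, contradicting uncountability.

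Iterating the branching lemma inside the orbit trees of the two selected rich children of $z$ embeds a full binary tree of ``doubly rich'' branching points into the orbit tree of $x$, producing a Cantor-like subset of $\orbs{E_q}{x}$. To convert this into a quantitative dimension bound depending only on $q$, one needs a uniform return-time estimate: a constant $N = N(q)$ such that from every doubly rich $w \in W_q \cap J_q$, each of the two selected rich children of $w$ has a doubly rich descendant within at most $N$ iterates of $E_q$. Given such an $N$, the measure assigning equal mass $2^{-n}$ to each of the $2^n$ cylinders of length at most $nN$ at the $n$-th branching level, combined with the mass distribution principle, yields $\hdim \orbs{E_q}{x} \geq (\log 2)/(N \log 3)$.

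The principal obstacle is this uniform depth bound $N(q)$: the branching lemma is qualitative, whereas a dimension estimate requires metric control. I would attempt to derive it by showing that $\overline{W_q \cap J_q}$ is compact and that the first-return time to the doubly rich set is upper semicontinuous on a suitable closed invariant subset, so bounded. An alternative, which I suspect is closer to the paper's approach through Theorem~\ref{theorem: positive dimension variation 2}, is to reformulate the problem directly in terms of base $q$ expansions: use Lemma~\ref{lemma: orbit space inequality} to transfer uncountability of $\orbs{E_q}{x}$ to uncountability of $\orbs{\hat{E}_q}{\cdot}$ at a related point, and then invoke an existing positive-dimension theorem for non-unique base $q$ expansions such as \cite{SimonBaker2012} to conclude.
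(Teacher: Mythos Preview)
Your approach is essentially that of the paper: reduce via the bijection lemma, establish a branching lemma (the paper's Proposition~\ref{proposition: existence of doubly infinite branching sequences}), obtain a uniform depth bound $M=M(q)$ by compactness of $J_q$, build a binary tree of branching points indexed by $\{0,1\}^*$, and apply the mass distribution principle to get $s=\log 2/(M\log 3)$.

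The one place your sketch diverges is the compactness step. You propose upper semicontinuity of the first-return time on $\overline{W_q\cap J_q}$; this is delicate because $f_0$ and $f_1$ have half-open domains, so such a return time need not be upper semicontinuous at boundary points of $J_q$. The paper's mechanism avoids this: for each $x\in J_q$ it first produces two branching sequences landing in the \emph{open} set $\interior{J_q}$, then continuity of the composite maps gives an open neighbourhood $U_x$ on which the \emph{same} two finite sequences still land in $\interior{J_q}$. A finite subcover of $\{U_x\}_{x\in J_q}$ then fixes finitely many branching-pair lengths, and $M$ is their maximum. Note that under $q\in\mathcal{D}$ every point of $J_q$ already has uncountable orbit, so one works on all of $J_q$ rather than on $W_q\cap J_q$.

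Your suggested alternative via $\hat E_q$ and existing base-$q$ results does not go through: the slice dimension is measured in the ternary coding of $E_q$ (three maps, including the orientation-reversing $f_1$), while results such as \cite{SimonBaker2012} concern the binary coding of $\hat E_q$, and Lemma~\ref{lemma: orbit space inequality} supplies only a cardinality comparison, not a metric embedding between the two symbolic spaces.
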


The implications of Corollary \ref{corollary: implications of the bijection lemma}, \cite[Theorem 2.1]{sidorov2009expansions} and \cite[Corollary 1.3]{BAKER2015515} are that $(1,q_{\aleph_0}) \setminus \{G\} \subset \mathcal{D}$ and $\mathcal{T} \cap (q_{\aleph_0}, q_\mathrm{KL}) \subset \mathcal{D}$.
Hence, Corollary \ref{corollary: positive Hausdorff dimension} follows from Lemma \ref{lemma: slice orbit bijection} and Theorem \ref{theorem: positive dimension variation 1}.
Some work is required to prove that Theorem \ref{theorem: positive dimension variation 1} is equivalent to Theorem \ref{theorem: positive dimension variation 2}.

\begin{theorem}
\label{theorem: positive dimension variation 2}
Let $q \in \mathcal{D}$. 
If $x \in J_q$ then there is some $s> 0$ depending only on $q$ such that $\hdim(\slice{q}{x(q-1)}) \geq s$.
\end{theorem}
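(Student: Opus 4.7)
By Lemma \ref{lemma: slice orbit bijection}, the map $x' \mapsto \fseq$, where $\seqj{i}{1}{\infty}$ is the RTE of $x'$, is a bijection from $\slice{q}{x(q-1)}$ onto $\orbs{E_q}{x}$. Since agreement of the first $k$ entries of two orbits forces the corresponding points of $\slice{q}{x(q-1)}$ to lie within distance $3^{-k}$, any subset of $\orbs{E_q}{x}$ that carries a Cantor-like, IFS-style branching structure descends to a subset of $\slice{q}{x(q-1)}$ of positive Hausdorff dimension. The plan is to build such a structure by exploiting recurrence of orbits to $J_q$, the set on which branching can occur.

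I would organise the argument in three steps. First, since $x \in J_q$, Lemma \ref{lemma: unique orbit if avoids switch} gives $|\orbs{E_q}{x}| > 1$, and the hypothesis $q \in \mathcal{D}$ upgrades this to $|\orbs{E_q}{x}| = 2^{\aleph_0}$. Second, I would prove a uniform recurrence statement: there exist an integer $N = N(q)$ and two distinct words $w_1, w_2 \in \{0,1,2\}^N$, depending only on $q$, such that $f_{w_1}(y), f_{w_2}(y) \in J_q$ for every $y \in J_q$. Third, given such words, starting from $x$ and inductively appending $w_1$ or $w_2$ at each stage keeps the trajectory in $J_q$ forever and produces a full binary subtree of $\orbs{E_q}{x}$; under the bijection this yields a Cantor subset of $\slice{q}{x(q-1)}$. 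A standard mass distribution argument, placing Bernoulli $(\tfrac12,\tfrac12)$-mass on the choices of $w_1$ versus $w_2$, then gives
$$\hdim\left(\slice{q}{x(q-1)}\right) \geq \frac{\log 2}{N \log 3},$$
and since $N$ depends only on $q$, this supplies the uniform $s$ required.

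The main obstacle is the second step: producing uniform return words $(w_1, w_2)$ that work for every $y \in J_q$. The dichotomy from $q \in \mathcal{D}$ gives only cardinality information about $\orbs{E_q}{y}$ and does not by itself quantify the times at which branching recurs. I would handle this by combining two ingredients. First, for each fixed $y \in J_q$, uncountability of $\orbs{E_q}{y}$ forces some orbit to revisit $J_q$: otherwise the dynamics would eventually produce a point with unique forward continuation, and tracing this back via Lemma \ref{lemma: unique orbit if avoids switch} would contradict the $\mathcal{D}$-dichotomy. Second, since the maps $f_0, f_1, f_2$ are Lipschitz on their domains, the set of $y \in J_q$ for which a given finite word $w$ yields $f_w(y) \in J_q$ is relatively open. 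A covering-and-compactness argument on $J_q$ should then yield a finite length $N$ and a branching alphabet of size at least two that works uniformly. If this direct compactness approach turns out to be technically awkward, a natural fallback is to reformulate step (ii) symbolically and extract a subshift of finite type of positive topological entropy within the natural $\{0,1,2\}^{\mathbb{N}}$-coding of $\orbs{E_q}{x}$; such a subshift translates directly into the desired dimension lower bound.
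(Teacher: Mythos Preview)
Your overall plan matches the paper's, but step (ii) as stated is false. You claim there exist fixed words $w_1, w_2 \in \{0,1,2\}^N$ depending only on $q$ with $f_{w_i}(y) \in J_q$ for every $y \in J_q$. But each $f_i$ is affine with $|f_i'| \geq q > 1$ (indeed $|f_1'| = \tfrac{q}{2-q} > q$), so every nonempty composition $f_w$ is strictly expanding. If $f_w$ were defined on all of $J_q$ with $f_w(J_q) \subset J_q$, we would get $|J_q| \geq |f_w(J_q)| \geq q\,|J_q| > |J_q|$. Hence no single nonempty word can return all of $J_q$ to $J_q$: the return words must depend on the point $y$. Your step (iii), which builds the Cantor set by ``appending $w_1$ or $w_2$ at each stage'', therefore cannot be carried out as written, and the subshift-of-finite-type fallback fails for the same reason.

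What the compactness argument you sketch actually delivers is the weaker (and correct) statement the paper proves: there is an $M = M(q)$ such that for each $y \in J_q$ there exist two words $b^0(y), b^1(y)$ of length at most $M$ with $f_{b^0}(y), f_{b^1}(y) \in \interior{J_q}$. Two subtleties are worth flagging. First, you need the image to lie in $\interior{J_q}$ rather than $J_q$ for the preimages to be open and the cover to be genuinely open. Second, producing \emph{two} such return words for each $y$ requires ruling out ``null infinite'' points---points whose branching tree is infinite but has only one infinite branch at every node; this is precisely where the hypothesis $q \in \mathcal{D}$ is used, since such points would have $|\orbs{E_q}{y}| = \aleph_0$. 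With $y$-dependent words of variable length, step (iii) then needs the inductive construction the paper carries out (a map $A:\{0,1\}^* \to \{0,1,2\}^*$ and nested sets $\mathcal{R}^k$), and the mass distribution estimate must accommodate cylinders whose depth at stage $k$ ranges up to $kM$; this still yields $s = \tfrac{\log 2}{M \log 3}$ depending only on $q$.
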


\begin{proof}[Proof of Theorem \ref{theorem: positive dimension variation 1} $\iff$ Theorem \ref{theorem: positive dimension variation 2}.]
The forwards implication is more straightforward so we prove this first.
Observe that $x \in I_q \iff x(q-1) \in [0,1]$.
Let $x \in J_q$ and let $q \in \mathcal{D}$, then we have the following sequence of implications.
\begin{equation}
\label{equation: dimension variation implications}
x \in J_q \overset{\text{Lemma \ref{lemma: unique orbit if avoids switch}}}{\implies} |\orbs{E_q}{x}| \geq 2 \overset{q \in \mathcal{D}}{\implies} \orbs{E_q}{x} \mathrm{\ uncountable \ } \overset{\text{Lemma \ref{lemma: slice orbit bijection}}}{\iff} \slice{q}{x(q-1)} \mathrm{\ uncountable }.
\end{equation}
The above chain of implications shows that the hypotheses of Theorem \ref{theorem: positive dimension variation 1} hold if the hypotheses of Theorem \ref{theorem: positive dimension variation 2} hold, and since the conclusions of the theorems are the same, we have proved the forwards implication.

For the reverse implication, let $q \in \mathcal{D}$ and let $x(q-1) \in [0,1]$ satisfy $\slice{q}{x(q-1)}$ is uncountable.
By Lemma \ref{lemma: slice orbit bijection} this is equivalent to $\orbs{E_q}{x}$ is uncountable.
By Lemma \ref{lemma: unique orbit if avoids switch} this implies that there is a finite sequence of maps $\mapstring{f}{a}{j}{1}{k} \in \orbsk{E_q}{*}{x}$ which satisfies $\mapseqn{f}{a}{j}{1}{k}(x) \in J_q$.
Applying Theorem \ref{theorem: positive dimension variation 2}, we conclude that there is some $s > 0$ depending only on $q$ such that $\hdim (\slice{q}{\mapseqn{f}{a}{j}{1}{k}(x)(q-1)}) \geq s$.
Hence it suffices to show, for any finite sequence of maps $\mapstring{f}{a}{j}{1}{k} \in \orbsk{E_q}{*}{x}$, that $\hdim(\slice{q}{x(q-1)}) \geq \hdim(\slice{q}{\mapseqn{f}{a}{j}{1}{k}(x)(q-1)})$.

Let $k \in \mathbb{N}_{\geq 0}$, $x \in I_q$ and let $\mapstring{f}{a}{j}{1}{k} \in \orbsk{E_q}{*}{x}$ be a finite sequence of maps.
By the definition of $\orbs{E_q}{x}$,
$$\mapstring{f}{a}{j}{1}{k} \fseq \in \orbs{E_q}{x} \iff \fseq \in \orbs{E_q}{\mapseqn{f}{a}{j}{1}{k}(x)}.$$
Hence by Lemma \ref{lemma: slice orbit bijection},
$$\pi_3(\seqj{i}{1}{\infty}) \in \slice{q}{\mapseqn{f}{a}{j}{1}{k}(x) (q-1)} \iff \pi_3(\seqgen{a}{j}{1}{k} \seqj{i}{1}{\infty} ) \in \slice{q}{x(q-1)}.$$
That is, for some affine map $F : [0,1] \rightarrow [0,1]$ which maps $\pi_3(\seqj{i}{1}{\infty})$ to $\pi_3(\seqj{a}{1}{k} \seqj{i}{1}{\infty})$,
$$F(\slice{q}{\mapseqn{f}{a}{j}{1}{k}(x) (q-1)}) \subset \slice{q}{x(q-1)},$$
so $\hdim(\slice{q}{x(q-1)}) \geq \hdim(\slice{q}{\mapseqn{f}{a}{j}{1}{k}(x)(q-1)})$.
\end{proof}

We outline the logic of the proof of Theorem \ref{theorem: positive dimension variation 2}.

\begin{enumerate}
    \item 
    Let $q \in \mathcal{D}$ and let $x \in J_q$.
    If there is some set $\mathcal{R} \subset \{0,1,2\}^\mathbb{N}$ such that $\pi_3(\mathcal{R}) \subset \slice{q}{x(q-1)}$ then it suffices to prove that $\hdim(\pi_3(\mathcal{R}))\geq s$ for some $s > 0$.
    The set $\mathcal{R}$ will depend on both $q$ and $x$ but $s$ will depend only on $q$.
    
    \item For all $k \in \mathbb{N}_{\geq 0}$ we construct the sets $\mathcal{R}^k \subset \{0,1,2\}^\mathbb{N}$ in terms of a function $
    A: \{0,1\}^* \rightarrow \{0,1,2\}^*$ written $A(\epsilon) = b^\epsilon = \seqj{b^\epsilon}{1}{N_\epsilon}$ which depends on $x$ and $q$.
    Precisely, for all $k \in \mathbb{N}_{\geq 0}$, $\mathcal{R}^k$ is the union of cylinders $\cup_{\epsilon \in \{0,1\}^k} [b^\epsilon]$.
    So elements of $\mathcal{R}^k$ are sequences in $\{0,1,2\}^\mathbb{N}$ which are prefixed by $b^\epsilon$ for some $\epsilon \in \{0,1\}^k$.
        
    \item Define $\mathcal{R} = \cap_{k \in \mathbb{N}_{\geq 0}} \mathcal{R}^k$. Therefore, the set $\pi_3(\mathcal{R})$ is the countable intersection over $k \in \mathbb{N}_{\geq 0}$ of sets $\pi_3(\mathcal{R}^k)$. 
    That is,
    $$\pi_3(\mathcal{R}) = \bigcap_{k \in \mathbb{N}_{\geq 0}} \bigcup_{\epsilon \in \{0,1\}^k} \pi_3[b^\epsilon].$$
    It will follow from the properties of the function $A$ that the $\mathcal{R}^k$ sets are nested and nonempty so the above intersection is nonempty.
    
    \item It is a consequence of the construction of the sets $\mathcal{R}^k$ that for each $k \in \mathbb{N}_{\geq 0}$, if $[b^\epsilon] \subset \mathcal{R}^k$ then $ \mapstringS{f}{b}{\epsilon}{j}{1}{N_\epsilon} \in \orbsk{E_q}{*}{x}$. 
    So if $\seqj{i}{1}{\infty} \in \mathcal{R}$, we know that $\fseq \in \orbs{E_q}{x}$ which provides the property that $\pi_3(\mathcal{R}) \subset \slice{q}{x(q-1)}$ by Lemma \ref{lemma: slice orbit bijection}.
    \item A measure $\mu$ is defined on $\pi_3(\mathcal{R})$ by declaring the value of $\mu(\pi_3[b^\epsilon])$ for all $\epsilon \in \{0,1\}^*$. 
    We will show that this measure $\mu$ satisfies the following property:
    \begin{equation}
    \label{equation: property of measure on R}
        \tag{P}
        \parbox{\dimexpr\linewidth-4em}{%
    \strut
    For every set $U \subset \mathbb{R}$, $\mu(U) \leq 4|U|^s$ where $s = \frac{\log 2}{M \log 3},$
    \strut
  }
    \end{equation}
    where $M \in \mathbb{N}$ is a constant depending only on $q$ which arises from the construction of the $\mathcal{R}^k$ sets.
    (We emphasise that the constant $M$ in this section is different from the $M$ which appears in the definition of the fixed expansion of $1$ in the previous section.)
    Via the mass distribution principle \cite{falconer2004fractal}, the existence of such a measure proves that $\mathcal{H}^s(\pi_3(\mathcal{R})) \geq \frac{1}{4}$ and that $\hdim(\pi_3(\mathcal{R})) \geq s$.
    \item 
    \label{item: R^k has 2^k elements}
    The existence of a measure $\mu$ with property \eqref{equation: property of measure on R} relies on the fact that for any $l \in \mathbb{N}_{\geq 0}$, $\pi_3(\mathcal{R}^l)$ is the union of $2^l$ intervals with disjoint interiors and of diameter at least $3^{-lM}$.
    Using this fact, it is possible to prove that there exists a measure $\mu$ such that whenever $lM \leq k < (l+1)M$ and $\epsilon \in \{0,1\}^k$, $\mu(\pi_3[b^\epsilon]) \leq 2(2^{-\frac{k}{M}})$.
    \item The existence of the $\mathcal{R}^k$ sets with the required property described in Item \ref{item: R^k has 2^k elements} makes use of branching tree constructions and the compactness of the interval $J_q$.
\end{enumerate}

The majority of the work of the proof is in the construction of the $\mathcal{R}^k$ sets.

\subsection{Branching trees}
In this subsection we prove the following proposition.
To do this we follow \cite{BAKER2015515} by constructing the \textit{branching tree} and the \textit{infinite branching tree} for any $x \in I_q$.

\begin{proposition}
\label{proposition: existence of doubly infinite branching sequences}
    Let $q \in \mathcal{D}$. For every $x \in J_q$ there are two distinct finite sequences of maps in $\orbsk{E_q}{*}{x}$ which map $x$ into $\interior{J_q}$.
\end{proposition}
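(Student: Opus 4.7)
The plan is to analyse the orbit tree $T_x$ of $x$ and to argue by contradiction. Since $x \in J_q$, Lemma \ref{lemma: unique orbit if avoids switch} gives $|\orbs{E_q}{x}| \geq 2$, and the hypothesis $q \in \mathcal{D}$ then forces $|\orbs{E_q}{x}| = 2^{\aleph_0}$. I would view $\orbsk{E_q}{*}{x}$ as a rooted tree $T_x$ in which a node $a$ has one child when $f_a(x) \in I_q \setminus J_q$, two children when $f_a(x) \in \{\tfrac{1}{q}, \tfrac{1}{q(q-1)}\}$, and three children when $f_a(x) \in \interior{J_q}$. Infinite branches of $T_x$ are in bijection with $\orbs{E_q}{x}$, and the proposition is equivalent to the assertion that $B^\circ := \{a \in T_x : f_a(x) \in \interior{J_q}\}$ has at least two elements.

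The key calculation is that $f_2(\tfrac{1}{q}) = 0$ and $f_1(\tfrac{1}{q(q-1)}) = 0$, while the only map of $E_q$ whose domain contains $0$ is $f_0$, and $f_0(0) = 0$. Thus at either boundary point of $J_q$ one of the two available extensions lands at $0$ and from there the orbit is forced to be $0^\infty$; the corresponding subtree is a ray and contributes exactly one infinite orbit to $\orbs{E_q}{x}$.

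I would then establish the contrapositive $|B^\circ| \leq 1 \Rightarrow |\orbs{E_q}{x}| \leq \aleph_0$. If $B^\circ = \emptyset$, then $T_x$ is a caterpillar: a single infinite spine obtained by taking the live extension at every two-child node and the unique extension at every one-child node, together with at most countably many dead side rays, giving at most $\aleph_0$ orbits. If $B^\circ = \{a\}$, then the strict prefixes of $a$ that are two-child nodes contribute at most $|a|$ side orbits that miss $a$, while the three subtrees rooted at the children of $a$ contain no further $B^\circ$ nodes and so are each caterpillars of the previous type, contributing at most $3\aleph_0 = \aleph_0$ orbits through $a$. Both cases contradict $|\orbs{E_q}{x}| = 2^{\aleph_0}$, so $|B^\circ| \geq 2$, and any two distinct elements of $B^\circ$ supply the required sequences.

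The step requiring the most care is the counting argument in the $|B^\circ| = 1$ case, in particular checking that every subtree rooted at a child of the unique $B^\circ$ node really is a $B^\circ$-free caterpillar (which is tautological once $B^\circ$ is defined relative to $T_x$) and that no other mechanism can produce uncountably many orbits. The rest of the proof reduces to the routine computations $f_2(\tfrac{1}{q}) = f_1(\tfrac{1}{q(q-1)}) = 0$ and the verification that $f_0$ is the unique extension at $0$.
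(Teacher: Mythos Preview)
Your proof is correct. Both your argument and the paper's rest on the same key computation $f_2(1/q) = f_1(1/(q(q-1))) = 0$, so that at a boundary point of $J_q$ one of the two available children lands at $0$, whence the continuation is forced to be $f_0^\infty$. From this common starting point the paper works through its infinite-branching-tree machinery: since $q \in \mathcal{D}$ rules out \emph{null infinite points}, $x$ admits a doubly infinite branching point $f_b(x) \in \interior{J_q}$, and iterating the argument below two distinct children $i_0 \neq i_1$ of $b$ produces $b^0 = b\, i_0\, h^0$ and $b^1 = b\, i_1\, h^1$. You instead argue directly that $|B^\circ| \leq 1$ forces the tree to be a caterpillar (or a single trifurcation into three caterpillars), hence to have at most $\aleph_0$ ends. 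Your route is more elementary and proves the proposition exactly as stated. What the paper's route additionally buys is that its $b^0, b^1$ are automatically prefix-incomparable (they agree on $b$ and differ at position $|b|+1$); this stronger property is what is actually invoked downstream in the proof of Proposition~\ref{proposition: the constant and the map}(2). Your argument delivers only distinctness, though the gap closes easily: applying your caterpillar bound to the three subtrees below any $a \in B^\circ$ shows that at least two of them must contain $B^\circ$ nodes, and these are incomparable.
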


The branching tree constructions below provide a useful visual aid when thinking about the orbit spaces $\orbs{E_q}{x}$ for $x \in I_q$.
This is because for any $x \in I_q$, the set of infinite paths in the branching tree of $x$ is in bijection with $\orbs{E_q}{x}$.
We show how the structure of the infinite branching tree for $x \in I_q$ allows us to distinguish between the cases when $\orbs{E_q}{x}$ is at most countably infinite and when it is uncountable.
Recall from \eqref{equation: dimension variation implications} that the hypotheses of Theorem 4.2 were shown to imply that $\orbs{E_q}{x}$ is uncountable.
We will use the uncountability of $\orbs{E_q}{x}$ to conclude that the infinite branching tree of $x$ has a certain structure.
This forms the first step towards constructing the set $\mathcal{R} \subset \{0,1,2\}^\mathbb{N}$ alluded to in the outline above.
Before defining the branching trees we will need the following definitions.

Let $x \in I_q$ and let $ b = \seqj{b}{1}{N} \in \{0,1,2\}^*$ be a finite ternary sequence.
If $b$ is such that $\mapstring{f}{b}{j}{1}{N} \in \orbsk{E_q}{*}{x}$ and $f_b(x) \in J_q$ then $\mapstring{f}{b}{j}{1}{N}$ is a \textit{branching sequence} for $x$, $f_b(x)$ is a \textit{branching point} of $x$, and $N$ is the \textit{branching length} of $\mapstring{f}{b}{j}{1}{N}$.
In particular, if $x \in J_q$ then if $e \in \{0,1,2\}^*$ is the empty word, $f_e$ is the identity map so $f_e(x) = x \in J_q$ is a branching point and the empty word is a branching sequence for $x$.
A \textit{minimal branching sequence} for $x$ is a branching sequence for $x$ such that there does not exist a branching sequence for $x$ with smaller branching length.
If $ |\orbs{E_q}{x}| > 1 $ then it is obvious that there is a branching sequence and hence also a minimal branching sequence for $x$.

If $x \in I_q$ and $b = \seqj{b}{1}{N} \in \{0, 1, 2\}^*$ is such that $\mapstring{f}{b}{j}{1}{N}$ is a branching sequence for $x$ such that there are at least two indices $i \in \{0,1,2\}$ for which $\orbs{E_q}{f_i(f_b(x))}$ is infinite then $\mapstring{f}{b}{j}{1}{N}$ is said to be a \textit{doubly infinite branching sequence} for $x$, $f_b(x)$ is a \textit{doubly infinite branching point} of $x$ and as before $N$ is the branching length of $\mapstring{f}{b}{j}{1}{N}$.
A \textit{minimal doubly infinite branching sequence} for $x$ is a doubly infinite branching sequence for $x$ such that there does not exist a doubly infinite branching sequence for $x$ with smaller branching length.

\subsubsection{Branching tree $\mathcal{T}(x)$}
We define the \textit{branching tree}, $\mathcal{T}(x)$, for any $x \in I_q$.

Let $x \in I_q$. Suppose $x$ satisfies $|\orbs{E_q}{x}| = 1$ then the branching tree for $x$ is defined to be an infinite horizontal line, representing the unique orbit of $x$ in $E_q$.
If $|\orbs{E_q}{x}| > 1$ then there is a minimal branching sequence $\mapstring{f}{b}{j}{1}{N}$ for $x$, which gives the branching point $f_b(x)$. 
To construct the branching tree $\mathcal{T}(x)$, the transformation $f_b$ is represented by a finite horizontal line which then \textit{bifurcates} or \textit{trifurcates} into two or three branches respectively as follows.
If $f_b(x) \in \interior{J_q}$ then we have a trifurcation and if $f_b(x) \in \partial J_q$ then we have a bifurcation.
In the first case, the three branches of the trifurcation correspond to the images $\{f_0(f_b(x)), f_1(f_b(x)), f_2(f_b(x))\}$ which we define to be the \textit{roots} of the branches.
In the second case, if $f_b(x) = \frac{1}{q}$ then the two branches of the bifurcation correspond to the images $\{f_0(f_b(x)), f_2(f_b(x))\}$ and if $f_b(x) = \frac{1}{q(q-1)}$ then they correspond to $\{f_1(f_b(x)) , f_2(f_b(x))\}$.
This is a formal way of saying that the branching points have two or three branches which correspond to those maps in $\{f_0, f_1, f_2\}$ which satisfy the property that $f_b(x)$ is in the domain of $f_i$, and these points $f_i(f_b(x))$ are the roots of the branches.
Recall that $f_1$ is not defined at $\frac{1}{q}$ and $f_0$ is not defined at $\frac{1}{q(q-1)}$, hence there are only two branches of $\mathcal{T}(x)$ at these points.

If for some $i \in \{0,1,2\}$, $f_i(f_b(x))$ satisfies $|\orbs{E_q}{f_i(f_b(x))}| = 1$ then the branch with root $f_i(f_b(x))$ is extended by an infinite horizontal line. 
If $|\orbs{E_q}{f_i(f_b(x))}| > 1$ then $f_i(f_b(x))$ has a unique minimal branching sequence $\mapstring{f}{c}{j}{1}{L}$ so we extend the branch with root $f_i(f_b(x))$ by a finite horizontal line segment which then bifurcates or trifurcates according to the value of $f_{c}(f_i(f_b(x)))$ as before.
These rules are repeatedly applied to successive branches of the construction which results in a tree we call the \textit{branching tree} of $x$ and denote by $\mathcal{T}(x)$.

For any branching point $f_b(x)$ of $\mathcal{T}(x)$, the branch with root $f_i(f_b(x))$ is the branching tree $\mathcal{T}(f_i(f_b(x)))$ and is a subtree (in the graph theoretical sense) of $\mathcal{T}(x)$.
The branches of $\mathcal{T}(x)$ are in one-to-one correspondence with points of the form $f_i(f_b(x))$ where $f_b(x)$ is a branching point of $x$, $i \in \{0,1,2\}$ and $f_i(f_b(x)) \in I_q$. 
We remark that, from the definition of the branching tree, there is a bijection between the space of infinite paths in $\mathcal{T}(x)$ and the orbit space $\orbs{E_q}{x}$.
We say that a tree or a branch is finite if it contains finitely many branching points and infinite otherwise.

Observe that $|\orbs{E_q}{f_2(\frac{1}{q})}| = |\orbs{E_q}{f_1(\frac{1}{q(q-1)})}| = 1$ and $\frac{1}{q}$, $\frac{1}{q(q-1)}$ are not in the domains of $f_1$ and $f_0$ respectively.
This means that if $f_b(x) \in \partial J_q$, there is at most one index $i \in \{0,1,2\}$ such that $\orbs{E_q}{f_i(f_b(x))}$ is infinite.
With Lemma \ref{lemma: unique orbit if avoids switch} this implies that if $\mapstring{f}{b}{j}{1}{N}$ is a doubly infinite branching sequence for $x$ then $f_b(x) \in \interior{J_q}$.
Heuristically, we are aiming to show that the number of branches grows sufficiently quickly, so this kind of branching can be ignored.
In fact, any finite branches can be ignored.
This motivates the definition of the \textit{infinite branching tree} below.

\begin{figure}
\label{fig:branching tree}

\centering
    % \begin{subfigure}[b]{0.3\textwidth}
        % \centering
            \begin{tikzpicture} [scale = 1]
            \begin{scope}[thick]
        % \begin{scope}[very thick]
            %the four box lines + dotted diagonal
            
            %first branch + bifurcation
            \draw (0,0) -- (3,0);
            \draw (3,-3) -- (3,3);
            
            %second order branches
            \draw (3,3) -- (7,3);
            \draw (3,0) -- (7,0);
            \draw (3,-3) -- (7,-3);
            
            %second order bifurcations
            \draw (7,2) -- (7,4);
            \draw (7,-1) -- (7,1);
            \draw (7,-4) -- (7,-2);
            
            %third order branches
            \draw (7,4) -- (9,4);
            % \draw (7,3) -- (9,3);
            \draw (7,2) -- (9,2);
            \draw (7,1) -- (9,1);
            \draw (7,0) -- (9,0);
            \draw (7,-1) -- (9,-1);
            \draw (7,-2) -- (9,-2);
            \draw (7,-3) -- (9,-3);
            \draw (7,-4) -- (9,-4);
            
            \node [above] at (0,0) {$x$};
            \node [above] at (2.5,0) {$f(x)$};
            \node [above] at (4.5,3) {$f_0 (f(x))$};
            \node [above] at (4.5,0) {$f_1(f(x))$};
            \node [above] at (4.5,-3) {$f_2(f(x))$};
        \end{scope}
        \end{tikzpicture}
    \caption{The start of the branching tree $\mathcal{T}(x)$ of some $x \in I_q$.}
    
\end{figure}
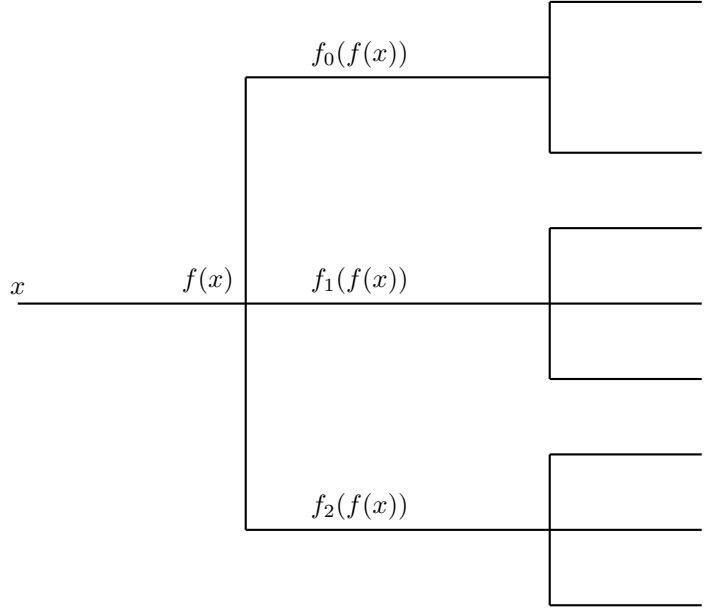

\subsubsection{Infinite branching tree $\mathcal{T}_{\infty}(x)$}
The infinite branching tree for $x \in I_q$ ignores all finite branching behaviour in the sense that only doubly infinite branching points generate branching points in this tree.
We now define the \textit{infinite branching tree}, $\mathcal{T}_\infty(x)$, for any $x \in I_q$.

Suppose $x \in I_q$ has the property that either $\orbs{E_q}{x}$ is finite or for each branching point $f_b(x)$ of $x$ we have at least two indices $i \in \{0,1,2\}$ such that $\orbs{E_q}{f_i(f_b(x))}$ is finite, then the \textit{infinite branching tree for $x$}, $\mathcal{T}_\infty(x)$, is defined to be an infinite horizontal line. 
This is equivalent to $x$ admitting no doubly infinite branching sequence and so there does not exist a doubly infinite branching point of $x$.

Suppose this is not the case, then there is a minimal doubly infinite branching sequence $\mapstring{f}{b}{j}{1}{N}$ for $x$ and $f_b(x)$ is a doubly infinite branching point.
Let $\mapstring{f}{b}{j}{1}{N}$ be a minimal doubly infinite branching sequence, then the infinite branching tree for $x$ consists of a finite horizontal line segment corresponding to the transformation $f_b$ which then bifurcates or trifurcates depending on whether there are two or three indices $i \in \{0,1,2\}$ such that $\orbs{E_q}{f_i(f_b(x))}$ is infinite.
In each case the branches of the bifurcation or trifurcation correspond to the images $f_i(f_b(x))$ that satisfy $\orbs{E_q}{f_i(f_b(x))}$ is infinite, and as before, the points $f_i(f_b(x))$ are defined to be the roots of the branches.
Each branch is then extended by the same rules ad infinitum.

We have the analogous observation that for any branching point $f_b(x)$ of $\mathcal{T}_\infty(x)$, the branch with root $f_i(f_b(x))$ is the infinite branching tree $\mathcal{T}_\infty(f_i(f_b(x)))$ and is a subtree of $\mathcal{T}_\infty(x)$.
Similarly, the branches of $\mathcal{T}_\infty(x)$ are in one-to-one correspondence with points of the form $f_i(f_b(x))$ where $f_b(x)$ is a doubly infinite branching point of $x$ and $\orbs{E_q}{f_i(f_b(x))}$ is infinite.

\subsubsection{Null infinite points}
\label{subsubsection: null infinite points}

It is possible that $x \in I_q$ satisfies $\orbs{E_q}{x}$ is infinite but that for each branching point $f_b(x)$ of $x$, there is exactly one index $i \in \{0,1,2\}$ such that $\orbs{E_q}{f_i(f_b(x))}$ is infinite. In this case $\mathcal{T}(x)$ is infinite, $\mathcal{T}_\infty(x)$ is an infinite horizontal line and $x$ is said to be a \textit{null infinite point}.
If $x$ is a null infinite point then $|\orbs{E_q}{x}| = \aleph_0$.

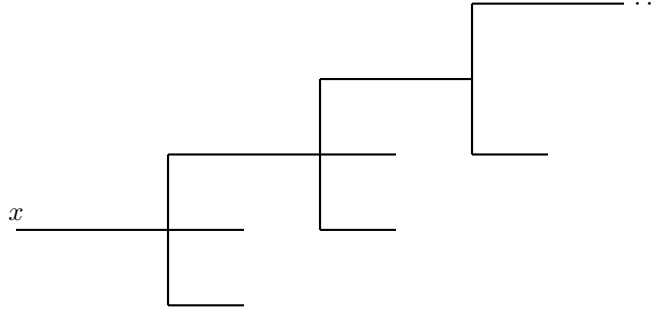
\begin{figure}[h]
\label{fig:infinite branching tree}

\centering
    % \begin{subfigure}[b]{0.3\textwidth}
        % \centering
            \begin{tikzpicture} [scale = 1]
            \begin{scope}[thick]
        % \begin{scope}[very thick]
            %the four box lines + dotted diagonal
            
            %first branch + bifurcation
            \draw (0,0) -- (2,0);
            \draw (2,-1) -- (2,1);
            \draw (2,-1) -- (3,-1);
            \draw (2,0) -- (3,0);
            \draw (2,1) -- (4,1);
            
            % %second order branches
            \draw (4,0) -- (4,2);
            \draw (4,0) -- (5,0);
            \draw (4,1) -- (5,1);
            \draw (4,2) -- (6,2);
            
            % %third order branches
            \draw (6,1) -- (6,3);
            \draw (6,1) -- (7,1);
            % \draw (6,2) -- (7,2);
            \draw (6,3) -- (8,3);
            
            \node [right] at (8,3) {$\cdots$};
            \node [above] at (0,0) {$x$};

        \end{scope}
        \end{tikzpicture}
    \caption{The branching tree $\mathcal{T}(x)$ of a null infinite point $x$. All branching points generate exactly one infinite branch.}
\end{figure}

\subsubsection{Proof of Proposition \ref{proposition: existence of doubly infinite branching sequences}}
For the following proof, we require the above definitions and remarks to deal with the subtle difference between the cases when $\orbs{E_q}{x}$ is countably infinite and when it is uncountably infinite.
Precisely, we require the conclusion that $x$ admits a doubly infinite branching sequence from the hypothesis that $\orbs{E_q}{x}$ is uncountable.

\begin{proof}[Proof of Proposition \ref{proposition: existence of doubly infinite branching sequences}]
    Let $q \in \mathcal{D}$ and let $x \in J_q$, so $|\orbs{E_q}{x}| > 1$.
    Since $q \in \mathcal{D}$, $|\orbs{E_q}{x}| = 2^{\aleph_0}$ so $x$ is not a null infinite point.
    Therefore $x$ admits a minimal doubly infinite branching sequence, $\mapstring{f}{b}{j}{1}{N} \in \orbsk{E_q}{*}{x}$ such that $f_b(x) \in \interior{J_q}$ and there are at least two indices $i_0, i_1 \in \{0,1,2\}$ such that $\orbs{E_q}{f_{i_0}(f_b(x))}$ and $\orbs{E_q}{f_{i_1}(f_b(x))}$ are infinite.
    Again using $q \in \mathcal{D}$, this implies that $|\orbs{E_q}{f_{i_0}(f_b(x))}| = |\orbs{E_q}{f_{i_1}(f_b(x))}| = 2^{\aleph_0}$ so neither $f_{i_0}(f_b(x))$ nor $f_{i_1}(f_b(x))$ are null infinite points.
    Therefore both $f_{i_0}(f_b(x))$ and $f_{i_1}(f_b(x))$ admit minimal doubly infinite branching sequences.
    
    Let $h^0 = \seqj{h^0}{1}{K_0}$ and $h^1 = \seqj{h^1}{1}{K_1}$ be elements of $\{0,1,2\}^*$ such that the associated finite sequences of maps
    $\mapstringS{f}{h}{0}{j}{1}{K_0} \in \orbsk{E_q}{*}{f_{i_0}(f_b(x))}$ and $ \mapstringS{f}{h}{1}{j}{1}{K_1}\in \orbsk{E_q}{*}{f_{i_1}(f_b(x))}$ are minimal doubly infinite branching sequences for $f_{i_0}(f_b(x))$ and $f_{i_1}(f_b(x))$ respectively.
    Therefore $f_{h^0} \circ f_{i_0} \circ f_b(x) \in \interior{J_q}$ and $f_{h^1} \circ f_{i_1} \circ f_b(x) \in \interior{J_q}$.
    Let $b^0 = \seqj{b^0}{1}{N_0}$ and $ b^1 = \seqj{b^1}{1}{N_1}$ be elements of $\{0, 1, 2\}^*$ with $b^0 = \seqj{b}{1}{N} i_0 \seqj{h^0}{1}{K_0}$ and $b^1 = \seqj{b}{1}{N} i_1 \seqj{h^1}{1}{K_1}$.
    Therefore $f_{b^0} = f_{h^0} \circ f_{i_0} \circ f_b$ and $f_{b^1} = f_{h^1} \circ f_{i_1} \circ f_b$.
    We know that $f_{b^0}(x), f_{b^1}(x) \in \interior{J_q}$ and $\mapstringS{f}{b}{0}{j}{1}{N_0}, \mapstringS{f}{b}{1}{j}{1}{N_1} \in \orbsk{E_q}{*}{x}$ by construction and the proposition holds.
\end{proof}

\subsection{Positive Hausdorff dimension}

In this subsection we construct $\mathcal{R} \subset \{0,1,2\}^\mathbb{N}$ and a measure $\mu$ on $\pi_3(\mathcal{R})$ which satisfies the hypotheses of the mass distribution principle and we use this to prove Theorem \ref{theorem: positive dimension variation 2}.
We emphasise that $\mathcal{R}$ depends upon the fixed $q \in \mathcal{D}$ and $x \in J_q$ which are chosen arbitrarily.
We construct the constant $M \in \mathbb{N}$ and the map $A: \{0,1\}^* \rightarrow \{0,1,2\}^*$ before proving their required properties in Proposition \ref{proposition: the constant and the map}.
Recall the definitions of \textit{prefix} and \textit{strict prefix} from Subsection \ref{subsection: background theory}

Let $q \in \mathcal{D}$ and let $x \in J_q$.
By Proposition \ref{proposition: existence of doubly infinite branching sequences} there are two doubly infinite branching sequences $\mapstringS{f}{b}{0}{j}{1}{N_0}, \mapstringS{f}{b}{1}{j}{1}{N_1}  \in \orbsk{E_q}{*}{x}$ such that $f_{b^0}(x), f_{b^1}(x) \in \interior{J_q}$.
The maps $f_0, f_1, f_2$ are continuous, so any finite composition of these maps is also continuous. 
This allows us to fix an open interval $U_x$ containing $x$ such that $f_{b^0}(U_x), f_{b^1}(U_x) \subset \interior{J_q}$.
Since $x \in J_q$ was arbitrary, this generates an open cover $\{U_x\}_{x \in J_q}$ of $J_q$.
By compactness of the closed interval $J_q$, this open cover must admit a finite subcover. That is, there is some finite subset $F \subset J_q$ such that $\{U_{x'}\}_{x' \in F}$ is a finite open cover of $J_q$.
For each $x' \in F$ we can choose a pair of doubly infinite branching sequences $\mapstringS{f}{b}{0}{j}{1}{N_0}, \mapstringS{f}{b}{1}{j}{1}{N_1}$ and define $M_{x'} = \max \{N_0, N_1\}$. 
We subsequently define $M = \max_{x' \in F}\{M_{x'}\}$.

Let $x \in J_q$ be arbitrary. 
Then $x \in U_{x'}$ for some $x' \in F$ and there are doubly infinite branching sequences $\mapstringS{f}{b}{0}{j}{1}{N_0}, \mapstringS{f}{b}{1}{j}{1}{N_1} \in \orbsk{E_q}{*}{x}$ for $x$ such that $\max\{N_0, N_1\} \leq M$.
We call such a pair of sequences of maps the \textit{branching pair} for $x$.
Although the branching pair of some $x \in J_q$ may not be unique, by fixing a branching pair for each $x \in J_q$, the definition is valid.
We emphasise that since $x \in J_q$ was arbitrary, $M$ is independent of $x$ and depends only on $q$.
Since $f_{b^0}(x), f_{b^1}(x) \in \interior{J_q}$, by Proposition \ref{proposition: existence of doubly infinite branching sequences} there are branching pairs for $f_{b^0}(x)$ and $f_{b^1}(x)$. 
That is, there are finite sequences of maps $\mapstringS{f}{c}{0}{j}{1}{K_0} , \mapstringS{f}{c}{1}{j}{1}{K_1} \in \orbsk{E_q}{*}{f_{b_0}(x)}$ and $\mapstringS{f}{d}{0}{j}{1}{L_0} , \mapstringS{f}{d}{1}{j}{1}{L_1} \in \orbsk{E_q}{*}{f_{b_1}(x)}$ with $K_0, K_1, L_0, L_1 \leq M$ such that $f_{c^0} \circ f_{b^0}(x), f_{c^1} \circ f_{b^0}(x), f_{d^0} \circ f_{b^1}(x), f_{d^1} \circ f_{b^1}(x) \in \interior{J_q}$. 
Let $\seqj{b^{00}}{1}{N_{00}}, \seqj{b^{01}}{1}{N_{01}}, \seqj{b^{10}}{1}{N_{10}}, \seqj{b^{11}}{1}{N_{11}}  \in \{0, 1, 2\}^*$ be the finite sequences such that
\begin{align*}
    f_{b^{00}} &= f_{c^0} \circ f_{b^0}, \\
    f_{b^{01}} &= f_{c^1} \circ f_{b^0}, \\
    f_{b^{10}} &= f_{d^0} \circ f_{b^1}, \\
    f_{b^{11}} &= f_{d^1} \circ f_{b^1}.
\end{align*}
Then, by construction, $\mapstringS{f}{b}{{00}}{j}{1}{N_{00}}, \mapstringS{f}{b}{{01}}{j}{1}{N_{01}}, \mapstringS{f}{b}{{10}}{j}{1}{N_{10}}, \mapstringS{f}{b}{{11}}{j}{1}{N_{11}} \in \orbsk{E_q}{*}{x}$.
Again using Proposition \ref{proposition: existence of doubly infinite branching sequences}, we can argue that since, $f_{b^\epsilon}(x) \in \interior{J_q}$ for all $\epsilon \in \{0,1\}^2$, each of these points has a branching pair. 
In general, for $\epsilon \in \{0,1\}^k$ and $B \in \{0,1\}$ we inductively define $\mapstringS{f}{b}{{\epsilon B}}{j}{1}{N_{\epsilon B}} \in \{f_0, f_1, f_2\}^*$ to be a finite sequence of maps which satisfies $f_{b^{\epsilon B}} = f_{g^B} \circ f_{b^{\epsilon}}$ where $f_{g^0}, f_{g^1} \in \orbsk{E_q}{*}{f_{b^\epsilon}(x)}$ is the branching pair for $f_{b^{\epsilon}}(x)$.
This defines $\mapstringS{f}{b}{\epsilon}{j}{1}{N_\epsilon}$, and by construction $\mapstringS{f}{b}{\epsilon}{j}{1}{N_\epsilon} \in \orbsk{E_q}{*}{x}$ for all $\epsilon \in \{0,1\}^*$.

In summary for every $\epsilon \in \{0,1\}^*$ we have an associated finite ternary sequence $b^\epsilon = \seqj{b^\epsilon}{1}{N_\epsilon} \in \{0,1,2\}^*$ with the property that $\mapstringS{f}{b}{\epsilon}{j}{1}{N_\epsilon} \in \orbsk{E_q}{*}{x}$.
We write this mapping as $A : \{0,1\}^* \rightarrow \{0,1,2\}^*$ where $A(\epsilon) = b^\epsilon = \seqj{b^\epsilon}{1}{N_\epsilon}$.
Note that if $\epsilon$ is the empty word then $b^\epsilon$ is the empty word.
Moreover, the map $A$ depends on $x \in J_q$ and $q \in \mathcal{D}$ because the branching pairs in the construction of $b^\epsilon$ depend on $x$ and $q$.

\begin{proposition}
\label{proposition: the constant and the map}
    For any $q \in \mathcal{D}$, the constant $M \in \mathbb{N}$ and the map $A: \{0,1\}^* \rightarrow \{0,1,2\}^*$ satisfy the following properties for any $x \in J_q$.
    \begin{enumerate}
        \item 
        If $\epsilon \in \{0,1\}^*$ then $ \mapstringS{f}{b}{\epsilon}{j}{1}{N_\epsilon} \in \orbsk{E_q}{*}{x}$. 
        \item 
        If $\epsilon, \epsilon' \in \{0,1\}^*$, then $b^\epsilon$ is a prefix of $b^{\epsilon'}$ if and only if $\epsilon$ is a prefix of $\epsilon'$.
        \item 
        If $\epsilon$ and $\epsilon'$ are distinct elements of $\{0,1\}^k$ for some $k \in \mathbb{N}_{\geq 0}$ then $\pi_q[b^\epsilon]$ and $\pi_q[b^{\epsilon'}]$ intersect in at most one point.
        \item 
        For all $k \in \mathbb{N}_{\geq 0}$, if $\epsilon \in \{0,1\}^k$ then $N_\epsilon \leq kM$.
    \end{enumerate}

\end{proposition}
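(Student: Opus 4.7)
The plan is to prove properties (1), (2), and (4) by simultaneous induction on $|\epsilon|$, and then to deduce (3) from (2) together with the elementary fact that two ternary cylinders indexed by prefix-incomparable finite words intersect in at most one point. The single structural input that the induction will lean on is that every branching pair $(g^0, g^1)$ produced by the construction in Proposition \ref{proposition: existence of doubly infinite branching sequences} (and then fixed via the finite subcover of $J_q$) consists of two finite ternary sequences that are \emph{prefix-incomparable}. Indeed, inspection of the proof of Proposition \ref{proposition: existence of doubly infinite branching sequences} shows that the two sequences take the form $b i_0 h^0$ and $b i_1 h^1$, sharing the minimal doubly infinite branching prefix $b$ but carrying distinct letters $i_0 \neq i_1$ at position $|b|+1$; in particular, neither is a prefix of the other.

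For the induction itself, the base case $|\epsilon|=0$ is vacuous since $b^\epsilon$ is the empty word. For the inductive step, fix $\epsilon \in \{0,1\}^k$ and $B \in \{0,1\}$, and recall that by construction $b^{\epsilon B} = b^\epsilon g^B$, where $(g^0, g^1)$ is the branching pair associated with $f_{b^\epsilon}(x)$. Property (1) is then immediate: the inductive hypothesis supplies $f_{b^\epsilon}(x) \in I_q$, while the definition of a branching pair gives $g^B \in \orbsk{E_q}{*}{f_{b^\epsilon}(x)}$, so the concatenated finite sequence of maps lies in $\orbsk{E_q}{*}{x}$. Property (4) is the arithmetic estimate $N_{\epsilon B} = N_\epsilon + |g^B| \leq kM + M = (k+1)M$, using the defining property of $M$ as a uniform upper bound on the lengths of branching pairs chosen over the finite cover $F$.

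For property (2) the forward direction is immediate from the construction: if $\epsilon$ is a prefix of $\epsilon'$, then $b^{\epsilon'}$ is obtained from $b^\epsilon$ by appending finitely many branching sequences, so $b^\epsilon$ is a prefix of $b^{\epsilon'}$. For the converse, suppose towards a contradiction that $b^\epsilon$ is a prefix of $b^{\epsilon'}$ but $\epsilon$ is not a prefix of $\epsilon'$. Let $\delta$ be the longest common prefix of $\epsilon$ and $\epsilon'$, and set $B = \epsilon_{|\delta|+1}$, $B' = \epsilon'_{|\delta|+1}$, so that $B \neq B'$. By construction $b^\epsilon$ extends $b^\delta g^B$ and $b^{\epsilon'}$ extends $b^\delta g^{B'}$, where $(g^0, g^1)$ is the branching pair for $f_{b^\delta}(x)$. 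The structural fact above says $g^B$ and $g^{B'}$ are prefix-incomparable, forcing $b^\epsilon$ and $b^{\epsilon'}$ to be prefix-incomparable as well, a contradiction.

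Finally, property (3) reduces to an elementary geometric observation. Given distinct $\epsilon, \epsilon' \in \{0,1\}^k$, property (2) yields a first position $m \leq \min\{N_\epsilon, N_{\epsilon'}\}$ where $b^\epsilon$ and $b^{\epsilon'}$ disagree. The intervals $\pi_3[b^\epsilon]$ and $\pi_3[b^{\epsilon'}]$ lie respectively in the digit-$b^\epsilon_m$ and digit-$b^{\epsilon'}_m$ thirds of the common ambient cylinder $\pi_3[\seqj{b^\epsilon}{1}{m-1}]$, and distinct thirds of a closed interval share at most an endpoint. The only genuinely nontrivial step in the proof is the converse of (2), whose success depends on cleanly extracting prefix-incomparability of branching pairs from the proof of Proposition \ref{proposition: existence of doubly infinite branching sequences}; everything else is routine induction and ternary geometry.
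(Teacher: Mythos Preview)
Your argument is correct and follows the same route as the paper: (1) and (4) by induction on $|\epsilon|$, (2) via the longest common prefix $\delta$ and the prefix-incomparability of the branching pair at $f_{b^\delta}(x)$, and (3) from (2) together with the nested-or-almost-disjoint dichotomy for ternary cylinders. Your explicit extraction of the prefix-incomparability of $(g^0,g^1)$ from the $i_0\neq i_1$ step in the proof of Proposition~\ref{proposition: existence of doubly infinite branching sequences} makes precise a point the paper simply asserts as a property of branching pairs.
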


\begin{proof}
    Let $q \in \mathcal{D}$, $x \in J_q$ for every part of this proof.
    \begin{enumerate}
        \item This follows easily from the construction of $\mapstringS{f}{b}{\epsilon}{j}{1}{N_\epsilon}$ for each $\epsilon \in \{0,1\}^*$.
        
        \item 
        Let $\epsilon, \epsilon' \in \{0,1\}^*$.
        If $\epsilon = \epsilon'$ the result is obvious.
        Assume that $\epsilon$ is a \textit{strict} prefix of $\epsilon'$.
        Suppose that $B \in \{0,1\}$ is such that $\epsilon B$ is a prefix of $\epsilon'$ and recall by the inductive definition of $b^\epsilon$ that $f_{b^{\epsilon B}} = f_{g^B} \circ f_{b^\epsilon}$ where $f_{g^0}, f_{g^1}$ is the branching pair of $f_{b^\epsilon}(x)$.
        This equation can be rewritten as
        $$ \mapseqnS{f}{b}{\epsilon B }{1}{N_{\epsilon B}} = f_{g^B} \circ \mapseqnS{f}{b}{\epsilon}{1}{N_\epsilon}.$$
        Hence $\seqj{b^{\epsilon B}}{1}{N_\epsilon} = \seqj{b^\epsilon}{1}{N_\epsilon}$ so $b^\epsilon$ is a prefix of $b^{\epsilon B}$.
        If $\epsilon' = \epsilon B$ then we are done.
        If not then $\epsilon B$ is a strict prefix of $\epsilon'$ and we repeat the above argument.
        Performing this argument finitely many times proves the result in the general case that $\epsilon$ is a strict prefix of $\epsilon'$.

        Suppose $\epsilon$ is not a prefix of $\epsilon'$ and vice versa, then there is some $l \in \mathbb{N}$ such that the length $l$ prefixes of $\epsilon$ and $\epsilon'$ are distinct but the length $l-1$ prefixes coincide.
        Without loss of generality, let $\delta 0$ and $\delta 1$ be the length $l$ prefixes of $\epsilon$ and $\epsilon'$ respectively.
        Then $b^{\delta 0}$ and $b^{\delta 1}$ satisfy $f_{b^{\delta 0}} = f_{g^0} \circ f_{b^\delta}$ and $f_{b^{\delta 1}} = f_{g^1} \circ f_{b^{\delta}}$ where $f_{g^0}, f_{g^1}$ is the branching pair for $f_{b^{\delta}}(x)$.
        Since $f_{g^0}, f_{g^1}$ is a branching pair, we know that $g^0$ is not a prefix of $g^1$ and vice versa.
        Therefore $b^{\delta 0}$ is not a prefix of $b^{\delta 1}$ and vice versa.
        Since $b^{\delta 0}$ and $b^{\delta 1}$ are prefixes of $b^{\epsilon}$ and $b^{\epsilon'}$ respectively, we can conclude that $b^{\epsilon}$ is not a prefix of $b^{\epsilon'}$ and vice versa.

        \item 
        It can be easily checked that intervals of the form $\pi_3[b]$ and $\pi_3[b']$ for $b,b' \in \{0,1,2\}^*$ are either nested or are disjoint apart from possibly at a single point.
        They can only be nested if $b$ is a prefix of $b'$ or vice versa.
        If $\epsilon, \epsilon' \in \{0,1\}^k$ are distinct then $\epsilon$ is not a prefix of $\epsilon'$ and vice versa so by Item 2, $b^\epsilon$ is not a prefix of $b^{\epsilon'}$ and vice versa.
        Therefore $\pi_3[b^\epsilon]$ and $\pi_3[b^{\epsilon'}]$ are not nested and intersect in at most a single point.
        
        \item 
        We prove the claim by induction.
        Let $\mapstringS{f}{b}{0}{j}{1}{N_0}, \mapstringS{f}{b}{1}{j}{1}{N_1} \in \orbsk{E_q}{*}{x}$ be the branching pair for $x$, so $N_0, N_1 \leq M$.
        Therefore the claim holds for $\epsilon \in \{0,1\}$.
        Given $\epsilon \in \{0,1\}^k$ and $b^\epsilon = \seqj{b^\epsilon}{1}{N_\epsilon}$, assume that $N_\epsilon \leq k M$.
        By definition, for any $B \in \{0,1\}$, $f_{b^{\epsilon B}} = f_{g^B} \circ f_{b^\epsilon}$ where $f_{g^0}, f_{g^1}$ is the branching pair for $f_{b^\epsilon}(x)$.
        We know $|g^0| , |g^1| \leq M$ and $N_{\epsilon B} = |g^B| + N_\epsilon$ so $N_{\epsilon B} \leq (k+1)M$.
        Since every $\delta \in \{0,1\}^{k+1}$ is of the form $\epsilon B$ for some $\epsilon \in \{0,1\}^k$ and some $B \in \{0,1\}$, the proof is complete.
    \end{enumerate}
\end{proof}

Fix some $q \in \mathcal{D}$ and $x \in J_q$, on which the construction of the set $\mathcal{R}$ will implicitly depend due to the dependence of the map $A$ on $q$ and $x$.
For each $k \in \mathbb{N}_{\geq 0}$ define the set $\mathcal{R}^k$ by
$$\mathcal{R}^k = \bigcup_{\epsilon \in \{0,1\}^k} [b^\epsilon].$$
Note that the sequences $b^\epsilon$ which define the cylinders of $\mathcal{R}^k$ need not have length $k$ but rather are the images under $A$ of the binary strings of length $k$.
Recall that $A(\epsilon) = b^\epsilon = \seqj{b^\epsilon}{1}{N_\epsilon}$ so the cylinder $[b^\epsilon] \subset \mathcal{R}^k$ has length $N_\epsilon$.
Define also
$$\mathcal{R} = \bigcap_{k \in \mathbb{N}_{\geq 0}} \mathcal{R}^k.$$
That is, $\mathcal{R}$ is the set of infinite sequences $\seqj{i}{1}{\infty} \in \{0,1,2\}^\mathbb{N}$ with the property that for infinitely many $l \in \mathbb{N}$, $\seqj{i}{1}{l} \in \mathcal{R}^k$ for some $k \in \mathbb{N}$.
Equivalently, $\mathcal{R}$ is the set of infinite sequences $\seqj{i}{1}{\infty}$ such that 
for all $k \in \mathbb{N}_{\geq 0}$ there is some $\epsilon \in \{0,1\}^*$ with the property that
$b^\epsilon$ is a prefix of $\seqj{i}{1}{k}$ and for some $B \in \{0,1\}$, $\seqj{i}{1}{k}$ is a strict prefix of $b^{\epsilon B}$.
We emphasise that $\mathcal{R}$ is implicitly dependent upon the fixed $q \in \mathcal{D}$ and $x \in J_q$ due to the dependence of the map $A$ on these values.
By the second part of Proposition \ref{proposition: the constant and the map}, the sets $\mathcal{R}^k$ are nested, and since $\mathcal{R}^k$ is nonempty for all $k \in \mathbb{N}_{\geq 0}$, we know that $\mathcal{R}$ is nonempty.
Using the fact that $ \mapstringS{f}{b}{\epsilon}{j}{1}{N_\epsilon} \in \orbsk{E_q}{*}{x}$, a simple consequence of the definition of the orbit space is that if $\seqj{i}{1}{k}$ is a prefix of $b^\epsilon$ then $\mapstring{f}{i}{j}{1}{k} \in \orbsk{E_q}{*}{x}$.

By the above remarks it follows that $\fseq \in \orbs{E_q}{x}$ for all $\seqj{i}{1}{\infty} \in \mathcal{R}$. 
Hence, by Lemma \ref{lemma: slice orbit bijection}, $\pi_3(\mathcal{R}) \subset \slice{q}{x(q-1)}$.
Moreover we know that 
$$\pi_3(\mathcal{R}) = \bigcap_{k \in \mathbb{N}_{\geq 0}} \bigcup_{\epsilon \in \{0,1\}^k} \pi_3[b^\epsilon],$$
so we define a measure $\mu$ on $\pi_3(\mathcal{R})$ by defining the values of $\mu(\pi_3[b^\epsilon])$ for all $\epsilon \in \{0,1\}^*$.
Given $\epsilon \in \{0,1\}^k$, define
$$\mu(\pi_3[b^\epsilon]) = 2^{-k}.$$

By Proposition \ref{proposition: the constant and the map} we know that if $\epsilon, \epsilon' \in \{0,1,2\}^k$ are distinct then $\pi_3[b^\epsilon]$ and $\pi_3[b^{\epsilon'}]$ intersect in at most one point, and therefore $\mu(\pi_3[b^\epsilon] \cup \pi_3[b^{\epsilon'}]) = \mu(\pi_3[b^\epsilon]) + \mu(\pi_3[b^{\epsilon'}])$.
Then
$$\mu(\pi_3(\mathcal{R}^k)) = \sum_{\epsilon \in \{0,1\}^k} \mu(\pi_3[b^\epsilon]) = 1,$$
since $|\{0,1\}^k| = 2^k$.
It follows that $\mu(\pi_3(\mathcal{R})) = 1$.
It is a consequence of Lemma \ref{lemma: subsequences are interval containments} that if $b^\epsilon$ is a prefix of $\seqj{i}{1}{l} \in \{0,1,2\}^*$ then $\pi_3[\seqj{i}{1}{l}] \subset \pi_3[b^\epsilon]$ so $\mu(\pi_3[\seqj{i}{1}{l}]) \leq  \mu(\pi_3[b^\epsilon])$.

\begin{proposition}
\label{proposition: mu on intervals is bounded above}
    The measure $\mu$ has the property that for all $U \subset \mathbb{R}$, $\mu(U) \leq 4|U|^s$ where $s = \frac{\log 2}{M \log 3}$.
\end{proposition}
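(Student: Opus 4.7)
The plan is to prove the measure bound by a covering argument that uses the structural properties of the cylinders $\pi_3[b^\epsilon]$ guaranteed by Proposition \ref{proposition: the constant and the map}. Since $\mu(U) \leq \mu(\mathrm{conv}(U))$ and $|\mathrm{conv}(U)| = |U|$, I may assume $U$ is a closed interval. If $|U| \geq 1$ then $\mu(U) \leq \mu(\pi_3(\mathcal{R})) = 1 \leq 4|U|^s$, so from now on I may assume $|U| < 1$ and choose the unique integer $k \geq 0$ with $3^{-(k+1)M} < |U| \leq 3^{-kM}$.

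At this level $k$, the $2^k$ cylinders $\pi_3[b^\epsilon]$ for $\epsilon \in \{0,1\}^k$ have pairwise disjoint interiors by Proposition \ref{proposition: the constant and the map}(3), and each has length $3^{-N_\epsilon} \geq 3^{-kM} \geq |U|$ by Proposition \ref{proposition: the constant and the map}(4). The key geometric observation is: an interval $U$ of length at most $L$ can have positive-length intersection with at most two members of any family of intervals with pairwise disjoint interiors and lengths $\geq L$. Indeed, if $I_1 < I_2 < I_3$ were three such intervals all meeting $U$ in sets of positive length, then picking $x \in I_1 \cap U$ and $y \in I_3 \cap U$ would force $I_2 \subset [x,y] \subset U$ by convexity, contradicting $|I_2| \geq L \geq |U|$. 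A possible third cylinder touching $U$ only at a single point would contribute nothing to $\mu(U)$ provided that $\mu$ is nonatomic. I would verify this by noting that from the inductive construction $b^{\epsilon B} = b^\epsilon g^B$ with $|g^B| \geq 1$ (each branching pair element necessarily contains some index $i_B \in \{0,1,2\}$ separating the two minimal doubly infinite branching sequences that go into it), one has $N_\epsilon \geq |\epsilon|$, so cylinder lengths shrink to zero, and consequently each point $x \in \pi_3(\mathcal{R})$ satisfies $\mu(\{x\}) \leq 2^{-k}$ for every $k$, giving $\mu(\{x\}) = 0$.

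Combining these observations with the inclusion $\pi_3(\mathcal{R}) \subset \bigcup_{\epsilon \in \{0,1\}^k} \pi_3[b^\epsilon]$ yields
\[ \mu(U) \leq \sum_{\epsilon \in \{0,1\}^k} \mu(U \cap \pi_3[b^\epsilon]) \leq 2 \cdot 2^{-k}. \]
The definition $s = \log 2/(M \log 3)$ gives the identity $(3^{-M})^s = 2^{-1}$, so the inequality $|U| > 3^{-(k+1)M}$ yields $|U|^s > 2^{-(k+1)}$ and hence $4|U|^s > 2 \cdot 2^{-k} \geq \mu(U)$, which establishes the claim.

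The step requiring the most care is the two-cylinder count, since cylinders at level $k$ generally have unequal lengths; the uniform lower bound $3^{-kM}$ coming from Proposition \ref{proposition: the constant and the map}(4) together with the sharp choice of $k$ is what makes the scales match, and the nonatomicity of $\mu$ is what rules out a spurious contribution from a third cylinder that meets $U$ only at a boundary point.
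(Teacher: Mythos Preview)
Your argument is correct and takes a genuinely different route from the paper's. The paper first bounds $\mu$ on \emph{all} ternary cylinders $\pi_3[\seqj{i}{1}{l}]$ by locating the unique $\epsilon$ with $b^\epsilon$ a prefix of $\seqj{i}{1}{l}$ and $\seqj{i}{1}{l}$ a strict prefix of some $b^{\epsilon B}$; it then covers an arbitrary $U$ by the (at most two) level-$l$ ternary cylinders meeting it, where $3^{-l-1}\le |U|<3^{-l}$. You instead work directly with the construction cylinders $\pi_3[b^\epsilon]$, $\epsilon\in\{0,1\}^k$, choosing $k$ so that $3^{-(k+1)M}<|U|\le 3^{-kM}$, and use the uniform lower bound $|\pi_3[b^\epsilon]|\ge 3^{-kM}\ge |U|$ from Proposition~\ref{proposition: the constant and the map}(4) together with pairwise interior-disjointness from part~(3) to bound the number of cylinders that can meet $U$ in positive length.

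Your approach is cleaner in the final arithmetic: the identity $(3^{-M})^s=2^{-1}$ gives $4|U|^s>2\cdot 2^{-k}\ge\mu(U)$ directly. The trade-off is that you must establish nonatomicity of $\mu$ to discard a possible third cylinder touching $U$ at a single endpoint; the paper's route through the full ternary grid avoids this, since the level-$l$ ternary intervals genuinely tile $[0,1]$ and an interval of length $<3^{-l}$ meets at most two of them. Your nonatomicity argument is fine: the observation $N_{\epsilon B}=N_\epsilon+|g^B|\ge N_\epsilon+1$ (each branching-pair word contains the separating digit $i_B$ from the proof of Proposition~\ref{proposition: existence of doubly infinite branching sequences}) gives $N_\epsilon\ge|\epsilon|$, hence $\mu(\{p\})\le 2^{-k}$ for all $k$.
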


\begin{proof}
    We start by bounding the measure of intervals of the form $\pi_3[\seqj{i}{1}{l}]$ where $ \seqj{i}{1}{l} \in \{0,1,2\}^*$ before generalising to arbitrary intervals in $\mathbb{R}$.
    
    Let $\seqj{i}{1}{l} \in \{0,1,2\}^*$ be such that $\seqj{i}{1}{l}$ is not a prefix of $b^\epsilon$ for any $\epsilon \in \{0,1\}^*$.
    Then $\seqj{i}{1}{l}$ is not a prefix of any sequence in $\mathcal{R}$ and hence $\pi_3[\seqj{i}{1}{l}] \cap \pi_3(\mathcal{R})$ is either empty or consists of a single point, so $\mu(\pi_3[\seqj{i}{1}{l}]) = 0$.
    Suppose instead that there is some $k \in \mathbb{N}_{\geq 0}$, $\epsilon \in \{0,1\}^k$ and some $B \in \{0,1\}$ with the property that $\seqj{i}{1}{l}$ is a strict prefix of $b^{\epsilon B}$.
    We can choose $k$ to be minimal so that $b^\epsilon$ is a prefix of $\seqj{i}{1}{l}$.
    Therefore $\mu(\pi_3[\seqj{i}{1}{l}]) \leq \mu(\pi_3[b^\epsilon]) \leq 2^{-k}$.
    Since $\seqj{i}{1}{l}$ is a strict prefix of $b^{\epsilon B}$ we know, using the third part of Proposition \ref{proposition: the constant and the map}, that $l < N_{\epsilon B} \leq (k+1)M$, so $2^{-k} = 2(2^{-k-1}) \leq 2(2^{-l/M})$.
    Therefore, for any $\seqj{i}{1}{l} \in \{0,1,2\}^*$, $\mu(\pi_3[\seqj{i}{1}{l}]) \leq 2(2^{-l/M})$.

    Let $U \subset \mathbb{R}$ satisfy $3^{-l-1} \leq |U| < 3^{-l}$ for some $l \in \mathbb{N}$. 
    Then since $|\pi_3[\seqj{i}{1}{l}]| = 3^{-l}$ for any $\seqj{i}{1}{l} \in \{0,1,2\}^l$, $U$ intersects at most two intervals of the form $\pi_3[\seqj{i}{1}{l}]$, so $\mu(U) \leq 4(2^{-l/M})$.
    Solving $2^{-l/M} = (3^{-l-1})^{s_l}$ gives $s_l = \frac{l}{l+1}\cdot \frac{\log 2}{M \log 3} < s = \frac{\log 2}{M \log 3}$.
    Therefore $\mu(U) \leq 4(3^{-l-1})^s \leq 4|U|^s$.
\end{proof}

Note that since $s$ is a constant depending only on $M$, and $M$ is dependent only on $q$, we know that $s$ is independent of $x \in J_q$ and depends only on $q \in \mathcal{D}$.

\begin{proof}[Proof of Theorem \ref{theorem: positive dimension variation 2}]
    By the mass distribution principle \cite{falconer2004fractal}, Proposition \ref{proposition: mu on intervals is bounded above}, implies that $\mathcal{H}^s(\pi_3(\mathcal{R})) \geq \frac{\mu(\pi_3(\mathcal{R}))}{4} = \frac{1}{4}$ and $\hdim(\pi_3(\mathcal{R})) \geq s$.
    Given $q \in \mathcal{D}$ and $x \in J_q$, we know that $\pi_3(\mathcal{R}) \subset \slice{q}{x(q-1)}$ so we conclude that $\hdim(\slice{q}{x(q-1)} \geq s$ which proves Theorem \ref{theorem: positive dimension variation 2}.
\end{proof}
 
\section{The special case of the $k$-Bonacci numbers}
\label{section: k-Bonacci results}

In this section we prove Theorem \ref{theorem: k-Bonacci numbers admit any odd slice} as well as an additional result (Theorem \ref{theorem: U = C_2}) in the special case where $q$ is a $k$-Bonacci number for some $k \in \mathbb{N}_{\geq 3}$.
We recall the notation from Subsection \ref{subsection: base q expansions}.
By Lemma \ref{lemma: slice orbit bijection}, Theorem \ref{theorem: k-Bonacci numbers admit any odd slice} is equivalent to the following theorem.

\begin{theorem}
\label{theorem: k-Bonacci numbers admit any odd slice variation}
    Let $ \{q_i\}_{i=3}^\infty$ be the set of $k$-Bonacci numbers excluding $G$.
    \begin{enumerate}
        \item If $q \in \{q_i\}_{i=3}^\infty$ then for all $m \in \mathbb{N}$, $\hdim(\mathcal{V}_q^{(2m+1)}) > \epsilon > 0$ where $\epsilon$ depends only on $q$.
        \item $ \{q_i\}_{i=3}^\infty \subset \mathcal{C}_{\aleph_0}$.
    \end{enumerate}
\end{theorem}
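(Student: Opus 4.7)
The plan is to reduce both parts via Lemma \ref{lemma: slice orbit bijection} to the equivalent statements for $\mathcal{V}_{q_k}^{(2m+1)}$ and $\mathcal{C}_{\aleph_0}$, and then to exploit the branching-tree framework of Subsection \ref{subsection: bijection lemma} and Section \ref{section: proof of positive dimension} together with the $k$-Bonacci identity $q_k^{-1}+q_k^{-2}+\cdots+q_k^{-k}=1$. The fundamental observation is that every visit of an $E_{q_k}$-orbit to $\interior{J_{q_k}}$ is a trifurcation (adding $2$ to the orbit count), every visit to $\partial J_{q_k}$ is a bifurcation (adding $1$), and every branch that never re-enters $J_{q_k}$ contributes exactly one orbit.

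For Part 1, I would first isolate an explicit \emph{branching block}: a finite word $\alpha\in\{0,1,2\}^*$ and a point $z\in\interior{J_{q_k}}$ such that $f_\alpha\circ f_i(z)$ is again an admissible branching point in $\interior{J_{q_k}}$ for one distinguished index $i\in\{0,2\}$, while $f_1(z)$ and the remaining branch both escape into $\mathcal{V}_{q_k}$. Chaining $m$ copies of this block along a single spine yields an $x$ whose branching tree has exactly $m$ trifurcations with all $2m+1$ leaves lying in $\mathcal{V}_{q_k}$, so that $|\Omega_{E_{q_k}}(x)|=2m+1$. Positive Hausdorff dimension is then obtained by a mass-distribution argument in the style of Section \ref{section: proof of positive dimension}: the freedom to vary the positions of the branching blocks and the tails of the settling branches, combined with Lemma \ref{lemma: orbit space inequality} and the known positive lower bound on $\hdim(\mathcal{U}_{q_k})$ for $k\ge 3$, encodes a product-type Cantor subset of $\mathcal{V}_{q_k}^{(2m+1)}$ whose Hausdorff dimension is bounded below by some $\epsilon=\epsilon(q_k)>0$ independently of $m$.

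For Part 2, I iterate the same branching block infinitely: take $x_{\aleph_0}$ so that its $E_{q_k}$-orbit re-enters $\interior{J_{q_k}}$ at a strictly increasing infinite sequence of times along the spine, spinning off at each visit a single $f_1$-branch that immediately lands in $\mathcal{V}_{q_k}$. Then $\mathcal{T}(x_{\aleph_0})$ is a null infinite tree in the sense of Subsubsection \ref{subsubsection: null infinite points}, so $|\Omega_{E_{q_k}}(x_{\aleph_0})|=\aleph_0$ and hence $q_k\in\mathcal{C}_{\aleph_0}$.

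The main obstacle in both parts is the existence of the branching block. By Lemma \ref{lemma: orbit space inequality}, the condition $f_1(z)\in\mathcal{V}_{q_k}$ reduces to $f_1(z)\in\mathcal{U}_{q_k}$, and by Lemma \ref{lemma: projection of S^k is contained in U_q} this holds once the restricted ternary expansion of $f_1(z)$ avoids the blocks $01^k$ and $10^k$. The heart of the proof will thus be a symbolic book-keeping exercise: identifying $z$ via an explicit $k$-Bonacci-compatible word and checking that $f_1$ sends it to a sequence satisfying the required avoidance property. Once this is secured, the chaining of blocks for Part 1 and the Cantor-set/mass-distribution steps become routine adaptations of the arguments in Section \ref{section: interval with slice three} and Section \ref{section: proof of positive dimension}.
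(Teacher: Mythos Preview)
Your plan follows essentially the paper's strategy: chain $m$ controlled branchings, each shedding two unique branches, to get $|\Omega_{E_q}(x)|=2m+1$, and for Part~2 exhibit a null infinite point. The paper makes this completely explicit: for $q=q_k$ it takes $x_m=\pi_q\bigl(1(0^k)^m\delta\bigr)$ with $\delta\in\tilde{\mathcal S}^k$, and the $k$-Bonacci identity $\pi_q(10^k\alpha)=\pi_q(01^k\alpha)$ together with $(2-q)^{-1}=q^k$ gives directly $f_0(x_m)=\pi_q(1^k(0^k)^{m-1}\delta)$ (so $|\Omega_{E_q}(f_0(x_m))|=|\Omega_{E_q}(x_{m-1})|$ by Lemma~\ref{lemma: uniquely maps to}), while $f_1(x_m)=\pi_q((1^k)^{m-1}\overline\delta)$ and $f_2(x_m)=\pi_q((0^k)^m\delta)$ both lie in $\mathcal U_q$. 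Hence $|\Omega_{E_q}(x_m)|=2+|\Omega_{E_q}(x_{m-1})|$ and induction finishes.

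Two places where your sketch drifts from what actually works. First, the dimension step: you propose varying ``positions of the branching blocks and the tails of the settling branches'', but the settling branches are not independent---they are all determined by the single point $x$, and perturbing the block positions will generally change the orbit count. The paper instead fixes the prefix $1(0^k)^m$ and varies only $\delta\in\tilde{\mathcal S}^k$; since $\delta\mapsto\pi_q(1(0^k)^m\delta)$ is affine, $\hdim(\mathcal V_q^{(2m+1)})\ge\hdim(\pi_q(\tilde{\mathcal S}^k))>0$ follows immediately from Glendinning--Sidorov, with no mass-distribution argument needed. Second, for Part~2 the natural infinite iterate of your block is $x_{\aleph_0}=\pi_q(10^\infty)=1/q$, which lies on $\partial J_q$, not in $\interior{J_q}$: $f_1$ is \emph{undefined} at $1/q$, so each return is a bifurcation ($f_0,f_2$) rather than the trifurcation with an ``$f_1$-branch'' you describe. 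The paper uses exactly this point: $f_2(1/q)=0\in\mathcal V_q$, $f_0(1/q)=1=\pi_q(1^k0^\infty)$, and then Lemma~\ref{lemma: uniquely maps to} returns the spine to $1/q$, yielding a periodic null-infinite tree.
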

Note that $\{q_i\}_{i=3}^\infty \subset \mathcal{C}_{2m+1}$ is an immediate implication of part 1 of the theorem.
In order to prove the theorem we will need the following lemma on the cardinalities of orbit spaces.

\begin{lemma}
\label{lemma: uniquely maps to}
    Let $q \in (G,2) $ and let $\alpha \in \{0,1\}^\mathbb{N}$ be arbitrary.
    If $x_n = \pi_q(1^n 0 \alpha)$ for some $n \in \mathbb{N}$ then $|\orbs{E_q}{x_n}| = |\orbs{E_q}{x_{n-1}}| = \cdots = |\orbs{E_q}{x_1}|$.
\end{lemma}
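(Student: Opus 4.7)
The plan is to prove the chain of equalities by an inductive step: for every $n \geq 2$, I will show that $f_2$ is the \emph{only} map in $E_q = \{f_0, f_1, f_2\}$ whose domain contains $x_n$, and that $f_2(x_n) = x_{n-1}$. Granted these two facts, the forgetful map $(f_2, f_{i_2}, f_{i_3}, \ldots) \mapsto (f_{i_2}, f_{i_3}, \ldots)$ is a bijection $\Omega_{E_q}(x_n) \leftrightarrow \Omega_{E_q}(x_{n-1})$, and iterating down to $n=1$ yields the lemma.

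The identity $f_2(x_n) = x_{n-1}$ is a direct calculation: multiplying the series $x_n = \sum_{j=1}^n q^{-j} + \sum_{j \geq n+2} \alpha_{j-n-1} q^{-j}$ by $q$ peels off a leading $1$ and shifts the remaining indices, so $q x_n = 1 + \pi_q(1^{n-1} 0 \alpha) = 1 + x_{n-1}$.

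The substantive step is to show that for $n \geq 2$ and every $\alpha \in \{0,1\}^\mathbb{N}$ one has $x_n > \tfrac{1}{q(q-1)}$, which places $x_n$ outside the domains of both $f_0$ (namely $[0,\tfrac{1}{q(q-1)})$) and $f_1$ (namely $(\tfrac{1}{q}, \tfrac{1}{q(q-1)}]$). Since the tail of $x_n$ indexed by $\alpha$ is non-negative, it suffices to verify the inequality with $\alpha = 0^\infty$, reducing it to
\[
\frac{1 - q^{-n}}{q-1} > \frac{1}{q(q-1)} \iff q^{n-1}(q-1) > 1.
\]
For $n = 2$ this is $q(q-1) > 1$, which is exactly the defining property $q > G$ (since $G(G-1) = G^2 - G = 1$); for $n \geq 3$ the left-hand side is strictly larger. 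Together with the trivial bounds $x_n \in [\tfrac{1}{q}, \tfrac{1}{q-1}]$ (forcing $f_2$ to be defined at $x_n$), this completes the uniqueness of the first map and hence the bijection.

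The only delicate step is pinning down the inequality above; everything else is bookkeeping. The appearance of the hypothesis $q > G$ in the statement is explained precisely by this threshold: for $q \leq G$ the point $x_n$ could re-enter the switch region $J_q$ for small $n$, at which point more than one of $f_0, f_1, f_2$ would be admissible and the clean telescoping bijection would fail.
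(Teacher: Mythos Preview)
Your proof is correct and follows essentially the same approach as the paper's: both argue that for $n\geq 2$ the point $x_n$ lies strictly to the right of $\tfrac{1}{q(q-1)}$ (using $q>G$), so that $f_2$ is the unique applicable map and $f_2(x_n)=x_{n-1}$ gives the telescoping bijection. The only cosmetic difference is that the paper bounds $x_n$ below by $x_2\geq q^{-1}+q^{-2}$ and then compares $q^{-1}+q^{-2}$ with $\tfrac{1}{q(q-1)}$, whereas you minimize over $\alpha$ directly; these amount to the same inequality.
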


\begin{proof}
    Let $\alpha \in \{0,1\}^\mathbb{N}$ be fixed and $x_n = \pi_q(1^n 0 \alpha)$ for all $n \in \mathbb{N}$.
    It is obvious that $q^{-1} + q^{-2} \leq x_2 < x_3 < \ldots$ and since $q \in (G,2)$ implies that $q^{-1} + q^{-2} > \frac{1}{q(q-1)}$, we know that 
    \begin{equation}
    \label{equation: maps uniquely to}
    x_n \in \left(\frac{1}{q(q-1)}, \frac{1}{q-1}\right],
    \end{equation}
    for all $n \geq 2$.    
    \eqref{equation: maps uniquely to} implies that $f_2$ is the only map in $\{f_0,f_1, f_2\}$ that satisfies $f_i(x_n) \in I_q$. By observing that $f_2(x_n) = x_{n-1}$, it is clear that since \eqref{equation: maps uniquely to} holds for all $n \geq 2$, $|\orbs{E_q}{x_n}| = |\orbs{E_q}{x_{n-1}}| = \cdots = |\orbs{E_q}{x_1}|$.
\end{proof}

Recall the reflection notation $\overline{x}$ from Subsection \ref{subsection: structure of pi_q(A_q)}.
We observe that since $\frac{1}{q-1} = \pi_q(1^\infty)$, 
\begin{equation}
\label{equation: reflections of sequences}
\pi_q(\overline{x}) = \frac{1}{q-1} - \pi_q(x).
\end{equation}

\begin{lemma}
\label{lemma: result on expansion of f_1 map}
    Let $k \geq 3$, $q = q_k$, and $x_m = \pi_q(1(0^k)^m \alpha)$ where $m \in \mathbb{N}$ and $\alpha \in \{0,1\}^\mathbb{N}$ is arbitrary. 
    Then $f_1(x_m) = \pi_q((1^k)^{m-1}\overline{\alpha})$.
\end{lemma}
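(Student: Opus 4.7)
The plan is to compute both sides of the claimed identity explicitly, reducing the lemma to the defining relation $(2-q) = q^{-k}$ for the $k$-Bonacci number $q_k$ (which was established in Lemma \ref{lemma: 2-q inequality}).

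First I would unpack the definition of $x_m$. Since the sequence $1(0^k)^m\alpha$ has a $1$ in position $1$, zeros in positions $2,\ldots,km+1$, and then $\alpha$, one gets
\begin{equation*}
x_m = q^{-1} + q^{-km-1}\pi_q(\alpha),
\end{equation*}
so that $qx_m - 1 = q^{-km}\pi_q(\alpha)$. Substituting into the defining formula for $f_1$ yields
\begin{equation*}
f_1(x_m) = \frac{1}{q-1} - \frac{qx_m-1}{2-q} = \frac{1}{q-1} - \frac{q^{-km}\pi_q(\alpha)}{2-q}.
\end{equation*}
Here I invoke Lemma \ref{lemma: 2-q inequality}, which gives $(2-q) = q^{-k}$ at $q = q_k$. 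This immediately simplifies the above expression to
\begin{equation*}
f_1(x_m) = \frac{1}{q-1} - q^{-k(m-1)}\pi_q(\alpha).
\end{equation*}

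Next I would compute the right-hand side. Using the geometric series for the initial block of ones,
\begin{equation*}
\pi_q((1^k)^{m-1}\overline{\alpha}) = \sum_{j=1}^{k(m-1)} q^{-j} + q^{-k(m-1)}\pi_q(\overline{\alpha}) = \frac{1 - q^{-k(m-1)}}{q-1} + q^{-k(m-1)}\pi_q(\overline{\alpha}).
\end{equation*}
Applying the identity $\pi_q(\overline{\alpha}) = \frac{1}{q-1} - \pi_q(\alpha)$ from \eqref{equation: reflections of sequences} and simplifying gives
\begin{equation*}
\pi_q((1^k)^{m-1}\overline{\alpha}) = \frac{1}{q-1} - q^{-k(m-1)}\pi_q(\alpha),
\end{equation*}
which matches the simplified expression for $f_1(x_m)$. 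Comparing the two completes the proof.

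There is no real obstacle here: the identity is a direct calculation whose only nontrivial ingredient is the characterisation $(2-q_k) = q_k^{-k}$ of the $k$-Bonacci number, already packaged in Lemma \ref{lemma: 2-q inequality}. One should verify that $x_m$ genuinely lies in the domain of $f_1$ (i.e.\ in $(1/q,1/(q(q-1))]$), but this follows from $q \in (G,2)$ together with the observation that $x_m > q^{-1}$ and $x_m \leq \pi_q(10^{km}1^\infty) \leq \pi_q(101^\infty) < 1/(q(q-1))$ for $k \geq 3$, $q = q_k$.
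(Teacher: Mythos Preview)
Your proof is correct and follows essentially the same approach as the paper: both arguments reduce to the identity $(2-q_k)=q_k^{-k}$ from Lemma~\ref{lemma: 2-q inequality} together with the reflection formula \eqref{equation: reflections of sequences}. The only difference is cosmetic---the paper keeps the computation at the level of $\pi_q$-sequences (writing $f_2(x_m)=\pi_q((0^k)^m\alpha)$ and noting that multiplication by $q^k$ shifts the sequence left by $k$ places), while you expand everything into explicit powers of $q$; your added check that $x_m$ lies in the domain of $f_1$ is a small bonus the paper omits.
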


\begin{proof}
    First observe that for any $x$ in the domain of $f_1$, that $f_1(x) = \frac{1}{q-1} - \frac{1}{2-q}f_2(x)$.
    If $q = q_k$ for some $k \geq 3$ then by the proof of Lemma \ref{lemma: 2-q inequality}, $\frac{1}{2-q} = q^k$.
    Since $f_2(x_m) = \pi_q((0^k)^m \alpha)$ we know that $\frac{1}{2-q}f_2(x_m) = \pi_q((0^k)^{m-1} \alpha)$.
    Using \eqref{equation: reflections of sequences}, $f_1(x_m) = \frac{1}{q-1} - \pi_q((0^k)^{m-1}\alpha) = \pi_q((1^k)^{m-1}\overline{\alpha})$.
\end{proof}

\begin{figure}[t]
\centering
    \begin{subfigure}[t]{0.45\textwidth}
    \centering
    \vskip 0pt
        \begin{tikzpicture} [scale = 5.85]
            \begin{scope}[thick]
            \draw (0,0) -- (1.1915,0);
            \draw (0,0) -- (0,1.1915);
            \draw (1.1915,0) -- (1.1915,1.1915);
            \draw (0,1.1915) -- (1.1915,1.1915);
            
            \draw [dotted] (0,0) -- (1.1915,1.1915);
            
            \draw (0,0) -- (0.6478,1.1915);
            \draw (0.5437,0) -- (1.1915,1.1915);
            \draw (0.6478,0) -- (0.5437,1.1915);
            
            \draw [dotted] (0.6478,1.1915) -- (0.6478,0);
            \draw [dotted] (0.5437,0) -- (0.5437,1.1915);
            
        % \end{scope}
        
        \draw [red] (0.5804,0) -- (0.5804,1.0674);
        
        \draw [red] (0.5804,1.0674) -- (1.0674,1.0674);
        
        \draw [blue] (0.5804,0.4196) -- (0.4196,0.4196);
        
        \draw [red] (0.5804,0.0674) -- (0.0674,0.0674);
        
        \draw [blue] (1.0674,1.0674) -- (1.0674,0.9633);
        \draw [blue] (1.0674,0.9633) -- (0.9633,0.9633);
        \draw [blue] (0.9633,0.9633) -- (0.9633,0.7718);
        \draw [blue] (0.9633,0.7718) -- (0.7718,0.7718);
        \draw [blue] (0.7718,0.7718) -- (0.7718,0.4196);
        \draw [blue] (0.7718,0.4196) -- (0.5804,0.4196);
        
        \draw [blue] (0.4196,0.4196) -- (0.4196,0.7718);
        \draw[blue] (0.4196,0.7718) -- (0.5804,0.7718);
        \draw [red] (0.5804,0.7718) -- (0.7718,0.7718);
        
        \draw [blue] (0.0674,0.0674) -- (0.0674,0.1240);
        \draw [blue] (0.0674, 0.1240) -- (0.1240,0.1240);
        \draw [blue] (0.1240,0.1240) -- (0.1240,0.2281);
        \draw [blue] (0.1240,0.2281) -- (0.2281,0.2281);
        \draw [blue] (0.2281,0.2281) -- (0.2281,0.4196);
        \draw [blue] (0.2281,0.4196) -- (0.4196,0.4196);

        \node[above] at (0.3,1.195){\small $f_0$};
    \node[above] at (0.6,1.195){\small $f_1$};
    \node[above] at (0.9,1.195){\small $f_2$};

    \node[below] at (0,0){\small $0$};
    \node[below] at (0.5437,0){\small $\frac{1}{q}$};
    \node[below] at (0.6478,0){\small $\frac{1}{q(q-1)}$};
    \node[below] at (1.195,0){\small $\frac{1}{q-1}$};

    \filldraw[white] (0.5437,1.195) circle (0.2pt);
    \filldraw[white] (0.6478,1.195) circle (0.2pt);
    \draw (0.5437,1.190) circle (0.2pt);
    \draw (0.6478,1.190) circle (0.2pt);
    \filldraw[black] (0.5437,0) circle (0.2pt);
    \filldraw[black] (0.6478,0) circle (0.2pt);
    \filldraw[black] (0,0) circle (0.2pt);
    \filldraw[black] (1.19,1.19) circle (0.2pt);

        \end{scope}
    \end{tikzpicture}
    \caption{The three orbits of the point $x_1$.}
    \label{figure: point with orbit three}
    \end{subfigure}
    \hfill
    \begin{subfigure}[t]{0.45\textwidth}
    \centering
    \vskip 5mm
        \begin{tikzpicture}[scale = 0.7, very thin]
    \okfun{8}{0.5436890};
    \draw[line width = 0.4mm, white] (0,0) -- (4.99,4.99);
    \draw[line width = 0.4mm, white](5.01,5.01) -- (10,10);
    \draw [line width = 0.001mm, red] (0,4.870856) -- (10,4.870856);
    \draw[thick] (0,0) -- (0,10) -- (10,10) -- (10,0) -- (0,0);
    \end{tikzpicture}
    \caption{A horizontal slice with three elements.}
    \end{subfigure}
    \caption{$E_q$ and the corresponding fractal $K_q$ for $q = q_3$.}
\end{figure}
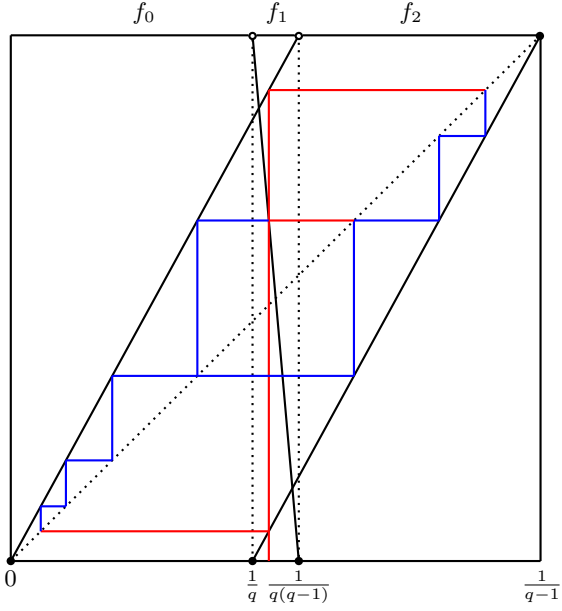
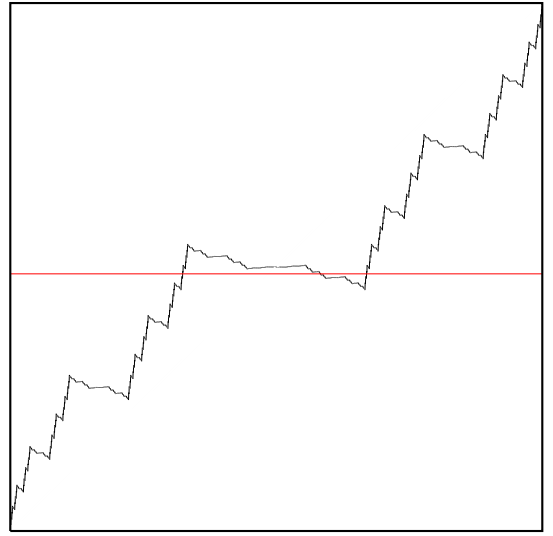

Before proving Theorem \ref{theorem: k-Bonacci numbers admit any odd slice variation}, we require the following definition and lemma.
For all $k \in \mathbb{N}$, define
$$\hat{\mathcal{S}}^k = \{\seqj{i}{1}{\infty} \in \{0,1\}^\mathbb{N} : \seqj{i}{1}{\infty} \mathrm{\ avoids \ } (01^k) \mathrm{\ and \ }(10^k) \mathrm{\ and \ does \ not \ end \ with \ } (01^{k-1})^\infty \mathrm{\ or \ } (10^{k-1})^\infty \},$$
and
$$\tilde{\mathcal{S}}^k = \{\seqj{i}{1}{\infty} \in \{0,1\}^\mathbb{N} : \seqj{i}{1}{\infty} \mathrm{\ avoids \ } (0^k) \mathrm{\ and \ }(1^k) \mathrm{\ and \ does \ not \ end \ with \ } (01^{k-1})^\infty \mathrm{\ or \ } (10^{k-1})^\infty \},$$
and note that $\tSk{k} \subset \hat{\mathcal{S}}^k \subset \mathcal{S}^k$ so $\pi_q(\tSk{k}) \subset \pi_q(\hat{\mathcal{S}}^k) \subset \pi_q(\mathcal{S}^k)$ for each $k \in \mathbb{N}$.
Note also that $\delta \in \mathcal{S}^k$ if and only if $\overline{\delta} \in \mathcal{S}^k$ and the same holds for $\hat{\mathcal{S}}^k$ and $\tilde{\mathcal{S}}^k$. 

By \cite[Lemma 4]{glendinning2001unique}, if $q = q_k$, then $\pi_q(\hat{\mathcal{S}}^k) = \mathcal{U}_q$ and so if $q \in (q_k,2)$ then $\pi_q(\hat{\mathcal{S}}^k) \subset \mathcal{U}_q$. 
This is because $\mathcal{U}_q$ is increasing with $q$ in the sense that if $\pi_q(\seqj{i}{1}{\infty}) \in \mathcal{U}_q$ for some $q \in (1,2)$ then $\pi_{q'}(\seqj{i}{1}{\infty}) \in \mathcal{U}_{q'}$ for all $q' \in (q,2)$ \cite[Lemma 4]{glendinning2001unique}.
Observe that if $\delta \in \tSk{k}$ then $0^n \delta \in \hat{\mathcal{S}}^k$ for all $n \in \mathbb{N}$.
This proves the following lemma.

\begin{lemma}
\label{lemma: uniquely maps to using delta}
    Let $q \in (q_k,2)$ and $\delta \in \tSk{k}$.
    Then $|\orbs{E_q}{\pi_q(0^n \delta)}| = |\orbs{E_q}{\pi_q(\delta)}| = 1$ for all $n \in \mathbb{N}$.
\end{lemma}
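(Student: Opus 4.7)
The plan is to exploit the observation already recorded immediately before the lemma, together with Lemma \ref{lemma: orbit space inequality} and the characterisation of $\mathcal{U}_q$ via forbidden patterns.

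First I would verify carefully the parenthetical observation that $\delta \in \tSk{k}$ implies $0^n\delta \in \hat{\mathcal{S}}^k$ for every $n \in \mathbb{N}$. Since $\delta$ avoids $0^k$, the only way $10^k$ could appear in $0^n\delta$ would be straddling the boundary between the prefix $0^n$ and $\delta$, but the prefix contains no $1$, so no such straddling occurrence is possible. For the pattern $01^k$: any occurrence would have to end inside $\delta$, and since $\delta$ avoids $1^k$ a straddling occurrence would require $\delta$ to begin with $1^k$, again impossible. The tail conditions $(01^{k-1})^\infty$ and $(10^{k-1})^\infty$ are conditions on the infinite tail of the sequence and are therefore inherited unchanged from $\delta$. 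Hence $0^n\delta \in \hat{\mathcal{S}}^k$, and taking $n=0$ also gives $\delta \in \hat{\mathcal{S}}^k$.

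Next I would invoke \cite[Lemma 4]{glendinning2001unique} (cited just above the lemma) which gives $\pi_q(\hat{\mathcal{S}}^k) \subset \mathcal{U}_q$ for every $q \in (q_k,2)$. Applied to both $\delta$ and $0^n\delta$, this yields $\pi_q(\delta), \pi_q(0^n\delta) \in \mathcal{U}_q$, i.e. both points admit a unique base $q$ expansion, so $|\orbs{\hat{E}_q}{\pi_q(\delta)}| = |\orbs{\hat{E}_q}{\pi_q(0^n\delta)}| = 1$.

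Finally I would apply Lemma \ref{lemma: orbit space inequality}, which asserts that for every $x \in I_q$ the equivalence $|\orbs{\hat{E}_q}{x}| = 1 \iff |\orbs{E_q}{x}| = 1$ holds. Applying this to $x = \pi_q(\delta)$ and to $x = \pi_q(0^n\delta)$ immediately yields $|\orbs{E_q}{\pi_q(\delta)}| = |\orbs{E_q}{\pi_q(0^n\delta)}| = 1$, as required. There is no real obstacle here since the work has essentially been done in Subsections \ref{subsection: background theory}--\ref{subsection: implications of the bijection lemma} and in the cited result of Glendinning and Sidorov; the only step that requires genuine care is the combinatorial verification of the observation, and that is a short check on boundary substrings.
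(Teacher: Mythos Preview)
Your proposal is correct and follows exactly the route the paper takes: the paper records the observation $\delta \in \tSk{k} \Rightarrow 0^n\delta \in \hat{\mathcal{S}}^k$, invokes \cite[Lemma 4]{glendinning2001unique} to get $\pi_q(\hat{\mathcal{S}}^k) \subset \mathcal{U}_q$ for $q \in (q_k,2)$, and then (implicitly via Lemma \ref{lemma: orbit space inequality}) passes from $|\orbs{\hat{E}_q}{\cdot}|=1$ to $|\orbs{E_q}{\cdot}|=1$. Your write-up simply makes the combinatorial check and the appeal to Lemma \ref{lemma: orbit space inequality} explicit, which is exactly what was left to the reader.
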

 
Since $\pi_q(\mathcal{S}^k) \setminus \pi_q(\hat{\mathcal{S}}^k)$ and $\pi_q(\hat{\mathcal{S}}^k) \setminus \pi_q(\tSk{k})$ have countably many points, we know that for any $q \in (1,2)$,
$$\hdim(\pi_q(\mathcal{S}^k)) = \hdim(\pi_q(\hat{\mathcal{S}}^k)) = \hdim(\pi_q(\tSk{k})).$$
It was shown in \cite[Theorem 2]{glendinning2001unique} that $\hdim(\pi_q(\mathcal{S}^k))>0$ for all $q \in (q_\mathrm{KL}, 2)$, $k \in \mathbb{N}$ and hence $\hdim(\pi_q(\tSk{k})) > 0$.
We note that $q_3 \approx 1.83929 > q_{\mathrm{KL}} \approx 1.78723$, so the range in which this result holds contains $\{q_i\}_{i=3}^\infty$.

\begin{proof}[Proof of Theorem \ref{theorem: k-Bonacci numbers admit any odd slice variation}]
    Proof of part 1:
    
    Let $k \geq 3$ and let $q = q_k$. 
    Define
    $$X_m = \{(1(0^k)^m \delta) \in \{0,1\}^\mathbb{N} : \delta \in \tSk{k}\}.$$
    Since an affine image of $\pi_q(\tSk{k})$ is contained in $\pi_q(X_m)$ for any $m \in \mathbb{N}$, we know that $\hdim(\pi_q(X_m)) \geq \hdim(\pi_q(\tSk{k}))> \epsilon > 0 $ for some $\epsilon$ which depends only on $q$.
    Hence, it suffices to show that if $x \in \pi_q(X_m)$ then $|\orbs{E_q}{x}| = 2m+1$.

    Since $q = q_k$ for some $k \geq 3$,
    $$q^k = q^{k-1} + \cdots + q + 1,$$
    and equivalently
    $$q^{-1} = q^{-2} + \cdots + q^{-k-1},$$
    which implies that
    \begin{equation}
    \label{equation: k-Bonacci base q expansion result}
    \pi_q(1 0^k \alpha ) = \pi_q(0 1^k \alpha),
    \end{equation}
    where $\alpha \in \{0,1\}^\mathbb{N}$ is an arbitrary infinite binary sequence.
    Let $\delta \in \tSk{k}$, and let $x_m = \pi_q(1(0^k)^m \delta)$ for all $m \in \mathbb{N}$. 
    \eqref{equation: k-Bonacci base q expansion result} implies that $\pi_q(1(0^k)^m \delta) = \pi_q(0(1^k)(0^k)^{m-1}\delta)$ and with Lemma \ref{lemma: result on expansion of f_1 map} we evaluate $f_0(x_m), f_1(x_m)$ and $f_2(x_m)$,
    \begin{align*}
    f_0(x_m) &= \pi_q(1^k (0^k)^{m-1} \delta),\\
    f_1(x_m) &= \pi_q((1^k)^{m-1}\overline{\delta}),\\
    f_2(x_m) &= \pi_q((0^k)^m \delta),
    \end{align*}
     and proceed by induction.
    By Lemma \ref{lemma: uniquely maps to using delta}, we can see that $|\orbs{E_q}{f_2(x_m)}| = |\orbs{E_q}{f_1(x_m)}| = 1$ and by Lemma \ref{lemma: uniquely maps to}, $|\orbs{E_q}{f_0(x_m)}| = |\orbs{E_q}{\pi_q(1(0^k)^{m-1}\delta)}| = |\orbs{E_q}{x_{m-1}}| $.
    Since $|\orbs{E_q}{x_m}| = |\orbs{E_q}{f_0(x_m)}| + |\orbs{E_q}{f_1(x_m)}| + |\orbs{E_q}{f_2(x_m)}|$, this means that $|\orbs{E_q}{x_m}| = 2 + |\orbs{E_q}{x_{m-1}}|$.
    For $x_1$, Lemma \ref{lemma: uniquely maps to} shows that $|\orbs{E_q}{f_0(x_1)}| = |\orbs{E_q}{f_1(x_1)}| = |\orbs{E_q}{f_2(x_1)}| = 1$.
    Hence, $|\orbs{E_q}{x_m}| = 2m+1$ for all $m \in \mathbb{N}$, or equivalently, $\pi_q(X_m) \subset \mathcal{V}_q^{(2m+1)}$.
    Therefore, if $q =q_k$ for some $k \geq 3$ and $m \in \mathbb{N}$, since $\pi_q(X_m) \subset \mathcal{V}_q^{(2m+1)}$ and $\hdim(\pi_q(X_m)) > \epsilon > 0$, we have proved that $\hdim(\mathcal{V}_q^{(2m+1)}) > \epsilon > 0$.

    Proof of part 2:

    We show that $x_{\aleph_0} = \pi_q(10^\infty) \in \mathcal{V}_q^{(\aleph_0)}$ for all $q \in \{q_i\}_{i=3}^\infty$.
    Let $q = q_k$ for some $k \geq 3$. 
    Using \eqref{equation: k-Bonacci base q expansion result} we know that 
    \begin{equation}
    \label{equation: k-Bonacci implication countable orbit}
    \pi_q(10^\infty) = \pi_q(01^k 0^\infty).
    \end{equation}
    Since $x_{\aleph_0} = 1/q$, we know that $f_0(x_{\aleph_0}), f_2(x_{\aleph_0}) \in I_q$ but $f_1(x_{\aleph_0})$ is not defined.
    We count the number of orbits of $f_0(x_{\aleph_0})$ and $f_2(x_{\aleph_0})$.
    $f_2(x_{\aleph_0}) = \pi_q(0^\infty)$ so $|\orbs{E_q}{f_2(x_{\aleph_0}})| = 1$. 
    \eqref{equation: k-Bonacci implication countable orbit} implies that $f_0(x_{\aleph_0}) = \pi_q(1^k 0^\infty)$. 
    Hence, by Lemma \ref{lemma: uniquely maps to}, $|\orbs{E_q}{\pi_q(1^k 0^\infty)}| = |\orbs{E_q}{\pi_q(1 0^\infty)}| = |\orbs{E_q}{x_{\aleph_0}}|$. 
    Therefore $|\orbs{E_q}{x_{\aleph_0}}| = |\orbs{E_q}{x_{\aleph_0}} \cup \orbs{E_q}{f_2(x_{\aleph_0}})|$ so $\orbs{E_q}{x_{\aleph_0}}$ is infinite.
    At each branching point of $x_{\aleph_0}$ the maps $f_i \in \{f_0, f_1, f_2\}$ which satisfy $f_i(x_{\aleph_0}) \in I_q$ are $f_0$ and $f_2$.
    These maps have the property that $|\orbs{E_q}{f_0(x_{\aleph_0})}|$ is infinite and $|\orbs{E_q}{f_2(x_{\aleph_0}}| = 1$.
    Recalling the definition from Subsection \ref{subsubsection: null infinite points}, we have that $x_{\aleph_0}$ is a null infinite point so $|\orbs{E_q}{x_{\aleph_0}}|$ is countably infinite, that is $x_{\aleph_0} \in \mathcal{V}_q^{(\aleph_0)}$.
\end{proof}

We note that in general, the sets $\mathcal{C}_{2m}$ for $m \in \mathbb{N}$ are harder to understand because the only points $x \in I_q$ for which it is possible that $|\orbs{E_q}{x}| = 2$ are $x \in \{\frac{1}{q}, \frac{1}{q(q-1)}\}$.
Recalling that $\mathcal{U}$ is the set of $q \in (1,2)$ for which the base $q$ expansion of $1$ is unique, we have the following theorem.

\begin{theorem}
\label{theorem: U = C_2}
    $\mathcal{C}_2 = \mathcal{U}$.
\end{theorem}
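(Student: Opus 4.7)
The plan is to prove the two containments of $\mathcal{C}_2 = \mathcal{U}$ separately, using the observation the authors make immediately before the statement: for $|\orbs{E_q}{x}| = 2$ to hold, $x$ cannot lie in $\interior{J_q}$ (all three of $f_0(x), f_1(x), f_2(x)$ would then be defined, each supports at least one orbit, and so $|\orbs{E_q}{x}| \geq 3$), while by Lemma \ref{lemma: unique orbit if avoids switch} the uniquely determined forward iterates of $x$ must enter $J_q$ at some time. It therefore suffices to analyse the two boundary points $y \in \{1/q,\, 1/(q(q-1))\}$, at each of which precisely two of the three maps $f_0, f_1, f_2$ are defined.

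First I would establish $\mathcal{U} \subseteq \mathcal{C}_2$ by exhibiting $y = 1/q$ as a witness. Inspection of the half-open domains in Subsection \ref{subsection: background theory} shows that only $f_0$ and $f_2$ act at $1/q$, with $f_0(1/q) = 1$ and $f_2(1/q) = 0$. Partitioning orbits by their first map gives $|\orbs{E_q}{1/q}| = |\orbs{E_q}{1}| + |\orbs{E_q}{0}|$. Only $f_0$ is defined at $0$ and it fixes $0$, so $|\orbs{E_q}{0}| = 1$. By Lemma \ref{lemma: orbit space inequality} we have $\mathcal{U} = \mathcal{V}$, so $q \in \mathcal{U}$ forces $|\orbs{E_q}{1}| = 1$. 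Hence $|\orbs{E_q}{1/q}| = 2$ and $q \in \mathcal{C}_2$.

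For the reverse inclusion, fix $q \in \mathcal{C}_2$ and pick $x \in I_q$ with $|\orbs{E_q}{x}| = 2$. Lemma \ref{lemma: unique orbit if avoids switch} yields a smallest $k \geq 0$ such that $y := \mapseqn{f}{i}{j}{1}{k}(x) \in J_q$ (the indices $i_j$ being uniquely determined up to time $k$), and determinism gives $|\orbs{E_q}{y}| = |\orbs{E_q}{x}| = 2$. By the opening observation $y \in \partial J_q$. If $y = 1/q$ the calculation above gives $|\orbs{E_q}{y}| = 1 + |\orbs{E_q}{1}|$. If $y = 1/(q(q-1))$ the analogous calculation (only $f_1$ and $f_2$ are defined, $f_1(y) = 0$ and $f_2(y) = (2-q)/(q-1)$) gives $|\orbs{E_q}{y}| = 1 + |\orbs{E_q}{(2-q)/(q-1)}|$, and the involution $\phi(z) = \tfrac{1}{q-1} - z$, which conjugates $f_0$ with $f_2$ and commutes with $f_1$, induces a bijection $\orbs{E_q}{1} \leftrightarrow \orbs{E_q}{\phi(1)} = \orbs{E_q}{(2-q)/(q-1)}$, so again $|\orbs{E_q}{y}| = 1 + |\orbs{E_q}{1}|$. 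Either way $|\orbs{E_q}{1}| = 1$, i.e.\ $q \in \mathcal{V} = \mathcal{U}$.

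The argument is essentially bookkeeping of domains and branching rather than deep work. The only mildly fiddly step is the second case in the last paragraph; introducing the involution $\phi$ lets me avoid redoing the cardinality computation from scratch and so bypasses the only real pitfall.
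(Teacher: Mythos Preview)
Your argument is correct and follows the same route as the paper's: both show $\mathcal{U}\subseteq\mathcal{C}_2$ by exhibiting $1/q$, and for the converse run the forced orbit to its first entry in $J_q$, rule out the interior, and at the boundary read off $|\Omega_{E_q}(1)|=1$; the paper disposes of the case $y=1/(q(q-1))$ with a bare ``without loss of generality'' where you make the reflection $\phi$ explicit. One caution on that step: the half-open domain conventions on $f_0$ and $f_1$ are \emph{not} $\phi$-symmetric (for instance $\phi(\mathrm{dom}\,f_0)=(1/q,\tfrac{1}{q-1}]$ while $\mathrm{dom}\,f_2=[1/q,\tfrac{1}{q-1}]$), so the digit-swap $0\leftrightarrow 2$ does not literally give a bijection $\Omega_{E_q}(1)\leftrightarrow\Omega_{E_q}((2-q)/(q-1))$ in general. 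The equality of cardinalities you need is still true, and the cleanest fix is to pass to $\hat E_q$ (whose closed domains \emph{are} $\phi$-symmetric) via Lemma~\ref{lemma: orbit space inequality}, or to note that since $|\Omega_{E_q}((2-q)/(q-1))|=1$ the unique orbit avoids $J_q$ entirely (Lemma~\ref{lemma: unique orbit if avoids switch}), so its $\phi$-image is a legitimate orbit of $1$ that also avoids $J_q$, forcing $|\Omega_{E_q}(1)|=1$.
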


\begin{proof}
    Let $q \in \mathcal{U}$. 
    Then $f_0(\frac{1}{q}) = 1$, $f_1(\frac{1}{q})$ is not defined and $f_2(\frac{1}{q}) = 0$, so $|\orbs{E_q}{\frac{1}{q}}| = 2$ and $q \in \mathcal{C}_2$.

    Let $q \in \mathcal{C}_2$. 
    If $x \in \mathcal{V}_q^{(2)}$ then by Lemma \ref{lemma: unique orbit if avoids switch}
    there is some unique finite sequence $\seqj{i}{1}{k} \in \{0,1,2\}^*$ such that $\mapstring{f}{i}{j}{1}{k}(x) \in J_q$.
    Moreover, we can observe that if $\mapstring{f}{i}{j}{1}{k}(x) \in \interior{J_q}$ then $|\orbs{E_q}{x}| \geq 3$.
    Hence $\mapstring{f}{i}{j}{1}{k}(x) \in \partial J_q = \{\frac{1}{q} , \frac{1}{q(q-1)} \}$.
    Without loss of generality, let $\mapstring{f}{i}{j}{1}{k}(x) = \frac{1}{q}$, then $\hat{f}_0(\frac{1}{q}) = f_0(\frac{1}{q}) = 1$ and $f_2(\frac{1}{q}) = 0$.
    Since $|\orbs{E_q}{\frac{1}{q}}| = 2$, we know $|\orbs{E_q}{1}| = 1$ so $|\orbs{\hat{E}_q}{1}| = 1$ and $q \in \mathcal{U}$.
\end{proof}

\bibliography{bib.bib}
\bibliographystyle{plain}

\end{document}